\newcommand{\bea}{\begin{eqnarray}}
\newcommand{\eea}{\end{eqnarray}}
\def\beaa{\begin{eqnarray*}}
\def\eeaa{\end{eqnarray*}}
\def\ba{\begin{array}}
\def\ea{\end{array}}
\def\be#1{\begin{equation} \label{#1}}
\def \eeq{\end{equation}}
\def\be{{\beta}}
\def\al{\alpha}
\def\FF{{\mathcal F}}
\def\OO{{\mathcal O}}
\def\R{{\mathbb{R}}}
\def\C{{\mathbb{C}}}
\def\N{{\bf N}}
\def\Z{{\mathbb{Z}}}
\def\T{{\mathbb{T}}}
\newtheorem{theorem}{Theorem}[section]
\newtheorem{lemma}[theorem]{Lemma}
\newtheorem{proposition}[theorem]{Proposition}
\newtheorem{corollary}[theorem]{Corollary}
\newtheorem{definition}[theorem]{Definition}
\newtheorem{remark}[theorem]{Remark}
\newtheorem{conjecture}[theorem]{Conjecture}
\numberwithin{equation}{section}
\begin{document}

\title[Quintic NLS on $\mathbb{R}\times\mathbb{T}^2$]{On scattering for the quintic defocusing nonlinear Schr\"odinger equation on $\mathbb{R}\times\mathbb{T}^2$.}

\author{Zaher Hani and Benoit Pausader}
\address{Courant Institute of Mathematical Sciences\\ 251 Mercer Street, New York NY 10012}
\email{hani@cims.nyu.edu}
\email{pausader@cims.nyu.edu}

\thanks{The first author was supported in part by a Simons Postdoctoral Fellowship. The second author was supported in part by NSF grant DMS-$1069243$.}

\maketitle

\begin{abstract}

We consider the problem of large data scattering for the quintic nonlinear Schr\"odinger equation on $\mathbb{R}\times\mathbb{T}^2$. This equation is critical both at the level of energy and mass. Most notably, we exhibit a new type of profile (a ``large scale profile'') that controls the asymptotic behavior of the solutions.
\end{abstract}

\section{Introduction}

The purpose of this manuscript is to study the asymptotic behavior of the defocusing quintic nonlinear Schr\"odinger equation on $\R\times \T^2$ given by\footnote{Since our main goal is to attack the \emph{large-data long-time} theory, the regularity of our initial data is dictated by the conservation laws. For large data global regularity and scattering, considering data in $H^1$ or in any smoother space is effectively equivalent.}:
\begin{equation}\label{NLS}
\left(i\partial_t +\Delta_{\mathbb{R}\times\mathbb{T}^2}\right) u =|u|^4u,\quad
u(t=0)= u_0 \in H^1(\R\times \T^2)
\end{equation}
Our main motivation is to better understand the broad question of the effect of the geometry of the domain on the asymptotic behavior of large solutions to nonlinear dispersive equations. While scattering holds for the quintic equation on $\mathbb{R}^3$, it is not expected to hold (apart from trivial cases) on $\mathbb{T}^3$ (cf. Appendix to \cite{CKSTT}). As we will argue below, the situation on $\mathbb{R}\times\mathbb{T}^2$ seems to be a borderline case for this question. 

The study of solutions of the nonlinear Schr\"odinger equation on compact or partially compact domains has been the subject of many works, dating back at least to \cite{BreGal} and then systematically developed by Bourgain \cite{Bo2,Bo1} on tori and Burq-G\'erard-Tzvetkov \cite{BuGeTz,BuGeTz3,BuGeTz2} on compact manifolds, with recent key developments in the energy-critical setting following the work of Herr-Tataru-Tzvetkov \cite{HeTaTz,HeTaTz2,He} and extended to global existence in Ionescu-Pausader \cite{IoPa,IoPa2}. We also refer to \cite{Bouclet,BoTz,GePi,GreTho,Ha,IoPaSt,TaTz} and especially to \cite{BaCaDu,TzVi} for previous works on the relation between scattering and geometry. Such equations have also been extensively studied in applied sciences on various backgrounds. While we will not attempt to make any fairly exhaustive list of those works, we should point out that their study on ``wave-guide" manifolds like $\R\times \T^2$ seems to be of particular interest especially in non-linear optics of telecommunications \cite{Schn, LPA, DFRMLCM}.

The studies on global wellposedness for energy critical and subcritical equations\footnote{That is when the potential part of the energy can be bounded in terms of the kinetic part. We remark that the energy-supercritical case (the complementary case which we do not discuss) is completely unknown, even on Euclidean spaces $\mathbb{R}^d$, except for some ill-posedness results \cite{AlCa,BuGeTz3,Tho}, or some results on the ``barely supercritical'' case \cite{Roy,TaoSuperCrit}.} so far seem to point to the absence of any geometric obstruction to global existence. Indeed in the cases known so far, the solutions  to the defocusing problem all extend globally. In contrast, it is clear that the geometry influences strongly the asymptotic dynamics of solutions but the precise relationship remains poorly understood.

A first step in this direction is to explore when one can obtain the simplest asymptotic behavior, namely scattering, which means that all nonlinear solutions asymptotically resemble linear solutions. From the heuristic that linear solutions with frequency $\sim N$ initially localized around the origin will disperse at time $t$ in the ball of radius $\sim Nt$, one can hope that scattering is partly determined by the asymptotic volume growth of balls with respect to their radius. In fact, if 
\begin{equation*}
V( r):=\inf_{q\in M^d}\{\hbox{Vol}_{M^d}(B(q,r))\}\sim_{r\to\infty} r^g,
\end{equation*}
then one would expect that linear solutions decay at a rate $\sim t^{-g/2}$ and based on the Euclidean theory on $\R^g$, the equation
\begin{equation}\label{PNLS}
\left(i\partial_t+\Delta_{M^d}\right)u=\vert u\vert^{p-1}u,\quad u(0)\in H^1(M^d)
\end{equation}
would scatter in the range $1+4/g\le p\le 1+4/(d-2)$, while one might expect more exotic behavior, at least when $p\le 1+2/g$. Note that this heuristics is consistent with the results in \cite{BaCaDu}.

We don't know whether such a simple picture is accurate but testing this hypothesis motivated us to study the asymptotic behavior for $\eqref{NLS}$ in the case $g=1$ and $d=3$, which seems to be the hardest case that can be addressed in light of the recent developments in \cite{HeTaTz, Dod}. Indeed, as we will argue later, this problem is both mass-critical and energy-critical ($1+4/g=1+4/(d-2)=5$).

Our two main results tend to confirm the picture above about scattering, at least in the case of quotients of Euclidean spaces and insofar as one can parallel the case $p=1+4/g$ and the mass-critical problem in $\mathbb{R}^g$. The first result asserts that small initial data lead to solutions which are global {\it and scatter}.

\begin{theorem}\label{Thm1}
There exists $\delta>0$ such that any initial data $u_0\in H^1(\mathbb{R}\times\mathbb{T}^2)$ satisfying
\begin{equation*}
\Vert u_0\Vert_{H^1(\mathbb{R}\times\mathbb{T}^2)}\le\delta
\end{equation*}
leads to a unique global solution $u\in X^1_c(\mathbb{R})$ of \eqref{NLS} which scatters in the sense that there exists $v^{\pm}\in H^1(\mathbb{R}\times\mathbb{T}^2)$ such that
\begin{equation}\label{Scat}
\Vert u(t)-e^{it\Delta_{\mathbb{R}\times\mathbb{T}^2}}v^{\pm}\Vert_{H^1(\mathbb{R}\times\mathbb{T}^2)}\to0\quad\hbox{as}\quad t\to\pm\infty.
\end{equation}
\end{theorem}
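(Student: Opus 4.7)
The plan is to run a Banach fixed-point argument for the Duhamel map
\begin{equation*}
\Phi(u)(t) := e^{it\Delta}u_0 - i\int_0^t e^{i(t-s)\Delta}\bigl(|u|^4u\bigr)(s)\,ds
\end{equation*}
in a small closed ball of the critical resolution space $X^1_c(\R)$, which I would build in the Herr-Tataru-Tzvetkov style (\cite{HeTaTz}): atomic $U^p$/$V^p$ spaces intertwined with the linear flow $e^{it\Delta_{\R\times\T^2}}$, equipped with an $H^1$-type weight adapted to a Littlewood-Paley decomposition in both the continuous $\R$-frequency $\xi$ and the discrete $\T^2$-frequency $k\in\Z^2$; the natural Duhamel target is a time-duality partner $N^1(\R)$. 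Two estimates are then required: the linear bound $\|e^{it\Delta}u_0\|_{X^1_c(\R)} \lesssim \|u_0\|_{H^1}$, which is built into the construction, and the core multilinear estimate
\begin{equation*}
\Big\| \prod_{j=1}^{5}\widetilde{u}_j \Big\|_{N^1(\R)} \lesssim \prod_{j=1}^{5} \|u_j\|_{X^1_c(\R)},\qquad \widetilde{u}_j\in\{u_j,\overline{u_j}\}.
\end{equation*}
Granted these, choosing $\delta$ so that $C\delta^4<1/2$ makes $\Phi$ a strict contraction on the ball $\{u:\|u\|_{X^1_c(\R)}\le 2\delta\}$ and produces the unique global solution with $\|u\|_{X^1_c(\R)}\lesssim\delta$.

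The substance of the argument is the multilinear estimate. I would decompose each factor simultaneously in $\xi$ and $k$, and by Plancherel/duality reduce to a six-linear frequency-localized bound of the schematic form
\begin{equation*}
\Big| \int_{\R_t\times\R_x\times\T^2_y} \prod_{j=1}^{6} P^{\R}_{N_j}P^{\T^2}_{K_j} u_j \,dt\,dx\,dy \Big| \lesssim A(\vec N,\vec K) \prod_{j=1}^{6} \|u_j\|_{X^0_c},
\end{equation*}
with coefficients $A(\vec N,\vec K)$ summable after the $H^1$ loss of one factor $\langle N_{\max}\rangle+K_{\max}$. The two driving ingredients are the one-dimensional Strichartz bound $\|e^{it\partial_x^2}f\|_{L^6_{t,x}(\R^2)}\lesssim\|f\|_{L^2}$, used fiberwise in the $\R$ direction, and Bourgain-type $L^p(\T^2)$ Strichartz bounds (with the rational/irrational refinements following from $\ell^2$-decoupling) used in the $\T^2$ fibers, combined via H\"older and $TT^\ast$. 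The main obstacle is precisely this summability: the problem is simultaneously mass-critical along the $\R$ direction ($1+4/g=5$) and energy-critical for the full three-dimensional $\R\times\T^2$ ($1+4/(d-2)=5$), so there is no scaling margin; the gain in each dyadic block has to be extracted from the transverse dispersion between the continuous and the discrete directions and tracked carefully so that the joint dyadic sum over $\vec N,\vec K$ converges.

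Once the multilinear estimate is in hand, scattering is a routine consequence of the finite global $X^1_c(\R)$ norm. Setting
\begin{equation*}
v^\pm := u_0 - i\int_0^{\pm\infty} e^{-is\Delta}\bigl(|u|^4u\bigr)(s)\,ds,
\end{equation*}
the integral converges in $H^1(\R\times\T^2)$ by the same multilinear estimate applied to $|u|^4u$ restricted to the tail $[T,\pm\infty)$; and
\begin{equation*}
u(t)-e^{it\Delta}v^\pm = -i\int_t^{\pm\infty} e^{i(t-s)\Delta}\bigl(|u|^4u\bigr)(s)\,ds,
\end{equation*}
whose $H^1$-norm is controlled by $\|(|u|^4u)\|_{N^1([t,\pm\infty))}$, which vanishes as $|t|\to\infty$ by monotone convergence of the summable dyadic bound. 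This yields the strong convergence \eqref{Scat}.
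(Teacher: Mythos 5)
Your skeleton (contraction mapping in $X^1_c(\mathbb{R})$, a linear bound plus a quintilinear estimate $\|\prod_{j=1}^5\widetilde u_j\|_{N^1}\lesssim\prod_j\|u_j\|_{X^1}$, and scattering read off from the finite global norm) is the same as the paper's, which implements it via Lemma \ref{nl estimate lemma} and Proposition \ref{LWP}. The gap is in the one place where all the work lies: the Strichartz input that makes the multilinear estimate true globally in time. You propose to get it by using the $1d$ bound $\|e^{it\partial_x^2}f\|_{L^6_{t,x}}\lesssim\|f\|_{L^2}$ ``fiberwise'' in $x$ and Bourgain's $\mathbb{T}^2$ estimates in the $y$-fibers, combined by H\"older and $TT^*$ into a single frequency-localized $L^p_{x,y,t}$ bound with summable constants. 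This runs into exactly the obstruction the paper identifies in the introduction: any single space-time Lebesgue exponent $p$ fails. Globalizing through the $\mathbb{R}$ direction with $L^6_{x,t}$ and Sobolev in $y$ gives $\|e^{it\Delta}u_0\|_{L^6_{x,y,t}(\mathbb{R}\times\mathbb{T}^2\times\mathbb{R})}\lesssim\|\langle\nabla\rangle^{2/3}u_0\|_{L^2}$, a loss of $2/3$ derivatives, which is too much for an $H^1$-critical contraction (one needs strictly less than $1/2$); conversely the $\mathbb{T}^3$-type loss $3/2-5/p$ is only available locally in time. Acknowledging that ``summability is the main obstacle'' does not resolve it: there is no choice of $A(\vec N,\vec K)$ obtainable from these two ingredients alone that is both globally integrable in time and summable after placing one factor in $H^1$.

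What is actually needed — and what constitutes the main technical novelty behind Theorem \ref{Thm1} — is the mixed-norm global Strichartz estimate of Theorem \ref{Striclem},
\begin{equation*}
\Vert e^{it\Delta}P_{\le N}u_0\Vert_{l^q_\gamma L^p_{x,y,t}(\mathbb{R}\times\mathbb{T}^2\times[2\pi\gamma,2\pi(\gamma+1)])}\lesssim N^{\frac32-\frac5p}\Vert u_0\Vert_{L^2},\qquad \tfrac2q+\tfrac1p=\tfrac12,\ p>4,
\end{equation*}
in which the derivative loss is the local ($3d$) one while the outer $l^q_\gamma$ sum carries the $1d$ admissibility that yields global integrability. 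Its proof is not a H\"older/$TT^*$ combination of the two model estimates: the $TT^*$ kernel must be split into a diagonal part (times $|s-t|\lesssim 1$, handled by the $\mathbb{T}^2$ estimate and responsible for the derivative loss) and a non-diagonal part (times $|s-t|\gg1$), and the latter requires rerunning the Hardy--Littlewood circle method on the $\mathbb{T}^2$ Weyl sums while simultaneously exploiting the $|\gamma|^{-1/2}$ stationary-phase decay from the $\mathbb{R}$ direction (Lemmas \ref{Stric} and \ref{SElem1}). Without this decoupling of ``loss of derivatives'' from ``slow time integrability'' the dyadic sums in your six-linear reduction do not close. (Two smaller points: the $\ell^2$-decoupling refinements for irrational tori you invoke are not needed here, and the scattering step at the end is fine once the global estimate is in place.)
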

The uniqueness space $X^1_c\subset C_t(\R:H^1(\mathbb{R}\times\mathbb{T}^2))$ was essentially introduced by Herr-Tataru-Tzvetkov \cite{HeTaTz}. 
The main novelty here is the scattering statement on a manifold with such little volume (and so many trapped geodesics\footnote{The presence of trapped geodesics is known to have nontrivial effects on the linear flow and could be expected to also affect the asymptotic behavior of nonlinear solutions.}). Using the time-reversal symmetry, a similar statement holds for negative times as well.
One key fact about Theorem \ref{Thm1} is that it requires only a control provided by the conserved mass and energy of the solution defined respectively by
\begin{equation}\label{mass and energy}
M(u):=\Vert u(0)\Vert_{L^2(\mathbb{R}\times\mathbb{T}^2)}^2,\quad E(u):=\frac{1}{2}\Vert\nabla u(0)\Vert_{L^2(\mathbb{R}\times\mathbb{T}^2)}^2+\frac{1}{6}\Vert u(0)\Vert_{L^6(\mathbb{R}\times\mathbb{T}^2)}^6.
\end{equation}
We will refer to the quantity $L(u):=\frac{1}{2}M(u)+E(u)$ as the ``full energy'' of $u$. We should mention the work of Tzvetkov-Visciglia \cite{TzVi} for a previous scattering statement involving a norm which is not bounded by the full energy.
The main advantage of having a control at the level of a conserved quantity is that one can then hope to extend a small data result to a global result. This is precisely the question we tackle in the main part of this paper.

In order to extend our analysis to large data, we use a method formalized in Kenig-Merle \cite{KeMe} following previous works on critical nonlinear dispersive equations \cite{BaGe,B,CKSTTcrit,Ker}, see also \cite{KiTaVi,Dod3}. One key ingredient is a linear and nonlinear profile decomposition for solutions with bounded energy. The so-called profiles correpond to sequences of solutions exhibiting an extreme behavior (in fact a defect of compactness) and possibly ``leaving'' the geometry. It is there that the ``energy-critical'' and ``mass-critical'' nature of our equation become manifest.

In order to understand the appearance of the profiles, one can argue as follows: in view of the scaling invariance of \eqref{NLS} under
\begin{equation*}
\mathbb{R}_x\times\mathbb{T}^2_y\to M_\lambda:=\R_x \times (\lambda^{-1}\T^2)_y,\quad u\to \tilde u(x,y,t)=\lambda^{1/2}u( \lambda x, \lambda y,\lambda^2t),
\end{equation*}
a uniform control on the solutions on $M_1$ in terms of their full energy yields a control on solutions on $M_\lambda$ uniformly in $\lambda$. Therefore, we would expect a solution initially at ``scale one'' on $M_\lambda$ to remain so. We then observe the two natural scaling limits:

$i)$ {\bf Small-scale limit:} when $\lambda \to 0$, a ``scale-one" solution on $M_\lambda$ should not ``sense'' the distinction between $M_\lambda$ and $\mathbb{R}^3$ and should therefore behave as a solution to the energy-critical NLS on $\mathbb{R}^3$. This corresponds in $M_1$ to solutions with initial data
\begin{equation*}
u^\lambda(x,y,0)=\lambda^{-1/2} \phi(\lambda^{-1}(x,y)),\quad\phi\in C^\infty_c(\mathbb{R}^3), \lambda \to 0.
\end{equation*}
We call such profiles Euclidean profiles. Their appearance is a manifestation of the energy-critical nature of the nonlinearity\footnote{In the sense that it corresponds to the critical exponent with respect to Sobolev embedding; in other words, with the notations in \eqref{PNLS}, $p=1+4/(d-2)$.}.

$ii)$ {\bf Large-scale limit: }when $\lambda \to+\infty$ the manifold $M_\lambda$ becomes thinner and thinner and ``resembles $\mathbb{R}$''. It is then tempting to guess that scale-one solutions on $M_\lambda$ will evolve, on time-scales of size $\lambda^{2}$, as the solutions to the quintic (mass-critical) nonlinear Schr\"odinger equation on $\mathbb{R}$:
\begin{equation}\label{MC}
\left(i\partial_t+\partial_{xx}\right)u=\vert u\vert^4u,\quad u(0)\in H^1(\mathbb{R})
\end{equation}
and this is indeed the case when the initial data is constant in the compact ($y$) variable. However, the general situation is more complicated as different Fourier modes will interact in a nontrivial manner and this ``fast dynamics'' has a profound effect on the slow dynamics coming from \eqref{MC}. Indeed the rigorous analysis of this ``multi-scale dynamics'' is one of the main component of this paper.

These solutions on $M_\lambda$ correspond to what we call the ``large-scale profiles'' on $M_1$ which are initial data given by:
\begin{equation*}
u^\lambda(x,y,0)=\lambda^{-1/2}\phi(\lambda^{-1} x,y),\quad\phi\in C^\infty_c(\mathbb{R}\times\mathbb{T}^2), \lambda \to +\infty
\end{equation*}
and their appearance seems to be a manifestation of the mass-critical nature of the equation. We will argue below that these solutions should follow the flow associated to a more general flow than that of \eqref{MC}, namely the one given by quintic resonant system on $\R$ that we now describe.

The quintic resonant system is given by
\begin{equation}\label{SS}
\begin{split}
\left(i\partial_t +\partial_{xx}\right) u_j =&\sum_{\mathcal{R}(j)} u_{j_1}\overline{u_{j_2}}u_{j_3}\overline{u_{j_4}}u_{j_5} \quad j \in \Z^2\\
\mathcal{R}(j)=&\{(j_1, j_2, j_3, j_4, j_5) \in (\Z^2)^5: j_1-j_2+j_3-j_4+j_5=j \hbox{ and }\\
 & |j_1|^2-|j_2|^2+|j_3|^2-|j_4|^2+|j_5|^2=|j|^2\}
\end{split}
\end{equation}
with unknown ${\vec u}=\{u_j\}_{j\in\mathbb{Z}^2}$, where $u_j:\mathbb{R}_x\times\mathbb{R}_t\to\mathbb{C}$. In the special case when $u_j=0$ for $j\ne 0$, we recover \eqref{MC}, but in general, this is a new equation. Similar finite or infinite systems of nonlinear Schr\"odinger equations arise independently in the study of nonlinear optics in waveguides and are the object of several previous studies\footnote{They sometimes go by the name of ``vector nonlinear Schr\"odinger equation'' or VNLS.} (see e.g. \cite{Cole,Gerd,MeSoSt}, the books \cite{AbPrTr,SuSu} and references therein).

As we show in an appendix, the system \eqref{SS} is Hamiltonian, has a nice local theory and retains many properties of \eqref{MC}. In view of this and of the result of Dodson \cite{Dod}, it seems reasonable to formulate the following conjecture:
\begin{conjecture}\label{SSConj}
Let $E \in (0, \infty)$. For any smooth initial data $\vec u_0$ satisfying:
\begin{equation*}
E_{ls}(\vec u_0):=\frac{1}{2}\sum_{j\in \Z} \langle j\rangle^2 \|u_{0,j}\|_{L^2(\mathbb{R})}^2\leq E
\end{equation*}
there exists a global solution of \eqref{SS}, $\vec u(t)$, ${\vec u}(t=0)={\vec u_0}$ with conserved $E_{ls}(\vec u(t))=E_{ls}(\vec u_0)$  satisfying:
\begin{equation}\label{SSBd}
\Vert {\vec u}\Vert_{\vec W}^2:=\sum_{j\in\mathbb{Z}}\langle j\rangle^2\Vert u_j\Vert_{L^6_{x,t}(\mathbb{R}_x\times\mathbb{R}_t)}^2\leq \Lambda_{ls}(E_{ls}({\vec u_0})).
\end{equation}
for some finite non-decreasing function $\Lambda_{ls}(E)$.
\end{conjecture}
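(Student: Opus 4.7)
The plan is to follow the Kenig-Merle concentration-compactness and rigidity scheme, in the spirit of Dodson's work on the scalar mass-critical quintic NLS on $\R$, adapted to the resonant system \eqref{SS}. The system is invariant under the 1D mass-critical scaling $u_j(x,t) \mapsto \lambda^{1/2} u_j(\lambda x, \lambda^2 t)$ as well as under $x$- and $t$-translations and Galilean boosts, and both $E_{ls}$ and $\|\vec u\|_{\vec W}$ are preserved by these symmetries. On each fixed index $j$, the analysis therefore parallels that of the mass-critical NLS on $\R$.

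As a preliminary, I would build a small-data and stability theory in $\vec W$ using the 1D $L^6_{t,x}$ Strichartz estimate. The key multilinear bound uses the resonance constraint $j_1-j_2+j_3-j_4+j_5=j$ to distribute the weight $\langle j\rangle$ among $\langle j_1\rangle,\ldots,\langle j_5\rangle$ via a Leibniz-type inequality, followed by H\"older in $(x,t)$ and Cauchy-Schwarz in $j$; this controls the resonant nonlinearity in the appropriate dual Strichartz space by $\|\vec u\|_{\vec W}^5$. Combined with a stability lemma for \eqref{SS}, this gives small-data scattering and allows perturbative comparison of approximate solutions with genuine ones.

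Assuming the conjecture fails, set $E^* := \sup\{E : \Lambda_{ls}(E) < \infty\}$ and take a sequence of solutions saturating this threshold. Lifting the profile decomposition of B\'egout-Vargas for bounded sequences in $L^2(\R)$ to the vector-valued energy space, any such sequence decomposes into a superposition of Euclidean-type profiles with $x$-scale, $x$-translation, time-translation and Galilean-boost parameters, plus an asymptotically small linear error. The stability lemma then converts this into a nonlinear profile decomposition, and a standard Kenig-Merle diagonal argument produces a minimal non-scattering solution $\vec u_{crit}$ with $E_{ls}(\vec u_{crit})=E^*$ whose orbit is precompact in the energy space modulo the symmetries above. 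Following Killip-Tao-Visan and Dodson, $\vec u_{crit}$ can be further reduced to one of two enemies: a rapid frequency cascade or a quasi-soliton.

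The main obstacle will be the rigidity step ruling out these two enemies, which is the heart of Dodson's scalar proof. The natural tool is a long-time frequency-localized interaction Morawetz estimate, summed against the weight $\langle j\rangle^2$ in analogy with the Planchon-Vega / Dodson estimate on $\R$. The principal new difficulty is that the resonant quintic nonlinearity couples different $j$-modes non-locally, so the error terms generated by frequency truncation in $x$ must be controlled uniformly in $j$ using almost-periodicity of $\vec u_{crit}$ in the full energy space rather than in each slice alone. Once such a vector interaction Morawetz estimate is in place, together with a frequency-localized mass-conservation argument to rule out the cascade scenario, the almost-periodic minimal element is forced to vanish, contradicting $\|\vec u_{crit}\|_{\vec W}=\infty$ and establishing the conjecture.
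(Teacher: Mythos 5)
You should first be aware that the statement you are trying to prove is stated in the paper as a \emph{conjecture}, not a theorem: the authors prove it only in two special regimes --- for small data $E_{ls}(\vec u_0)<\delta$, via the local well-posedness and small-data scattering theory for \eqref{SS} developed in the appendix (Proposition \ref{LWPSS}, whose key ingredient is the counting estimate of Lemma \ref{SumLem} distributing the weight $\langle j\rangle$ over the resonant set $\mathcal{R}(j)$), and in the scalar case $u_j\equiv 0$ for $j\neq 0$, where it reduces to Dodson's theorem for the mass-critical quintic NLS on $\R$. They explicitly state that the full resolution ``seems to require considerable additional work'' and leave it open; the only other result they prove about it is the reverse implication (Lemma \ref{reverse implication}) that the conjecture follows from the conclusion of Theorem \ref{MainThm}. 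So there is no proof in the paper against which your argument could be checked for agreement.

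As a proof, your proposal has a genuine gap, and you have in fact named it yourself: the rigidity step. Everything up to the extraction of a minimal almost-periodic non-scattering solution is plausible and broadly consistent with the toolbox the paper already assembles (the multilinear estimate you describe is essentially the one in Proposition \ref{LWPSS}, and a vector-valued inverse Strichartz/profile decomposition in $h^1L^2$ is believable, though you would need to verify that the B\'egout--Vargas profile extraction can be performed \emph{jointly} across all components $j$ with a single set of symmetry parameters, since the equation is only invariant under scalings, translations and Galilean boosts applied uniformly in $j$). But the elimination of the quasi-soliton and frequency-cascade scenarios is not a routine adaptation: Dodson's argument rests on long-time Strichartz estimates and a frequency-localized interaction Morawetz inequality whose error terms are controlled by delicate bilinear and double-Duhamel arguments, and in the resonant system the nonlinearity couples all modes $j_1,\dots,j_5$ with $j$ nonlocally, so that frequency truncation in $x$ produces commutator errors summed over $\mathcal{R}(j)$ that have no analogue in the scalar case. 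Writing ``once such a vector interaction Morawetz estimate is in place'' is precisely assuming the conjecture's hardest content. Until that estimate is actually established, the proposal is a program, not a proof --- which is consistent with the authors' own decision to leave the statement as Conjecture \ref{SSConj}.
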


We can now give the main result of this paper which asserts large data scattering for \eqref{NLS} conditioned on Conjecture \ref{SSConj}.

\begin{theorem}\label{MainThm}
Assume that Conjecture \ref{SSConj} holds for all $E\le E_{max}^{ls}$, then any initial data $u_0\in H^1(\mathbb{R}\times\mathbb{T}^2)$ satisfying
\begin{equation*}
L(u_0)=\int_{\mathbb{R}\times\mathbb{T}^2}\left\{\frac{1}{2}\vert u_0\vert^2+\frac{1}{2}\vert\nabla u_0\vert^2+\frac{1}{6}\vert u_0\vert^6\right\}dx\le E_{max}^{ls}
\end{equation*}
leads to a solution $u\in X^1_c(\R)$ which is global, and scatters in the sense that there exists $v^{\pm}\in H^1(\mathbb{R}\times\mathbb{T}^2)$ such that \eqref{Scat} holds. In particular, if $E_{max}^{ls}=+\infty$, then all solutions of \eqref{NLS} with finite energy and mass scatter.
\end{theorem}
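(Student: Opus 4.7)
The plan is to implement a Kenig-Merle concentration-compactness/rigidity scheme adapted to the waveguide geometry $\R\times\T^2$, with the large-scale profiles governed by Conjecture \ref{SSConj} playing the role of the crucial new ingredient. The building blocks I would need are: a local well-posedness and stability theory for \eqref{NLS} in the Herr-Tataru-Tzvetkov space $X^1_c$, the small-data scattering of Theorem \ref{Thm1}, and an $H^1$ linear profile decomposition for bounded sequences in $H^1(\R\times\T^2)$. In view of the two scaling limits highlighted in the introduction, this decomposition should produce three kinds of profiles---Euclidean (concentration scale $\la_n\to 0$), scale-one (bounded translations in $x$), and large-scale ($\la_n\to +\infty$)---with a remainder whose Strichartz-type norm tends to zero.

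Next I would associate a nonlinear profile to each linear profile and establish uniform Strichartz/$X^1_c$ bounds for it. For Euclidean profiles the evolution on the natural small scale is approximated by the energy-critical defocusing NLS on $\R^3$, which is global and scatters by Colliander-Keel-Staffilani-Takaoka-Tao; this scattering bound is transferred back to $X^1_c(\R\times\T^2)$ via the stability theory. Scale-one profiles are simply genuine $X^1_c$ solutions of \eqref{NLS}. For the large-scale profiles, starting from initial data of the form $\la^{-1/2}\phi(\la^{-1}x,y)$, I would show that on time scales of order $\la^2$ the true flow is well-approximated by the quintic resonant system \eqref{SS} on $\R\times\Z^2$, via a normal form/averaging argument which singles out the resonant interactions defining $\mathcal{R}(j)$. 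Assuming Conjecture \ref{SSConj} up to energy $E_{max}^{ls}$, the associated solutions of \eqref{SS} satisfy the $\vec W$-bound \eqref{SSBd}, which in turn lifts to an $X^1_c$ bound on the nonlinear profile through a second application of stability.

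With these tools in hand I would run the usual contradiction argument. Suppose the theorem fails at some critical threshold $E_c\le E_{max}^{ls}$: there is a sequence of data $u_{0,n}$ with $L(u_{0,n})\to E_c$ whose solutions fail to admit uniform $X^1_c$ bounds. Applying the profile decomposition and using the Pythagorean decomposition of the full energy $L$ together with the uniform profile bounds of the previous step, only one nontrivial profile can survive, of full energy exactly $E_c$. This profile must be of scale-one type, since by the second step Euclidean and large-scale profiles automatically scatter at any energy strictly below $E_{max}^{ls}$. One thus obtains a critical element: a global-in-time $X^1_c$ solution $u_c$ with $L(u_c)=E_c$ whose trajectory is pre-compact in $H^1(\R\times\T^2)$ modulo translations $x\mapsto x+x(t)$. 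A Morawetz-type virial identity in the unbounded $x$-direction, combined with a sublinear bound on $x(t)$ coming from mass and momentum conservation (in the spirit of Dodson's treatment of the mass-critical 1D NLS), then forces $u_c\equiv 0$, contradicting $L(u_c)=E_c>0$.

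The main obstacle I anticipate is the construction and control of the large-scale nonlinear profiles. One must justify that the fast oscillations coming from the $\T^2$-Fourier modes average out to give precisely the resonant system \eqref{SS} on time scales of order $\la^2$, and that the error introduced by this averaging is small enough to be absorbed by the stability theory in $X^1_c$---a space whose natural norms are not obviously compatible with the $\vec W$-norm appearing in Conjecture \ref{SSConj}. This is the step where the energy-critical and mass-critical features of \eqref{NLS} meet, and where the conditional nature of Theorem \ref{MainThm} enters: without the conjecture one has no a priori scattering statement for \eqref{SS} at the relevant energies, and the reduction to a single scale-one critical element collapses.
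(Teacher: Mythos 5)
Your proposal follows essentially the same route as the paper: a Kenig--Merle induction on energy built on global Strichartz estimates, a three-type profile decomposition (Euclidean, large-scale, scale-one), approximation of large-scale profiles by the resonant system \eqref{SS} via a normal form, reduction to a scale-one almost-periodic critical element, and its exclusion by a virial/momentum argument in the $x$-direction (the paper's version is adapted from Duyckaerts--Holmer--Roudenko rather than Dodson, after a Galilean boost killing the momentum). The only substantive step you gloss over is the nonlinear profile recomposition, where the paper must evolve the remainder by a linearized flow and control high-frequency data concentrated near the non-dispersive directions via the angular projections $\tilde{P}^1_\delta$ and the weighted $\widetilde{X}^1_\rho$ norms; but this is a technical elaboration of the scheme you describe, not a different approach.
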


Before we go into the details of the proof, a few remarks about the above theorem are in order. First, we should point out that the global regularity part holds for all solutions of finite energy, unconditional on Conjecture \ref{SSConj}. Second, as a consequence of the local theory for the system \eqref{SS}, one has that Conjecture \ref{SSConj} holds below a nonzero threshold $ E_{max}^{ls}>0$, so Theorem \ref{MainThm} is non-empty and actually strengthens Theorem \ref{Thm1}. Actually, by simple modifications of the proof, one can find regimes with large mass (for example using the main result in \cite{Dod}) where one can obtain nontrivial large data that scatter. Another point worth mentioning is that while Theorem \ref{MainThm} is stated as an implication, it is actually an equivalence as it is easy to see that one can reverse the analysis needed to understand the behavior of large-scale profile initial data for \eqref{NLS} in order to control general solutions of \eqref{SS} and prove Conjecture \ref{SSConj} assuming that Theorem \ref{MainThm} holds (cf. Appendix). Finally, we note that the full resolution of Conjecture \ref{SSConj} seems to require considerable additional work that is completely independent of the analysis on $\mathbb{R}\times\mathbb{T}^2$, so we choose to leave it for a later work.

The proof of Theorem \ref{MainThm} has a fairly standard skeleton based on the Kenig-Merle machinery \cite{KeMe} on one hand, and the recent works devoted to understanding inhomogeneous critical equations \cite{IoPaSt, IoPa, IoPa2, KiKwShVi, KiStVi} on the other. Nonetheless, the global geometry, the two levels of criticality (mass and energy), and the emergence of large-scale profiles require some novel ideas and nonstandard adaptations. Most notably: i) proving good global Strichartz estimates not only to prove Theorem \ref{Thm1} but also to obtain an $L^2-$profile decomposition suitable for the large data theory, ii) the analysis of the large-scale profile initial data that appear in the profile decomposition, understanding their ``two time-scale'' behavior in terms of the quintic system \eqref{SS} via a normal form transformation, and iii) a final nonlinear profile recomposition similar to that in \cite{IoPa} but with many more cases. We elaborate briefly on those three points:

\subsection{Global Strichartz estimates} While local-in-time Strichartz estimates are sufficient for the purposes of local existence and even global regularity \cite{IoPa, IoPa2}, they are not useful to obtain information about the asymptotic behavior, even for small data. On the other hand, in order to prove scattering with critical control (at the level of $H^1$), one cannot afford to work with Strichartz estimates that lose too many derivatives. These two limitations (good global-in-time integrability and derivative loss) encapsulate the main difficulties at this level. 

By using the Strichartz estimates on $\R$ and Sobolev embedding, one can obtain global-in-time bounds with optimal scaling such as:
$$
\|e^{it\Delta_{\R\times \T^2}}u_0\|_{L^6_{x,y,t}(\R\times \T^2\times\mathbb{R})} \lesssim \|\langle \nabla\rangle^{\frac{2}{3}}u_0\|_{L_{x,y}^2(\R\times \T^2)}.
$$
However, this derivative loss of $2/3$ derivatives does not allow for a local theory at the level of $H^1$, which seems to require estimates with less than $1/2$ derivatives lost. On the other hand, one can also easily obtain \emph{local-in-time} Strichartz estimates with satisfactory derivative loss from the corresponding estimates on $\T^2$, but with no obvious way to extend them globally. In addition, application of the Hardy-Littlewood circle method as in \cite{Bo2,IoPa} requires the use of the same $L^p$ norms in $x,y,t$ and no such $p$ can give both sufficiently good derivative loss and global-in-time integrability.

This suggests using norms that distinguish between local-in-time and global-in-time integrability. We are thus lead to the following Strichartz estimates:
$$
\|e^{it\Delta_{\R\times \T^2}}u_0\|_{l^q_\gamma L^r_{x,y,t}(\R\times \T^2\times [\gamma, \gamma+1])}\lesssim \|\langle \nabla \rangle^{\frac{3}{2}-\frac{5}{r}} u_0\|_{L^2_{x,y}(\R\times \T^2)}, \quad \frac{2}{q}+\frac{1}{r}=\frac{1}{2}, \quad 4<q,r<+\infty,
$$
where we take the $l^q$ sum in $\gamma$ of the $L^p_{x,y,t}(\R\times \T^2\times [\gamma, \gamma+1])$ norm on the left hand side. Notice that the condition on $q$ is exactly the $1d$ Schr\"odinger admissibility condition on $\R$ whereas the derivative loss of $3/2-5/r$ is exactly that on $\T^3$ (or $\R^3$). The proof of this estimate follows a $TT^*$ argument that allows to decompose the relevant inner product into a diagonal part (the solution at time $t$ interacting with itself at a similar time) and a non-diagonal part (the solution at time $t$ interacting with itself at time $s$ with $|s-t|\gg 1$). The diagonal component leads to the loss of derivatives, but gives a contribution that has better time-integrability (like $l_\gamma^2$). The non-diagonal part loses fewer derivatives but forces the above-mentioned slower $l_\gamma^q$ time-integrability and requires a nontrivial adaptation of the Hardy-Littlewood circle method in a way that incorporates the decay in $|s-t|$ coming from the $\R$ direction. The fact that we can decouple those two difficulties (loss of derivatives and slow integrability) is crucial in obtaining a good profile decomposition in this mass-critical context.

\subsection{Large-scale profiles and the resonant quintic system} The treatment of the large-scale profiles alluded to in our previous heuristic and formalized using the profile decomposition leads to the quintic resonant system \eqref{SS}. This system is not completely unexpected as it can be derived from \eqref{NLS} by removing all non-resonant interactions between the periodic Fourier modes of $u$. In general, one would expect non-resonant interactions to have important large-time contributions especially for large data. However, in the \emph{special limit} given by those large-scale profile initial data, one can integrate out the non-resonant dynamics by using a normal form transformation and this yields \eqref{SS}.

Among the three types of sequences appearing in the profile decomposition, the large-scale profiles correspond to sequences of solutions of \eqref{NLS} with initial data roughly of the form:
$$
\psi_k(x,y)=M_k^{1/2}\psi(M_k x,y), \quad \hbox{ with } \psi\in C^\infty_c(\mathbb{R}\times\mathbb{T}^2),\hbox{ and }M_k \to 0.
$$
Predicting that this scaling is conserved leads one to consider $u(x,y,t) = M_k^{1/2}\tilde u ( M_k x, y,t)$ where $\tilde u$ then satisfies the equation:
$$
(i\partial_t + \Delta_{\T^2})\tilde u =M_k^2\left(|\tilde u|^4 \tilde u-\partial_{xx}\tilde{u}\right),  \quad \tilde u(0)=\psi.
$$
This suggests conjugating with respect to the corresponding semigroup $e^{it\Delta_{\mathbb{T}^2}}$ and, after a rescaling,
we see that $v$ satisfies an equation that is essentially:
$$
\left(i\partial_t +\partial_{xx} \right) v=\mathcal{N}( v)=\mathcal O( v^5)
$$
Since one would expect solutions of this equation to disperse after some large time $T_0$ (thanks to the dispersion of $e^{it\Delta_\R}$), we only need to control the dynamics up to  $T_0$. But over this time interval, the contribution of the non-resonant interactions is $O(T_0M_k^{2})$ and hence can be ignored in the limit as $M_k \to 0$. Doing this leads to the quintic resonant system \eqref{SS}. The main conclusion of this analysis is that large-scale profiles admit the following ``multi-scale'' description\footnote{Indeed, this describes $u$ in terms of two ``times'' $t$ and $T$.}
\begin{equation*}
u(x,y,t)=e^{it\Delta_{\mathbb{T}^2}}\left[M_k^\frac{1}{2}v(X,y,T)\right],\quad X=M_kx,\,\,T=M_k^2t,
\end{equation*}
where $v$ solves \eqref{SS}.

\subsection{Profile recomposition and end of the proof} To end the argument, one needs to be able to write an approximate solution of $\eqref{NLS}$ with initial data given as a sum of orthogonal profiles and dispersed initial data (i.e.~one whose linear development is small in the relevant norm). A first guess for this approximate solution would be to sum the nonlinear profiles (solution of \eqref{NLS} with initial data given by the profiles) and the linear propagation of the perturbation. As in \cite{IoPa}, this doesn't quite work because of terms that are linear in the perturbation. We follow \cite{IoPa} and focus on the second iterate of the Duhamel formula. While this does indeed give a good approximate solution it leaves us with proving that the newly defined perturbation is still small in the appropriate norm.

The main problem we face comes from perturbations that are concentrated at high frequencies and pointing only in the non-dispersive direction (near the zero frequency of $\FF_x$). These solutions are close to concentrate on the trapped geodesics of $\R\times \T^2$ and they linger for a long time in compact regions of the manifold before they disperse, thereby interacting with solutions at larger scales for a long time. The redeeming features of such solutions are that i) their initial Fourier support makes them far from saturating the Strichartz estimates, so at least their linear development is small in a weak norm which makes their interaction with smaller scale profiles ineffective; ii) the qualitative fact that they are located in a small angular sector is conserved when they interact with larger scale nonlinear profiles, so the output of this interaction has similar properties. Implementing this requires introducing norms that are sensitive to the special Fourier support properties of the initial data and obtaining a good well-posedness theory to propagate the initial Fourier information effectively to later times\footnote{Note that just putting the gradient of the highest frequency in $L^2$ loses any such information.}. 

The paper is organized as follows: in Section \ref{Not} we fix our notations and recall the function spaces we use; in Section \ref{Strichartz section} we prove the global Strichartz estimates we need for the rest of our work. Section \ref{Section-LWP} contains the local well-posedness and small-data scattering (Theorem \ref{Thm1}) along with the associated stability theory. In Section \ref{Sec-ProfileDec}, we obtain a good linear profile decomposition that leads us to the types of profiles we need to analyze, which is what we do in advance in Section \ref{Sec-DesPro} for the Euclidean and large-scale profiles. In Section \ref{Sec-IOE} we prove the contradiction argument leading to Theorem \ref{MainThm}. Finally, we develop the local existence theory for \eqref{SS} in Section \ref{Sec-AppSS}.


\section{Notation and Function Spaces}\label{Not}

\subsection{Notations} 

We write $A\lesssim B$ to signify that there is a constant $C>0$ such that $A\le C B$. We write $A\simeq B$ when $A\lesssim B\lesssim A$. If the constant $C$ involved has some explicit dependency, we emphasize it by a subscript. Thus $A\lesssim_uB$ means that $A\le C(u)B$ for some constant $C(u)$ depending on $u$.
For $p\in\mathbb{N}^n$ a multi-index we denote by $\mathfrak{O}_{p_1,\dots ,p_n}(a_1,\dots,a_n)$ a $\vert p\vert$-linear expression which is a product of $p_1$ terms which are either equal to $a_1$ or its complex conjugate $\overline{a}_1$ and similarly for $p_j$, $a_j$, $2\le j\le n$.

We label the coordinates on $\mathbb{R}\times\mathbb{T}^2$ as $(x,y_1,y_2)\in\mathbb{R}\times\mathbb{T}\times\mathbb{T}$.
We fix $\eta^1\in C^\infty(\mathbb{R})$ such that $\eta^1(x)=1$ if $\vert x\vert\le 1$ and $\eta^1(x)=0$ when $\vert x\vert\ge 2$. We also let $\eta^3(a,b,c)=\eta^1(a)\eta^1(b)\eta^1( c)$, so that for $x\in \R^k$ and $k \in \{1,3\}$:
\begin{equation*}
\begin{split}
\eta^k_{\le N}(x)=\eta^k(x/N),\quad \eta^k_N(x)=\eta^k_{\le N}(x)-\eta^k_{\le N/2}(x),\quad \eta^k_{\ge N}(x)=1-\eta^k_{\le N/2}(x).
\end{split}
\end{equation*}

For any dyadic number $N \geq 1$, we define the Littlewood-Paley projectors:
$$P_{\le N}:=\eta^3_{\leq N}(i\nabla), \quad P_{N}:=P_{\leq N} -P_{\leq N/2} \hbox{ for }N\geq 2, \hbox{ and }P_1:=P_{\leq 1}.
$$ 
Since we sometimes need to distinguish the regularity in the different directions, we similarly define $P^x_{\le l}$ to be the Fourier multiplier associated to $\eta^1_{\le M}(i\partial_x)$ and $P^y_{\le N}$ to be the multiplier associated to $\eta^1_{\le N}(i\partial_{y_1})\eta^1_{\le N}(i\partial_{y_2})$. We define $P^x_M$ and $P^y_M$ similarly as before. We will also need the following angular frequency localization: for $\delta>0$, we define:
\begin{equation}\label{def of tilde P}
\tilde{P}^1_\delta=\sum_{N\ge 1}P_NP_{\ge\delta N}^x.
\end{equation}

In addition to the usual isotropic Sobolev spaces $H^s(\R\times \T^2)$, we will need non-isotropic versions. For $s_1, s_2 \in \R$ we define:
\begin{equation*}\label{DefHH}
H^{s_1, s_2}(\R\times \T^2)= \{u: \R\times \T^2 \to \C: \langle \xi\rangle^{s_1}\langle n \rangle^{s_2} \widehat u(\xi,n) \in L^2_{\xi, n}(\R \times \Z^2)\}.
\end{equation*}

We will be particularly interested in the space $H^{0,1}(\R\times \T^2)$. This is the set of functions $\psi\in L^2_{x,y}(\mathbb{R}\times\mathbb{T}^2)$ such that $\nabla_y\psi\in L^2_{x,y}(\mathbb{R}\times\mathbb{T}^2)$. We can also define a discrete analogue. For $\vec \phi=\{\phi_p\}_{p\in \Z^2}$ a sequence of real-variable functions, we let
\begin{equation}\label{DefhH}
h^{s_1}H^{s_2}:=\left\{{\vec \phi}=\{\phi_p\}:\Vert{\vec \phi}\Vert_{h^{s_1}H^{s_2}}^2=\sum_{p\in\mathbb{Z}^2}\langle p\rangle^{2s_1}\Vert\phi_p\Vert_{H^{s_2}}^2<+\infty\right\}.
\end{equation}
We can naturally identify $H^{0,1}(\mathbb{R}\times\mathbb{T}^2)$ and $h^1L^2$ by via the Fourier transform in the periodic variable $y$ as follows:
for $\psi\in H^{0,1}(\mathbb{R}\times\mathbb{T}^2)$ we define the vector ${\vec \psi}=\{\psi_p\}_p$ where
\begin{equation}\label{DiscFourier}
\psi_p(x):=\frac{1}{(2\pi)^2}\int_{\mathbb{T}^2}\psi(x,y)e^{-i\langle p,y\rangle} dy
\end{equation}
is the sequence of periodic Fourier coefficients of $\psi$. Conversely, for any ${\vec v}\in h^1L^2$, we define
\begin{equation*}
\underline{{\vec v}}(x,y):=\sum_{p\in\mathbb{Z}^2}v_p(x)e^{i\langle y,p\rangle}.
\end{equation*}
Clearly the two operations invert one another and are isometries\footnote{Up to the normalizing constants in the definition of the Fourier transform.}.

\subsection{Function spaces}

For $C=[-\frac{1}{2}, \frac{1}{2})^3\subset \R^3$ and $z\in \R^3$, we denote by $C_z=z+C$ the translate of $C$ by $z$ and define the sharp projection operator $P_{C_z}$ as follows: 
$$
\FF \left(P_{C_z}f\right)=\chi_{C_z}(\xi)\FF(f)(\xi).
$$
We use the same modifications of the atomic and variation space norms that were employed in \cite{HeTaTz, HeTaTz2, IoPa}.  Namely, for $s\in \R$, we define:
\begin{equation*}
\begin{split}
\|u\|_{X_0^s(\R)}^2&=\sum_{z\in \Z^3}\langle z\rangle^{2s}\|P_{C_z}u\|_{U^2_\Delta(\R;L^2)}^2,\quad \|u\|_{Y^s(\R)}^2=\sum_{z\in \Z^3}\langle z\rangle^{2s}\|P_{C_z}u\|_{V^2_\Delta(\R;L^2)}^2.
\end{split}
\end{equation*}
We refer to \cite{HaHeKo,HeTaTz, HeTaTz2} for the description and properties of the spaces $U^p_\Delta$ and $V^p_\Delta$. The norms $X^s_0$ and $Y^s$ are both stronger than the $L^\infty(\R; H^s)$ norm and weaker than the $U_\Delta^2(\mathbb{R}:H^s)$ norm. In fact, they satisfy the following nesting property:
\begin{equation*}
U^2_\Delta(\R; H^s) \hookrightarrow X_0^s(\R) \hookrightarrow Y^s(\R) \hookrightarrow V^2_\Delta(\R; H^s)\hookrightarrow U^p_\Delta(\R;H^s) \hookrightarrow L^\infty(\R; H^s)
\end{equation*}
for any $p>2$.
For a compact interval $I\subset \R$, we also need the restriction norms $X^s(I)$ and $Y^s(I)$ defined in the usual way:
\begin{equation*}
\|u\|_{X^s(I)}=\inf\{\|v\|_{X_0^s(\R)}: v\in X_0^s(\R)\text{ satisfying } v_{|I}=u_{|I}\}
\end{equation*}
and similarly for $Y^s(I)$.
Our main solution space is then
\begin{equation*}
\begin{split}
X^s_c(\R):=\{u\in C(\mathbb{R}:H^s(\R\times \T^2)):\,&\phi_{-\infty}:=\lim_{t\to -\infty}e^{-it\Delta}u(t) \hbox{ exists in $H^s$, and }\,u(t)-e^{it\Delta}\phi_{-\infty} \in X^s_0(\R)\}
\end{split}
\end{equation*}
equipped with the norm:
\begin{equation}\label{def of X^1+}
\|u\|_{X^s(\R)}^2:=\|\phi\|_{H^s(\R\times \T^2)}^2+\|u-e^{it\Delta}\phi_{-\infty}\|_{X^s_0(\R)}^2\simeq \sup_{K\subset \mathbb{R},K\text{ compact}}\Vert u\Vert_{X^s(K)}^2
\end{equation}
and we can extend the second definition of the $X^s$-norm to arbitrary subintervals of $\mathbb{R}$.
We also consider $X^1_{c,\operatorname{loc}}(I)$ to be the set of all functions in $C_{\operatorname{loc}}(I:H^1)$ whose $X^1(J)$-norm is finite for every compact subset $J\subset I$. 

To control the nonlinearity on an interval $I=(a,b)$ we use the norm:
\begin{equation}\label{def of N norm}
\|h\|_{N^s(I)}:= \left \| \int_a^t e^{i(t-s)\Delta} h(s) ds\right\|_{X^s(I)}. 
\end{equation}
We mostly use this norm for $s=1$ and in this case, we omit the $s$ in the notation.

%

In addition to the above norms, we need the following time-divisible norm:
\begin{equation}\label{def of Z}
\|u\|_{Z(I)}=\sum_{p_0=9/2, 18} \left(\sum_{N\geq 1}N^{p_0(\frac{5}{p_0}-\frac{1}{2})}\|\mathbbm{1}_I(t)P_N u\|^{p_0}_{l^{\frac{4p_0}{p_0-2}}_\gamma L_{x,y,t}^{p_0}( \R \times \T^2\times I_\gamma)}\right)^{\frac{1}{p_0}}.
\end{equation}
where $I_\gamma=[2\pi\gamma,2\pi(\gamma+1)]$.
$Z$ is a weaker norm than $X^1$. In fact, from Lemma \ref{U^p lemma} below,
\begin{equation*}
\|u\|_{Z(I)}\lesssim \|u\|_{X^1(I)}.
\end{equation*}

We will also need the following proposition which is the analogue of Poposition 2.11 of \cite{HeTaTz} or Proposition 2.10 of \cite{HeTaTz2}:

\begin{proposition}[\cite{HeTaTz, HeTaTz2}]\label{N prop}
If $f\in L_t^1(I, H^1(\mathbb{R}\times \T^2))$, then 
$$
\|f\|_{N(I)}\lesssim \sup_{\substack{v\in Y^{-1}(I)\\ \|v\|_{Y^{-1}(I)}\leq 1}} \int_{I \times (\R\times \T^2)} f(x,t) \overline{v(x,t)} dxdt 
$$
In particular, the following estimate holds for any smooth function $g$ on an interval $I=[a,b]$:
\begin{equation}\label{X1SimpleEst}
\|g\|_{X^1(I)} \lesssim \|g(0)\|_{H^1(\mathbb{R}\times\mathbb{T}^2)}+\left(\sum_{N}\|P_N(i\partial_t +\Delta)g\|^2_{L_t^1(I,H_x^1(\R\times \T^2))}\right)^{1/2}.
\end{equation}
\end{proposition}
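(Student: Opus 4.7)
The plan is to follow the template of Proposition~2.11 of \cite{HeTaTz} and Proposition~2.10 of \cite{HeTaTz2}; the only new twist is to propagate the $U^2$-$V^2$ duality through the weighted frequency-cube structure that defines $X^1$ and $Y^{-1}$. The central input I would invoke is the Hadac-Herr-Koch duality \cite{HaHeKo} applied to the non-homogeneous Schr\"odinger flow: writing $\II[f](t):=\int_a^t e^{i(t-s)\Delta} f(s)\,ds$ on $I=(a,b)$, one has
$$
\|\II[f]\|_{U^2_\Delta(I;L^2)}\simeq \sup\Bigl\{\Bigl|\int_I \langle f(t),w(t)\rangle_{L^2}\,dt\Bigr| \,:\, \|w\|_{V^2_\Delta(I;L^2)}\leq 1\Bigr\}.
$$

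For part (i) I would apply this cube by cube. For each $z\in\Z^3$, pick $w_z$ with $\|w_z\|_{V^2_\Delta}\leq 1$ nearly saturating the displayed duality for $P_{C_z}\II[f]=\II[P_{C_z}f]$, and set $\alpha_z:=\langle z\rangle^2 \|P_{C_z}\II[f]\|_{U^2_\Delta}$. Since the sharp cube projections $P_{C_z}$ are orthogonal on $L^2$ and bounded on $V^2_\Delta$, the test function $v:=\sum_z \alpha_z P_{C_z} w_z$ satisfies
$$
\|v\|_{Y^{-1}(I)}^2=\sum_z \langle z\rangle^{-2}\alpha_z^2\|P_{C_z}w_z\|_{V^2_\Delta}^2\leq \sum_z \langle z\rangle^2 \|P_{C_z}\II[f]\|_{U^2_\Delta}^2 =\|f\|_{N(I)}^2,
$$
while the same duality yields $\int_I\!\int f\bar v\,dxdt\simeq \sum_z \alpha_z\|P_{C_z}\II[f]\|_{U^2_\Delta}=\|f\|_{N(I)}^2$. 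Dividing by $\|v\|_{Y^{-1}(I)}$ then gives part (i).

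For part (ii) I would apply Duhamel on $I$: $g(t)=e^{it\Delta}g(0)-i\int_0^t e^{i(t-s)\Delta}h(s)\,ds$ with $h:=(i\partial_t+\Delta)g$. Since $e^{it\Delta}$ is isometric on each $U^2_\Delta$ factor, $\|e^{it\Delta}g(0)\|_{X^1(I)}\simeq \|g(0)\|_{H^1}$, so the task reduces to bounding $\|h\|_{N(I)}$ by the square-function term. By part (i) it is enough to pair $h$ against $v$ with $\|v\|_{Y^{-1}(I)}\le 1$. Using $V^2_\Delta\hookrightarrow L^\infty_t L^2$ together with almost-orthogonality of the cubes on each dyadic shell $\{|z|\sim N\}$, one obtains $\sum_N N^{-2}\|P_N v\|_{L^\infty_t L^2}^2 \les \|v\|_{Y^{-1}(I)}^2\le 1$. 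A Littlewood-Paley decomposition and Cauchy-Schwarz in $N$ then give
$$
\Bigl|\int_I\!\int h\bar v\,dxdt\Bigr| \le \sum_N \|P_N h\|_{L^1_t L^2}\|P_N v\|_{L^\infty_t L^2} \les \Bigl(\sum_N \|P_N h\|_{L^1_t H^1}^2\Bigr)^{1/2},
$$
as required. The only delicate step is the Duhamel $U^2$-$V^2$ duality in the first display; its proof via the atomic representation of $U^2$ is identical to the one in \cite[\S2]{HaHeKo}, so I would cite it rather than reproduce it, and the remaining weighted bookkeeping on the cube scale is then routine.
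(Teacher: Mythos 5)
Your argument is correct and is precisely the standard proof of this proposition: the paper itself gives no proof but cites \cite{HeTaTz, HeTaTz2}, where part (i) is obtained exactly as you do, by applying the Hadac--Herr--Koch $U^2$--$V^2$ duality for the Duhamel integral cube by cube and assembling the near-extremizers $w_z$ with weights $\alpha_z$, and part (ii) follows from Duhamel plus the Littlewood--Paley/Cauchy--Schwarz pairing against $V^2_\Delta\hookrightarrow L^\infty_tL^2$ as you describe. The only points to tidy are routine: unimodular phases on the $w_z$ so the pairings add constructively, and the almost-orthogonality $N\sim N'$ in the final sum, neither of which affects the argument.
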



\section{Global Strichartz estimates }\label{Strichartz section}

We recall the Strichartz estimates from \cite{Bo2}: for any $p>4$, there holds that
\begin{equation}\label{StricT}
\begin{split}
\Vert e^{it\Delta_{\mathbb{T}^3}}P_N f\Vert_{L^p_{x,t}(\mathbb{T}^3_x\times\mathbb{T}_t)}&\lesssim N^{\left(\frac{3}{2}-\frac{5}{p}\right)}\Vert f\Vert_{L^2(\mathbb{T}^3)}\\
\end{split}
\end{equation}
This bound can of course be adapted to the case of $\mathbb{R}\times\mathbb{T}^2$. However, in order to prove scattering estimates, we need to obtain global in time bounds. This is only possible by using the $\mathbb{R}$ component of our manifold. But this only supports linear solutions that decay much slower than on $\mathbb{R}^3$. As a result, we need to distinguish between the ``local in time'' integrability, similar to the one in $\mathbb{R}^3$ (or $\mathbb{T}^3$) and the ``global in time'' integrability, similar to the one on $\mathbb{R}$. This is done by decomposing
\begin{equation*}
\mathbb{R}_t=\bigcup_{\gamma\in\mathbb{Z}}2\pi[\gamma,\gamma+1).
\end{equation*}
In this section, we prove the following result:
\begin{theorem}\label{Striclem}
Let $N\geq 1$ be dyadic, then
\begin{equation}\label{Stric1}
\Vert e^{it\Delta_{\mathbb{R}\times\mathbb{T}^2}}P_{\le N} u_0\Vert_{l^q_\gamma L^{p}_{x,y,t}(\mathbb{R}\times\mathbb{T}^2\times [2\pi\gamma,2\pi(\gamma+1)])}\lesssim N^{\left(\frac{3}{2}-\frac{5}{p}\right)}\Vert u_0\Vert_{L^2(\mathbb{R}\times\mathbb{T}^2)}.
\end{equation}
whenever
\begin{equation}\label{Adm}
p>4\quad\hbox{ and }\quad \frac{2}{q}+\frac{1}{p}=\frac{1}{2}.
\end{equation}
\end{theorem}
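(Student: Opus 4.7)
The plan is to establish \eqref{Stric1} via a $TT^*$ argument that decouples the loss of derivatives (coming from the $\mathbb{T}^2$ factor) from the slow global-in-time integrability (coming from the $\mathbb{R}$ factor). By duality, the estimate is equivalent to the bilinear form bound
\begin{equation*}
\Bigl| \iint \bigl\langle e^{i(t-s)\Delta} P_{\le N}^2 F(s), G(t)\bigr\rangle_{L^2_{x,y}} \, ds\, dt \Bigr| \lesssim N^{3-10/p}\, \|F\|_{l^{q'}_\gamma L^{p'}_{x,y,t}}\, \|G\|_{l^{q'}_\gamma L^{p'}_{x,y,t}},
\end{equation*}
with $F$ supported on $I_\gamma = [2\pi\gamma, 2\pi(\gamma+1))$ and $G$ on $I_{\gamma'}$. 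I would then split the bilinear form according to whether $|\gamma - \gamma'| \le 1$ (the \emph{diagonal} piece) or $|\gamma - \gamma'| \ge 2$ (the \emph{non-diagonal} piece).

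For the diagonal piece, both time variables lie in a common interval of length $O(1)$, so the $\mathbb{R}$ direction is irrelevant at the level of $L^p_t$ integrability. Here I would import the $\mathbb{T}^3$ Strichartz bound \eqref{StricT} (a routine transfer argument produces the same estimate on unit time intervals of $\mathbb{R}\times\mathbb{T}^2$, with loss $N^{3/2-5/p}$), yielding the desired estimate with the stronger $l^2_\gamma$ summability in $\gamma$; since $q>4>2$, this implies $l^q_\gamma$.

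For the non-diagonal piece, I would exploit the dispersive decay from the Euclidean factor. Write $e^{it\Delta_{\mathbb{R}\times\mathbb{T}^2}} = e^{it\partial_x^2}\otimes e^{it\Delta_{\mathbb{T}^2}}$ and express the integral kernel accordingly. The one-dimensional kernel contributes its pointwise $|t-s|^{-1/2}$ decay, while the $\mathbb{T}^2$ kernel is controlled after frequency localization by a Hardy--Littlewood circle method as in \cite{Bo2,IoPa}: a dyadic decomposition in the denominator of a rational approximation $t/(2\pi) \approx a/q$ produces an $L^\infty_y$ pointwise bound on the $P_{\le N}^y$-truncated $\mathbb{T}^2$ kernel carrying a gain $N^{3/2-5/p}$ after balancing with Bernstein and $L^2$ orthogonality. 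Combining this $\mathbb{T}^2$ kernel bound with the $|t-s|^{-1/2}$ Euclidean factor and Young's inequality in $(x,y)$ gives a pointwise-in-$(t,s)$ $L^{p'}_{x,y} \to L^p_{x,y}$ bound that decays like $|t-s|^{-1/2}$; integrating in the $\gamma$-variables then reduces to a Hardy--Littlewood--Sobolev inequality on $\mathbb{Z}$ whose exponents match precisely the admissibility relation $2/q + 1/p = 1/2$.

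The main obstacle is the non-diagonal analysis: one needs the circle method to produce a kernel bound that is \emph{uniformly} good enough in the denominator $q$ of the rational approximation to retain the full $|t-s|^{-1/2}$ decay coming from $\mathbb{R}$, so that the subsequent Hardy--Littlewood--Sobolev step closes under the admissibility condition. Care is also needed in handling the frequency truncation $P_{\le N}$ compatibly with the major/minor arc decomposition, since one works with frequency-localized solutions on $\mathbb{R}\times\mathbb{T}^2$ rather than on $\mathbb{T}^3$, and one must ensure that the $\mathbb{R}$-directional oscillation does not degrade the derivative count coming from the periodic kernel.
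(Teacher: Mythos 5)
Your overall architecture---the $TT^*$ reduction, the split into diagonal and non-diagonal pairs of unit time intervals, a local-in-time Strichartz estimate with $l^2_\gamma$ summability for the diagonal part, and discrete Hardy--Littlewood--Sobolev under the admissibility relation \eqref{Adm} for the non-diagonal part---is exactly that of the paper (Lemma \ref{Stric} and its proof). The diagonal step is fine; the paper derives the unit-interval estimate from Bourgain's $\mathbb{T}^2$ bound together with Hausdorff--Young in the $x$-variable rather than transferring from $\mathbb{T}^3$, but this difference is immaterial.

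The gap is in the non-diagonal step. You propose to produce, for each fixed pair $(t,s)$ with $|t-s|\sim|\gamma|\gg 1$, an $L^{p'}_{x,y}\to L^{p}_{x,y}$ bound for $e^{i(t-s)\Delta}P_{\le N}^2$ with constant $N^{2-6/p}|t-s|^{-1/2}$ (or at worst $N^{3-10/p}$), and then conclude by HLS. No such fixed-time bound holds: when $t-s$ lies within $O(N^{-2})$ of $2\pi\mathbb{Z}$, the propagator $e^{i(t-s)\Delta_{\mathbb{T}^2}}$ is essentially the identity, the $\mathbb{T}^2$ kernel has $L^\infty_y$ norm $\sim N^2$, and the sharp fixed-time operator norm is $\sim N^{2-4/p}|t-s|^{-(1/2-1/p)}$ (Bernstein on $\mathbb{T}^2$ composed with the one-dimensional dispersive estimate; note the $|t-s|$ power already matches \eqref{SE}, so it is only the derivative count that fails). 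Since $2-4/p>3-10/p$ precisely when $p<6$, this is too lossy for $4<p<6$---in particular for $p=9/2$, one of the two exponents on which the $Z$-norm is built. The missing gain $N^{-2/p}$ can only be recovered by exploiting that such bad times occupy a set of small measure, i.e., by integrating in time \emph{before} passing to Lebesgue norms. This is what the paper's Lemma \ref{SElem1} does: decompose $g$ and $h$ into level sets, and for each pair of levels split the spacetime kernel $K_{N,\gamma}=K^1+K^2$ into a major-arc piece controlled in $L^\infty_{x,y,t}$ via \eqref{Weyl} (paired with $L^1$ norms of the level sets) and a minor-arc piece controlled through $\Vert\mathcal{F}_{x,y,t}K^2\Vert_{L^\infty}$ using Lemma \ref{decomposition} (paired with $L^2$ norms), as in \eqref{DecK}--\eqref{DecK3}. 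You correctly flag uniformity in the denominator $q$ as the danger, but the framework you propose---fixed-time kernel bound, then Young, then HLS---cannot be repaired from within; the level-set and arc decomposition is the missing idea, not a technical refinement.
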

In the estimate above, we see that the loss in derivatives is dictated by the local ($3d$) geometry, while the global integrability condition is dictated by the global ($1d$) geometry. Theorem \ref{Striclem} follows directly by duality from the following more precise estimate
\begin{lemma}\label{Stric}
For any $h\in C^\infty_c(\mathbb{R}_x\times\mathbb{T}^2_y\times\mathbb{R}_t)$, there holds that
\begin{equation}\label{Adj}
\begin{split}
&\left \Vert \int_{s\in\mathbb{R}}e^{-is\Delta_{\mathbb{R}\times\mathbb{T}^2}}P_{\le N}h(x,y,s)ds\right \Vert_{L^2_{x,y}(\mathbb{R}\times\mathbb{T}^2)}\\
&\lesssim N^{\left(\frac{3}{2}-\frac{5}{p}\right)}\Vert h\Vert_{l^{2}_\gamma L^{p^\prime}_{x,y,t}(\mathbb{R}\times\mathbb{T}^2\times [2\pi\gamma,2\pi(\gamma+1)])}+N^{\left(1-\frac{3}{p}\right)}\Vert h\Vert_{l^{q^\prime}_\gamma L^{p^\prime}_{x,y,t}(\mathbb{R}\times\mathbb{T}^2\times [2\pi\gamma,2\pi(\gamma+1)])}
\end{split}
\end{equation}
for any $(q,p)$ satisfying \eqref{Adm}.
\end{lemma}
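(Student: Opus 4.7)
The plan is to prove the lemma by a $TT^*$ argument for the Strichartz operator $Tu_0 := e^{it\Delta_{\R\times\T^2}}P_{\le N} u_0$. Since $T^*h = \int_\R e^{-is\Delta}P_{\le N} h(s)\,ds$, one has $\|T^*h\|_{L^2}^2 = \sum_{\gamma,\gamma'}A_{\gamma,\gamma'}$ with
$$A_{\gamma,\gamma'} := \int_{I_\gamma}\int_{I_{\gamma'}}\langle e^{i(t-s)\Delta}P_{\le N}^2 h(s), h(t)\rangle\,ds\,dt, \quad I_\gamma=[2\pi\gamma,2\pi(\gamma+1)].$$
Using $(a+b)^2\le 2(a^2+b^2)$, it suffices to show
$$\sum_{\gamma,\gamma'}|A_{\gamma,\gamma'}|\lesssim N^{3-10/p}\|h\|_{l^2_\gamma L^{p'}}^2 + N^{2-6/p}\|h\|_{l^{q'}_\gamma L^{p'}}^2.$$
My strategy is to split into a \emph{near-diagonal} regime $|\gamma-\gamma'|\le 1$ (which produces the first term, losing derivatives but keeping good time-summability) and a \emph{far-diagonal} regime $|\gamma-\gamma'|\ge 2$ (which produces the second, gaining in $N$ at the cost of worse summability), handled by different methods.

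For the near-diagonal part, I would first establish a local-in-time Bourgain-type Strichartz estimate $\|e^{it\Delta_{\R\times\T^2}}P_{\le N}u_0\|_{L^p_{x,y,t}(\R\times\T^2\times I_\gamma)}\lesssim N^{3/2-5/p}\|u_0\|_{L^2}$ by transplanting Bourgain's bound \eqref{StricT}: on a unit time window the relevant $\R$-dispersion scale is $\sim N$, so one may approximate $\R$ by a torus of radius $\gg N$ and invoke \eqref{StricT} directly. Dualizing gives $\|\int_{I_\gamma}e^{-is\Delta}P_{\le N}h(s)\,ds\|_{L^2}\lesssim N^{3/2-5/p}\|h\|_{L^{p'}(I_\gamma)}$, so Cauchy--Schwarz over the $O(1)$ neighboring pairs yields the first term. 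For the far-diagonal part, I would exploit the factorization $e^{i\tau\Delta}P_{\le N}^2 = K^\R_N(\cdot,\tau)\otimes K^{\T^2}_N(\cdot,\tau)$. Riesz--Thorin interpolation between the $L^2\to L^2$ unitarity of $e^{i\tau\Delta_\R}$ and the dispersive bound $\|K^\R_N\|_{L^\infty}\lesssim|\tau|^{-1/2}$ yields $\|K^\R_N(\tau)\|_{L^{p'}_x\to L^p_x}\lesssim|\tau|^{-(1/2-1/p)}$ for $|\tau|\ge 1$. Combining this with a suitably time-averaged circle-method bound for $K^{\T^2}_N$, then a Young inequality in $t$, and finally the discrete Hardy--Littlewood--Sobolev inequality in $(\gamma,\gamma')$ with kernel $|\gamma-\gamma'|^{-(1/2-1/p)}$, produces the $l^{q'}_\gamma$ summability; the relation $2/q=1/2-1/p$ from \eqref{Adm} is exactly the HLS condition.

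The hard part is the time-averaged circle-method bound for the $\T^2$-kernel. A pointwise Bernstein bound only gives $\|K^{\T^2}_N(\tau)\|_{L^{p'}_y\to L^p_y}\lesssim N^{2-4/p}$, which is $N^{2/p}$ too lossy to yield $N^{1-3/p}$ after taking the square root, while the fully time-averaged Bourgain $\T^2\times\T$ Strichartz delivers an estimate in a different mixed-norm form. What is needed is a carefully balanced bound of the form
$$\Bigl(\int_0^{2\pi}\|K^{\T^2}_N(\tau)\|^{p/2}_{L^{p'}_y\to L^p_y}\,d\tau\Bigr)^{2/p}\lesssim N^{2-6/p},$$
proved by analyzing the Weyl sum $\sum_{|n|\le N}e^{i(n\cdot y-|n|^2\tau)}$ on major and minor arcs in $\tau$. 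This nontrivial adaptation of the Hardy--Littlewood circle method, fusing the $\T^2$-Weyl-sum analysis with the $\R$-direction dispersive decay, is what lets the loss of derivatives and the time-summability exponent decouple in the final inequality.
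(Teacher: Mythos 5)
Your proposal follows the same $TT^*$ skeleton as the paper (diagonal versus non-diagonal splitting, a local-in-time Strichartz estimate for the diagonal, an $L^{p'}\to L^p$ operator bound of size $|\gamma|^{1/p-1/2}N^{2-6/p}$ plus discrete Hardy--Littlewood--Sobolev for the off-diagonal, with \eqref{Adm} appearing exactly as the HLS condition), but your treatment of the off-diagonal piece is genuinely different. The paper proves Lemma \ref{SElem1} by a level-set (bilinear) decomposition of $g$ and $h$ combined with a major/minor-arc splitting of the kernel $K_{N,\gamma}=K^1+K^2$, estimating $K^1$ in $L^\infty_{x,y,t}$ and $\mathcal{F}_{x,y,t}K^2$ in $L^\infty$ via the divisor-sum Lemma \ref{decomposition}. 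You instead interpolate at each \emph{fixed} time difference $\tau$ between $L^1\to L^\infty$ (the kernel bound) and $L^2\to L^2$ (boundedness of the multiplier), obtaining $\|K^{\T^2}_N(\tau)\ast_y\|_{L^{p'}_y\to L^p_y}\lesssim \|K^{\T^2}_N(\tau)\|_{L^\infty_y}^{1-2/p}$, and close with Young's inequality in $t$ at exponent $p/2$. Your key claimed estimate is correct and needs only the major-arc Weyl bound: with $\sigma=(1-2/p)(p/2)=p/2-1>1$ and $\|K^{\T^2}_N(\tau)\|_{L^\infty}\lesssim N^2q^{-1}(1+N^2|\tau/2\pi-a/q|)^{-1}$ on the Dirichlet arc of $a/q$, one finds $\int_0^{2\pi}\|K^{\T^2}_N\|_{L^\infty}^{\sigma}d\tau\lesssim N^{2\sigma-2}\sum_{q\le N}q^{1-\sigma}$, which after raising to the power $2/p$ gives $N^{2-8/p}$ for $p>6$ and $N^{1-2/p}$ for $4<p<6$ --- both at least as good as $N^{2-6/p}$, and consistent with the paper's footnote on the improved exponent $\theta$. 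Your route avoids the minor-arc Fourier-side analysis entirely (the $L^2\to L^2$ input requires no number theory); the tensorized fixed-time interpolation it relies on is legitimate since $p'\le 2\le p$, by Minkowski's inequality applied to the two convolution factors.

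One step you should repair: for the diagonal term you propose to obtain $\|e^{it\Delta}P_{\le N}u_0\|_{L^p(\R\times\T^2\times I_\gamma)}\lesssim N^{3/2-5/p}\|u_0\|_{L^2}$ by ``approximating $\R$ by a torus of radius $\gg N$ and invoking \eqref{StricT} directly.'' That deduction is not valid as stated: \eqref{StricT} is proved for the isotropic torus, and Strichartz estimates on anisotropic tori $\T_L\times\T^2$ with constants uniform in $L$ do not follow by rescaling one direction, since the underlying exponential-sum estimates are sensitive to the arithmetic of the frequency lattice. The estimate you need is true, and the efficient way to get it --- the one the paper uses --- is to take the Fourier transform in $x$ only, apply the $\T^2\times\T$ Strichartz estimate of Bourgain for each fixed $\xi$, and recombine via Hausdorff--Young and Minkowski, paying the factor $N^{1/2-1/p}$ from H\"older on the $\xi$-support. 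With that substitution, and with the time-averaged circle-method bound carried out as indicated above, your argument is complete.
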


\begin{proof}[Proof of Lemma \ref{Striclem}]
In order to distinguish between the large and small time scales, we choose a smooth partition of unity
\begin{equation*}
1=\sum_{\gamma \in \mathbb{Z}} \chi(t -2\pi\gamma)
\end{equation*}
with $\chi$ supported in $[-2\pi,2\pi]$. We also denote by $h_\alpha(t):=\chi(t)h(2\pi\alpha+t)$.
Using the semigroup property and the unitarity of $e^{it\Delta_{\mathbb{R}\times\mathbb{T}^2}}$, we compute directly that
\begin{equation*}
\begin{split}
&\Vert \int_{s\in\mathbb{R}}e^{-is\Delta_{\mathbb{R}\times\mathbb{T}^2}}P_{\le N}h(x,y,s)ds\Vert_{L^2_{x,y}(\mathbb{R}\times\mathbb{T}^2)}^2\\
&=\iint_{s,t\in\mathbb{R}}\langle e^{-is\Delta_{\mathbb{R}\times\mathbb{T}^2}}P_{\le N}h(s),e^{-it\Delta_{\mathbb{R}\times\mathbb{T}^2}}P_{\le N}h(t)\rangle_{L^2_{x,y}\times L^2_{x,y}}dsdt\\
&=\sum_{\alpha,\beta}\iint_{s,t\in\mathbb{R}}\langle \chi(s-2\pi\alpha)e^{-is\Delta_{\mathbb{R}\times\mathbb{T}^2}}P_{\leq N}h(s),\chi(t-2\pi\beta)e^{-it\Delta_{\mathbb{R}\times\mathbb{T}^2}}P_{\leq N}h(t)\rangle_{L^2_{x,y}\times L^2_{x,y}}dsdt\\
&=\sum_{\alpha,\beta}\iint_{s,t\in[-2\pi,2\pi]}\langle e^{-i(2\pi(\alpha-\beta)+s)\Delta_{\mathbb{R}\times\mathbb{T}^2}}P_{\leq N}h_\alpha(s),e^{-it\Delta_{\mathbb{R}\times\mathbb{T}^2}}P_{\leq N}h_\beta(t)\rangle_{L^2_{x,y}\times L^2_{x,y}}dsdt\\
&=\sigma_{\operatorname{diag}}+\sigma_{\operatorname{non-diag}}
\end{split}
\end{equation*}
where
\begin{equation*}
\begin{split}
\sigma_{\operatorname{diag}}&=\sum_{\vert\alpha-\beta\vert\le 9}\iint_{s,t}\langle e^{-i(2\pi(\alpha-\beta)+s)\Delta_{\mathbb{R}\times\mathbb{T}^2}}P_{\leq N}h_\alpha(s),e^{-it\Delta_{\mathbb{R}\times\mathbb{T}^2}}P_{\leq N}h_\beta(t)\rangle_{L^2_{x,y}\times L^2_{x,y}}dsdt\\
\sigma_{\operatorname{non-diag}}&=\sum_{\alpha,\gamma\in\mathbb{Z},\vert\gamma\vert\ge 10}\iint_{s,t}\langle e^{-i(s-2\pi\gamma)\Delta_{\mathbb{R}\times\mathbb{T}^2}}P_{\leq N}h_\alpha(s),e^{-it\Delta_{\mathbb{R}\times\mathbb{T}^2}}P_{\leq N}h_{\alpha+\gamma}(t)\rangle_{L^2_{x,y}\times L^2_{x,y}}dsdt.
\end{split}
\end{equation*}
The diagonal sum dictates the loss of derivative, while the non-diagonal sum dictates the decay.

\medskip

{\bf Diagonal Estimates.} Here, we use the following result from \cite{Bo2}: for any $f\in L^2(\mathbb{T}^2)$ and any $p>4$, there holds that
\begin{equation*}
\Vert e^{it\Delta_{\mathbb{T}^2}}P^y_{\le N}f\Vert_{L^p(\mathbb{T}^2_y\times\mathbb{T}_t)}\lesssim N^{\left(1-\frac{4}{p}\right)}\Vert f\Vert_{L^2(\mathbb{T}^2_y)}.
\end{equation*}
This implies that for $p>4$,
\begin{equation*}
\Vert e^{it\Delta_{\mathbb{R}\times\mathbb{T}^2}}P_{\le N}f\Vert_{L^p_{x,y,t}(\mathbb{R}\times\mathbb{T}^2\times [0,2\pi])}
\lesssim N^{\left(\frac{3}{2}-\frac{5}{p}\right)}\Vert f\Vert_{L^2_{x,y}(\mathbb{R}\times\mathbb{T}^2)} .
\end{equation*}
Indeed, using the Hausdorff-Young, H\"older, Minkowski and Plancherel inequalities, we have that
\begin{equation*}
\begin{split}
\Vert e^{it\Delta_{\mathbb{R}\times\mathbb{T}^2}}P_{\le N}f\Vert_{L^p_{x,y,t}(\mathbb{R}\times\mathbb{T}^2\times [-2\pi,2\pi])}
&=\Vert e^{it\Delta_{\mathbb{R}\times\mathbb{T}^2}}P_{\le N}f\Vert_{L^p_x(\mathbb{R};L^p_{y,t}(\mathbb{T}^2\times [-2\pi,2\pi]))}\\
&\lesssim N^{\left(\frac{1}{2}-\frac{1}{p}\right)}\Vert e^{-it\vert\xi\vert^2}\mathcal{F}_\xi e^{it\Delta_{\mathbb{T}^2}}P_{\le N}f\Vert_{L^2_\xi(\mathbb{R};L^p_{y,t}(\mathbb{T}^2\times [-2\pi,2\pi]))}\\
&\lesssim N^{\left(\frac{1}{2}-\frac{1}{p}\right)}\Vert e^{it\Delta_{\mathbb{T}^2}}P_{\le N}^y\mathcal{F}_\xi f\Vert_{L^2_\xi(\mathbb{R};L^p_{y,t}(\mathbb{T}^2\times [-2\pi,2\pi]))}\\
&\lesssim N^{\left(\frac{3}{2}-\frac{5}{p}\right)}\Vert \mathcal{F}_\xi f\Vert_{L^2_\xi L^2_y}=N^{\left(\frac{3}{2}-\frac{5}{p}\right)}\Vert f\Vert_{L^2_{x,y}(\mathbb{R}\times\mathbb{T}^2)}.
\end{split}
\end{equation*}
Again, by duality, this implies that
\begin{equation}\label{DiagDual}
\Vert \int_{s\in\mathbb{R}}e^{-is\Delta_{\mathbb{R}\times\mathbb{T}^2}} P_{\leq N}h(s)ds \Vert_{L^2_{x,y}(\mathbb{R}\times\mathbb{T}^2)}
\lesssim N^{\left(\frac{3}{2}-\frac{5}{p}\right)}\Vert h\Vert_{L^{p^\prime}_{x,y,t}(\mathbb{R}\times\mathbb{T}^2\times [-2\pi,2\pi])}
\end{equation}
whenever $h$ is supported in $[-2\pi,2\pi]$.
And consequently,
\begin{equation}\label{Sigma1}
\begin{split}
\sigma_{\operatorname{diag}}&=\sum_{\alpha\in\mathbb{Z},\vert\gamma\vert\le 9}\iint_{s,t\in\mathbb{R}}\langle e^{-i(s-2\pi\gamma)\Delta_{\mathbb{R}\times\mathbb{T}^2}}P_{\leq N}h_\alpha(s),e^{-it\Delta_{\mathbb{R}\times\mathbb{T}^2}}P_{\leq N} h_{\alpha+\gamma}(t)\rangle_{L^2_{x,y}\times L^2_{x,y}}dsdt\\
&\le\sum_{\alpha,\vert\gamma\vert\le 9} \left \Vert \int_{s\in\mathbb{R}}e^{-is\Delta_{\mathbb{R}\times\mathbb{T}^2}}P_{\leq N}h_\alpha(2\pi\gamma+s)ds\right \Vert_{L^2_{x,y}(\mathbb{R}\times\mathbb{T}^2)}\left \Vert \int_{s\in\mathbb{R}}e^{-is\Delta_{\mathbb{R}\times\mathbb{T}^2}}P_{\leq N}h_{\alpha+\gamma}(s)ds \right\Vert_{L^2_{x,y}(\mathbb{R}\times\mathbb{T}^2)}\\
&\lesssim N^{2\left(\frac{3}{2}-\frac{5}{p}\right)}
\sum_{\alpha} \Vert h_\alpha\Vert_{L^{p^\prime}_{x,y,t}(\mathbb{R}\times\mathbb{T}^2\times [-2\pi,2\pi])}^2.
\end{split}
\end{equation}
This is controlled by the first term in the righthand side of \eqref{Adj}.

\medskip

{\bf Nondiagonal Estimates.}
We can estimate the non diagonal sum as follows using \eqref{SE} proved below:
\begin{equation}\label{NDS}
\begin{split}
\sigma_{\operatorname{non-diag}}&=\sum_{\alpha,\gamma\in\mathbb{Z},\vert\gamma\vert\ge 10}\int_{t}\langle \int_{s}e^{-i(s-t-2\pi\gamma)\Delta_{\mathbb{R}\times\mathbb{T}^2}}P_{\leq N}h_\alpha(s)ds,P_{\leq N}h_{\alpha+\gamma}(t)\rangle_{L^2_{x,y}\times L^2_{x,y}}dt\\
&\lesssim N^{\left(2-\frac{6}{p}\right)}\sum_{\alpha,\gamma\in\mathbb{Z},\vert\gamma\vert\ge 3}\vert\gamma\vert^{\frac{1}{p}-\frac{1}{2}}\Vert h_\alpha\Vert_{L^{p^\prime}_{x,y,t}}\Vert h_{\alpha+\gamma}\Vert_{L^{p^\prime}_{x,y,t}}\\
&\lesssim N^{\left(2-\frac{6}{p}\right)}\Vert \{h_\alpha\}_\alpha\Vert^2_{l^{q^\prime}_\alpha L^{p^\prime}_{x,y,t}(\mathbb{R}\times\mathbb{T}^2\times [-2\pi,2\pi])}, \quad (\frac{2}{q}=\frac{1}{2}-\frac{1}{p})
\end{split}
\end{equation}
where we have used the discrete Hardy-Littlewood-Sobolev inequality in the last step. Together with \eqref{Sigma1}, this finishes the proof of \eqref{Adj} once we prove Lemma \ref{SElem1}.
\end{proof}

\begin{lemma}\label{SElem1}
Suppose that $\gamma\in\mathbb{Z}$ satisfies $\vert\gamma\vert\ge 3$ and that $p>4$. For any $h \in L^{p^\prime}_{x,y,s}(\mathbb{R}\times\mathbb{T}^2\times [-2\pi,2\pi])$, there holds that:
\begin{equation}\label{SE}
\Vert \int_{s\in\mathbb{R}}\chi(s)e^{i(t-s+2\pi\gamma)\Delta_{\mathbb{R}\times\mathbb{T}^2}}P_{\leq N}h(s)ds\Vert_{L^p_{x,y,t}(\mathbb{R}\times\mathbb{T}^2\times [-2\pi,2\pi])}\lesssim \vert\gamma\vert^{\frac{1}{p}-\frac{1}{2}} N^{\left(2-\frac{6}{p}\right)}\Vert h\Vert_{L^{p^\prime}_{x,y,s}(\mathbb{R}\times\mathbb{T}^2\times [-2\pi,2\pi])}.
\end{equation}
\end{lemma}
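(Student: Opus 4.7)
The plan is to exploit two features in parallel: the $2\pi$-periodicity of $e^{it\Delta_{\T^2}}$ (so that $e^{2\pi i\gamma \Delta_{\T^2}}=\mathrm{Id}$), which collapses the large time shift $2\pi\gamma$ on the compact factor, and the dispersive decay of $e^{it\partial_{xx}}$ on $\R$, which supplies the factor $|\gamma|^{1/p-1/2}$. The remaining $N^{2-6/p}$ loss will come from combining Bourgain's periodic Strichartz estimate on $\T^2$ with Bernstein on $\T^2$.

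Concretely, I factor $e^{i(t-s+2\pi\gamma)\Delta_{\R\times\T^2}} = e^{i\tau\partial_{xx}}\,e^{i(t-s)\Delta_{\T^2}}$ with $\tau := t-s+2\pi\gamma$ and $P_{\leq N}=P^x_{\leq N}P^y_{\leq N}$, noting that $|\tau|\sim|\gamma|$ on the relevant time window. For fixed $(y,t)$, Minkowski in $s$ combined with the classical 1D dispersive estimate
\[
\|e^{i\tau\partial_{xx}}P^x_{\leq N}f\|_{L^p_x(\R)}\lesssim |\tau|^{1/p-1/2}\|f\|_{L^{p'}_x(\R)}
\]
(obtained by interpolating the $L^1\to L^\infty$ decay $|\tau|^{-1/2}$ with the trivial $L^2\to L^2$ bound) produces the prefactor $|\gamma|^{1/p-1/2}$. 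Then taking $L^p_{y,t}([-2\pi,2\pi])$ and pulling the $s$-integral back outside via Minkowski reduces the problem to showing, for fixed $s$,
\[
\| \|e^{i(t-s)\Delta_{\T^2}}P^y_{\leq N}h(\cdot,\cdot,s)\|_{L^{p'}_x}\|_{L^p_{y,t}([-2\pi,2\pi])} \lesssim N^{2-6/p}\,\|h(\cdot,\cdot,s)\|_{L^{p'}_{x,y}}.
\]

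For this I would swap the outer $L^p_{y,t}$ with the inner $L^{p'}_x$ via Minkowski's integral inequality (valid since $p\geq p'$). For each fixed $x$, Bourgain's $\T^2$ Strichartz recalled at the start of the Diagonal Estimates gives $\|e^{i(t-s)\Delta_{\T^2}}P^y_{\leq N}g\|_{L^p_{y,t}}\lesssim N^{1-4/p}\|g\|_{L^2_y}$, and Bernstein on $\T^2$ applied to the frequency-localized $P^y_{\leq N}h(x,\cdot,s)$ gives $\|P^y_{\leq N}h(x,\cdot,s)\|_{L^2_y}\lesssim N^{1-2/p}\|h(x,\cdot,s)\|_{L^{p'}_y}$. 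Their product is exactly $N^{(1-4/p)+(1-2/p)}=N^{2-6/p}$, and a final Hölder estimate in $s$ against $\chi$ (which belongs to every $L^q_s$) closes the bound.

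The main subtle point is matching the $L^{p'}_y$ output of the dispersive step on $\R$ with the $L^2_y$ input required by Bourgain's Strichartz: on the compact factor $L^2_y$ is strictly stronger than $L^{p'}_y$, so this gap must be bridged. That is exactly what Bernstein does at cost $N^{1-2/p}$, producing the exponent $2-6/p$ rather than the nominally better $2-8/p$ that a $TT^*$ argument on the $\T^2$-Duhamel operator would yield but which would leave an intermediate norm of the type $\|h\|_{L^{p'}_{x,s}L^2_y}$ that cannot be controlled by $\|h\|_{L^{p'}_{x,y,s}}$.
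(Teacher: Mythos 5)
Your argument is correct and delivers exactly the stated bound, but it takes a genuinely different --- and much more elementary --- route than the paper. You tensor--factor the propagator, use the exact $2\pi$-periodicity of $e^{it\Delta_{\mathbb{T}^2}}$ to push the entire shift $2\pi\gamma$ onto the $\mathbb{R}$-factor, where the fixed-time dispersive bound $\Vert e^{i\tau\partial_{xx}}\Vert_{L^{p'}_x\to L^p_x}\lesssim \vert\tau\vert^{1/p-1/2}$ supplies the $\vert\gamma\vert^{1/p-1/2}$ decay (here $\vert\tau\vert=\vert t-s+2\pi\gamma\vert\gtrsim\vert\gamma\vert$ precisely because $\vert\gamma\vert\ge 3$ while $\vert t-s\vert\le 4\pi$), and you handle the compact factor with Bourgain's $\mathbb{T}^2$ Strichartz estimate ($N^{1-4/p}$) composed with Bernstein ($N^{1-2/p}$); all the Minkowski swaps go in the admissible direction since $p'\le p$, and the final H\"older in $s$ costs only a constant. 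The paper instead performs a level-set decomposition of $g$ and $h$ together with a major/minor arc splitting $K_{N,\gamma}=K^1+K^2$ of the kernel, using Weyl-sum bounds and the divisor-counting Lemma~\ref{decomposition}, and interpolates between $\Vert K^1\Vert_{L^\infty_{x,y,t}}$ and $\Vert\mathcal{F}K^2\Vert_{L^\infty}$. What that heavier machinery buys is a sharper power of $N$: optimizing \eqref{possimp01} gives $N^{1-2/p}$ for $4<p<6$ (and essentially $N^{2-8/p+\varepsilon}$ for $p\ge 6$), which is what feeds the improved $\theta$ in the footnote to \eqref{Strich M refined U^p lemma}; your Bernstein step $L^{p'}_y\to L^2_y$ is exactly the lossy point that caps your method at $N^{2-6/p}$, as you correctly diagnose in your final paragraph. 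Since the lemma only claims $N^{2-6/p}$ (the paper itself discards the sharper exponent via $\max(N^{p-4+\varepsilon},N^{\frac{p-2}{2}})\le N^{p-3}$ before stating \eqref{SE}), your proof is a valid and substantially shorter substitute for the statement as written.
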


\begin{proof}
Without loss of generality, we may assume that: 
\begin{equation}\label{BdOnh}
h=\chi(s)P_{\le N} h,\quad \hbox{ and } \Vert h\Vert_{L^{p^\prime}_{x,y,t}(\mathbb{R}\times\mathbb{T}^2\times[-2\pi,2\pi])}=1
\end{equation}
and we define
\begin{equation*}
g(x,y,t)=\int_{s\in\mathbb{R}}e^{i(t-s+2\pi \gamma)\Delta_{\mathbb{R}\times\mathbb{T}^2}}h(x,y,s)ds.
\end{equation*}
We also define the Kernel
\begin{equation}\label{Ker}
\begin{split}
K_N(x,y,t)&=\sum_{k\in \Z^2}\int_{\R_\xi} [\eta^1_{\le N}(\xi)]^2[\eta^1_{\le N}(k_1) \eta^1_{\leq N}(k_2)]^2 e^{i\left[x\cdot\xi+y\cdot k+t(\vert k\vert^2+\vert \xi\vert^2)\right]} d\xi\\
&=\left[\int_{\R_\xi} [\eta^1_{\le N}(\xi)]^2e^{i\left(x\cdot\xi+t\vert\xi\vert^2\right)}d\xi\right]\cdot\left[\sum_{k\in \Z^2}[\eta^1_{\le N}(k_1) \eta^1_{\leq N}(k_2)]^2e^{i\left(y\cdot k+t\vert k\vert^2\right)}\right]\\
&=K_N^{\mathbb{R}}(x,t)\otimes K^{\mathbb{T}^2}_N(y,t).
\end{split}
\end{equation}
and introduce $K_{N,\gamma}(x,y,t):=K_N(x,y,2\pi\gamma+t)$, so that $g(x,y,t)=K_{N,\gamma}\ast_{x,y,t}h$. We have the following straightforward bounds:
\begin{equation}\label{BdOnK}
\begin{split}
\Vert K_{N,\gamma}\Vert_{L^\infty_{x,y,t}}&\lesssim\vert\gamma\vert^{-\frac{1}{2}}N^{2}\\
\Vert \mathcal{F}_{x,y,t}K_{N,\gamma}\Vert_{L^\infty_{\xi,k,\tau}}&\lesssim 1.
\end{split}
\end{equation}
The first estimates follows from stationary phase estimates in $x$ and direct counting in $k$ (see also $\eqref{Weyl}$). The second estimate is direct.

\medskip

For $\alpha$ a dyadic number, we define $g^\alpha(x,y,t)=\alpha^{-1}g(x,y,t)\mathbbm{1}_{\{\alpha/2\le \vert g\vert< \alpha\}}$ which has modulus in $[1/2,1]$. We define\footnote{This is not be confused with $h_\alpha$ defined previous to Lemma \ref{SElem1}.} $h^\beta$ similarly for $\beta \in 2^\Z$. Clearly, $g=\sum_{\alpha}\alpha g^\alpha$. Besides,
\begin{equation}\label{G}
\begin{split}
\Vert g\Vert_{L^p_{x,y,t}}^p&=\langle\vert g\vert^{p-2}g,g\rangle\\
&=\sum_{\alpha,\beta}\alpha^{p-1}\beta\langle \vert g^\alpha\vert^{p-2}g^\alpha,K_{N,\gamma}\ast h^\beta\rangle\\
&=\left[\sum_{\mathcal{S}_1}+\sum_{\mathcal{S}_2}+\sum_{\mathcal{S}_3}\right]\alpha^{p-1}\beta\langle \vert g^\alpha\vert^{p-2}g^\alpha,K_{N,\gamma}\ast h^\beta\rangle=\Sigma_1+\Sigma_2+\Sigma_3,
\end{split}
\end{equation}
where
\begin{equation*}
\begin{split}
\mathcal{S}_1&=\{(\alpha,\beta): C \vert\gamma\vert^{-1/2}N^{2}\le\alpha\beta^{p^\prime-1}\},\\
 \mathcal{S}_2&=\{(\alpha,\beta): \alpha\beta^{p^\prime-1}\le CN\vert\gamma\vert^{-1/2}\},\\
 \mathcal{S}_3&=\{(\alpha,\beta): CN\vert\gamma\vert^{-1/2}\le\alpha\beta^{p^\prime-1}\le C\vert\gamma\vert^{-1/2}N^{2}\}
 \end{split}
 \end{equation*}
 for $C$ a large constant to be fixed later.
In each of the three cases defined above and for any fixed $\alpha, \beta$, we will decompose $K_{N,\gamma}=K^1_{N,\gamma;\alpha,\beta}+K^2_{N,\gamma;\alpha,\beta}$ and estimate the corresponding contributions as follows:
\begin{equation}\label{DecK}
\begin{split}
\langle \vert g^\alpha\vert^{p-2}g^\alpha,K^1_{N,\gamma;\alpha,\beta}\ast h^\beta\rangle&\lesssim \Vert K^1_{N,\gamma;\alpha,\beta}\Vert_{L^\infty_{x,y,t}}\Vert g^\alpha\Vert_{L^1}\Vert h^\beta\Vert_{L^1},\\
\langle \vert g^\alpha\vert^{p-2}g^\alpha,K^2_{N,\gamma;\alpha,\beta}\ast h^\beta\rangle&\lesssim \Vert \mathcal{F}_{x,y,t}K^2_{N,\gamma;\alpha,\beta}\Vert_{L^\infty_{\xi,k,\tau}}\Vert g^{\alpha}\Vert_{L^2}\Vert h^\beta\Vert_{L^2}.
\end{split}
\end{equation}

In what follows, we denote by $S_\alpha:=\hbox{supp}(g_\alpha)$ and $E_\beta:=\hbox{supp}(h_\beta)$. Then, by assumption
\begin{equation*}
\begin{split}
\Vert g\Vert_{L^p_{x,y,t}}^p&\simeq\sum_\alpha\alpha^{p}\vert S_\alpha\vert,\quad \sum_{\beta}\beta^\frac{p}{p-1}\vert E_\beta\vert\simeq1,\quad \Vert g^\alpha\Vert_{L^p_{x,y,t}}\simeq\vert S_\alpha\vert^\frac{1}{p}.
\end{split}
\end{equation*}

\medskip

We will bound $\Sigma_1, \Sigma_2, \Sigma_3$ in \eqref{G} separately:

\medskip
{\bf $\Sigma_1$, the large levels:} $C\vert\gamma\vert^{-1/2}N^{2}\le\alpha\beta^{p^\prime-1}$. In this case, we use the decomposition \eqref{DecK} with $K^2=0$ and the bound \eqref{BdOnK}. This gives
\begin{equation*}
\begin{split}
\Sigma_1\le\sum_{\alpha,\beta;C\vert\gamma\vert^{-1/2}N^{2}\le\alpha\beta^{p^\prime-1}}\alpha^{p-1}\beta\Vert K_{N,\gamma}\Vert_{L^\infty_{x,y,t}}\vert S_\alpha\vert \vert E_\beta\vert\lesssim C^{-1}\sum_{\alpha,\beta}\alpha^p\vert S_\alpha\vert \beta^{p^\prime}\vert E_\beta\vert\lesssim C^{-1}\Vert g\Vert_{L^p_{x,y,t}}^p
\end{split}
\end{equation*}

\medskip

{\bf $\Sigma_2$, the small levels:} $\alpha\beta^{p^\prime-1}\le CN\vert\gamma\vert^{-1/2}$. In this case, we use the decomposition \eqref{DecK} with $K^1=0$ and \eqref{BdOnK}. This gives
\begin{equation*}
\begin{split}
\Sigma_2&\le \sum_{(\alpha,\beta)\in\mathcal{S}_2}\alpha^{p-1}\beta\vert S_\alpha\vert^\frac{1}{2}\vert E_\beta\vert^\frac{1}{2}\le \sum_{\alpha,\beta; \alpha\beta^{p^\prime-1}\le CN\vert\gamma\vert^{-1/2}}\left[\alpha^{p}\vert S_\alpha\vert\right]^\frac{1}{2}\left[\beta^{p^\prime}\vert E_\beta\vert\right]^\frac{1}{2}\left[\alpha\beta^{p^\prime-1}\right]^\frac{p-2}{2}.\\
\end{split}
\end{equation*}
Using Schur's test, we can sum in $\alpha,\beta$ and this gives
\begin{equation*}
\Sigma_2\lesssim C^{\frac{p-2}{2}}\Vert g\Vert_{L^p_{x,y,t}}^\frac{p}{2} N^{\frac{p-2}{2}}\vert\gamma\vert^{-\frac{p-2}{4}}.
\end{equation*}

\medskip

{\bf $\Sigma_3$, the medium levels:} $CN\vert\gamma\vert^{-1/2}\le\alpha\beta^{p^\prime-1}\le C\vert\gamma\vert^{-1/2}N^{2}$. In this case, we will decompose the kernel into two components as in \eqref{DecK} which satisfy
\begin{equation}\label{DecK3}
\begin{split}
\Vert K^1_{N,\gamma;\alpha,\beta}\Vert_{L^\infty_{x,y,t}}&\lesssim C^{-1}\alpha\beta^{p^\prime-1}\\
\Vert \mathcal{F}_{t,x,y}K^2_{N,\gamma;\alpha,\beta}\Vert_{L^\infty_{\xi,k,\tau}}&\lesssim_\varepsilon N^{\varepsilon}C(\alpha\beta^{p^\prime-1})^{-1}\vert\gamma\vert^{-1/2}.
\end{split}
\end{equation}
Assuming this, we finish the proof as follows:
\begin{equation*}
\begin{split}
\Sigma_3&\lesssim C^{-1}\sum_{\mathcal{S}_3}\alpha^p\beta^{p^\prime}\vert S_\alpha\vert \vert E_\beta\vert+N^{\varepsilon }C\sum_{\mathcal{S}_3}\alpha^{p-2}\beta^{2-p^\prime}\vert S_\alpha\vert^\frac{1}{2}\vert E_\beta\vert^\frac{1}{2}\vert\gamma\vert^{-1/2}\\
&\lesssim C^{-1}\Vert g\Vert_{L^p_{x,y,t}}^p+N^{\varepsilon }C\sum_{\mathcal{S}_3}\left[\alpha^p\vert S_\alpha\vert\right]^\frac{1}{2}\left[\beta^{p^\prime}\vert E_\beta\vert\right]^\frac{1}{2}\left[\alpha\beta^{p^\prime-1}\right]^\frac{p-4}{2}\vert\gamma\vert^{-1/2}\\
&\lesssim C^{-1}\Vert g\Vert_{L^p_{x,y,t}}^p+\Vert g\Vert_{L^p_{x,y,t}}^\frac{p}{2}C^{\frac{p-2}{2}}N^{(p-4+\varepsilon)}\vert\gamma\vert^{-\frac{p-2}{4}}.
\end{split}
\end{equation*}
As a result we finally obtain
\begin{equation*}
\Vert g\Vert_{L^p_{x,y,t}}^p=\Sigma_1+\Sigma_2+\Sigma_3\lesssim C^{-1}\Vert g\Vert_{L^p_{x,y,t}}^p+C^\frac{p-2}{2}\Vert g\Vert_{L^p_{x,y,t}}^\frac{p}{2}\left[N^{\frac{p-2}{2}}\vert\gamma\vert^{-\frac{p-2}{4}}+N^{p-4+\varepsilon}|\gamma|^{-\frac{p-2}{4}}\right].
\end{equation*}
We now fix the universal constant $C$ so that it is large enough to absorb the first term on the RHS in the LHS and we get:
\begin{equation}\label{possimp01}
\Vert g\Vert_{L^p_{x,y,t}}^p \lesssim_C \Vert g\Vert_{L^p_{x,y,t}}^\frac{p}{2}\vert\gamma\vert^{-\frac{p-2}{4}}\max(N^{(p-4+\varepsilon)}, N^{\frac{p-2}{2}})\lesssim \Vert g\Vert_{L^p_{x,y,t}}^\frac{p}{2}\vert\gamma\vert^{-\frac{p-2}{4}}N^{p-3}
\end{equation}
if $p>4$. This finishes the proof of \eqref{SE} once we justify the decomposition \eqref{DecK3}. 

\medskip

In order to get the decomposition \eqref{DecK3}, we need a minor/major arc decomposition of the Kernel $K_N^{\T^2}$. We recall the following bound from \cite[Lemma 3.18]{Bo2}: assume that $1\le a\le q\le N$, $(a,q)=1$ and $\vert t/2\pi-a/q\vert\le 1/Nq$, then
\begin{equation*}
\Vert K_N^{\mathbb{T}^2}(\cdot,t)\Vert_{L^\infty_y(\mathbb{T}^2)}\lesssim \frac{N^{2}}{q(1+N^{2}\vert t/2\pi-a/q\vert)}.
\end{equation*}
From \eqref{Ker} and the explicit formula in $\mathbb{R}$, we directly get that, under the above conditions, the first estimate in \eqref{BdOnK} can be further refined into:
\begin{equation}\label{Weyl}
\Vert K_{N,\gamma}(t)\Vert_{L^\infty_{x,y}(\mathbb{R}\times\mathbb{T}^2)}\lesssim \vert\gamma\vert^{-1/2}\frac{N^{2}}{q(1+N^{2}\vert t/2\pi-a/q\vert)}.
\end{equation}

For $L \in 2^\Z$, we will need the following fine-scale time-cutoff functions $\phi_{\geq L}(\cdot):=\eta^1(L\cdot)$ and $\phi_{L}=\phi_{\geq L}-\phi_{\geq 2L}$. Let $n$ be such that $N=2^n$ and for $0\le k\le n-10$, $j+k\le n$, define the time projectors 
\begin{equation}\label{Proj}
\begin{split}
p_{k,j}(s):=&\sum_{2^k \leq q \leq 2^{k+1}}\sum_{(a,q)=1, 1\leq a\leq q} \phi_{N2^{k+j+10}}(s/2\pi-a/q)\quad \text{ for } j\leq n-k-1,\\
p_{k, n-k}(s):=& \sum_{2^k \leq q\leq 2^{k+1}}\sum_{(a,q)=1, 1 \leq a \leq q} \phi_{\geq 2^{10}N^2 }(s/2\pi-a/q)
\end{split}
\end{equation}
and let $e$ be such that
\begin{equation*}
1=\sum_{k=0}^{n-10}\sum_{j;j+k\le n}p_{k,j}(t)+e(t).
\end{equation*}
We now define
\begin{equation}\label{DefK1}
K^1_{N,\gamma;\alpha,\beta}(x,y,t):=\left[e(t)+\sum_{\mathcal{S}_M}p_{k,j}(t)+\sum_{\mathcal{S}_m}\rho_{j,k}p_{k,0}(t)\right]K_{N,\gamma}(x,y,t).
\end{equation}
where we used the following notations:
\begin{equation}\label{def of rho_kj}
\rho_{k,j}:=2^{-j} \;\text{ if } j<n-k \;\;\text{ and } \rho_{k, n-k}:=2^{-n+k+1},
\end{equation}
\begin{equation*}
\begin{split}
\mathcal{S}_M&:=\{(j,k):0\le k\le n-10, 0\le j+k\le n;\,\, 2^{n+j}\le C^{-1}\alpha\beta^{p^\prime-1}|\gamma|^{1/2}\}\\
\mathcal{S}_m&:=\{(j,k):0\le k\le n-10, 0\le j+k\le n;\,\, 2^{n+j}\ge C^{-1}\alpha\beta^{p^\prime-1}|\gamma|^{1/2}\}
\end{split}
\end{equation*}
the defining sets of the major and minor arcs. The $\rho_{j,k}$ are introduced to cancel the large zero frequency mode of $K^2_{n,\gamma;\alpha,\beta}(x,y,t)$ (cf. $\eqref{def of K^2}$ and $\eqref{estimate on b_m}$). 

By direct inspection, we see that the major arcs are disjoint:
\begin{equation*}
\phi_{N2^{k+j+10}}(s/2\pi-a/q)\phi_{N2^{k^\prime+j^\prime+10}}(s/2\pi-a^\prime/q^\prime)\ne 0\Rightarrow k=k^\prime,\,(a,q)=(a^\prime,q^\prime),\,\vert j-j^\prime\vert\le 3.
\end{equation*}
In particular,
\begin{equation*}
0\le p_{k,j}\le 1\quad\text{and}\quad p_{k,j}(t)p_{k^\prime,j^\prime}(t)\ne 0\Rightarrow k=k^\prime;\,\vert j-j^\prime\vert\le 3.
\end{equation*}
If $e(t)\ne 0$, then either $\vert t/2\pi-a/q\vert\ge 2^{-n}/q$ or $q\gtrsim 2^n$. In both cases, from \eqref{Weyl}, we see that
\begin{equation*}
\Vert e(t)K_{N,\gamma}\Vert_{L^\infty_{x,y,t}}\lesssim 2^n\vert\gamma\vert^{-1/2}.
\end{equation*}
Invoking \eqref{Weyl} again, we deduce that
\begin{equation*}
\Vert K_{N,\gamma}(x,t)\sum_{\mathcal{S}_M}p_{k,j}(t)\Vert_{L^\infty_{x,y,t}}\lesssim \sup_{\mathcal{S}_M}\Vert p_{k,j}K_{N,\gamma}\Vert_{L^\infty_{x,y,t}}\lesssim C^{-1}\alpha\beta^{p^\prime-1}.
\end{equation*}
Similarly using the fact that $2^n\vert\gamma\vert^{-1/2}\le\alpha\beta^{p^\prime-1}$, we get that
\begin{equation*}
\Vert K_{N,\gamma}(x,t)\sum_{\mathcal{S}_m}2^{-j}p_{k,0}(t)\Vert_{L^\infty_{x,y,t}}\lesssim \sup_{k}\sum_{j:(j,k)\in\mathcal{S}_m}2^{-j-k}2^{2n}\vert\gamma\vert^{-1/2}\lesssim C^{-1}\alpha\beta^{p'-1}.
\end{equation*}
This proves that $K^1_{n,\gamma;\alpha,\beta}$ satisfies the right bound in \eqref{DecK3}.

\medskip

It remains to prove that
\begin{equation}\label{def of K^2}
K^2_{N,\gamma;\alpha,\beta}(x,y,t)=\chi(t)\sum_{\mathcal{S}_m}\left[p_{k,j}(t)-\rho_{k,j}p_{k,0}(t)\right]K_{N,\gamma}(x,y,t)
\end{equation}
satisfies the appropriate estimates in \eqref{DecK3}. Here we need the following lemma from \cite{IoPa}:

\begin{lemma}\label{decomposition}
Assume $\kappa>0$, $Q,M\in\mathbb{N}^\ast$, $M\geq 8Q$, $S\subseteq\{1,\ldots,Q\}$, and $\eta:\mathbb{R}\to[0,1]$ is a smooth function supported in $[-2,2]$. Then, for any $t\in\mathbb{R}$,
\begin{equation}\label{bo1}
\sum_{q\in S,\,a\in\mathbb{Z}\,(a,q)=1}\eta(MQ(t-a/q))=\sum_{m\in\mathbb{Z}}(MQ)^{-1}\widehat{\eta}(2\pi m/(MQ))c_m e^{2\pi imt},
\end{equation}
and the coefficients $c_m$ have the properties
\begin{equation}\label{bo2}
c_m=\sum_{q\in S,\,0\le a\le q-1,\,(a,q)=1}e^{-2\pi im\cdot a/q},\qquad\sup_{m\in\mathbb{Z}}|c_m|\leq 4Q^2,\qquad |c_m|\leq C_\kappa d(m,Q)Q^{1+\kappa},
\end{equation}
where $d(m,Q)$ denotes the number of divisors of $m$ less than or equal to $Q$.
\end{lemma}

Using this lemma and because of our definition of $\rho_{k,j}$ in $\eqref{def of rho_kj}$, we can write
\begin{equation*}
p_{k,j}(t)-\rho_{k,j}p_{k,0}(t)=\sum_{m\in\mathbb{Z}}c_me^{i2\pi mt}2^{-j-k-n-10}\left[\hat{\phi_1}(2^{-j-k-n-10}2\pi m)-\hat{\phi_1}(2^{-k-n-10}2\pi m)\right]
\end{equation*}
if $k < n-k$, and 
\begin{equation*}
p_{k,n-k}(t)-\rho_{k,n-k} p_{k,0}(t)=\sum_{m\in\mathbb{Z}}c_me^{i2\pi mt}2^{-2n-10}\left[\widehat{\phi_{\geq 1}}(2^{-2n-10}2\pi m)-2\hat{\phi_1}(2^{-k-n-10}2\pi m)\right]
\end{equation*}

In either case, we have that for $0\leq k \leq n-10, \;0\leq j \leq n-k$:

\begin{equation*}
p_{k,j}(t)-\rho_{k,j}p_{k,0}(t)=\sum_{m\in\mathbb{Z}}b_me^{i2\pi mt}
\end{equation*}
with
\begin{equation}\label{estimate on b_m}
b_0=0,\quad \vert b_m\vert\le C_\varepsilon d(m,2^{k+1})2^{-j-n+\varepsilon k}(1+2^{-j-k-n}\vert m\vert)^{-10},
\end{equation}
where $d(m,Q)$ denotes the number of divisors of $m$ less than $Q$. We remark that $\rho_{k,j}$ were specifically introduced to make $b_0=0$ (recall that $\widehat{\phi_{\geq 1}}(0)=2\widehat{\phi_1}(0)$).

Using the fact that $d(m,Q)\lesssim C_\varepsilon\vert m\vert^\varepsilon$, we finally get
\begin{equation*}
\begin{split}
\Vert \mathcal{F}_{x,y,t}K^2_{N,\gamma;\alpha,\beta}\Vert_{L^\infty_{\xi,z,\tau}}
&\lesssim\sum_{(k,j) \in \mathcal{S}_m}
\sup_{\xi,z,\tau}\left\vert\int_{\mathbb{R}}\chi(t)\left(p_{k,j}(t)-2^{-j}p_{k,0}(t)\right)e^{it\left[\vert\xi\vert^2+\vert z\vert^2-\tau-\gamma\right]}dt\right\vert\\
&\le \sum_{(k,j) \in \mathcal{S}_m}\sum_{m\in\mathbb{Z}} \vert b_m\vert\lesssim C_\varepsilon \sum_{(k,j) \in \mathcal{S}_m} 2^{\varepsilon n}2^{-j-n}.
\end{split}
\end{equation*}
which gives second bound in \eqref{DecK3}. This finishes the proof.
\end{proof}

\begin{remark}

One can sharpen the estimate \eqref{Stric1} to allow for different smoothness in both directions, namely, when $1\le M\le N$,
\begin{equation}\label{Strich M refined U^p lemma}
\Vert P_{\le N}P^x_{\le M}e^{it\Delta_{\mathbb{R}\times\mathbb{T}^2}}u_0\Vert_{l^q_\gamma L^{p}_{x,y,t}(\mathbb{R}\times\mathbb{T}^2\times [2\pi\gamma,2\pi(\gamma+1)])}\lesssim \left(\frac{M}{N}\right)^{\theta}N^{\left(\frac{3}{2}-\frac{5}{p}\right)}\Vert u_0\Vert_{L^2_{x,y}(\mathbb{R}\times\mathbb{T}^2)}.
\end{equation}
where $\theta=1/2-2/p$.
\end{remark}
This follows directly from \eqref{Adj} after one refines the diagonal contribution \eqref{DiagDual} with
\begin{equation*}
\Vert \int_{\mathbb{R}}e^{-is\Delta_{\mathbb{R}\times\mathbb{T}^2}}P_{\le N}P_{\le M}^xh(s)ds\Vert_{L^2_{x,y}(\mathbb{R}\times\mathbb{T}^2)}\lesssim \left(\frac{M}{N}\right)^{\frac{1}{2}-\frac{1}{p}}N^{\frac{3}{2}-\frac{5}{p}}\Vert h\Vert_{L^{p^\prime}_{x,y,t}(\mathbb{R}\times\mathbb{T}^2\times [-2\pi,2\pi])}.
\end{equation*}
The claim follows\footnote{Taking into account the better derivative gain in the non-diagonal part (by optimizing the power of $N$ in \eqref{possimp01}), one can improve $\theta$ in \eqref{Strich M refined U^p lemma} into $\theta=1-\frac{4}{p}$ if $4<p<6$ and $1/2-1/p+\varepsilon$ if $p\geq 6$.}.


\section{Local well-posedness and small-data scattering}\label{Section-LWP}

We start by translating the Strichartz estimate in Theorem \ref{Striclem} into an embedding theorem for $U^p_\Delta$ spaces:

\begin{lemma}\label{U^p lemma} For $p>4$ and $q$ as in Theorem \ref{Striclem}, the following estimate holds for any time interval $I\subset \R$ and every cube $Q\subset \R^3$ of size N:

\begin{equation}\label{U^p estimate}
\|\mathbbm{1}_I(t)P_Q u\|_{l_\gamma^q L^p_{x,y,t}( \R\times\T^2\times I_\gamma)} \lesssim N^{\frac{3}{2}-\frac{5}{p}}\|u\|_{U_\Delta^{\min(p,q)}(I; L^2_{x,y}(\R\times \T^2))}
\end{equation}

\end{lemma}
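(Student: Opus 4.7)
The plan is to transfer the linear Strichartz bound of Theorem~\ref{Striclem} to $U^r_\Delta$ functions, where $r:=\min(p,q)$, via the atomic decomposition, together with a short case split that accommodates the mixed $l^q_\gamma L^p$ output norm. First I would upgrade \eqref{Stric1} from $P_{\le N}$ to an arbitrary cube projector $P_Q$ of size $N$: writing $Q=C_z$ and using the identity $P_Q\phi=e^{i\langle z,\cdot\rangle}P_C(e^{-i\langle z,\cdot\rangle}\phi)$ (for $z$ with integer components in the periodic directions) together with the Galilean invariance of $e^{it\Delta_{\R\times\T^2}}$, the frequency shift becomes a space--time translation on the modulus; since $L^p_{x,y,t}$ and the $l^q_\gamma$-sum are translation invariant, Theorem~\ref{Striclem} directly yields
\begin{equation*}
\|P_Q e^{it\Delta}\phi\|_{l^q_\gamma L^p_{x,y,t}(\R\times\T^2\times I_\gamma)}\lesssim N^{\frac{3}{2}-\frac{5}{p}}\|\phi\|_{L^2}.
\end{equation*}

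By the atomic definition of $U^r_\Delta$ it then suffices to establish the claim when $u$ is a single $U^r_\Delta$-atom $a(t)=\sum_{k=1}^K\mathbbm{1}_{J_k}(t)e^{it\Delta}\phi_k$, with pairwise disjoint intervals $\{J_k\}$ and $\sum_k\|\phi_k\|_{L^2}^r\le 1$. Writing $f_k:=P_Qe^{it\Delta}\phi_k$ and $b_{k,\gamma}:=\|\mathbbm{1}_{I\cap J_k\cap I_\gamma}f_k\|_{L^p_{x,y,t}}$, the disjointness of the $J_k$ decouples the $L^p_t$-integral on each window, $\|\mathbbm{1}_I P_Q a\|_{L^p(I_\gamma)}^p=\sum_k b_{k,\gamma}^p$, while the free-solution bound just obtained supplies the per-$k$ estimate $(\sum_\gamma b_{k,\gamma}^q)^{1/q}\lesssim N^{3/2-5/p}\|\phi_k\|_{L^2}$. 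What remains is to combine these into $\|\{b_{k,\gamma}\}\|_{\ell^q_\gamma\ell^p_k}$ with at most an $\ell^r$-sum in $k$, which calls for a short case split. If $p\ge q$ (so $r=q$), the elementary inequality $(\sum_k c_k)^{q/p}\le\sum_k c_k^{q/p}$, valid because $q/p\le 1$, combined with Fubini gives
\begin{equation*}
\|\mathbbm{1}_I P_Q a\|_{l^q_\gamma L^p}^q=\sum_\gamma\Bigl(\sum_k b_{k,\gamma}^p\Bigr)^{q/p}\le\sum_k\sum_\gamma b_{k,\gamma}^q\lesssim N^{q(\frac{3}{2}-\frac{5}{p})}\sum_k\|\phi_k\|_{L^2}^q.
\end{equation*}
If instead $p<q$ (so $r=p$), the generalized Minkowski inequality, valid since $p\le q$, exchanges the mixed norms:
\begin{equation*}
\|\{b_{k,\gamma}\}\|_{\ell^q_\gamma\ell^p_k}\le\|\{b_{k,\gamma}\}\|_{\ell^p_k\ell^q_\gamma}\lesssim N^{\frac{3}{2}-\frac{5}{p}}\Bigl(\sum_k\|\phi_k\|_{L^2}^p\Bigr)^{1/p}.
\end{equation*}
Either way, the atom normalization $\sum_k\|\phi_k\|_{L^2}^r\le 1$ closes the estimate, and atomic decomposition then passes to general $u\in U^r_\Delta(I;L^2)$.

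The main technical obstacle is the mixed nature of the output norm: the standard black-box transfer principle from $\|S e^{it\Delta}\phi\|_Y\lesssim\|\phi\|_{L^2}$ to $\|Su\|_Y\lesssim\|u\|_{U^p}$ does not apply verbatim when $Y=l^q_\gamma L^p$ with $q\ne p$, because the outer $\ell^q_\gamma$-sum and the inner $L^p_t$-integral interact with the atomic superposition in opposite ways. The case split above is precisely what dictates the exponent $r=\min(p,q)$ in the $U$-space on the right-hand side, while preserving the Strichartz derivative count $N^{3/2-5/p}$.
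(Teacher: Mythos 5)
Your argument is correct, and it is the standard transfer argument that the paper alludes to when it declares the proof ``slightly tedious but straightforward'' and omits it: modulation/Galilean invariance reduces $P_Q$ to $P_{\le CN}$ so that Theorem \ref{Striclem} applies to free solutions, and the atomic decomposition with your two-case combination ($q/p\le 1$ subadditivity when $p\ge q$, Minkowski's inequality when $p<q$) is exactly what produces the exponent $\min(p,q)$ on the right-hand side.
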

The proof of this lemma is slightly tedious but straightforward. We omit the details.

\endproof

\subsection{Contraction argument} The following quantity will be of use in the local existence theory:
\begin{equation}\label{def of Z'}
\|u\|_{Z'(I)}=\|u\|_{X^1(I)}^{1/4}\|u\|_{Z(I)}^{3/4}
\end{equation}

The local well-posedness theory will follow from the following two lemmas:

\begin{lemma}\label{half nl lemma}
Suppose that $u_i=P_{N_i}u_i$ for $i=1,2,3$ satisfying $N_1 \geq N_2 \geq N_3$. There exists $\delta>0$ such that the following estimate holds for any interval $I \subset \R$:

\begin{equation}\label{half nonlinear}
\|u_1u_2 u_3\|_{L^2_{x,y,t}(\R\times \T^2\times I)} \lesssim \left(\frac{N_3}{N_1}+\frac{1}{N_2}\right)^{\delta} \|u_1\|_{Y^0(I)} \|u_2\|_{Z'(I)}\|u_3\|_{Z'(I)}.
\end{equation}
\end{lemma}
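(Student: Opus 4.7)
The strategy is to reduce the trilinear estimate to a bilinear Strichartz improvement on the extreme-frequency pair $u_1 u_3$, and to absorb $u_2$ via Hölder against a time-divisible Strichartz norm that fits inside $Z'$. Writing $L^2_t=l^2_\gamma L^2_t(I_\gamma)$, I would first prove a bound on each unit time slab $I_\gamma$ and then sum in $\gamma$ using Hölder; the precise $l^q_\gamma$-exponents embedded in $Z$ (with the two choices $p_0\in\{9/2,18\}$) and the $l^2_\gamma$-integrability provided by the embedding $Y^0\hookrightarrow V^2_\Delta$ are calibrated by Theorem \ref{Striclem} so that this $\gamma$-sum converges without loss.

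The main ingredient I would establish separately is a bilinear estimate of the schematic form
\begin{equation*}
\|u_1\,u_3\|_{L^2_{x,y,t}(I_\gamma)} \lesssim \Bigl(\tfrac{N_3}{N_1}+\tfrac{1}{N_2}\Bigr)^{\delta'}N_3\,\|u_1\|_{Y^0(I_\gamma)}\,\|u_3\|_{Y^0(I_\gamma)},
\end{equation*}
obtained by partitioning the $N_1$-annulus of $u_1$ into Fourier cubes of sidelength $\sim N_3$ and invoking almost-orthogonality with the $U^2_\Delta$-transference principle to reduce to a linear bilinear Strichartz for two free Schr\"odinger evolutions. The $(N_3/N_1)^{\delta'}$-gain is the standard Fourier-support separation improvement, while the alternative $(1/N_2)^{\delta'}$-gain is produced by a bilinear refinement of the minor/major-arc kernel decomposition of Lemma \ref{SElem1} applied in the $\T^2$-direction; this second gain is what saves us when $N_3\simeq N_1$ and the first gain is ineffective.

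To introduce $u_2$ and convert the $Y^0$-norm on $u_3$ to a $Z'$-norm, I would interpolate the $L^2$-bilinear bound above against a Bernstein-type $L^\infty$-bound $\|u_1u_3\|_{L^\infty_{x,y,t}}\lesssim N_1^{3/2}N_3^{3/2}\|u_1\|_{Y^0}\|u_3\|_{Y^0}$, yielding $\|u_1u_3\|_{L^a(I_\gamma)}$ for every $a\in[2,\infty]$, and pair it via Hölder with $\|u_2\|_{L^b(I_\gamma)}$ for $1/a+1/b=1/2$ and $b\in\{9/2,18\}$ matching one of the $Z$-norm exponents. Raising to the fractional power $3/4$, the $u_2$-factor becomes $\|u_2\|_Z^{3/4}\|u_2\|_{X^1}^{1/4}=\|u_2\|_{Z'}$ by \eqref{def of Z'}, and a symmetric step replaces $\|u_3\|_{Y^0}$ by $\|u_3\|_{Z'}$---the extra derivatives being absorbed by the $X^1$-fractional factors, with the exponent $\delta'$ being degraded to a slightly smaller $\delta>0$.

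The principal obstacle will be the bilinear estimate itself: producing the double gain $(N_3/N_1+1/N_2)^{\delta'}$ requires a non-trivial bilinear adaptation of the circle-method argument of Lemma \ref{SElem1}, in which the $\R$-dispersion must control the non-diagonal $TT^*$-contribution in $t$ while the $\T^2$-almost-orthogonality produces the Fourier-separation gain. A secondary subtlety is to close the dyadic summation over $N_1\ge N_2\ge N_3$ after the $\gamma$-Hölder step, which is precisely why two distinct exponents $p_0\in\{9/2,18\}$ appear in the definition of $Z$: together they allow simultaneous control of the small-$N$ and large-$N$ regimes without ever losing the $(N_3/N_1+1/N_2)^\delta$ factor.
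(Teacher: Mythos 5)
Your overall architecture is the right one and matches the paper's: two base trilinear estimates, one carrying the frequency gain with $Y^0$-type norms and one carrying $Z$-norms on the lower-frequency factors, combined through the geometric-mean definition $\|u\|_{Z'}=\|u\|_{X^1}^{1/4}\|u\|_{Z}^{3/4}$ (together with $N_i\|u_i\|_{Y^0}\lesssim \|u_i\|_{X^1}$ for frequency-localized functions). The cube decomposition of $u_1$ at the scale of the lower frequencies and the H\"older pairing against the $l^{q}_\gamma L^{p_0}$ components of $Z$ with $p_0\in\{9/2,18\}$ are also exactly the paper's mechanism for the second base estimate.

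There are, however, two genuine gaps. First, your central ingredient — a \emph{bilinear} estimate on $u_1u_3$ with gain $\bigl(\tfrac{N_3}{N_1}+\tfrac{1}{N_2}\bigr)^{\delta'}$ — is structurally flawed: $N_2$ is the frequency of $u_2$, which does not appear in that product, so no bilinear argument on $u_1u_3$ can produce a $1/N_2$ factor. The natural bilinear substitute is a $1/N_3$ gain, but since $N_2\geq N_3$ this is strictly weaker, and it is insufficient downstream: in Lemma \ref{nl estimate lemma} one must sum the gain over the \emph{middle} frequency $N_2$ ranging between $N_3$ and $N_1$, and with only $\bigl(\tfrac{N_3}{N_1}+\tfrac{1}{N_3}\bigr)^{\delta}$ that sum diverges logarithmically when $N_3\sim 1$. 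The estimate really must be trilinear; the paper takes it as \eqref{extra refinement}, imported from Proposition 3.5 of \cite{HeTaTz} (Proposition 2.8 of \cite{HeTaTz2}), where the $1/N_2$ gain comes from the $\epsilon$ of room in the $p>4$ Strichartz exponents applied after decomposing $u_1$ into cubes of side $N_2$ — no new bilinear circle-method argument beyond Theorem \ref{Striclem} is required. Second, your ``symmetric step'' replacing $\|u_3\|_{Y^0}$ by $\|u_3\|_{Z'}$ is not something one can do a posteriori: $Z'$ is a weaker norm than $X^1$, so an inequality with $Z'$ on the right is a \emph{stronger} statement and cannot be obtained by majorizing $Y^0$. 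Both $u_2$ and $u_3$ have to sit in $Z$ already in the second base estimate, which is precisely what the paper's \eqref{trilinear Z} provides before the interpolation is performed.
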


\proof It is sufficient to study the case $I=\R$. The result is a consequence of the following two estimates:
\begin{equation}\label{extra refinement}
\|u_1u_2 u_3\|_{L^2_{x,y,t}(\R\times \T^2\times\mathbb{R})} \lesssim N_2N_3\left(\frac{N_3}{N_1}+\frac{1}{N_2}\right)^{\delta_0} \|u_1\|_{Y^0(\R)} \|u_2\|_{Y^0(\R)}\|u_3\|_{Y^0(\R)}.
\end{equation}
and 
\begin{equation}\label{trilinear Z}
\|u_1u_2 u_3\|_{L^2_{x,y,t}(\R\times \T^2\times\mathbb{R})} \lesssim \|u_1\|_{Y^0(\R)} \|u_2\|_{Z(\R)}\|u_3\|_{Z(\R)}.
\end{equation}
Estimate $\eqref{extra refinement}$ follows as in Proposition 3.5 of \cite{HeTaTz} or Proposition 2.8 of \cite{HeTaTz2}. As for $\eqref{trilinear Z}$, we first notice that, by orthogonality considerations, we may replace $u_1$ by $P_C u_1$ where $C$ is a cube of dimension $N_2$. This allows to estimate:

\begin{align*}
\|(P_C u_1)u_2 u_3\|_{L^2_{x,y,t}(\R\times \T^2\times\mathbb{R})}\lesssim&  \|P_C u_1\|_{l^{\frac{36}{5}}_\gamma L_{x,y,t}^{\frac{9}{2}}} \|u_2\|_{l^{\frac{36}{5}}_\gamma L_{x,y,t}^{9/2}}\|u_3\|_{l^{\frac{9}{2}}_\gamma L_{x,y,t}^{18}}\\
\lesssim& N_2^{\frac{7}{18}}\|P_C u_1\|_{Y^0} \|u_2\|_{l^{\frac{36}{5}}_\gamma L_{x,y,t}^{9/2}}\|u_3\|_{l^{\frac{9}{2}}_\gamma L_{x,y,t}^{18}}\\
\lesssim&  (\frac{N_3}{N_2})^{2/9} \|P_C u_1\|_{Y^0}\|u_2\|_{Z(\R)}\|u_3\|_{Z(\R)}
\end{align*}
which is acceptable.

\endproof
 
 As a result of the above lemma, we have:

\begin{lemma}\label{nl estimate lemma}
For $u_i \in X^1(I)$, $i=1,\ldots, 5$, there holds that
\begin{equation}\label{nl refined}
\|\sum_{K\geq 1} P_{K} u_1\prod_{i=2}^5 P_{\leq CK} \tilde u_i\|_{N(I)} \lesssim_C \|u_1\|_{X^1(I)} \prod_{i=2}^5\|u_i\|_{Z'(I)}.
\end{equation}
and in particular,
\begin{equation}\label{nl estimate}
\|\prod_{i=1}^5 \tilde u_i\|_{N(I)} \lesssim \sum_{j=1}^5 \|u_j\|_{X^1(I)} \prod_{i\neq j}\|u_i\|_{Z'(I)}
\end{equation}
where $\tilde u_i$ is either $u_i$ or $\overline{u_i}$.
\end{lemma}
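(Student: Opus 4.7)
\emph{Proof plan.} The second estimate \eqref{nl estimate} follows from the first \eqref{nl refined} by Littlewood--Paley decomposition. Indeed, writing each $\tilde u_i = \sum_{N_i} P_{N_i} \tilde u_i$ and splitting the product $\prod_{i=1}^5 \tilde u_i$ into five pieces according to which index $j$ carries the maximal frequency $N_j$, the piece indexed by $j$ has $P_{N_j}\tilde u_j$ play the role of $P_K u_1$ while the remaining factors satisfy $N_i \leq N_j$ and hence equal $P_{\leq C N_j} P_{N_i}\tilde u_i$ for any $C\geq 1$. Applying \eqref{nl refined} to each of the five (symmetric) pieces yields \eqref{nl estimate}.

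For \eqref{nl refined}, we invoke the duality statement in Proposition \ref{N prop}: it suffices to bound
$$\mathcal{I} := \int_{I \times (\R \times \T^2)} \bar v \sum_{K \geq 1}(P_K u_1) \prod_{i=2}^5 P_{\leq CK} \tilde u_i \, dx\, dt$$
for an arbitrary test function $v$ with $\|v\|_{Y^{-1}(I)} \leq 1$. We dyadically decompose $v = \sum_M P_M v$ and each $\tilde u_i = \sum_{N_i \leq CK} P_{N_i} \tilde u_i$; Fourier-support considerations force $M \lesssim K$ in any non-vanishing interaction. After ordering WLOG $N_2 \geq N_3$ and $N_4 \geq N_5$, we apply Cauchy--Schwarz in $L^2_{x,y,t}$ to split the six-fold integral into a product of two trilinear $L^2$ norms:
$$|\mathcal{I}| \leq \sum \bigl\|(P_K u_1)(P_{N_4}\tilde u_4)(P_{N_5}\tilde u_5)\bigr\|_{L^2_{x,y,t}} \cdot \bigl\|\bar v \,(P_{N_2}\tilde u_2)(P_{N_3}\tilde u_3)\bigr\|_{L^2_{x,y,t}}.$$

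To each trilinear factor we apply Lemma \ref{half nl lemma}. For the first factor, the highest-frequency piece is $P_K u_1$ (using $N_4,N_5\leq CK$ and absorbing $C$ into the implicit constant), which we place in $Y^0$; this yields a $\delta$-gain of $(N_5/K + 1/N_4)^\delta$. For the second factor we place whichever of $\{P_M v, P_{N_2}\tilde u_2, P_{N_3}\tilde u_3\}$ has the largest frequency in $Y^0$, after a short case split on the ordering of $(M, N_2, N_3)$; in each case a $\delta$-gain of the same form appears. We then convert to the target norms via $\|P_K u_1\|_{Y^0} \lesssim K^{-1}\|P_K u_1\|_{Y^1}$ and $\|P_M v\|_{Y^0} \lesssim M\|P_M v\|_{Y^{-1}}$, noting $M\lesssim K$ makes the factor $M/K$ harmless, and finish by Cauchy--Schwarz in $K$ and the square-summability $\sum_K \|P_K u_1\|_{Y^1}^2 \lesssim \|u_1\|_{X^1}^2$ together with $\sum_M \|P_M v\|_{Y^{-1}}^2 \lesssim \|v\|_{Y^{-1}}^2 \leq 1$. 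The main obstacle is that the dyadic sum over the six parameters $(K,M,N_2,N_3,N_4,N_5)$ carries no intrinsic decay; the key input is the pair of $\delta$-gains from Lemma \ref{half nl lemma}, which together with the strategic pairing above are precisely strong enough to absorb all the dyadic summations and recover the $X^1 \cdot (Z')^4$ control on the right-hand side of \eqref{nl refined}.
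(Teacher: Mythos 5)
Your overall strategy --- duality via Proposition \ref{N prop}, Littlewood--Paley decomposition, Cauchy--Schwarz into two trilinear $L^2$ factors, and Lemma \ref{half nl lemma} applied to each --- is exactly the route the paper intends (it defers the details to Lemma 3.2 of \cite{IoPa}), and your reduction of \eqref{nl estimate} to \eqref{nl refined} is fine. The gap is in the group containing the dual function $v$. Lemma \ref{half nl lemma} places the \emph{highest}-frequency factor in $Y^0$ and the other two in $Z'$. When $M\geq N_2\geq N_3$ your argument works (and the frequency constraint then in fact forces $M\sim K$, so the $M$-summation is harmless). But the case $N_2>M$ is genuinely present --- the output frequency $\xi_1-\xi_2+\xi_3-\xi_4+\xi_5$ of the quintic interaction can be $O(1)$ even when every input frequency is large --- and there your prescription ``place the largest in $Y^0$'' puts $P_{N_2}\tilde u_2$ in the $Y^0$ slot and hence $P_M v$ in a $Z'$ slot. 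That is not admissible: $Z'=(X^1)^{1/4}Z^{3/4}$ is an $X^1$-type quantity at regularity one, and $\|P_Mv\|_{Z'}$ is not controlled by $\|v\|_{Y^{-1}}\leq 1$ (even for a single dyadic block one cannot bound $U^2$-based norms by $V^2$-based ones). Conversely, keeping $v$ in the $Y^0$ slot --- which is what your conversion $\|P_Mv\|_{Y^0}\lesssim M\|P_Mv\|_{Y^{-1}}$ presupposes --- violates the ordering hypothesis of Lemma \ref{half nl lemma}. Adaptive regrouping does not rescue this either: if $M$ lies below all of $N_2,\dots,N_5$, then $v$ cannot be the top frequency of any group.

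The standard repair, which is the actual content of this case in \cite{HeTaTz} and \cite{IoPa}, is to unpack Lemma \ref{half nl lemma} into its two ingredients and interpolate them differently in the bad regime. One uses the symmetric estimate \eqref{extra refinement}, in which all three factors are measured in $Y^0$ and the derivative weights $N_2N_3$ fall on the two \emph{lower} frequencies of the group; with $P_Mv$ among the lower frequencies this produces a factor of $M$ that turns $\|P_Mv\|_{Y^0}$ into the harmless $M^2\|P_Mv\|_{Y^{-1}}$, while the constraint $N_2\sim K$ (forced by $M\ll N_2$ and the vanishing of the total frequency) supplies the decay $M^2/N_2\lesssim M^2/K$ needed to sum over $M$ and $K$. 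The conversion of the remaining $Y^0$ factors into $Z'$ is then done by interpolating with \eqref{trilinear Z} in the $u$-slots only. Without this additional case your dyadic sums either involve an uncontrolled norm of $v$ or an illegitimate application of the trilinear lemma, so as written the proof of \eqref{nl refined} is incomplete.
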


\begin{proof}
The proof is a fairly standard consequence of Lemma \ref{half nl lemma} (cf. Lemma 3.2 of \cite{IoPa}).
\end{proof}

We are now ready to prove the local existence theory for large data and global existence and scattering theory for small data. The proof of the following statement is very similar from the proofs in \cite[Section 3]{IoPa} given Lemma \ref{nl estimate lemma} and is omitted.

\begin{proposition}[Local Well-posedness]\label{LWP}
Let $E >0$ and suppose that $\|u_0\|_{H^1(\R \times \T^2)}\leq E$. There exists $\delta_0=\delta_0(E)$ $\approx \min(1, E^{-1})$ such that if 
$$
\|e^{it\Delta_{\mathbb{R}\times\mathbb{T}^2}}u_0\|_{Z(I)} \leq \delta\leq \delta_0
$$
on some interval $I\subset \R$ containing $0$, then there exists a unique strong solution $u\in X^1_c(I)$ of $\eqref{NLS}$ satisfying $u(0)=u_0$ and 
\begin{equation}\label{SmalldataCCL}
\|u(t)-e^{it\Delta_{\mathbb{R}\times\mathbb{T}^2}}u_0\|_{X^1(I)}\lesssim E^2\delta^3.
\end{equation}
Moreover, the quantities $E(u)$ and $M(u)$ defined in \eqref{mass and energy} are conserved on $I$.
\end{proposition}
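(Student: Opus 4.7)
The plan is to set up a Banach contraction on the Duhamel map
\[
\Phi(u)(t) := e^{it\Delta}u_0 - i\int_0^t e^{i(t-s)\Delta}\bigl(|u|^4 u\bigr)(s)\, ds,
\]
where all the real work has already been carried out in Lemma \ref{nl estimate lemma}. I would work on the closed set
\[
B = \bigl\{u \in X^1_c(I) : \|u\|_{X^1(I)} \leq 2E,\ \|u\|_{Z(I)} \leq 2\delta\bigr\},
\]
which is a complete metric space under the $X^1(I)$ distance. The reference trajectory sits inside $B$: since $e^{it\Delta}u_0 - e^{it\Delta}u_0 = 0$ we have $\|e^{it\Delta}u_0\|_{X^1(I)} \leq \|u_0\|_{H^1} \leq E$, while $\|e^{it\Delta}u_0\|_{Z(I)} \leq \delta$ by hypothesis.

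For any $u \in B$, the interpolation formula \eqref{def of Z'} gives $\|u\|_{Z'(I)} \lesssim E^{1/4}\delta^{3/4}$, and Lemma \ref{nl estimate lemma} with all five factors equal to $u$ yields
\[
\bigl\|\Phi(u) - e^{it\Delta}u_0\bigr\|_{X^1(I)} = \bigl\||u|^4 u\bigr\|_{N(I)} \lesssim \|u\|_{X^1(I)}\|u\|_{Z'(I)}^4 \lesssim E^2\delta^3.
\]
Using $\|\cdot\|_{Z(I)} \lesssim \|\cdot\|_{X^1(I)}$, both defining conditions of $B$ are preserved by $\Phi$ as soon as $C E^2\delta^3 \leq E$ and $\delta + CE^2\delta^3 \leq 2\delta$, which is exactly encoded in the threshold $\delta_0 \asymp \min(1, E^{-1})$. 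Contraction follows from the same lemma applied to the multilinear expansion of $|u|^4 u - |v|^4 v$, using $\|u-v\|_{Z(I)} \lesssim \|u-v\|_{X^1(I)}$ to control the $Z'$ norm of the difference by $\|u-v\|_{X^1(I)}$; the resulting Lipschitz constant is $O(E^2\delta^2) \ll 1$. The unique fixed point in $B$ is the desired solution, and the bound \eqref{SmalldataCCL} is precisely what comes out of the first iteration.

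Uniqueness in the full class $X^1_c(I)$ (not just in $B$) is then standard: any other solution with the same initial data agrees with ours on a small subinterval where it is forced into $B$ by continuity, and a bootstrap covers all of $I$. For the conservation laws, I would regularize the data and pass to the limit: $M$ and $E$ are conserved along smooth solutions by direct integration by parts, and continuity of the solution map from $H^1$ data to $C_t H^1$ trajectories (a corollary of the stability estimate above) transfers conservation to the $H^1$ solution. The only genuine difficulty in the argument is the multilinear estimate \eqref{nl estimate}; given that tool, the scheme is essentially identical to the one carried out in \cite[Section 3]{IoPa}, which is why the authors omit the details.
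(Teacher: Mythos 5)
Your contraction argument on the set $B=\{\|u\|_{X^1(I)}\le 2E,\ \|u\|_{Z(I)}\le 2\delta\}$, driven by the interpolation $\|u\|_{Z'(I)}\lesssim E^{1/4}\delta^{3/4}$ and the multilinear estimate \eqref{nl estimate}, is correct and is exactly the scheme the paper has in mind when it defers the proof to \cite[Section 3]{IoPa}. The only bookkeeping to note is that the smallness of the self-mapping and Lipschitz constants (your $O(E^2\delta^2)$) relies on choosing the implicit constant in $\delta_0\approx\min(1,E^{-1})$ sufficiently small, which you have correctly built in.
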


Observe that if $u\in X^1_c(\R)$, then $u$ scatters as $t\to\pm\infty$ as in \eqref{Scat}. Also, if $E$ is small enough, $I$ can be taken to be $\R$ which proves Theorem \ref{Thm1}.

\begin{lemma}[Controlling norm]\label{Z controls}
Suppose that $u \in X_{c,\operatorname{loc}}^1(I)$ is a strong solution on an interval $I\subset \mathbb{R}$ satisfying:
\begin{equation}\label{CondForGE}
\|u\|_{Z(I)}<+\infty.
\end{equation}
\begin{enumerate}
\item If $I$ is finite, then $u$ can be extended as a strong solution \emph{in $ X^1_c(I')$} on a strictly larger interval $I'$, $I\subsetneq I^\prime\subset\mathbb{R}$. In particular, if $u$ blows up in finite time, then the $Z$ norm of $u$ has to blow up.

\item If $I$ is infinite, then $u \in X_c^1(I)$.
\end{enumerate}
\end{lemma}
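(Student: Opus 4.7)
The plan is to exploit time-divisibility of the $Z$-norm together with Lemma~\ref{nl estimate lemma} in a bootstrap argument. The starting point is that by Proposition~\ref{LWP}, the mass and energy are conserved on $I$, so the positivity of $E(u)$ yields a uniform a priori bound $M:=\sup_{t\in I}\|u(t)\|_{H^1(\R\times\T^2)}<+\infty$, depending only on $L(u_0)$.

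Next I would partition $I$ into finitely many consecutive subintervals $I=I_1\cup\cdots\cup I_K$ with $\|u\|_{Z(I_j)}\le\eta$ for a small $\eta=\eta(M)>0$ to be chosen below. This divisibility is permitted because every outer exponent in \eqref{def of Z} (the Littlewood--Paley index $N$, the integer shift $\gamma$, and $p_0\in\{9/2,18\}$) is strictly larger than $2$: a truncation of the $N$- and $\gamma$-tails drops them below $\eta/2$, and the remaining finitely many terms are $L^{p_0}_t$-norms on finitely many bounded $\gamma$-intervals which split exactly across finite partitions. On any compact $J\subset I_j$ with left endpoint $t_j$, the Duhamel formula together with Lemma~\ref{nl estimate lemma} and the identity $\|u\|_{Z'(J)}^4=\|u\|_{X^1(J)}\|u\|_{Z(J)}^3$ give
\begin{equation*}
\|u\|_{X^1(J)}\le C_0\|u(t_j)\|_{H^1}+C_0\|u\|_{X^1(J)}^2\|u\|_{Z(J)}^3\le C_0 M+C_0\eta^3\|u\|_{X^1(J)}^2.
\end{equation*}
Choosing $\eta$ small enough that $4C_0^2M\eta^3<1$, and using that $t\mapsto\|u\|_{X^1([t_j,t])}$ is continuous with value $0$ at $t=t_j$ (by $u\in X^1_{c,\operatorname{loc}}(I)$ and the atomic structure of $X^1$), a standard continuity bootstrap forces $\|u\|_{X^1(J)}\le 2C_0M$. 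Taking $J\uparrow I_j$ and summing over $j=1,\ldots,K$ via the usual atomic-space estimate for finite partitions yields $\|u\|_{X^1(I)}\lesssim_K M<+\infty$.

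For part (1), with $I=[a,b)$ bounded, the finite $X^1(I)$-bound implies that $u(t)$ admits an $H^1$-limit as $t\to b^-$, and I would then apply Proposition~\ref{LWP} at this limit to extend $u$ past $b$ and paste to a strong solution on $I'=[a,b+\delta_1)$, $\delta_1>0$; the blow-up criterion is the contrapositive. For part (2), with $I$ infinite, the equivalence in \eqref{def of X^1+} between $\|u\|_{X^1(I)}$ and $\sup_{K\text{ compact}}\|u\|_{X^1(K)}$, combined with the Duhamel tail bound $\|u-e^{i(\cdot-t_0)\Delta}u(t_0)\|_{X^1_0}\lesssim\|u\|_{X^1(I)}\|u\|_{Z'(I)}^4$ at endpoints $t_0\to\pm\infty$, places $u$ in $X^1_c(I)$ and produces the requisite scattering data.

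The main obstacle, beyond technicalities of the atomic-space continuity used in the bootstrap, is carrying out the first step cleanly: the finite partition of $I$ into pieces with arbitrarily small $Z$-norm. The subsequent bootstrap and gluing steps are by now standard once this divisibility is in hand.
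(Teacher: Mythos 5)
Your argument is correct and is exactly the route the paper intends (the paper omits the proof of this lemma, deferring to the analogous arguments in \cite[Section 3]{IoPa}): time-divisibility of the $Z$-norm, a continuity bootstrap on each small-$Z$ subinterval using Lemma \ref{nl estimate lemma} together with $\|u\|_{Z'}^4=\|u\|_{X^1}\|u\|_Z^3$, and then extension past a finite endpoint, respectively convergence of the Duhamel tail, via the finiteness of $\|u\|_{X^1(I)}$. One harmless imprecision: $\lim_{t\to t_j^+}\|u\|_{X^1([t_j,t])}$ is not $0$ but comparable to $\|u(t_j)\|_{H^1}\le M$, which still lies below the unstable root of the quadratic inequality, so the bootstrap closes as you describe.
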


Finally, we end this section with the following stability result:

\begin{proposition}\label{stability}
Let $I\subset\mathbb{R}$ be an interval, and let $\tilde u \in X^1(I)$ solve the approximate equation:
\begin{equation}\label{approx}
(i\partial_t  +\Delta_{\mathbb{R}\times\mathbb{T}^2}) \tilde u=\rho |\tilde u|^4 \tilde u +e
\end{equation}
where $\rho$ is a constant in $[0,1]$. Assume that the following boundedness estimates holds:
\begin{equation}\label{M bound}
\|\tilde u\|_{Z(I)}+\|\tilde u\|_{L_t^\infty(I, H^1(\R \times \T^2))}\leq M.
\end{equation}
There exists $\epsilon_0=\epsilon_0(M)\in (0,1]$ such that if for some $t_0\in I$:
\begin{equation}\label{smallness}
\|\tilde u(t_0)-u_0\|_{H^1(\R\times \T^2)}+\|e\|_{N(I)}\leq \epsilon <\epsilon_0,
\end{equation}
then there exists a solution $u(t)$ to the exact equation:
\begin{equation}\label{rho NLS}
(i\partial_t  +\Delta_{\mathbb{R}\times\mathbb{T}^2})u=\rho |u|^4u
\end{equation}
with initial data $u_0$ which satisfies
\begin{equation}\label{solution estimates}
\begin{split}
\|u\|_{X^1(I)}+\|\tilde u\|_{X^1(I)} \leq& C(M)\\
\|u-\tilde u\|_{X^1(I)}\leq& C(M) \epsilon.
\end{split}
\end{equation}
\end{proposition}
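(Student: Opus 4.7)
The plan is to run the standard Kenig--Merle-type stability argument adapted to our function spaces, along the lines of \cite[Sec.~3]{IoPa} and \cite{HeTaTz}: I would partition $I$ according to the controlling $Z$-norm of $\tilde u$, perturb off $\tilde u$ on each piece using the nonlinear estimate in Lemma \ref{nl estimate lemma}, and iterate across the pieces. The three ingredients will be Lemma \ref{nl estimate lemma}, Proposition \ref{LWP}, and the elementary bound $\|u\|_{Z'(I)}\lesssim\|u\|_{X^{1}(I)}$ that follows from $\|u\|_{Z}\lesssim\|u\|_{X^{1}}$ combined with \eqref{def of Z'}.

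Concretely, first fix $\eta=\eta(M)>0$ (small, to be chosen) and split $I=\bigsqcup_{j=1}^{J}I_j$ into $J=J(M,\eta)$ consecutive subintervals with $\|\tilde u\|_{Z(I_j)}\le\eta$ on each. On any such $I_j$ a first bootstrap, using Proposition \ref{N prop}, the nonlinear bound $\||\tilde u|^{4}\tilde u\|_{N(I_j)}\lesssim\|\tilde u\|_{X^{1}(I_j)}^{2}\|\tilde u\|_{Z(I_j)}^{3}$ (coming from \eqref{nl estimate} together with \eqref{def of Z'}), and the hypothesis $\|e\|_{N(I)}\le\epsilon$, would upgrade the $L^{\infty}_{t}H^{1}$-bound on $\tilde u$ to $\|\tilde u\|_{X^{1}(I_j)}\le 2M$, and give in particular $\|\tilde u\|_{Z'(I_j)}\le C(M)\eta^{3/4}$.

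Next, setting $w:=u-\tilde u$, which formally satisfies $(i\partial_{t}+\Delta)w$ equal to a sum of quintic monomials $\mathfrak{O}_{a,b}(\tilde u,w)$ with $a+b=5$ and $b\ge 1$, minus $e$, I would apply \eqref{nl estimate} to each such monomial, placing $w$ in the $X^{1}$-slot whenever it appears and $\tilde u$ otherwise, and use $\|w\|_{Z'(I_j)}\lesssim\|w\|_{X^{1}(I_j)}$. This would lead to a Duhamel inequality of the form
\begin{equation*}
\|w\|_{X^{1}(I_j)}\le C\|w(t_j)\|_{H^{1}}+C\|e\|_{N(I_j)}+C\bigl(C(M)\eta^{9/4}+\|w\|_{X^{1}(I_j)}^{4}\bigr)\|w\|_{X^{1}(I_j)},
\end{equation*}
which would close by continuity once $\eta$ is small enough (depending on $M$) and $\|w(t_j)\|_{H^{1}}$, $\|e\|_{N(I_j)}$ are sufficiently small, yielding $\|w\|_{X^{1}(I_j)}\le 2C(\|w(t_j)\|_{H^{1}}+\|e\|_{N(I_j)})$. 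Iterating the resulting linear recursion $\|w(t_{j+1})\|_{H^{1}}\le 2C(\|w(t_j)\|_{H^{1}}+\|e\|_{N(I_j)})$ across the $J$ pieces would give $\|w\|_{X^{1}(I)}\le(2C)^{J+1}\epsilon$, so that choosing $\epsilon_{0}=\epsilon_{0}(M)$ small enough that $(2C)^{J+1}\epsilon_{0}$ stays below the smallness threshold required at each stage will validate the induction and produce \eqref{solution estimates}. Existence of $u$ itself (and not merely an a priori bound on $w$) on each $I_j$ would be supplied by Proposition \ref{LWP} applied with initial datum $u(t_j)$, whose hypothesis $\|e^{i(t-t_j)\Delta_{\mathbb{R}\times\mathbb{T}^{2}}}u(t_j)\|_{Z(I_j)}\le\delta_{0}$ follows by the triangle inequality from the $Z$-smallness of $\tilde u$ on $I_j$ and the $H^{1}$-smallness of $w(t_j)$.

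The main obstacle will be the tension between the iteration count $J(M,\eta)\to\infty$ as $\eta\to 0$ and the requirement that $\eta$ be small enough to close the perturbation inequality, which will force $\epsilon_{0}$ to depend (at least) super-exponentially on $M$. No new analytic input beyond Lemma \ref{nl estimate lemma} and Proposition \ref{LWP} is needed; the only delicate point is to keep careful track, inside the quintic expansion of $|u|^{4}u-|\tilde u|^{4}\tilde u$, of which factor sits in the $X^{1}$-slot and which sits in $Z'$, so that the linear-in-$w$ contribution to the Duhamel inequality retains the genuinely small prefactor $C(M)\eta^{9/4}$ without which the bootstrap would not close.
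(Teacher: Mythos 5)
Your proposal is correct and is essentially the argument the paper has in mind: the paper omits the proof of Proposition \ref{stability}, deferring to the standard interval-subdivision bootstrap of \cite[Section 3]{IoPa}, which is exactly what you carry out using Lemma \ref{nl estimate lemma}, the time-divisibility of the $Z$-norm, and Proposition \ref{LWP}. The only cosmetic point is that the terms of intermediate order in $w$ (quadratic through quartic in $w$) should also appear in your Duhamel inequality; they are dominated by $C(M)\|w\|_{X^1(I_j)}^2$ once $\|w\|_{X^1(I_j)}\le 1$, so they do not affect the closure of the bootstrap.
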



\section{Nonlinear analysis of profiles}\label{Sec-DesPro}

In this section, we describe and analyze the main profiles that appear in our linear and nonlinear profile decomposition.

\subsection{Euclidean profiles}\label{Euclidean profile analysis}

The Euclidean profiles define a regime where we can compare solutions of quintic NLS on $\R^3$ with those on $\R\times \T^2$. Of course, this analysis is only possible in the small-scale limit, i.e. for solutions that concentrate at a point. Our arguments below follow closely those in \cite{IoPaSt, IoPa}.

We fix a spherically-symmetric function $\eta\in C^\infty_0(\mathbb{R}^3)$ supported in the ball of radius $2$ and equal to $1$ in the ball of radius $1$. Given $\phi\in \dot{H}^1(\mathbb{R}^3)$ and a real number $N\geq 1$ we define
\begin{equation}\label{rescaled}
\begin{split}
\phi_N\in H^1(\mathbb{R}^3),\qquad &\phi_N(x)=N^\frac{1}{2}\eta(N^\frac{1}{2}x)\phi(Nx),\\
f_{N}\in H^1(\mathbb{R}\times\T^2),\qquad &f_{N}(y)=\phi_N(\Psi^{-1}(y)),
\end{split}
\end{equation}
where $\Psi:\{x\in\mathbb{R}^3:|x|<1\}\to O_0\subseteq \mathbb{R}\times\T^2$, $\Psi(x)=x$. Thus $\phi_N$ is an $\dot{H}^1$-invariant rescaling of a smooth compact truncation $\phi$, and $f_{N}$ is the function obtained by transferring $\phi_N$ to a neighborhood of $0$ in $\mathbb{R}\times\T^2$. Notice that 

$$\|f_N\|_{H^1(\R\times \T^2)}\lesssim \|\phi\|_{\dot H^1(\R^3)}.$$

We define also
\begin{equation*}
E_{\mathbb{R}^3}(\phi)=\frac{1}{2}\int_{\mathbb{R}^3}|\nabla_{\R^3}\phi|^2\,dx+\frac{1}{6}\int_{\mathbb{R}^3}|\phi|^6\,dx.
\end{equation*}

We will use the main theorem of \cite[Theorem 1.1]{CKSTTcrit} (see also \cite{KiVi}), in the following form.

\begin{theorem}\label{MainThmEucl}
Assume $\psi\in\dot{H}^1(\mathbb{R}^3)$. Then there is a unique global solution $v\in C(\mathbb{R}:\dot{H}^1(\mathbb{R}^3))$ of the initial-value problem
\begin{equation}\label{clo3}
(i\partial_t+\Delta_{\R^3})v=v|v|^4,\qquad v(0)=\psi,
\end{equation}
and
\begin{equation}\label{clo4}
\|\,|\nabla_{\R^3} v|\,\|_{(L^\infty_tL^2_x\cap L^2_tL^6_x)(\mathbb{R}^3\times\mathbb{R})}\leq \widetilde{C}(E_{\mathbb{R}^3}(\psi)).
\end{equation}
Moreover this solution scatters in the sense that there exists $\psi^{\pm\infty}\in\dot{H}^1(\mathbb{R}^3)$ such that
\begin{equation}\label{EScat}
\Vert v(t)-e^{it\Delta_{\mathbb{R}^3}}\psi^{\pm\infty}\Vert_{\dot{H}^1(\mathbb{R}^3)}\to 0
\end{equation}
as $t\to\pm\infty$. Besides, if $\psi\in H^5(\mathbb{R}^3)$ then $v\in C(\mathbb{R}:H^5(\mathbb{R}^3))$ and
\begin{equation*}
\sup_{t\in\mathbb{R}}\|v(t)\|_{H^5(\mathbb{R}^3)}\lesssim_{\|\psi\|_{H^5(\mathbb{R}^3)}}1.
\end{equation*}
\end{theorem}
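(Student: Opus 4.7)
The plan is to deduce Theorem \ref{MainThmEucl} from the main theorem of Colliander-Keel-Staffilani-Takaoka-Tao on the energy-critical defocusing quintic NLS on $\mathbb{R}^3$, treating the three conclusions (global existence with spacetime bound, scattering, persistence of regularity) in sequence. The global existence together with the spacetime estimate \eqref{clo4}, controlled by a function of the conserved $\dot H^1(\mathbb{R}^3)$ energy, is exactly \cite[Theorem 1.1]{CKSTTcrit}, so I would simply quote it as a black box. This is the only genuinely hard input in the entire statement; everything else is standard post-processing from \eqref{clo4}.

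For the scattering statement \eqref{EScat}, the plan is to set $w(t):=e^{-it\Delta_{\mathbb{R}^3}}v(t)$ and show that $w(\cdot)$ is Cauchy in $\dot H^1(\mathbb{R}^3)$ as $t\to\pm\infty$. From Duhamel's formula,
\[
w(t_2)-w(t_1)=-i\int_{t_1}^{t_2}e^{-is\Delta_{\mathbb{R}^3}}\bigl(|v|^4 v\bigr)(s)\,ds,
\]
and by the dual Strichartz inequality in $\mathbb{R}^3$, a Leibniz rule on the nonlinearity, and Sobolev embedding, one obtains an estimate of the schematic form $\|w(t_2)-w(t_1)\|_{\dot H^1}\lesssim \|\nabla v\|_{L^2_tL^6_x([t_1,t_2])}^5$. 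The right-hand side tends to zero as $t_1,t_2\to\pm\infty$ by \eqref{clo4}, yielding the existence of the limits $\psi^{\pm\infty}$ in $\dot H^1(\mathbb{R}^3)$.

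For the persistence of regularity at the $H^5$ level, the plan is a bootstrap on subintervals. Using \eqref{clo4}, I would partition $\mathbb{R}$ into a finite collection (bounded in terms of $E_{\mathbb{R}^3}(\psi)$) of intervals $I_k$ on each of which $\|\nabla v\|_{L^2_tL^6_x(I_k)}$ is below a small absolute constant $\eta$. On each $I_k$, apply $\langle\nabla\rangle^5$ to Duhamel's formula and estimate the nonlinearity with a Kato-Ponce / Moser-type product rule together with Strichartz; this gives a self-improving inequality of the form
\[
\|v\|_{L^\infty_tH^5(I_k)}\lesssim \|v(t_k)\|_{H^5}+C\eta^4\|v\|_{L^\infty_tH^5(I_k)},
\]
which closes once $\eta$ is small enough. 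Iterating over the finite family of $I_k$'s then propagates the $H^5$ bound globally, with a final constant depending only on $\|\psi\|_{H^5(\mathbb{R}^3)}$ and (through the number of intervals) on $E_{\mathbb{R}^3}(\psi)$. Since the CKSTT theorem is invoked as a black box, no further obstacle arises; the scattering and regularity parts are purely routine Strichartz computations.
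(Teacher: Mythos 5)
Your proposal is correct and matches the paper's treatment: the paper does not prove this theorem but simply quotes it as \cite[Theorem 1.1]{CKSTTcrit} (with \cite{KiVi}), where the global spacetime bound is the deep input and the scattering and $H^5$-persistence statements are the standard Strichartz consequences you outline. Your schematic exponent bookkeeping in the scattering step is slightly off (one should keep four factors in $L^{10}_{t,x}$ and one gradient factor in a Strichartz norm rather than five copies of $L^2_tL^6_x$), but this is a routine fix and does not affect the argument.
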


Our first result in this section is the following lemma:

\begin{lemma}\label{step1}
Assume $\phi\in\dot{H}^1(\mathbb{R}^3)$, $T_0\in(0,\infty)$, and $\rho\in\{0,1\}$ are given, and define $f_{N}$ as in \eqref{rescaled}. Then the following conclusions hold:

(i) There is $N_0=N_0(\phi,T_0)$ sufficiently large such that for any $N\geq N_0$ there is a unique solution $U_{N}\in C((-T_0N^{-2},T_0N^{-2});H^1(\R \times \T^2))$ of the initial-value problem
\begin{equation}\label{clo5}
(i\partial_t+\Delta)U_N=\rho U_N|U_N|^4,\qquad U_N(0)=f_N.
\end{equation}
Moreover, for any $N\geq N_0$,
\begin{equation}\label{clo6}
\|U_N\|_{X^1(-T_0N^{-2},T_0N^{-2})}\lesssim_{E_{\mathbb{R}^3}(\phi)}1.
\end{equation}

(ii) Assume $\varepsilon_1\in(0,1]$ is sufficiently small (depending only on $E_{\mathbb{R}^3}(\phi)$), $\phi'\in H^5(\mathbb{R}^3)$, and $\|\phi-\phi'\|_{\dot{H}^1(\mathbb{R}^3)}\leq\varepsilon_1$. Let $v'\in C(\mathbb{R}:H^5)$ denote the solution of the initial-value problem
\begin{equation*}
(i\partial_t+\Delta_{\R^3})v'=\rho v'|v'|^4,\qquad v'(0)=\phi'.
\end{equation*}
For $R\geq 1$, and $N\geq 10R$ we define
\begin{equation}\label{clo9}
\begin{split}
&v'_R(x,t)=\eta(x/R)v'(x,t),\qquad\,\,\qquad (x,t)\in\mathbb{R}^3\times(-T_0,T_0),\\
&v'_{R,N}(x,t)=N^\frac{1}{2}v'_R(Nx,N^2t),\qquad\quad\,(x,t)\in\mathbb{R}^3\times(-T_0N^{-2},T_0N^{-2}),\\
&V_{R,N}(y,t)=v'_{R,N}(\Psi^{-1}(y),t)\qquad\quad\,\, (y,t)\in\R\times \T^2\times(-T_0N^{-2},T_0N^{-2}).
\end{split}
\end{equation}
Then there is $R_0\geq 1$ (depending on $T_0$ and $\phi'$ and $\varepsilon_1$) such that, for any $R\geq R_0$ and $N\geq 10R$,
\begin{equation}\label{clo18}
\limsup_{N\to\infty}\|U_N-V_{R,N}\|_{X^1(-T_0N^{-2},T_0N^{-2})}\lesssim_{E_{\mathbb{R}^3}(\phi)}\varepsilon_1.
\end{equation}
\end{lemma}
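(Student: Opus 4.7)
The plan is to construct $V_{R,N}$ as an explicit approximate solution of \eqref{clo5}, verify that its equation error is small in the $N(\cdot)$ norm, and invoke the stability Proposition \ref{stability} both to produce $U_N$ and to control $U_N-V_{R,N}$; this delivers (i) and (ii) simultaneously. First, I apply Theorem \ref{MainThmEucl} to $\phi'$, obtaining $v'\in C(\R;H^5(\R^3))$ solving \eqref{clo3} with $\||\nabla v'|\|_{L^\infty_tL^2_x\cap L^2_tL^6_x}\lesssim_{E_{\R^3}(\phi)}1$ (legitimate since $\|\phi'\|_{\dot H^1}\leq\|\phi\|_{\dot H^1}+1$) and $\sup_{|t|\leq T_0}\|v'(t)\|_{H^5}\leq C(\phi',T_0)$.

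The core of the proof rests on three estimates for $V_{R,N}$ with $N\geq 10R$; note that $V_{R,N}$ is supported in $\{|x|\leq 2R/N\}\subset O_0$, so that $\Psi=\mathrm{id}$ and $\Delta_{\R\times\T^2}=\Delta_{\R^3}$ on its support. \emph{(a) $X^1$-boundedness:} $\|V_{R,N}\|_{X^1(-T_0N^{-2},T_0N^{-2})}\lesssim_{E_{\R^3}(\phi)}1$ uniformly in large $R,N$. This follows from the global Euclidean bound \eqref{clo4} combined with the small-scale equivalence of the linear flows on $\R^3$ and $\R\times\T^2$, exactly as in the Euclidean-approximation argument of \cite{IoPaSt,IoPa}, now adapted to the spaces of Section \ref{Strichartz section} via Lemma \ref{U^p lemma}. \emph{(b) Error control:} writing $\chi_R(x)=\eta(x/R)$, the defect
\begin{equation*}
e_{R,N}:=(i\partial_t+\Delta_{\R\times\T^2})V_{R,N}-\rho|V_{R,N}|^4V_{R,N}=N^{5/2}\big[F_{\mathrm{lin}}+\rho F_{\mathrm{nl}}\big](Nx,N^2t)
\end{equation*}
has $F_{\mathrm{lin}}=2R^{-1}(\nabla\eta)(\cdot/R)\cdot\nabla v'+R^{-2}(\Delta\eta)(\cdot/R)v'$ and $F_{\mathrm{nl}}=(\chi_R^5-\chi_R)|v'|^4v'$, each supported in the shell $\{R\leq|\cdot|\leq 2R\}$; applying \eqref{X1SimpleEst} and changing variables $y=Nx$, $s=N^2t$ gives $\|e_{R,N}\|_{N(-T_0N^{-2},T_0N^{-2})}\lesssim_{\phi',T_0}R^{-1}$. \emph{(c) Initial-data closeness:} a direct computation and rescaling yield
\begin{equation*}
\|f_N-V_{R,N}(0)\|_{H^1(\R\times\T^2)}\leq\|\eta(N^{-1/2}\cdot)\phi-\eta(\cdot/R)\phi'\|_{\dot H^1(\R^3)}+o_N(1)\leq 2\varepsilon_1
\end{equation*}
for $R$ then $N$ sufficiently large (using $\|\phi-\phi'\|_{\dot H^1}\leq\varepsilon_1$ and dominated convergence in $\dot H^1$).

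Fixing $R\geq R_0$ and then $N\geq N_0(R,\phi',T_0)$ so that both the error and the initial-data defect are $\lesssim\varepsilon_1$, Proposition \ref{stability} applied with $\tilde u=V_{R,N}$, $t_0=0$, $u_0=f_N$, $\epsilon\simeq\varepsilon_1$ produces a unique strong solution $U_N$ on $(-T_0N^{-2},T_0N^{-2})$ with $\|U_N\|_{X^1(-T_0N^{-2},T_0N^{-2})}\lesssim_{E_{\R^3}(\phi)}1$, proving (i) and \eqref{clo6}, and $\|U_N-V_{R,N}\|_{X^1(-T_0N^{-2},T_0N^{-2})}\lesssim_{E_{\R^3}(\phi)}\varepsilon_1$, proving \eqref{clo18}. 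The main technical obstacle is step (a): the $X^1$ norm is built from the $\R\times\T^2$ linear flow, whereas $V_{R,N}$ is natively an $\R^3$-Euclidean object. Bridging this requires the Euclidean-approximation machinery of \cite{IoPaSt,IoPa}, reworked in the mass-critical Strichartz framework of Section \ref{Strichartz section}, and it is also the place where the global space-time bound \eqref{clo4} from Theorem \ref{MainThmEucl} enters in an essential way.
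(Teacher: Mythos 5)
Your proposal follows exactly the strategy the paper intends: the paper's proof of this lemma is deferred wholesale to \cite[Lemma 4.2]{IoPaSt}, and your outline (build $V_{R,N}$ from the truncated, rescaled Euclidean solution, verify smallness of the equation defect and of the initial-data mismatch, then conclude via Proposition \ref{stability}, with the only delicate point being the uniform $X^1$ bound on $V_{R,N}$ obtained from \eqref{clo4} by the interval-splitting/flow-comparison argument of \cite{IoPaSt,IoPa}) is a faithful rendering of that argument. The details you defer in step (a) are precisely the ones the paper also defers, so the proposal is correct and essentially identical in approach.
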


\begin{proof}
The proof of Lemma \ref{step1} is very similar to the proof of \cite[Lemma 4.2]{IoPaSt}. 

\end{proof}

To understand linear and nonlinear evolutions beyond the Euclidean window we need an additional extinction lemma:

\begin{lemma}\label{Extinction}
Suppose that $\phi\in \dot{H}^1(\mathbb{R}^3)$, $\epsilon>0$, and $I \subset \R$ is an interval. Assume that
\begin{equation}\label{small stric}
\Vert \phi\Vert_{\dot{H}^1(\mathbb{R}^3)}\le 1,\qquad\|\nabla_x e^{it\Delta}\phi\|_{L_t^2 L^6_{x}(I\times \R^3)}\leq \epsilon.
\end{equation} 
For $N \geq 1$, we define as before:
$$
Q_N \phi =\eta(N^{-1/2}x)\phi(x),\;\; \phi_N=N^{1/2}Q_N(Nx)=N^{1/2}\eta(N^{1/2}x)\phi(Nx), \;\;\; f_N(y)=\phi_N(\Psi^{-1}(y)).
$$
Then there exists $N_0=N_0(\phi, \epsilon)$ such that for any $N\geq N_0$,
$$
\| e^{it\Delta} f_N\|_{Z(N^{-2}I)}\lesssim \epsilon^\frac{1}{6}
$$

\end{lemma}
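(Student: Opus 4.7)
The plan follows the Euclidean profile strategy used to prove Lemma~\ref{step1}, specialized to the linear equation. First, by density of $C^\infty_c(\mathbb{R}^3)$ in $\dot H^1(\mathbb{R}^3)$, I approximate $\phi$ by $\phi'\in C^\infty_c(\mathbb{R}^3)$ with $\|\phi-\phi'\|_{\dot H^1}\leq\epsilon$. The Euclidean Strichartz estimate gives $\|\nabla_x e^{it\Delta_{\mathbb{R}^3}}\phi'\|_{L^2_tL^6_x(I\times\mathbb{R}^3)}\leq 3\epsilon$, while Theorem~\ref{Striclem} applied to $f_N-f'_N$ controls the $Z$-norm error by $\lesssim\|f_N-f'_N\|_{H^1}\lesssim\epsilon$. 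So it suffices to prove the claim for smooth, compactly supported $\phi'$, and I set $v'=e^{it\Delta_{\mathbb{R}^3}}\phi'$.

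Second, I fix a large parameter $T_0>0$ (to be chosen in terms of $\epsilon$ and $\phi'$) and split $N^{-2}I$ into the Euclidean window $J_{\rm in}:=N^{-2}I\cap[-N^{-2}T_0,N^{-2}T_0]$ and its complement $J_{\rm out}$. On $J_{\rm in}$, the linear version of Lemma~\ref{step1} (i.e.\ $\rho=0$) gives, for suitable $R=R(\phi',T_0)$ and all $N\geq N_0(\phi',T_0,R)$,
\[
\|e^{it\Delta_{\mathbb{R}\times\mathbb{T}^2}}f'_N - V_{R,N}\|_{X^1(J_{\rm in})} = o_N(1),
\]
where $V_{R,N}$ is the transferred cutoff Euclidean solution. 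A direct change of variables shows $\|V_{R,N}\|_{L^{p_0}_{x,y,t}(J_{\rm in})}\simeq N^{1/2-5/p_0}\|v'_R\|_{L^{p_0}_{t,x}(\mathbb{R}^3\times[-T_0,T_0])}$; since $V_{R,N}$ is localized at frequency $\sim N$ the $Z$-norm weight $N^{5/p_0-1/2}$ cancels this factor, so
\[
\|V_{R,N}\|_{Z(J_{\rm in})} \lesssim \|v'\|_{L^{p_0}_{t,x}(\mathbb{R}^3\times[-T_0,T_0])}+o_R(1),\qquad p_0\in\{9/2,18\}.
\]

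Third, I bound $\|v'\|_{L^{p_0}_{t,x}(I\times\mathbb{R}^3)}$ by interpolation. The Gagliardo--Nirenberg bound $\|w\|_{L^\infty_x(\mathbb{R}^3)}\lesssim\|w\|_{L^6_x}^{1/2}\|\nabla w\|_{L^6_x}^{1/2}$ inserted into $\|v'(t)\|_{L^{10}_x}^{10}\leq\|v'(t)\|_{L^6_x}^6\|v'(t)\|_{L^\infty_x}^4$ and time-integrated gives
\[
\|v'\|_{L^{10}_{t,x}(I\times\mathbb{R}^3)}\lesssim \|v'\|_{L^\infty_t L^6_x}^{4/5}\|\nabla v'\|_{L^2_tL^6_x(I)}^{1/5}\lesssim\epsilon^{1/5}.
\]
Interpolating this small bound against the non-small Strichartz/dispersive bounds $\|v'\|_{L^r_{t,x}(\mathbb{R}^4)}\lesssim C(\phi')$ available for $\phi'\in C^\infty_c$ (via Sobolev at higher regularity, or via the dispersive estimate $\|v'(t)\|_{L^r_x}\lesssim\langle t\rangle^{-3(1/2-1/r)}$) yields $\|v'\|_{L^{p_0}_{t,x}(I)}\lesssim C(\phi')\epsilon^{1/6}$ for each $p_0\in\{9/2,18\}$, after an appropriate choice of interpolation exponents.

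For $J_{\rm out}$, I propagate from the boundary times $\pm N^{-2}T_0$ where, by Lemma~\ref{step1}, $e^{it\Delta_{\mathbb{R}\times\mathbb{T}^2}}f'_N$ is close in $H^1$ to the transferred $V_{R,N}(\pm N^{-2}T_0)$. The Euclidean data $v'(\pm T_0)$ is dispersed ($\|v'(\pm T_0)\|_{L^{p_0}_x}\lesssim T_0^{-3(1/2-1/p_0)}$ by dispersion), which after rescaling makes the $Z$-norm contribution from $J_{\rm out}$ small provided $T_0$ is chosen large in terms of $\epsilon$. Fixing $T_0=T_0(\epsilon,\phi')$, then $R=R(\phi',T_0)$, and finally $N_0=N_0(\phi',T_0,R)$ gives the bound. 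The main obstacle is the interpolation step: ensuring one gets a uniform $\epsilon^{1/6}$ for both values $p_0=9/2$ and $p_0=18$, since naively interpolating $L^{10}_{t,x}$ with $L^\infty_{t,x}$ yields only $\epsilon^{1/9}$ at $p_0=18$; one must instead interpolate with a smaller-integrability norm ($L^r_{t,x}$ with $r<10$) that is still finite thanks to the dispersive decay of smooth compactly supported data, and carefully absorb the resulting $C(\phi')$ factors into $N_0$.
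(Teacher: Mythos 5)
Your treatment of the Euclidean window $J_{\rm in}=N^{-2}I\cap[-N^{-2}T_0,N^{-2}T_0]$ is essentially the intended one (the paper omits the proof, deferring to \cite{IoPa}, but the mechanism is as you describe: reduce to smooth compactly supported data, compare with the transferred free Euclidean solution via the $\rho=0$ case of Lemma \ref{step1}, rescale, and interpolate the hypothesis \eqref{small stric} against a priori bounds). Two caveats there: the assertion that the $Z$-weight ``cancels because $V_{R,N}$ is localized at frequency $\sim N$'' needs justification, since for general $\phi'\in C^\infty_c(\mathbb{R}^3)$ the rescaled datum has low-frequency content and the weight $N'^{\,p_0(5/p_0-1/2)}$ \emph{grows} in $N'$ when $p_0=9/2$; one should either further reduce to $\widehat{\phi'}$ supported in an annulus or check $\|P_{N'}f'_N\|_{H^1}\lesssim \min\left((N'/N)^{5/2},(N/N')^{k}\right)$. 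Also, your own diagnosis of the exponent issue is slightly off: interpolating with $L^r_{t,x}$, $r<10$, helps only at $p_0=9/2$; at $p_0=18$ one cannot beat $\epsilon^{1/9}$ from $\|v'\|_{L^{10}_{t,x}}\lesssim\epsilon^{1/5}$ plus $O(1)$ bounds, so one should simply accept whatever positive power emerges (any positive power suffices for Proposition \ref{GEForEP}).

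The genuine gap is the out-region $J_{\rm out}=\{|t|\ge T_0N^{-2}\}$, which is the actual content of an ``extinction'' lemma. You argue: $v'(\pm T_0)$ is dispersed, hence small in $L^{p_0}_x$, hence ``after rescaling'' the $Z$-norm of the subsequent flow is small. This implication is false as stated. Smallness of $L^p_{x,y}$ norms ($p>2$) of the data at time $\pm T_0N^{-2}$ gives no control on $\|e^{it\Delta}(\cdot)\|_{Z}$: the only general bound on the $Z$-norm is through $X^1$/Strichartz, which sees the $H^1$ norm of the data, and that remains $\simeq 1$. Worse, the $Z$-norm carries a global-in-time $l^{q}_\gamma$ sum over all unit intervals, and for $|t|\gtrsim N^{-1}$ (a fortiori $|t|\gtrsim 1$) the solution has wrapped around $\mathbb{T}^2$, so the three-dimensional Euclidean dispersive picture implicit in ``after rescaling'' no longer applies; only the $\mathbb{R}$-direction keeps dispersing. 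What is needed here is a quantitative decay estimate for the concentrated free evolution itself — e.g.\ a pointwise/kernel bound giving $\|e^{it\Delta_{\mathbb{R}\times\mathbb{T}^2}}f_N\|_{L^\infty_{x,y}}$ or a scaling-critical $\ell^2_K K^{2/3}L^6_{x,y,t}$ norm small on $\{|t|\ge T_0N^{-2}\}$ for $T_0$ large — which is then \emph{interpolated against the global Strichartz estimates} \eqref{Stric1} at exponents on either side of $9/2$ and $18$ (e.g.\ $p_0=4.1$ and $p_0=1000$) to conclude smallness of the $Z$-norm. This is exactly the scheme carried out in the paper for the large-scale analogue, Lemma \ref{LSExtinction}, and it is the step your proposal is missing.
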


\begin{proof} 
The proof is fairly similar to that of Lemma of 4.3  \cite{IoPa}, so we omit the details for brevity.

\end{proof}

We are now ready to describe the nonlinear solutions of \eqref{NLS} corresponding to data concentrating at a point. Let $\widetilde{\mathcal{F}}_e$ denote the set of renormalized Euclidean frames\footnote{We will later consider a slightly more general class of frames, called {\it{Euclidean frames}}, see Definition \ref{DefPro}. However, for all practical purposes (cf. Lemma \ref{Eq and orth profiles}), it suffices to prove Proposition \ref{GEForEP} under the stronger assumption that $\mathcal{O}$ is a renormalized Euclidean frame.} 
\begin{equation}\label{renframe}
\begin{split}
\widetilde{\mathcal{F}}_e:=\{(N_k,t_k,x_k)_{k\geq 1}:&\,N_k\in[1,\infty),\,x_k\in\mathbb{R}\times\T^2,\,N_k\to\infty,\\
&\text{ and either }t_k=0 \text{ for any }k\geq 1\text{ or }\lim_{k\to\infty}N_k^2|t_k|=\infty\}.
\end{split}
\end{equation}

Given $f \in L^2_{x,y}(\R\times \T^2)$, $t_0\in \R$, and $x_0 \in \R\times\T^2$, we define:
\begin{equation}\label{DefPi}
\pi_{x_0} f= f(x-x_0), \quad \Pi_{(t_0,x_0)} f=(e^{-it_0\Delta_{\mathbb{R}\times\mathbb{T}^2}}f)(x-x_0)=\pi_{x_0} e^{-it_0\Delta_{\mathbb{R}\times\mathbb{T}^2}}f.
\end{equation}
Also for $\phi \in \dot H^1(\R^3)$ and $N\geq 1$, we denote the function obtained in \eqref{rescaled} by:
\begin{equation}\label{DefTNE}
T^e_N \phi:= N^{1/2} \widetilde \phi(N \Psi^{-1}(x))\quad \hbox{where}\quad \widetilde \phi(y):=\eta(y/N^{1/2})\phi(y), 
\end{equation}
and observe as before that $T^e_N: \dot H^1(\R^3) \to H^1(\R\times \T^2)$ with $\|T^e_N \phi\|_{H^1(\R\times \T^2)} \lesssim \|\phi\|_{\dot H^1(\R^3)}$.

\medskip

\begin{proposition}\label{GEForEP}
Assume that $\mathcal{O}=(N_k,t_k,x_k)_k\in\widetilde{\mathcal{F}}_e$, $\phi\in\dot{H}^1(\mathbb{R}^3)$, and let $U_k(0)=\Pi_{t_k,x_k}(T^e_{N_k}\phi)$: 

(i) For $k$ large enough (depending only on $\phi$, $\mathcal{O}$), there is a nonlinear solution $U_k\in X^1_c(\R)$ of the equation \eqref{NLS} satisfying:
\begin{equation}\label{ControlOnZNormForEP}
\Vert U_k\Vert_{X^1(\R)}\lesssim_{E_{\mathbb{R}^3}(\phi)}1.
\end{equation}

(ii) There exists a Euclidean solution $u\in C(\mathbb{R}:\dot{H}^1(\mathbb{R}^3))$ of
\begin{equation}\label{EEq}
\left(i\partial_t+\Delta_{\R^3}\right)u=\vert u\vert^4u
\end{equation}
with scattering data $\phi^{\pm\infty}$ defined as in \eqref{EScat} such that the following holds, up to a subsequence:
for any $\varepsilon>0$, there exists $T(\phi,\varepsilon)$ such that for all $T\ge T(\phi,\varepsilon)$ there exists $R(\phi,\varepsilon,T)$ such that for all $R\ge R(\phi,\varepsilon,T)$, there holds that
\begin{equation}\label{ProxyEuclHyp}
\Vert U_k-\tilde{u}_k\Vert_{X^1(\{\vert t-t_k\vert\le TN_k^{-2}\})}\le\varepsilon,
\end{equation}
for $k$ large enough, where
\begin{equation*}
(\pi_{-x_k}\tilde{u}_k)(x,t)=N_k^\frac{1}{2}\eta(N_k\Psi^{-1}(x)/R)u(N_k\Psi^{-1}(x),N_k^2(t-t_k)).
\end{equation*}
In addition, up to a subsequence,
\begin{equation}\label{ScatEuclSol}
\Vert U_k(t)-\Pi_{t_k-t,x_k}T^e_{N_k}\phi^{\pm\infty}\Vert_{X^1(\{(t-t_k)\geq TN_k^{-2}\})}\le \varepsilon,
\end{equation}
for $k$ large enough (depending on $\phi,\varepsilon,T,R$).
\end{proposition}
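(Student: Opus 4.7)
The plan is to construct a global approximate solution on $\mathbb{R}\times\mathbb{T}^2$ by gluing together three pieces -- a truncated-rescaled Euclidean solution on the ``Euclidean window'' $\{|t-t_k|\le TN_k^{-2}\}$ and linear propagations of rescaled scattering data outside -- and then invoke the stability Proposition \ref{stability} to pass to the exact nonlinear solution $U_k$.

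First, by the translation symmetry $\pi_{x_k}$ we may assume $x_k=0$, and by the time translation $t\mapsto t-t_k$ it suffices to treat two cases: $t_k\equiv 0$, and $N_k^2 t_k\to\sigma\infty$ with $\sigma\in\{\pm 1\}$. Let $u\in C(\mathbb{R}:\dot H^1(\mathbb{R}^3))$ be the global Euclidean solution of \eqref{EEq} given by Theorem \ref{MainThmEucl}, chosen so that $u(0)=\phi$ in the first case, or so that the scattering data at $-\sigma\infty$ is compatible with the initial condition in the second. Denote by $\phi^{\pm\infty}\in \dot H^1(\mathbb{R}^3)$ its scattering data, and fix $\varepsilon>0$. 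Given any $\varepsilon_1>0$, pick $\phi'\in H^5(\mathbb{R}^3)$ with $\|\phi-\phi'\|_{\dot H^1}\le \varepsilon_1$ and let $v'_R,V_{R,N_k}$ be as in Lemma \ref{step1}.

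Step 1 (Euclidean window). Apply Lemma \ref{step1}(ii) around the time $t_k$: for $T$ large, $R\ge R_0(T,\phi',\varepsilon_1)$, and all $k$ sufficiently large, the initial-value problem at time $t_k$ with data $T^e_{N_k}\phi$ admits a unique solution $U_k^\sharp\in X^1(|t-t_k|\le TN_k^{-2})$, and
\begin{equation*}
\limsup_{k\to\infty}\|U_k^\sharp-\tilde u_k\|_{X^1(|t-t_k|\le TN_k^{-2})}\lesssim_{E_{\mathbb R^3}(\phi)}\varepsilon_1,
\end{equation*}
where $\tilde u_k$ is the $\mathbb{R}\times\mathbb{T}^2$-version of $V_{R,N_k}(t-t_k)$. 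In particular $U_k^\sharp(t_k\pm TN_k^{-2})$ is close in $H^1$ to $T^e_{N_k}\bigl(e^{\pm iT\Delta_{\mathbb{R}^3}}\phi^{\pm\infty}\bigr)$ by the scattering statement \eqref{EScat}, provided $T$ is chosen large enough in terms of $\varepsilon$.

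Step 2 (outside the window). Outside $|t-t_k|\le TN_k^{-2}$ we take the global approximate solution to be the linear evolution $\tilde u_k^{\pm}(t)=e^{i(t-t_k)\Delta}T^e_{N_k}\phi^{\pm\infty}$ (using the appropriate scattering datum on each side). Lemma \ref{Extinction} gives, for $T$ large, $k\to\infty$,
\begin{equation*}
\|\tilde u_k^{\pm}\|_{Z(\pm(t-t_k)\ge TN_k^{-2})}\lesssim \varepsilon^{1/6}.
\end{equation*}
Combining this with the nonlinear estimate \eqref{nl estimate} of Lemma \ref{nl estimate lemma} shows that $(i\partial_t+\Delta)\tilde u_k^{\pm}-|\tilde u_k^{\pm}|^4\tilde u_k^{\pm}$ is small in the $N$-norm on the exterior intervals.

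Step 3 (gluing and stability). Splice $\tilde u_k$ on the Euclidean window with $\tilde u_k^{\pm}$ on the two exterior intervals into a single function $\widetilde U_k\in X^1(\mathbb R)$, using smooth time cut-offs matched at $t=t_k\pm TN_k^{-2}$. The mismatches at the gluing times produce a Duhamel defect of size $o(1)+O(\varepsilon_1)$ in $N(\mathbb{R})$ thanks to Step 1 and the $H^1$-closeness of $U_k^\sharp$ to the linear development of the scattering data at the endpoints. Hence $\widetilde U_k$ solves the approximate equation \eqref{approx} with $\rho=1$ and an error $e_k$ having $\|e_k\|_{N(\mathbb R)}\to 0$, while $\|\widetilde U_k\|_{Z\cap L^\infty_tH^1}\lesssim_{E_{\mathbb R^3}(\phi)}1$ uniformly in $k$. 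Applying Proposition \ref{stability} with initial time $t_0=0$ produces the true solution $U_k\in X^1_c(\mathbb R)$ satisfying $\|U_k\|_{X^1(\mathbb R)}\lesssim_{E_{\mathbb R^3}(\phi)}1$, which yields \eqref{ControlOnZNormForEP}. Then \eqref{ProxyEuclHyp} follows from $\|U_k-\widetilde U_k\|_{X^1(\mathbb R)}=o(1)$ combined with Step 1, while \eqref{ScatEuclSol} follows from $\|U_k-\widetilde U_k\|_{X^1(\mathbb R)}=o(1)$ combined with the definition of $\widetilde U_k$ on the exterior intervals.

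The main obstacle will be Step 3, namely verifying that the glued approximate solution indeed has $N$-norm error that is $o(1)$ as $k\to\infty$: the linear-nonlinear transitions at $t_k\pm TN_k^{-2}$ must be controlled uniformly using the trilinear estimate \eqref{nl estimate} together with the smallness of $Z$ coming from Lemma \ref{Extinction}, and the choice of the parameters must be taken in the order $\varepsilon_1\to 0$, then $T\to\infty$, then $R\to\infty$, then $k\to\infty$, consistently with the thresholds in Lemma \ref{step1}, Lemma \ref{Extinction}, and Proposition \ref{stability}.
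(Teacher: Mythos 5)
Your plan reproduces, correctly and in the right order of quantifiers ($\varepsilon_1\to 0$, then $T$, then $R$, then $k$), the standard argument the paper invokes by reference to Proposition 4.4 of \cite{IoPa}: Lemma \ref{step1} on the Euclidean window, the extinction Lemma \ref{Extinction} plus the small-data theory outside it, and the stability Proposition \ref{stability} to pass to the exact solution, with the case $N_k^2|t_k|\to\infty$ reduced to $t_k\equiv 0$ by choosing the Euclidean solution with the matching asymptotic datum. The only (harmless) cosmetic deviation is that you glue the three pieces with smooth time cut-offs and apply stability once globally, whereas the reference argument extends the solution past the window directly via Proposition \ref{LWP} applied to the data at $t_k\pm TN_k^{-2}$; both are valid since the commutator error from the cut-offs is controlled in $L^1_tH^1$ by the $H^1$-matching at the junction times.
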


\begin{proof} cf. Proposition 4.4 of \cite{IoPa} for a similar argument.
%
%
%
\end{proof}

\subsection{Large Scale Profiles}\label{LS profile analysis}

In this subsection, we analyze the large-scale profiles that appear in the profile decomposition in Section \ref{Sec-ProfileDec}. We need the following notation: given $\psi\in H^{0,1}(\mathbb{R}\times\mathbb{T}^2)$ and $M\le 1$, we define the large-scale rescaling
\begin{equation}\label{LSrescaled}
T_M^{ls}\psi(x,y):=M^\frac{1}{2}\widetilde{\psi^*}(Mx,y)\qquad\text{ where }\qquad \widetilde \psi^*(x,y)=P^x_{\le M^{-1/100}}\psi(x,y).
\end{equation}
It's easy to see that,
\begin{equation*}
T_M^{ls}:H^{0,1}(\mathbb{R}\times\mathbb{T}^2)\to H^1(\mathbb{R}\times\mathbb{T}^2)\text{ is a linear operator with }\|T^{ls}_M\psi\|_{H^1(\R\times\mathbb{T}^2)}\lesssim \|\psi\|_{H^{0,1}(\mathbb{R}\times\mathbb{T}^2)}.
\end{equation*}

The purpose of this section is to understand the behavior of nonlinear solutions $u_k$ of \eqref{NLS} with initial data as above and variations thereof. As we will see these are tightly connected to solutions of the quintic resonant system defined in \eqref{SS} that we recall here for convenience.

\subsubsection{The quintic resonant system}
We consider the quintic resonant system
\begin{equation*}
\begin{split}
\left(i\partial_t+\partial_{xx}\right)u_j=&\sum_{(p_1,p_2,p_3,p_4,p_5)\in\mathcal{R}(j)}u_{p_1}\overline{u}_{p_2}u_{p_3}\overline{u}_{p_4}u_{p_5},\quad j\in\mathbb{Z}^2\\
\mathcal{R}(j)=&\{{\vec p}:=(p_1,p_2,p_3,p_4,p_5)\in(\mathbb{Z}^2)^5: \,\, p_1-p_2+p_3-p_4+p_5-j=0,\\
&\,\,\vert p_1\vert^2-\vert p_2\vert^2+\vert p_3\vert^2-\vert p_4\vert^2+\vert p_5\vert^2-\vert j\vert^2=0\}.
\end{split}
\end{equation*}
with initial data ${\vec u}(0)=\{u_j(0)\}_{j \in \Z^2}\in h^1L^2$, where $h^1L^2$ is defined in \eqref{DefhH}.
In light of the conservation laws \eqref{g energy} proved in the Appendix, the following energy:
\begin{equation}\label{Els}
E_{ls}[\vec u]:=\sum_{p \in \Z^2}[1+|p|^2]\|u_p\|^2_{L_x^2(\mathbb{R})}
\end{equation}
is conserved and hence so is the $h^1 L^2$ norm of any solution of $\eqref{SS}$.
 
We remark that in the special case when only one component of the initial data is nontrivial: $u_j(0)=0,j\ne 0$, this is propagated by the nonlinear flow and \eqref{SS} reduces to the scalar defocusing mass-critical Schr\"odinger equation,
\begin{equation*}
\left(i\partial_t+\partial_{xx}\right)u_0=\vert u_0\vert^4u_0,\quad E_{ls}(u_0)=\Vert u_0\Vert_{L^2_x(\mathbb{R})}^2.
\end{equation*}

\medskip

Based on the local and small data theory that we develop in the appendix and the large-data scattering result in the scalar case \cite{Dod},
we conjecture the following global existence theorem about solutions to \eqref{SS}:
\begin{conjecture}\label{LSConj}
Any initial data ${\vec u_0}$ of finite $E_{ls}$ energy leads to a global solution of \eqref{SS} satisfying
\begin{equation}\label{bound on W}
\Vert {\vec u}\Vert_{{\vec W}}^2:=\sum_{p\in\mathbb{Z}^2}\left[1+\vert p\vert^2\right]\Vert u_p\Vert_{L^6_{x,t}(\mathbb{R}_x\times\mathbb{R}_t)}^2\le \Lambda_{ls}(E_{ls}({\vec u}))
\end{equation}
for some nondecreasing finite function $\Lambda_{ls}$. In addition, this solution scatters in the sense that there exists ${\vec v}^{\pm\infty}\in h^1L^2$ such that
\begin{equation}\label{LSScat}
\sum_{p\in \Z^2}\left[1+\vert p\vert^2\right]\Vert u_p(t)-e^{it\partial_{xx}}v_p^{\pm\infty}\Vert_{L^2_x(\mathbb{R})}^2\to0\quad\hbox{as }t\to\pm\infty.
\end{equation}
\end{conjecture}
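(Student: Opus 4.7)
The plan is to adapt the Kenig--Merle concentration--compactness and rigidity scheme to the system \eqref{SS}, building on the small-data and local theory of Section \ref{Sec-AppSS} and drawing heavily on Dodson's resolution of the scalar mass-critical NLS on $\mathbb{R}$ \cite{Dod}. I would first record the symmetries of \eqref{SS}: spatial translation in $x$, time translation, mass-critical scaling $u_j(x,t)\mapsto\lambda^{1/2}u_j(\lambda x,\lambda^2t)$, phase rotation, and the hybrid Galilean boost $u_p(x,t)\mapsto e^{i(x\xi-t\xi^2)}u_{p-k_0}(x-2t\xi,t)$ for $\xi\in\mathbb{R}$, $k_0\in\mathbb{Z}^2$, which one checks preserves $\mathcal{R}(\cdot)$ up to the index shift $j\mapsto j+k_0$ (the quadratic correction $|k_0|^2$ telescopes through the alternating signs in $\mathcal{R}$). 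Combined with a refined $L^6_{x,t}$ Strichartz estimate on $\mathbb{R}$ summed against the weight $\langle p\rangle^2$, together with a standard Bessel-type reduction in $h^1L^2$, this produces a linear profile decomposition for sequences bounded in $h^1L^2$ along the above symmetry group.

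Second, I would build nonlinear profiles: to each linear profile attach a nonlinear solution of \eqref{SS}, invoking small-data theory when the profile has small $L^2$-mass per component, and the induction hypothesis on $E_{ls}$ otherwise. Together with the stability theory of the appendix and the asymptotic orthogonality of the profiles, this gives a nonlinear profile decomposition. Under the contrapositive assumption that $\Lambda_{ls}(E_c^{ls})=+\infty$ for some minimal $E_c^{ls}\in(0,\infty]$, the standard Palais--Smale extraction yields a critical element $\vec u^c$ with $E_{ls}(\vec u^c)=E_c^{ls}$, infinite $\vec W$-norm, and orbit precompact in $h^1L^2$ modulo the symmetry group. A further enemy reduction narrows the possibilities to either a finite-time blowup or a global almost-periodic solution with bounded frequency scale $N(t)$.

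The main obstacle is the rigidity step, where I would adapt Dodson's long-time Strichartz bound and frequency-localized interaction Morawetz estimate to this vector setting. The natural Morawetz functional is the weighted interaction
\[
M(t)=\mathrm{Im}\iint_{\mathbb{R}^2}a'(x-y)\,\rho(x,t)\sum_{q\in\mathbb{Z}^2}\langle q\rangle^2\,\overline{u_q(y,t)}\,\partial_y u_q(y,t)\,dx\,dy,\qquad \rho(x,t)=\sum_{p\in\mathbb{Z}^2}\langle p\rangle^2|u_p(x,t)|^2,
\]
whose time derivative along \eqref{SS} splits, as in the scalar case, into a positive kinetic term and a nonlinear term indexed by $\mathcal{R}(j)$. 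The genuinely new algebraic point is coercivity of the nonlinear term: undoing the discrete Fourier transform identifies the unweighted version of this term with the well-known positive quantity $\int_{\mathbb{R}\times\mathbb{T}^2}|\tilde u|^6\,\partial_x|\tilde u|^2$ for the auxiliary function $\tilde u(x,y,t)=\sum_p u_p(x,t)e^{i\langle p,y\rangle}$, and the $\langle p\rangle^2$ weights can then be absorbed using the fact that $E_{ls}$ controls $h^1L^2$ together with a Cauchy--Schwarz-in-$p$ estimate on the resonance manifold. Combined with a long-time Strichartz estimate propagating the $\langle p\rangle$ weight through the quintic interaction, and with almost-periodicity, this yields a contradiction exactly as in \cite{Dod}. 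Finally, the scattering statement \eqref{LSScat} is a standard Duhamel/Strichartz consequence of the a priori bound \eqref{bound on W}.
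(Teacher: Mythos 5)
First, a point of order: Conjecture \ref{LSConj} is not proved in the paper. The authors establish it only in two special regimes --- for small $E_{ls}$ via the fixed-point argument of Proposition \ref{LWPSS}, and for scalar data (only $u_0\neq 0$) by reduction to Dodson's theorem \cite{Dod} --- and they show in Lemma \ref{reverse implication} that it would follow from the conclusion of Theorem \ref{MainThm}, which is itself conditional on the conjecture. They explicitly state that the full resolution ``seems to require considerable additional work'' and leave it open. So there is no proof in the paper to compare yours against; what I can do is assess your strategy on its own terms.

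Your concentration--compactness skeleton (symmetries, profile decomposition in $h^1L^2$, minimal counterexample, rigidity via long-time Strichartz and interaction Morawetz) is the natural plan, but the step you identify as ``the genuinely new algebraic point'' --- coercivity of the nonlinear Morawetz term --- is where the argument breaks, and the reason it breaks is that the claimed identification is false. If $\tilde u(x,y,t)=\sum_p u_p(x,t)e^{i\langle p,y\rangle}$, then the Fourier coefficients of $|\tilde u|^4\tilde u$ are $\sum_{p_1-p_2+p_3-p_4+p_5=j}u_{p_1}\overline{u_{p_2}}u_{p_3}\overline{u_{p_4}}u_{p_5}$, i.e.\ the sum over the \emph{full} convolution constraint. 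The nonlinearity of \eqref{SS} carries the additional constraint $|p_1|^2-|p_2|^2+|p_3|^2-|p_4|^2+|p_5|^2=|j|^2$, so it is only the resonant \emph{part} of $|\tilde u|^4\tilde u$ (equivalently, a time average of $e^{is\Delta_{\mathbb{T}^2}}\bigl(|e^{-is\Delta_{\mathbb{T}^2}}\tilde u|^4e^{-is\Delta_{\mathbb{T}^2}}\tilde u\bigr)$ over the fast time $s$). Consequently the momentum-bracket term in $\frac{d}{dt}M(t)$ does not collapse to $\int|\tilde u|^6\,\partial_x|\tilde u|^2$; it becomes a genuinely off-diagonal interaction between densities at different fast times, and its sign is precisely what is unclear. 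The positive-definite form of the Hamiltonian \eqref{Hamiltonian} suggests a candidate replacement density $\sum_{q,n}\bigl|\sum u_{p_1}\overline{u_{p_2}}u_{p_3}\bigr|^2$, but no monotonicity has been established for it. Two further points are unsubstantiated: (i) inserting the $\langle q\rangle^2$ weights into the Morawetz functional generically destroys whatever cancellation survives (the conservation of $E_g$ uses the resonance identities in a way that does not obviously persist for the Morawetz commutator), and a ``Cauchy--Schwarz on the resonance manifold'' cannot restore a sign; (ii) your small-data input in the profile reconstruction must be smallness in $h^1L^2$, not in mass, since profiles escaping to $|p|\to\infty$ have vanishing mass but possibly order-one $E_{ls}$, and the index shift $k_0$ you include in the symmetry group does not preserve $E_{ls}$, so it cannot be quotiented out in a minimal-energy argument without additional care. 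These gaps are not cosmetic; they are essentially the content of the open problem.
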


As we also show in the appendix, this conjecture is in fact implied by the conclusion of Theorem \ref{MainThm}. In addition, the local wellposedness theory for \eqref{SS} (cf. Appendix) gives that this conjecture is true under the smallness hypothesis $E_{ls}({\vec u})<\delta$ for some $\delta>0$. Finally, the result of Dodson \cite{Dod} shows that this conjecture is again true if one adds the additional assumption that ${\vec u}(0)$ is, in fact scalar.

A direct consequence of  Conjecture \ref{LSConj} and the propagation of regularity part of the local well-posedness proposition \ref{LWPSS} is the following:

\begin{proposition}\label{LocTheoryForSS}
Assume that Conjecture \ref{LSConj} holds true. Suppose that ${\vec u_0}\in h^1L^2$ and that ${\vec u}\in C(\mathbb{R}: h^1L^2)$ is the solution of \eqref{SS} with initial data ${\vec u}_0$ given by Conjecture \ref{LSConj}. Suppose also that ${\vec v}_0\in h^5 H^4$ satisfies
\begin{equation*}
\Vert {\vec u}_0-{\vec v}_0\Vert_{ h^1L^2}\lesssim \varepsilon,
\end{equation*}
and that ${\vec v}(t)$ is the solution to \eqref{SS} with initial data ${\vec v}(0)={\vec v}_0$. Then, it holds that:
\begin{equation*}
\begin{split}
\Vert (1-\partial_{xx})^2\{(1+\vert p\vert^4)v_p\}_p\Vert_{L^\infty_t(h^1L^2)\cap {\vec W}(\mathbb{R})}&\lesssim_{\Vert {\vec u}_0\Vert_{h^1L^2}} \Vert {\vec v}_0\Vert_{h^5H^4}\\
\Vert {\vec u}-{\vec v}\Vert_{L^\infty_t(h^1L^2)\cap {\vec W}(\mathbb{R})}&\lesssim_{\Vert {\vec u}_0\Vert_{h^1L^2}} \varepsilon.
\end{split}
\end{equation*}
and there exists ${\vec w}^\pm\in h^5 H^4$ such that
\begin{equation*}
\begin{split}
\sum_{p\in\mathbb{Z}^2}\left[1+\vert p\vert^2\right]\Vert v_p(t)-e^{it\partial_{xx}}w^\pm_p\Vert_{L^2_x(\mathbb{R})}^2\to 0\hbox{ as }t\to\pm\infty.
\end{split}
\end{equation*}
\end{proposition}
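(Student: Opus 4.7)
The plan is to combine Conjecture~\ref{LSConj}, applied to both $\vec u$ and $\vec v$, with a long-time stability and persistence-of-regularity scheme organized around intervals on which the $\vec W$-norms of the two solutions are small. Since $\|\vec v_0\|_{h^1L^2}\leq \|\vec u_0\|_{h^1L^2}+\varepsilon$, the conjecture produces global solutions $\vec u,\vec v$ with
\[
\|\vec u\|_{\vec W(\mathbb{R})}+\|\vec v\|_{\vec W(\mathbb{R})}\lesssim_{\|\vec u_0\|_{h^1L^2}} 1,
\]
and I partition $\mathbb{R}$ into finitely many intervals $I_1,\dots,I_J$ with $J=J(\|\vec u_0\|_{h^1L^2})$ on each of which $\|\vec u\|_{\vec W(I_k)}+\|\vec v\|_{\vec W(I_k)}\leq \eta$, for a small absolute constant $\eta$ chosen according to the multilinear estimates of the local theory for \eqref{SS} from Section~\ref{Sec-AppSS}.

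For the stability bound, subtract the equations for $\vec u$ and $\vec v$ and apply Duhamel on each $I_k$. The difference of quintic nonlinearities is a finite sum of terms each containing exactly one factor of $\vec u-\vec v$ and four factors of $\vec u$ or $\vec v$, so the quintic multilinear estimate adapted to \eqref{SS} yields
\[
\|\vec u-\vec v\|_{L^\infty_{t\in I_k}(h^1L^2)\cap \vec W(I_k)}\leq C\|(\vec u-\vec v)(t_{k-1})\|_{h^1L^2}+C\eta^4\|\vec u-\vec v\|_{\vec W(I_k)}.
\]
Absorbing the last term for $\eta$ small and iterating over $k=1,\dots,J$ produces the claimed stability estimate, with the constant depending only on $\|\vec u_0\|_{h^1L^2}$.

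For the persistence-of-regularity bound, apply the weighted operator $(1-\partial_{xx})^2\langle p\rangle^4$ to \eqref{SS} for $\vec v$. The discrete weight distributes thanks to the resonance relation $p=p_1-p_2+p_3-p_4+p_5$, which forces $\langle p\rangle^4\lesssim \sum_i\langle p_i\rangle^4$, so it may always be transferred onto one of the five factors; the spatial derivatives distribute by Leibniz, with at most two landing on any given factor. Placing the weighted factor in the weighted $\vec W$-norm and the remaining four factors in the unweighted $\vec W$-norm (of size $\eta$ on each $I_k$), a multilinear estimate of the same type yields an iteration inequality for $V:=(1-\partial_{xx})^2\langle p\rangle^4\vec v$ that, after absorbing the $\eta^4$ term and iterating over the $J$ intervals, gives a bound of $V$ in $L^\infty_t(h^1L^2)\cap\vec W(\mathbb{R})$ controlled by $\|\vec v_0\|_{h^5H^4}$ and $\|\vec u_0\|_{h^1L^2}$.

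Finally, scattering in $h^5H^4$ follows from a Duhamel–Cauchy argument: the identity
\[
e^{-it\partial_{xx}}v_p(t)-e^{-is\partial_{xx}}v_p(s)=-i\int_s^t e^{-i\tau\partial_{xx}}\mathcal{N}_p(\vec v)(\tau)\,d\tau
\]
combined with the high-regularity control and the global $\vec W$-bound just obtained shows that $\{e^{-it\partial_{xx}}v_p(t)\}_p$ is Cauchy in $h^5H^4$ as $t\to\pm\infty$, producing $\vec w^\pm$. The main obstacle is the persistence-of-regularity step, where one needs a quintic multilinear estimate for \eqref{SS} that simultaneously accommodates the discrete $\langle p\rangle^4$ weight (distributed via the resonance relation) and the continuous $\langle\partial_x\rangle^4$ weight (distributed by Leibniz), while still extracting the smallness factor $\eta^4$ from four of the five entries; this is a controlled weighted extension of the multilinear tools developed for the local theory of \eqref{SS} in Section~\ref{Sec-AppSS}.
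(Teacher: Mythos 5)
Your proposal is essentially the argument the paper intends: the paper states this proposition as a direct consequence of Conjecture \ref{LSConj} together with the propagation-of-regularity part of Proposition \ref{LWPSS}, and the standard way to fill that in is exactly your scheme of splitting $\mathbb{R}$ into $O_{\|\vec u_0\|_{h^1L^2}}(1)$ intervals of small $\vec W$-norm, running the quintic multilinear estimate (whose weighted version reduces, via $\langle j\rangle\lesssim\max_i\langle p_i\rangle$, to the same Lemma \ref{SumLem}) for both the difference equation and the weighted equation, and closing the scattering statement by a Duhamel--Cauchy argument. One imprecision to fix: in the persistence-of-regularity step the Leibniz expansion of $\partial_x^4$ over five factors is not limited to ``at most two derivatives per factor,'' and the cross terms where derivatives split over several factors are controlled neither by the unweighted $\vec W$-norm nor directly by the top-order weighted norm; one should either induct on the regularity level (propagating $h^1H^1$, then $h^1H^2$, etc.) or interpolate, which is routine but should be said.
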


\subsubsection{Description of the large scale profiles}
We are now in a position to describe the behavior of solutions with initial data $f_M=T_M^{ls}\psi$.
We also define for $\vec \psi=\{\psi_p\}_p$ as in \eqref{DiscFourier},
\begin{equation*}
{\vec \psi}_M:=\{\psi^M_p\}_p,\quad \hbox{ where }\psi^M_p(x):=M^\frac{1}{2}\psi_p(Mx),
\end{equation*}
and, more generally, we similarly define ${\vec u}_M$ from ${\vec u}$ for any ${\vec u}\in h^1L^2$.
Clearly, with $f_M$ as in \eqref{LSrescaled},
\begin{equation*}
\Vert {\vec \psi}_M\Vert_{h^1L^2}=\Vert \psi\Vert_{H^{0,1}(\mathbb{R}\times\mathbb{T}^2)};\quad \Vert {\vec \psi}_M-{\vec f_M}\Vert_{h^1L^2}\to 0\hbox{ as }M\to 0.
\end{equation*}

\medskip

Our first result in this direction is the following lemma:

\begin{lemma}\label{LSstep1}
Assume that Conjecture \ref{LSConj} holds true. Let $\psi\in H^{0,1}(\mathbb{R}\times\mathbb{T}^2)$, $T_0\in(0,\infty)$, and $\rho\in\{0,1\}$ be given, and define $f_{M}=T_M^{ls}\psi(x,y)$ as in \eqref{LSrescaled}. The following conclusions hold:

(i) There is $M_0=M_0(\phi,T_0)$ sufficiently small such that for any $M\leq M_0$ there is a unique solution $U_{M}\in C((-T_0M^{-2},T_0M^{-2});H^1(\mathbb{R}\times\T^2))$ of the initial-value problem
\begin{equation}\label{clo5}
(i\partial_t+\Delta_{\mathbb{R}\times\mathbb{T}^2})U_M=\rho \vert U_M\vert^4U_M,\qquad U_M(0)=f_M.
\end{equation}
Moreover, for any $M\leq M_0$,
\begin{equation}\label{clo6}
\|U_M\|_{X^1(-T_0M^{-2},T_0M^{-2})}\lesssim_{E_{ls}(\psi)}1.
\end{equation}

(ii) Assume $\varepsilon_1\in(0,1]$ is sufficiently small (depending only on $E_{ls}(\psi)$), ${\vec v}_0\in h^5H^4$, and $\Vert {\vec \psi}-{\vec v}_0\Vert_{h^1L^2}\leq\varepsilon_1$. Let ${\vec v}\in C(\mathbb{R}:h^5H^4)$ denote the solution of the initial-value problem
\begin{equation*}
(i\partial_t+\partial_{xx})v_j=\rho \sum_{(p_1,p_2,p_3,p_4,p_5)\in\mathcal{R}(j)}v_{p_1}\overline{v_{p_2}}v_{p_3}\overline{v_{p_4}}v_{p_5},\qquad v_j(0)=v_{0,j},\, j\in\mathbb{Z}^2.
\end{equation*}
For $M\geq 1$ we define
\begin{equation}\label{RemormalizedLS}
\begin{split}
&v_{j,M}(x,t)=M^\frac{1}{2}v_{j}(Mx,M^2t),\qquad\qquad\quad\quad(x,t)\in\mathbb{R}\times(-T_0M^{-2},T_0M^{-2}),\\
&V_{M}(x,y,t)=\sum_{q\in\mathbb{Z}^2}e^{-it\vert q\vert^2}e^{i\langle y,q\rangle} v_{q,M}(x,t),\qquad (x,y,t)\in\mathbb{R}\times\mathbb{T}^2\times(-T_0M^{-2},T_0M^{-2}).
\end{split}
\end{equation}
Then
\begin{equation}\label{LSclo18}
\limsup_{M\to0}\|U_M-V_{M}\|_{X^1(-T_0M^{-2},T_0M^{-2})}\lesssim_{E_{ls}({\vec \psi})}\varepsilon_1.
\end{equation}
\end{lemma}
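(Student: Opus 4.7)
The plan is to build an approximate solution $V_M$ from the resonant system and then invoke the stability proposition (Proposition \ref{stability}) to produce $U_M$ and control the difference $U_M - V_M$. First, approximate $\vec\psi \in h^1 L^2$ by $\vec v_0 \in h^5 H^4$ with $\|\vec\psi - \vec v_0\|_{h^1 L^2} \le \varepsilon_1$. Conjecture \ref{LSConj} combined with Proposition \ref{LocTheoryForSS} then produces a global solution $\vec v \in L^\infty_t(\mathbb{R}: h^5 H^4)$ of the resonant system \eqref{SS} with initial datum $\vec v_0$, together with quantitative smoothness bounds. Rescaling via $v_{q,M}(x,t) := M^{1/2} v_q(Mx, M^2 t)$ and assembling $V_M$ as in \eqref{RemormalizedLS}, the regularity of $\vec v$ and the Strichartz estimate (Theorem \ref{Striclem}) yield $\|V_M\|_{X^1(-T_0 M^{-2}, T_0 M^{-2})} \lesssim_{E_{ls}(\vec\psi)} 1$. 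Moreover, the initial-data discrepancy $V_M(0) - f_M$ splits into the piece stemming from $\vec v_0 - \vec\psi$, which is $O(\varepsilon_1)$ in $H^1$, and the piece coming from the frequency cutoff $(1 - P^x_{\le M^{-1/100}})$ in the definition of $T^{ls}_M$, which vanishes as $M \to 0$ by dominated convergence applied after rescaling.

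The heart of the argument is the estimate of the equation error $e_M := (i\partial_t + \Delta_{\mathbb{R}\times\mathbb{T}^2})V_M - \rho |V_M|^4 V_M$. A direct computation in $y$-Fourier modes shows that $(i\partial_t + \Delta)V_M$ reproduces precisely the resonant part of $\rho|V_M|^4 V_M$ (since $\vec v$ solves \eqref{SS} and the rescaling preserves the form of the system), so the leftover error is the non-resonant piece
\begin{equation*}
e_M = -\rho\sum_{j\in\mathbb{Z}^2} e^{-it|j|^2}e^{i\langle y,j\rangle}\sum_{\substack{p_1-p_2+p_3-p_4+p_5 = j \\ \tilde\Omega(\vec p, j) \ne 0}} e^{-it\tilde\Omega(\vec p, j)}\, v_{p_1,M}\overline{v_{p_2,M}}v_{p_3,M}\overline{v_{p_4,M}}v_{p_5,M},
\end{equation*}
where $\tilde\Omega(\vec p, j) := |p_1|^2 - |p_2|^2 + |p_3|^2 - |p_4|^2 + |p_5|^2 - |j|^2 \in \mathbb{Z}\setminus\{0\}$. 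To bound $\|e_M\|_{N(-T_0 M^{-2}, T_0 M^{-2})}$, exploit the oscillation $e^{-it\tilde\Omega}$ via a normal form: writing $e^{-it\tilde\Omega} = \frac{i}{\tilde\Omega} \partial_t e^{-it\tilde\Omega}$ and integrating by parts in the Duhamel integral splits $e_M$ into a $(i\partial_t + \Delta)$-derivative of a correction term that is $o_M(1)$-small in $X^1$ (the factor $1/|\tilde\Omega|$ enables summation over $\vec p, j$ once the $h^5 H^4$ smoothness of $\vec v$ is used to absorb the polynomial growth in $\tilde\Omega$), plus a remainder in which one factor $v_{p,M}$ is differentiated in time; since $\partial_t v_{p,M} = M^{5/2} (\partial_T v_p)(Mx, M^2 t)$, rescaling produces an overall factor $M^2$ which, combined with the $T_0 M^{-2}$ time-interval length, yields an $o_M(1)$ bound in $N$.

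With $\|V_M(0) - f_M\|_{H^1} + \|e_M\|_{N(-T_0M^{-2}, T_0M^{-2})} \lesssim \varepsilon_1 + o_M(1)$ in hand, for $M$ small enough (depending on $\psi, T_0, \varepsilon_1$) the hypotheses of Proposition \ref{stability} hold with $\tilde u = V_M$ and initial datum $f_M$. This produces $U_M \in X^1(-T_0M^{-2}, T_0M^{-2})$ solving the equation, together with the uniform $X^1$-bound and the closeness statement \eqref{LSclo18}. The main obstacle is the normal-form step: one must carefully sum over quintuples $\vec p$ with a fixed value of $\tilde\Omega$ in the mass-critical setting, balancing the $1/|\tilde\Omega|$ oscillation gain against the loss of $y$-derivatives (offset by the $h^5 H^4$ regularity of the smoothed profile $\vec v$) and the loss of a time derivative (offset by the $M^2$ factor from rescaling).
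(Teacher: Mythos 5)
Your proposal follows the paper's proof in all essentials: approximate $\vec\psi$ by smooth data $\vec v_0\in h^5H^4$, take $V_M$ to be the rescaled, $\mathbb{T}^2$-modulated solution of the resonant system, identify the equation error with the non-resonant quintic terms carrying the phase $e^{-it\tilde\Omega}$, remove them by a normal form / integration by parts in time (gaining $M^2$ from $\partial_t v_{p,M}$, with the $h^5H^4$ regularity supplying summability over $\vec p$ and $j$ and Lemma \ref{SumLem}-type bounds for the resonant part needed in \eqref{X1Control}), and close with Proposition \ref{stability}. One bookkeeping point: when you integrate by parts against $e^{-i\sigma\tilde\Omega}$ inside the Duhamel integral, $\partial_\sigma$ also falls on the propagator $e^{i(S-\sigma)\partial_{xx}}$, producing an extra term $\tilde\Omega^{-1}\,\partial_{xx}\bigl(\prod v_{p_i,M}\bigr)$ that your sketch omits; it is harmless, being $O(M^2)$ by exactly the rescaling argument you give for $\partial_\sigma P$, but it must be accounted for. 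The paper instead absorbs $\partial_{xx}$ into the denominator via $(\partial_{xx}+\Phi_{q,\vec p})^{-1}$, which forces a preliminary split into $x$-frequencies above and below $2^{-10}$ (to keep that operator bounded away from the resonance $\xi^2=\Phi$), the high-frequency piece being estimated separately through the factor $M$ gained from $\partial_x$; either organization is valid.
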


The proof of Lemma \ref{LSstep1} is the equivalent for large-scale profiles of the proof of \cite[Lemma 4.2]{IoPaSt}. Here we handle the error caused by neglecting the non resonant terms by using a normal form transformation. This reduction to the analysis of the resonant part of the system seems to be relevant only for large scale profiles ($M \to 0$) or in very special regimes.

\begin{proof} When $\rho=0$, there is nothing to prove. It suffices to prove part (ii). All implicit constants are allowed to depend on $\|\psi\|_{ H^{0,1}(\mathbb{R}\times\mathbb{T}^2)}$.

First, we remark that by \eqref{bound on W} and Stichartz estimates on $\R$, we have that
\begin{equation}\label{l5l10}
\begin{split}
\sum_{q\in\mathbb{Z}^2}\langle q\rangle^2\Vert v_{q}\Vert_{L^5_tL^{10}_x}^2&\lesssim {E_{ls}({\vec \psi})},\quad\sum_{q\in\mathbb{Z}^2}\langle q\rangle^{10}\Vert v_{q}\Vert_{L^5_tW^{4,10}_x}^2\lesssim_{E_{ls}({\vec \psi})}\Vert {\vec v}\Vert_{h^5H^4}^2.
\end{split}
\end{equation}

We now show that $V_M$ is an almost solution in the sense of Lemma \ref{stability}.
\begin{equation*}
\begin{split}
\left(i\partial_t+\Delta_{\mathbb{R}\times\mathbb{T}^2}\right)V_M-\vert V_M\vert^4V_M&=-\sum_{q\in\mathbb{Z}^2}e^{-it\vert q\vert^2}e^{i\langle y,q\rangle}\sum_{{\vec p}\in\mathcal{NR}(q)}v_{p_1,M}\overline{v_{p_2,M}}v_{p_3,M}\overline{v_{p_4,M}}v_{p_5,M}e^{-it\Phi_{q,{\vec p}}}\\
&=LHS\\
\Phi_{q,\vec{p}}&=\vert p_1\vert^2-\vert p_2\vert^2+\vert p_3\vert^2-\vert p_4\vert^2+\vert p_5\vert^2-\vert q\vert^2\\
\end{split}
\end{equation*}
where
\begin{equation*}
\mathcal{NR}(q)=\{{\vec p}=(p_1,p_2,p_3,p_4,p_5)\,: \,p_1-p_2+p_3-p_4+p_5-q=0\,;\,\Phi_{q,{\vec p}}\ne 0\}.
\end{equation*}

\medskip

We now claim that, for $I=(-T_0M^{-2},T_0M^{-2})$,
\begin{equation}\label{EEst}
\Vert LHS\Vert_{N^1(I)}\lesssim_{\Vert {\vec v}_0\Vert_{h^5H^4}} M.
\end{equation}

\medskip

We first decompose
\begin{equation*}
LHS=P_{\ge 2^{-10}}^xLHS+P_{<2^{-10}}^xLHS=P_{high}LHS+P_{low}LHS.
\end{equation*}
Using Bernstein's estimate, the definition of the $N$-norm and \eqref{X1SimpleEst}, we obtain that
\begin{equation*}
\begin{split}
\Vert P^x_{\ge 2^{-10}}LHS\Vert_{N^1(0,S)}^2&\lesssim \Vert P^x_{\ge 2^{-10}}\partial_x LHS\Vert_{N^1(0,S)}^2=\sum_{q\in \Z^2} \Vert  e^{i \langle y,q\rangle}P^x_{\ge 2^{-10}}\partial_x \FF_y (LHS) (q)\Vert_{N^1(0,S)}^2\\
&\lesssim \sum_{q\in\mathbb{Z}^2}\langle q\rangle^{-2}\left[\langle q\rangle^2\sum_{{\vec p}\in\mathcal{NR}(q)}\Vert \partial_x\left\{v_{p_1,M}\overline{v_{p_2,M}}v_{p_3,M}\overline{v_{p_4,M}}v_{p_5,M}\right\}\Vert_{L^1_tH_x^1}\right]^2.
\end{split}
\end{equation*}
Since $q\lesssim\max(p_1,p_2,p_3,p_4,p_5)$, we see that, for any $q$,
\begin{equation*}
\begin{split}
&\langle q\rangle^2\sum_{\vec p\in\mathcal{NR}(q)} \Vert \partial_x\left\{v_{p_1,M}\overline{v_{p_2,M}}v_{p_3,M}\overline{v_{p_4,M}}v_{p_5,M}\right\}\Vert_{L^1_tH_x^1}\\
&\lesssim M \sum_{{\vec p} \in \mathcal{NR}(q)}\langle p_1\rangle^2\Vert(\partial_xv_{p_1})_M\Vert_{L^5_tW^{1,10}_x}\Pi_{j=2}^5\left\{\langle p_j\rangle^2\Vert v_{p_j,M}\Vert_{L^5_tW^{1,10}}\right\}
\end{split}
\end{equation*}
Thus $P_{high}LHS$ is acceptable once we sum in $q$.

The contribution of $P_{low}LHS$ is bounded by recalling the definition of the $N^1$ norm in $\eqref{def of N norm}$ and integrating  the time integral by parts as follows:
\begin{equation*}
\begin{split}
&\int_0^Se^{i(S-\sigma)\Delta_{\mathbb{R}\times\mathbb{T}^2}}P_{low}LHS(\sigma)d\sigma\\
&=-\sum_{q\in\mathbb{Z}^2}\sum_{{\vec p}\in\mathcal{NR}(q)}e^{-iS[\vert q\vert^2+\Phi_{q,{\vec p}}]}e^{i\langle y,q\rangle}
\int_0^Se^{i\left(S-\sigma\right)\left[\partial_{xx}+\Phi_{q,{\vec p}}\right]}P_{low}(v_{p_1,M}\overline{v_{p_2,M}}v_{p_3,M}\overline{v_{p_4,M}}v_{p_5,M})d\sigma\\
&=\sum_{q\in\mathbb{Z}^2}\sum_{{\vec p}\in\mathcal{NR}(q)}e^{-iS[\vert q\vert^2+\Phi_{q,{\vec p}}]}e^{i\langle y,q\rangle}\times\\
&
\Big\{\left[ie^{i\left(S-\sigma\right)\left[\partial_{xx}+\Phi_{q,{\vec p}}\right]}\left(\partial_{xx}+\Phi_{q,{\vec p}}\right)^{-1}P_{low}(v_{p_1,M}\overline{v_{p_2,M}}v_{p_3,M}\overline{v_{p_4,M}}v_{p_5,M})\right]_0^S\\
&-i\int_0^Se^{i\left(S-\sigma\right)\left[\partial_{xx}+\Phi_{q,{\vec p}}\right]}\left(\partial_{xx}+\Phi_{q,{\vec p}}\right)^{-1}P_{low}\partial_\sigma\left\{v_{p_1,M}\overline{v_{p_2,M}}v_{p_3,M}\overline{v_{p_4,M}}v_{p_5,M}\right\}d\sigma\Big\}.
\end{split}
\end{equation*}

We bound the $S-$boundary limit using \eqref{X1SimpleEst} and the boundedness of $\left(\partial_{xx}+\Phi_{q,{\vec p}}\right)^{-1}P_{low}$ on $L^2_x(\mathbb{R})$ along with Lemma \ref{SumLem}:
\begin{equation*}
\begin{split}
&\Vert\sum_{q\in\mathbb{Z}^2}\sum_{{\vec p}\in\mathcal{NR}(q)}e^{-iS[\vert q\vert^2+\Phi_{q,{\vec p}}]}e^{i\langle y,q\rangle}
\left(\partial_{xx}+\Phi_{q,{\vec p}}\right)^{-1}P_{low}(v_{p_1,M}\overline{v_{p_2,M}}v_{p_3,M}\overline{v_{p_4,M}}v_{p_5,M})\Vert_{X^1(I)}^2\\
&\lesssim \sum_{q\in\mathbb{Z}}\langle q\rangle^2\Vert\sum_{{\vec p}\in\mathcal{NR}(q)}v_{p_1,M}(0)\overline{v_{p_2,M}}(0)v_{p_3,M}(0)\overline{v_{p_4,M}}(0)v_{p_5,M}(0)\Vert_{H^1_x(\mathbb{R})}^2\\
&+\sum_{q\in\mathbb{Z}}\langle q\rangle^2 \Vert(i\partial_t+\partial_{xx})\sum_{{\vec p}\in\mathcal{NR}(q)}v_{p_1,M}\overline{v_{p_2,M}}v_{p_3,M}\overline{v_{p_4,M}}v_{p_5,M}\Vert_{L^1_tH^1_x}^2\\
&\lesssim_{\|\vec v_0\|_{h^5H^4}} M^4.
\end{split}
\end{equation*}
The $0-$boundary limit is bounded more directly as follows:
\begin{equation*}
\begin{split}
&\Vert \sum_{q\in\mathbb{Z}^2}\sum_{{\vec p}\in\mathcal{NR}(q)}e^{iS\Delta_{\mathbb{R}\times\mathbb{T}^2}}e^{i\langle y,q\rangle}\left(\partial_{xx}+\Phi_{q,{\vec p}}\right)^{-1}P_{low}\left[v_{p_1,M}\overline{v_{p_2,M}}v_{p_3,M}\overline{v_{p_4,M}}v_{p_5,M}\right](0)\Vert_{X^1(I)}^2\\
&\lesssim
 \sum_{q\in\mathbb{Z}^2}\langle q\rangle^2\Vert \sum_{{\vec p}\in\mathcal{NR}(q)}(v_{p_1,M}\overline{v_{p_2,M}}v_{p_3,M}\overline{v_{p_4,M}}v_{p_5,M})(0)\Vert_{H^1_x(\mathbb{R})}^2\lesssim_{\|\vec v_0\|_{h^5H^4}} M^4.
 \end{split}
\end{equation*}

Finally, we also have that
\begin{equation*}
\begin{split}
&\bigg\Vert \sum_{q\in\mathbb{Z}^2}\sum_{{\vec p}\in\mathcal{NR}(q)}e^{-iS[\vert q\vert^2+\Phi_{q,{\vec p}}]}e^{i\langle y,q\rangle}\\
&{}\int_0^Se^{i\left(S-\sigma\right)\left[\partial_{xx}+\Phi_{q,{\vec p}}\right]}\left(\partial_{xx}+\Phi_{q,{\vec p}}\right)^{-1}P_{low}\partial_\sigma\left\{v_{p_1,M}\overline{v_{p_2,M}}v_{p_3,M}\overline{v_{p_4,M}}v_{p_5,M}\right\}d\sigma\bigg\Vert_{X^1}^2\\
&\lesssim \sum_{q\in\mathbb{Z}^2}\langle q\rangle^2\Vert \sum_{{\vec p}\in\mathcal{NR}(q)}\partial_\sigma\left\{v_{p_1,M}\overline{v_{p_2,M}}v_{p_3,M}\overline{v_{p_4,M}}v_{p_5,M}\right\}\Vert_{L^1_tH^1_x}^2 \lesssim_{{\|\vec v_0\|_{h^5H^4}}} M^4.
\end{split}
\end{equation*}
This finishes the proof of \eqref{EEst}.

\medskip

Independently, we also have that
\begin{equation*}
\Vert V_M\Vert_{L^\infty_tH^1_{x,y}(\mathbb{R}\times\mathbb{T}^2\times I)}^2\le\sum_{q\in\mathbb{Z}^2}\langle q\rangle^2\Vert v_{q,M}\Vert_{L^\infty_tH^1_x}^2\le 2\Vert {\vec u}(0)\Vert_{h^1L^2}^2+M^2\Vert {\vec v}\Vert^2_{h^1H^1}
\end{equation*}
and that
\begin{equation}\label{X1Control}
\Vert V_M\Vert_{X^1( I)}\lesssim_{\|\vec u\|_{h^1L^2}}1+C(\Vert {\vec v}\Vert_{h^5H^4})M.
\end{equation}
Indeed, using Lemma \ref{SumLem} in the third line,
\begin{equation*}
\begin{split}
\Vert \left(i\partial_t+\Delta_{\mathbb{R}\times\mathbb{T}^2}\right)V_M\Vert_{N^1(I)}^2&\lesssim \sum_{q\in\mathbb{Z}^2}\langle q\rangle^2\Vert\sum_{{\vec p}\in\mathcal{R}(q)}v_{p_1,M}\overline{v_{p_2,M}}v_{p_3,M}\overline{v_{p_4,M}}v_{p_5,M}\Vert_{L^1_tH^1_x}^2\\
&\lesssim \sum_{q\in\mathbb{Z}^2}\left[\sum_{{\vec p}\in\mathcal{R}(q)}\Pi_{k=1}^5\langle p_k\rangle\Vert v_{p_k,M}\Vert_{L^5_tW^{1,10}_x}\times \langle q\rangle\Pi_{k=1}^5\langle p_k\rangle^{-1}\right]^2\\
&\lesssim \left[\sum_{p\in\mathbb{Z}^2}\langle p\rangle^2\Vert v_{p,M}\Vert_{L^5_tW^{1,10}_x}^2\right]^5
\end{split}
\end{equation*}
and appealing to \eqref{l5l10}, we can justify \eqref{X1Control}.

Using Proposition \ref{stability},
we conclude that, for $M$ small enough (depending on ${\vec v}_0$), the solution $U_M$ of \eqref{NLS} with initial data $V_M(0)$ exists on $I$ and that
\begin{equation*}
\Vert U_M-V_M\Vert_{X^1(I)}\lesssim \varepsilon_1+C(\Vert {\vec v}_0\Vert_{h^5H^4})M,
\end{equation*}
which ends the proof.

\end{proof}

To understand linear and nonlinear evolutions beyond the large-scale window we need an additional extinction lemma:

\begin{lemma}\label{LSExtinction}
For any $\psi\in H^{0,1}(\mathbb{R}\times\mathbb{T}^2)$ and any $\varepsilon>0$, there exists $T_0=T(\psi,\varepsilon)$ and $M_0=M_0(\psi, \epsilon)$ such that for any $T\ge T_0$ and any $M\le M_0$,
\begin{equation}\label{ExtEst}
\Vert e^{it\Delta_{\mathbb{R}\times\mathbb{T}^2}}T^{ls}_M\psi\Vert_{Z(\{M^2\vert t\vert\ge T_0\})} \lesssim \varepsilon.
\end{equation}
\end{lemma}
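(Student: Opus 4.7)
The plan is to reduce by density to a smooth finite trigonometric polynomial in $y$ with compactly supported $x$-coefficients, and then to exploit the one-dimensional dispersive decay of $e^{iT\partial_{xx}}$ on $\R$ after rescaling to the slow time variable $T=M^2 t$. The approach parallels the Euclidean extinction Lemma \ref{Extinction}, except that here smallness comes from the $|T|^{-1/2}$ decay on $\R$ rather than from a small Strichartz norm.

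First I would split $\psi=\tilde\psi+r$ in $H^{0,1}$, where $\tilde\psi(x,y)=\sum_{|q|\le Q}\tilde\psi_q(x)e^{i\langle y,q\rangle}$ with $\tilde\psi_q\in C^\infty_c(\R)$ supported in $[-R,R]$, and $\|r\|_{H^{0,1}}<\delta$ for some $\delta\ll\varepsilon$ to be chosen. A direct computation (the $x$-derivative picks up a factor of $M^{99/100}$ from the cutoff $P^x_{\le M^{-1/100}}$, while the $y$-derivative and the $L^2$ norm are preserved up to the Jacobian) shows that $T^{ls}_M:H^{0,1}\to H^1$ is bounded uniformly in $M\le 1$. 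Using $\|\cdot\|_{Z(\R)}\lesssim\|\cdot\|_{X^1(\R)}\lesssim\|\cdot\|_{H^1}$ from the discussion preceding Lemma \ref{U^p lemma}, one gets $\|e^{it\Delta}T^{ls}_M r\|_{Z(\R)}\lesssim\delta$, which is acceptable. For the main term, Fourier expansion in $y$ decouples the propagator as
\[
e^{it\Delta}T^{ls}_M\tilde\psi(x,y)=M^{\frac12}\sum_{|q|\le Q}e^{-it|q|^2}e^{i\langle y,q\rangle}\bigl(e^{iM^2 t\,\partial_{xx}}\phi^M_q\bigr)(Mx),\qquad \phi^M_q:=P^x_{\le M^{-1/100}}\tilde\psi_q,
\]
and because $\tilde\psi$ carries only finitely many $y$-modes and has $x$-frequency support $\lesssim M^{99/100}\ll 1$, the projector $P_N$ contributes only for $N\sim\max(|q|,1)\le Q+1$.

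For $|T|=M^2|t|\ge T_0$ large, I would invoke the one-dimensional dispersive estimate
\[
\|e^{iT\partial_{xx}}\phi^M_q\|_{L^{p_0}_X}\lesssim|T|^{-(\frac12-\frac1{p_0})}\|\phi^M_q\|_{L^{p_0'}_X}\lesssim_{R}|T|^{-(\frac12-\frac1{p_0})}\|\tilde\psi_q\|_{L^2_X},
\]
use the triangle inequality in $q$, and change variables $X=Mx$, $T=M^2 t$. The Jacobian factors ($M^{-1/p_0}$ from $x$, $M^{-2/p_0}$ from $t$), the $M^{1/2}$ normalization, and the $M$-dependence inside $|T|^{-(1/2-1/p_0)}$ combine to cancel exactly, leaving
\[
\|P_N u\|_{L^{p_0}_{x,y,t}(I_\gamma)}\lesssim_R N\,|\gamma|^{-(\frac12-\frac1{p_0})}\Bigl(\sum_{|q|\sim N}\|\tilde\psi_q\|_{L^2}^{2}\Bigr)^{1/2}
\]
for each $\gamma$ with $|\gamma|\gtrsim T_0/M^2$. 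The crucial numerical identity $q_0\bigl(\tfrac12-\tfrac1{p_0}\bigr)=2$, equivalent to the Strichartz admissibility $\tfrac2{q_0}+\tfrac1{p_0}=\tfrac12$, makes $\sum_{|\gamma|\ge T_0/M^2}|\gamma|^{-2}\sim M^2/T_0$, producing a factor $(M^2/T_0)^{1/q_0}$ after the $l^{q_0}_\gamma$-norm. Summing the finitely many $N\le Q$ with the weights $N^{p_0(5/p_0-1/2)}$ in the definition of the $Z$-norm and handling both $p_0\in\{9/2,18\}$ gives $\|e^{it\Delta}T^{ls}_M\tilde\psi\|_{Z(\{M^2|t|\ge T_0\})}\lesssim_{\tilde\psi}(M^2/T_0)^{c}$ for some $c>0$, which is $\le\varepsilon$ once $T_0$ is taken large (with $M\le 1$).

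The main obstacle is the careful bookkeeping of $M$-powers: they come from the Jacobian of the rescaling, the $M^{1/2}$ prefactor, and the rescaled dispersive factor $|M^2 t|^{-(1/2-1/p_0)}$, and must cancel so that the only $M$-dependence left is the benign $(M^2/T_0)^{1/q_0}$ coming from the time summation. The reason we first pass to $\tilde\psi$ (independent of $M$) is precisely so that the implicit constants $\lesssim_{\tilde\psi}$ and $\lesssim_R$ are fixed once $\varepsilon$ is chosen; then $T_0=T(\psi,\varepsilon)$ is selected to beat these constants and $M_0=M_0(\psi,\varepsilon)$ is taken small enough that the frequency cutoff $P^x_{\le M^{-1/100}}$ introduces a negligible perturbation to $\tilde\psi_q$.
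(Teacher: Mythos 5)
Your argument is correct in substance but takes a genuinely different route from the paper. The paper never invokes the pointwise $L^{p_0'}\to L^{p_0}$ dispersive decay; instead it works at the single scale-invariant exponent $p=6$: the quantity $\sum_N N^{2/3}\Vert P_N e^{it\Delta}T^{ls}_M\psi\Vert_{L^6_{x,y,t}}^2$ is exactly invariant under the mass-critical rescaling $v_{q,M}(x,t)=M^{1/2}(e^{iM^2t\partial_{xx}}\tilde\psi_q^M)(Mx)$, so the restriction to $\{M^2|t|\ge T_0\}$ pulls back to $\{|t|\ge T_0\}$ and is made small purely by dominated convergence applied to the finite global $L^6_{x,t}(\R\times\R)$ Strichartz norm of each $e^{it\partial_{xx}}\psi_q$ (this replaces your density reduction and works directly for general $\psi\in H^{0,1}$). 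Since the $Z$-norm lives at $p_0\in\{9/2,18\}$ rather than $6$, the paper then interpolates this small $L^6$-Besov bound against the bounded $l^q_\gamma L^{p}$ Strichartz norms at $p=4.1$ and $p=1000$. Your approach trades the interpolation step for a density argument plus explicit stationary-phase decay summed in $\gamma$; it is more elementary and makes the mechanism (slow $1$d dispersion over times $\gtrsim T_0M^{-2}$) more transparent, at the cost of the approximation bookkeeping (the loss of compact support under $P^x_{\le M^{-1/100}}$, the rapidly decaying Littlewood--Paley tails at $N>Q$) that the paper's scale-invariant formulation avoids.

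One correction to your power counting: the displayed per-interval estimate is off by a factor of $M$. Computing $\Vert M^{1/2}(e^{iM^2t\partial_{xx}}\phi_q)(Mx)\Vert_{L^{p_0}_{x,t}(\R\times I_\gamma)}$ over a \emph{unit} time interval $I_\gamma$, only the $x$-Jacobian $M^{-1/p_0}$ enters (there is no $t$-Jacobian since you have not rescaled time), and the dispersive factor contributes $(M^2|\gamma|)^{-(1/2-1/p_0)}=M^{-1+2/p_0}|\gamma|^{-(1/2-1/p_0)}$; the net power is $M^{1/2-1/p_0}\cdot M^{-1+2/p_0}=M^{-1/2+1/p_0}=M^{-2/q_0}$, which does not cancel and blows up as $M\to0$. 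The cancellation only closes after the $l^{q_0}_\gamma$ summation: since $q_0(\tfrac12-\tfrac1{p_0})=2$, the tail sum over $|\gamma|\gtrsim T_0M^{-2}$ contributes $(M^2/T_0)^{1/q_0}=M^{2/q_0}T_0^{-1/q_0}$, and the final bound is $C(\tilde\psi,R,Q)\,T_0^{-1/q_0}$ — uniform in $M$, rather than the stronger $(M^2/T_0)^{c}$ you state. This still suffices, since the lemma only requires smallness for $T_0$ large uniformly in $M\le M_0$, but the intermediate display as written is not correct and the advertised "exact cancellation'' happens at the level of the $l^{q_0}_\gamma$ norm, not interval by interval (interval-by-interval cancellation would occur only at the scale-invariant exponent $p_0=6$, which is precisely why the paper works there).
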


\begin{proof}
By Strichartz estimates on $\mathbb{R}$, and dominated convergence, there exists $T_0=T_0(\psi,\varepsilon)$ such that
\begin{equation}\label{StricR}
\sum_{p\in\mathbb{Z}^2}\langle p\rangle^2\Vert e^{it\partial_{xx}}\psi_p\Vert_{L^6_{x,t}(\mathbb{R}\times\{\vert t\vert\ge T_0\})}^2\le\varepsilon^{1000}.
\end{equation}
Let $I=\{\vert t\vert\ge T_0\}$ and $I_M=\{M^2\vert t\vert\ge T_0\}$.
We have that
\begin{align*}
e^{it\Delta_{\mathbb{R}\times\mathbb{T}^2}}T^{ls}_M\psi=&\sum_{q\in\mathbb{Z}^2}e^{i(\langle q, y\rangle-t\vert q\vert^2)} \left(M^{1/2}e^{iM^2t\partial_{xx}}\tilde{\psi}^M_{q}(Mx)\right),\quad \tilde{\psi}^M=P^x_{\le M^{-1/100}}\psi\\
=&\sum_{q \in \Z^2} e^{i(\langle q, y\rangle -|q|^2t)}v_{q, M}(t,x)
\end{align*}
where we denoted by:
\begin{equation*}
 v_{p,M}(x,t)=M^{1/2}e^{iM^2t\partial_{xx}}\tilde{\psi}^M_{p}(Mx).
\end{equation*}
Using the fact that $e^{i\langle q,y\rangle}v_{p,M}(x,t)$ is supported in Fourier space in the box centered at $q$ of radius 2 and Bernstein's estimate in $y$, we can estimate on $\mathbb{R}\times\mathbb{T}^2\times I_M$:
$$
\|P_N e^{it\Delta_{\R\times \T^2}} T_M^{ls} \psi\|_{L^6_{x,y,t}} \lesssim  N^{2/3}\left\| \left(\sum_{|q| \sim N} |v_{q,M}(x,t)|^2\right)^{1/2}\right\|_{L^6_{x,t}}\lesssim N^{2/3}\left(\sum_{|q|\sim N} \|v_{q,M}\|_{L^6_{x,t}(\R_x\times \R_t)}^2\right)^{1/2}.
$$

Consequently we see that
\begin{equation}\label{Zrevisionadd}
\begin{split}
\sum_{N\ge 1}N^\frac{2}{3}\Vert P_Ne^{it\Delta_{\mathbb{R}\times\mathbb{T}^2}}T^{ls}_M\psi\Vert_{L^6_{x,y,t}(\mathbb{R}\times\mathbb{T}^2\times I_M)}^2\lesssim&  \sum_{N \geq 1}N^2\sum_{|q|\sim N} \|v_{q,M}\|^2_{L^6_{x,t}(\mathbb{R}\times I_M)}\\
\lesssim& \sum_{q\in \Z^2}\langle q \rangle^2 \|e^{it\partial_{xx}}\widetilde \psi_q^M\|_{L^6_{x,t}(\R \times I)}^2\lesssim \varepsilon^{1000},
\end{split}
\end{equation}
if $M$ is chosen large enough depending on $\psi$ and $\varepsilon$. On the other hand, by Strichartz estimates \eqref{Stric1}, we also have that
\begin{equation*}
\sum_{N\ge 1}N^{5-\frac{p_0}{2}}\Vert P_Ne^{it\Delta_{\mathbb{R}\times\mathbb{T}^2}}T^{ls}_M\psi\Vert_{l^\frac{4p_0}{p_0-2}_\gamma L^{p_0}_{x,y,t}}^{p_0}\lesssim\Vert T^{ls}_M\psi\Vert_{H^1(\mathbb{R}\times\mathbb{T}^2)}^{p_0}\lesssim \Vert \psi\Vert_{H^{0,1}(\mathbb{R}\times\mathbb{T}^2)}^{p_0}
\end{equation*}
for $p_0=4.1,1000$. Interpolating this with \eqref{Zrevisionadd} gives \eqref{ExtEst}.

\end{proof}

We are now ready to describe the nonlinear solutions of \eqref{NLS} corresponding to large-scale profiles. In view of the profile analysis in the next section, we need to consider renormalized large-scale frames defined by:

\begin{equation}\label{renframeLS}
\begin{split}
\widetilde{\mathcal{F}_{ls}}:=&\{(M_k,t_k,p_k,\xi_k)_k: M_k\le 1,\, M_k\to0,\, t_k\in\mathbb{R},\,  p_k=(x_k,0)\in\mathbb{R}\times\mathbb{T}^2 \hbox{ and } \xi_k\in\mathbb{R},\, \xi_k\to\xi_\infty\in\mathbb{R}\\
&\hbox{and such that either $t_k\equiv 0$ or $M_k^2t_k\to\pm\infty$, and either $\xi_k\equiv 0$ or $M_k^{-1}\xi_k\to\pm\infty$} \}.
\end{split}
\end{equation}

We can then describe the solution of \eqref{NLS} with initial data given by a large-scale profile as follows:

\begin{proposition}\label{GEForLSP}
Assume that Conjecture \ref{LSConj} holds true. Fix a renormalized large-scale frame, $\mathcal{S}=(M_k,t_k,(x_k,0),\xi_k)_k\in\widetilde{\mathcal{F}}_{ls}$, $\psi\in H^{0,1}(\mathbb{R}\times\mathbb{T}^2)$, and let
\begin{equation*}
U_k(0)=\Pi_{t_k,x_k}e^{i\xi_kx}T_{M_k}^{ls}\psi.
\end{equation*}

(i) For $k$ large enough (depending only on $\psi$, $\mathcal{S}$), there is a nonlinear solution $U_k\in X^1_c(\mathbb{R})$ of the equation \eqref{NLS} satisfying:
\begin{equation}\label{ControlOnZNormForLSP}
\Vert U_k\Vert_{X^1(\R)}\lesssim_{E_{ls}(\psi)}1.
\end{equation}

(ii) There exists a solution ${\vec v}\in C(\mathbb{R}:h^1L^2)$ of \eqref{SS}
with scattering data ${\vec v_0}{}^{\pm\infty}$ such that the following holds, up to a subsequence:
for any $\varepsilon>0$, there exists $T(\psi,\varepsilon)$ such that for all $T\ge T(\psi,\varepsilon)$, there holds that
\begin{equation}\label{ProxySSHyp}
\Vert U_k-W_{k}\Vert_{X^1(\{\vert t-t_k\vert\le TM_k^{-2}\})}\le\varepsilon,
\end{equation}
for $k$ large enough, where
\begin{equation*}
W_k(x,t)=e^{-i\tau\vert\xi_k\vert^2}e^{ix\xi_k}V_{M_k}(x-x_k-2\xi_k\tau,y,\tau),\quad \tau=t-t_k
\end{equation*}
with $V_{M_k}$ defined in terms of ${\vec v}$ as in \eqref{RemormalizedLS}.
In addition, up to a subsequence,
\begin{equation}\label{ScatLSSol}
\Vert U_k(t)-\Pi_{t_k-t,x_k}e^{ix\xi_k}T_{M_k}^{ls}\underline{{\vec v_0}{}^{\pm\infty}}\Vert_{X^1(\{\pm(t-t_k)\geq TM_k^{-2}\})}\le \varepsilon,
\end{equation}
for $k$ large enough (depending on $\psi,\varepsilon,T$).
\end{proposition}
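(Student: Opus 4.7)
My plan is to reduce the proposition to the three technical ingredients already proved above: Lemma \ref{LSstep1} (nonlinear approximation on the multi-scale window), Lemma \ref{LSExtinction} (linear dispersion outside the window), and the stability Proposition \ref{stability}. The key reduction is via symmetries of NLS. By translation invariance in the $\R$-factor and the $\R$-Galilean transform
\[
u(x,y,t)\mapsto e^{i(\xi x - t|\xi|^2)}u(x-2\xi t,y,t),
\]
we may assume $x_k=0$ and $\xi_k=0$; this is precisely the reason for factoring out $e^{ix\xi_k}$ and the translation $p_k$ in the definition of the renormalized large-scale frame. The case $t_k\equiv 0$ is the heart of the argument, while the case $M_k^2t_k\to\pm\infty$ reduces to small-data scattering via Lemma \ref{LSExtinction} applied to $\psi$ together with the fact that the initial data is then already the linear evolution of a large-scale profile from very far in the past or future.

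For the main case $t_k=0$, I would first invoke Conjecture \ref{LSConj} to produce the global solution $\vec v\in C(\mathbb{R}:h^1L^2)$ of \eqref{SS} with initial data $\vec\psi$, scattering to data $\vec v_0^{\pm\infty}\in h^1L^2$. Approximating $\vec\psi$ by smooth data $\vec\phi_0\in h^5H^4$ within $\varepsilon_1\ll\varepsilon$ in $h^1L^2$, Proposition \ref{LocTheoryForSS} produces a smoother solution $\vec\phi(t)$ with scattering data $\vec w^{\pm\infty}\in h^5H^4$. On the window $I_k=[-TM_k^{-2},TM_k^{-2}]$ with $T=T(\psi,\varepsilon)$ large, Lemma \ref{LSstep1}(ii) applied to $\vec\phi_0$ directly produces the nonlinear solution $U_k$ and an approximation by $V_{M_k}^{\phi}$ (built from $\vec\phi$ as in \eqref{RemormalizedLS}) up to $O(\varepsilon_1)$; replacing $V_{M_k}^{\phi}$ by $W_k$ (built from the true $\vec v$) costs another $O(\varepsilon_1)$ by Proposition \ref{LocTheoryForSS}. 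This already gives \eqref{ProxySSHyp} on $I_k$.

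To extend past $I_k$ and establish \eqref{ScatLSSol}, I would match $V_{M_k}^{\phi}(\pm TM_k^{-2})$ with $e^{\pm iTM_k^{-2}\Delta_{\mathbb{R}\times\mathbb{T}^2}}T_{M_k}^{ls}\underline{\vec w^{\pm\infty}}$ by a direct Fourier computation in $y$: the large-scale rescaling intertwines $e^{it\partial_{xx}}$ on $\mathbb{R}$ with $M_k^2 t$-scaled $\R\times\T^2$ linear flow after removing the resonant factors $e^{-it|q|^2}$, so the $h^1L^2$-scattering of $\vec\phi$ into $\vec w^{\pm\infty}$ within time $T$ translates into an $H^1$-approximation of $V_{M_k}^{\phi}(\pm TM_k^{-2})$ by the linear $\R\times\T^2$-evolution of $T_{M_k}^{ls}\underline{\vec w^{\pm\infty}}$ (the residual $P^x_{\le M_k^{-1/100}}$ cutoff in \eqref{LSrescaled} vanishes as $M_k\to 0$ on smooth data). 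Lemma \ref{LSExtinction} applied to $\underline{\vec w^{\pm\infty}}\in H^{0,1}$ then supplies smallness of the $Z$-norm of the linear flow on $\{\pm M_k^2(t-t_k^{\pm})\ge T\}$, so Proposition \ref{LWP} produces a small-data nonlinear solution in the exterior region whose difference from the linear flow is $O(\varepsilon^3)$, and Proposition \ref{stability} identifies this solution with the extension of $U_k$. The uniform bound \eqref{ControlOnZNormForLSP} follows by concatenating the $X^1$-bounds on $I_k$ and its complement.

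The main obstacle I anticipate is precisely this endpoint matching: transferring the scattering statement \eqref{LSScat} for $\vec\phi$, which is $h^1L^2$-convergence at time $\pm T$ on $\mathbb{R}$, into an $H^1(\mathbb{R}\times\mathbb{T}^2)$-approximation of $V_{M_k}^{\phi}(\pm TM_k^{-2})$ by the $\mathbb{R}\times\mathbb{T}^2$-linear evolution of the large-scale profile built from $\vec w^{\pm\infty}$. Doing this rigorously requires that the $x$-frequency truncation in $T_{M_k}^{ls}$ lose no derivatives on the approximant (which is why the $h^5H^4$-smoothing via $\vec\phi_0$ is used, with $M_k^{-1/100}$ eventually dwarfing any fixed frequency), and that the Galilean and translation parameters be tracked consistently through the symmetry reduction so that $W_k$ in the statement really does match the Galilean boost and shift of $V_{M_k}$. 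Once this identification is in hand, the remainder of the argument is a routine application of the small-data theory of Section \ref{Section-LWP} and the stability Proposition \ref{stability}.
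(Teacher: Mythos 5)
Your treatment of the main case $t_k\equiv 0$ matches the paper's proof essentially step for step: Galilean/translation reduction to $x_k=0$, $\xi_k=0$; Lemma \ref{LSstep1} on the window $\{|t|\le TM_k^{-2}\}$ (the $h^5H^4$ smoothing you describe is exactly what that lemma's hypotheses encode); the scattering bounds from Proposition \ref{LocTheoryForSS} combined with Lemma \ref{LSExtinction} to make $\Vert e^{it\Delta}U_k(\pm TM_k^{-2})\Vert_{Z}$ small outside the window; and Propositions \ref{LWP} and \ref{stability} to extend $U_k$ globally and deduce \eqref{ScatLSSol} and \eqref{ControlOnZNormForLSP}.

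The one step that would fail as written is your dispatch of the case $M_k^2t_k\to\pm\infty$: this does \emph{not} reduce to small-data scattering. The datum $\Pi_{t_k,0}T^{ls}_{M_k}\psi$ has full $E_{ls}$-size, and although its linear flow is small in $Z$ away from $t\approx t_k$ (by Lemma \ref{LSExtinction}), the solution still undergoes an $O(1)$ nonlinear interaction on the window $|t-t_k|\lesssim M_k^{-2}$, and part (ii) requires you to exhibit the resonant-system solution $\vec v$ governing that interaction. The paper's fix is a wave-operator argument for \eqref{SS}: take $\vec v$ to be the solution of \eqref{SS} with $\Vert \vec v(t)-e^{it\partial_{xx}}\vec\psi\Vert_{h^1L^2}\to 0$ as $t\to\mp\infty$ (this exists under Conjecture \ref{LSConj}), set $\psi'=\underline{\vec v}(0)$, apply the already-proved $t_k=0$ case to the family with data $T^{ls}_{M_k}\psi'$, check that this family at time $-t_k$ converges in $H^1$ to $\Pi_{t_k,0}T^{ls}_{M_k}\psi$, and then identify it with $U_k$ via Proposition \ref{stability}. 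Your sketch gestures at the right picture ("linear evolution of a large-scale profile from far in the past"), but without the wave operator for \eqref{SS} and the stability step the conclusion of part (ii) is not reached for these frames.
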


\begin{proof}
Without loss of generality we may assume that $x_k=0$. Using a Galilean transform and the fact that $\xi_k$ is bounded, we may assume that $\xi_k=0$ for all $k$. We first consider the case when $t_k=0$ for all $k$ and we let ${\vec v}$ be the solution of \eqref{SS} with initial data ${\vec \psi}$. Then, by Proposition \ref{LocTheoryForSS}, we see that there exists $T_0=T_0(\psi,\varepsilon)$ such that
\begin{equation}\label{ChoiceOFT0}
\begin{split}
\sup_{t\ge T_0}\Vert {\vec u}(t)-e^{it\partial_{xx}}{\vec v_0}^{+\infty}\Vert_{h^1L^2}+\Vert e^{it\partial_{xx}} {\vec v_0}^{+\infty}\Vert_{\vec W (\mathbb{R}\times\{t\ge T_0\})}&\le\varepsilon,\\
\sup_{t\le -T_0}\Vert {\vec u}(t)-e^{it\partial_{xx}}{\vec v_0}^{-\infty}\Vert_{h^1L^2}+\Vert e^{it\partial_{xx}}{\vec v_0}^{-\infty}\Vert_{\vec W(\mathbb{R}\times\{t\le -T_0\})}&\le\varepsilon,
\end{split}
\end{equation}
Fix $T\ge T_0$. Applying Lemma \ref{LSstep1}, we see that, provided that $k$ is large enough,
\begin{equation*}
\Vert U_k-V_{M_k}\Vert_{\tilde{X}^1(\{\vert t\vert\le T M_k^{-2}\})}\le\varepsilon.
\end{equation*}
This and Strichartz estimates, together with \eqref{ChoiceOFT0}, Lemma \ref{LSExtinction} imply that
\begin{equation*}
\Vert e^{ it\Delta_{\mathbb{R}\times\mathbb{T}^2}}U_k(\pm T M_k^{-2})\Vert_{Z(\pm t\geq M_k^{-2}T)}\le\varepsilon.
\end{equation*}
At this point, Proposition \ref{LWP} shows that $U_k$ extends to a global solution $U_k\in X^1_c(\mathbb{R})$ satisfying \eqref{ScatLSSol}.

We now turn to the case when $M_k^{-2}\vert t_k\vert \to \infty$. For definiteness, we assume that $M_k^{-2}t_k \to +\infty$ and let ${\vec v}$ be the solutions to \eqref{SS} satisfying:
\begin{equation*}
\Vert {\vec v}(t)-e^{it\partial_{xx}}{\vec \psi}\Vert_{h^1L^2}\to 0
\end{equation*}
as $t\to -\infty$. Let $\psi^\prime=\underline{\vec v}(0) \in H^{0,1}(\R\times \T^2)$ and let $V_k$ be the solution of $\eqref{NLS}$ with initial data $T^{ls}_{M_k}\psi^\prime$.
Applying the first case of the proof to the frame $(N_k, 0,0,0)$ and the family $V_k$ we conclude that:
\begin{equation*}
\Vert V_k(-t_k)-\Pi_{t_k,0}T^{ls}_{M_k}\psi\Vert_{H^1(\mathbb{R}\times \mathbb{T}^2)} \to 0
\end{equation*}
as $k \to \infty$. The conclusion of the proof now follows from Proposition \ref{stability} and by inspecting the behavior of $V_k$.
\end{proof}


\section{Profile Decomposition}\label{Sec-ProfileDec}

Recall the definitions \eqref{DefPi}, \eqref{DefTNE} and \eqref{LSrescaled}. We consider three different kind of profiles corresponding to different frames.

\begin{definition}[Frames and Profiles]\label{DefPro}
\begin{enumerate}
\item We define a frame to be sequence $(N_k,t_k,p_k, \xi_k)_k$ in $2^{\Z} \times \R \times (\R\times \T^2)\times \R$. We distinguish the following three different types of frames: 
\begin{enumerate}

\item A Euclidean frame is a sequence $\mathcal{F}_e=(N_k,t_k,p_k,0)_k$ with $N_k\ge 1$, $N_k\to+\infty$, $t_k\in\mathbb{R}$, $p_k\in\mathbb{R}\times\mathbb{T}^2$. 

\item A large-scale frame is a sequence $\mathcal{F}_{ls}=(M_k,t_k,p_k,\xi_k)_k$ with $M_k\le 1$, $M_k\to0$, $t_k\in\mathbb{R}$, $ p_k=(x_k,0)\in\mathbb{R}\times\mathbb{T}^2$ and $\xi_k\in\mathbb{R}$, $\xi_k\to\xi_\infty\in\mathbb{R}$.

\item A Scale-$1$ frame is a sequence $\mathcal{F}_{1}=(1,t_k,p_k,0)_k$ with $t_k\in\mathbb{R}$, $p_k\in\mathbb{R}\times\mathbb{T}^2$.
\end{enumerate}

\item We say that two frames $(N_k,t_k,p_k,\xi_k)_k$ and $(M_k,s_k,q_k,\eta_k)_k$ are orthogonal if
\begin{equation*}
\lim_{k\to+\infty} \left(\left\vert \ln\frac{N_k}{M_k}\right\vert+N_k^2\vert t_k-s_k\vert +N_k^{-1} \vert \xi_k-\eta_k\vert+N_k\left \vert (p_k-q_k)-2(t_k-s_k)\xi_k \right\vert\right)=+\infty.
\end{equation*}
Two frames that are not orthogonal are called equivalent.

\item Associated to each of the above three types of frames, we associate a profile defined as: 
\begin{enumerate}
\item If $\mathcal{O}=(N_k,t_k,p_k,0)_k$ is a Euclidean frame and if $\phi\in \dot{H}^1(\mathbb{R}^3)$, we define the Euclidean profile associated to $(\phi,\mathcal{O})$ as the sequence $\widetilde{\phi}_{\mathcal{O},k}$ with
\begin{equation*}
\widetilde{\phi}_{\mathcal{O},k}(x,y):=\Pi_{t_k,p_k}(T^e_{N_k}\phi)(x,y).
\end{equation*}
\item If $\mathcal{O}=(M_k,t_k,p_k,\xi_k)_k$ is a large scale frame, if $p_k=(x_k,0)$ and if $\psi\in H^{0,1}(\mathbb{R}\times\mathbb{T}^2)$, we define the large scale profile associated to $(\psi,\mathcal{O})$ as the sequence $\widetilde{\psi}_{\mathcal{O},k}$ with
\begin{equation*}
\widetilde{\psi}_{\mathcal{O},k}(x,y):=\Pi_{t_k,p_k}\left[e^{i\xi_k x}T^{ls}_{M_k}\psi(x,y)\right].
\end{equation*}

\item If $\mathcal{O}=(1,t_k,p_k,0)$ is a scale-1 frame and $W\in H^1(\mathbb{R}\times\mathbb{T}^2)$, we define the Scale-1 profile associated to $(W,\mathcal{O})$ as $\widetilde{W}_{\mathcal{O},k}$ with
\begin{equation*}
\widetilde{W}_{\mathcal{O},k}=\Pi_{t_k,p_k}W.
\end{equation*}
\end{enumerate}

\item Finally, we say that a sequence of functions $\{f_k\}_k \subset H^1(\R\times \T^2)$ is absent from a frame $\mathcal O$ if, up to a subsequence:
$$
\langle f_k, \widetilde{\psi}_{\mathcal O,k}\rangle_{H^1\times H^1} \to 0
$$
as $k\to +\infty$ for any profile $\widetilde{\psi}_{\mathcal O,k}$ associated with $\mathcal O$.
\end{enumerate}
\end{definition}

\subsection{Orthogonality of profiles}
This section summarizes some orthogonality and equivalence properties of frames. These are included in the following lemma:

\begin{lemma}[Frame equivalences and orthogonality]\label{Eq and orth profiles}
\begin{enumerate}
\item Suppose that $\OO$ and $\OO'$ are equivalent Euclidean (respectively large-scale, or scale-1) frames, then there exists an isometry $L$ of $\dot H^1(\R^3)$ (resp. $H^{0,1}(\R\times \T^2), H^1(\R\times \T^2)$) such that, for any profile generator $\psi \in \dot H^1(\R^3)$ (resp. $H^{0,1}(\R\times \T^2), H^1(\R\times \T^2)$) it holds that, up to a subsequence:
\begin{equation}\label{feq 1}
\limsup_{k\to +\infty} \|\widetilde{L\psi}_{\OO, k}-\widetilde{\psi}_{\OO',k}\|_{H^1(\R\times \T^2)}=0
\end{equation}

\item Suppose that $\OO$ and $\OO'$ are orthogonal frames and $\widetilde{\psi}_{\OO,k}$ and $\widetilde{\phi}_{\OO',k}$ are two profiles associated with $\OO$ and $\OO'$ respectively. Then
\begin{align*}
\lim_{k \to+\infty} \langle \widetilde{\psi}_{\OO, k}, \widetilde{\phi}_{\OO',k}\rangle_{H^1\times H^1(\R\times \T^2)}=&0,\\
\lim_{k \to+\infty} \langle \vert \widetilde{\psi}_{\OO,k}\vert^3, \vert \widetilde{\phi}_{\OO',k}\vert^3\rangle=&0.
\end{align*}

\item If $\OO$ is a Euclidean frame and $\widetilde{\psi}_{\OO,k}, \widetilde{\phi}_{\OO, k}$ are two profiles associated to $\OO$, then:

\begin{align*}
\lim_{k \to +\infty} \|\widetilde{\psi}_{\OO,k}\|_{L^2} +\|\widetilde{\phi}_{\OO,k}\|_{L^2} =&0,\\
\lim_{k\to +\infty} \langle \widetilde{\psi}_{\OO,k}, \widetilde{\phi}_{\OO, k}\rangle_{H^1\times H^1(\R \times \T^2)}=&\langle \psi, \phi\rangle_{\dot H^1\times \dot H^1(\R^3)}.\\
\end{align*}

\item If $\OO$ is a scale-1 frame and $\widetilde{\psi}_{\OO,k}, \widetilde{\phi}_{\OO, k}$ are two profiles associated to $\OO$, then:

\begin{align*}
\lim_{k\to +\infty} \langle \widetilde{\psi}_{\OO,k}, \widetilde{\phi}_{\OO, k}\rangle_{H^1\times H^1(\R\times \T^2)}=\langle \psi, \phi\rangle_{H^1\times H^1(\R\times \T^2)}
\end{align*}

\item If $\OO$ is a large-scale frame and $\widetilde{\psi}_{\OO,k}, \widetilde{\phi}_{\OO, k}$ are two profiles associated to $\OO$, then:

\begin{align*}
\lim_{k \to +\infty} \| \widetilde{\psi}_{\OO,k}\|_{L^6_{x,y}(\R\times \T^2)}&=0,\\
\lim_{k\to +\infty} \langle \widetilde{\psi}_{\OO,k}, \widetilde{\phi}_{\OO, k}\rangle_{H^1\times H^1(\R\times \T^2)}&=\langle\psi,\phi\rangle_{H^{0,1}\times H^{0,1}(\mathbb{R}\times\mathbb{T}^2)}+\vert\xi_\infty\vert^2\langle \psi,\phi\rangle_{L^2\times L^2(\mathbb{R}\times\mathbb{T}^2)}\simeq&\langle \psi, \phi\rangle_{H^{0,1}\times H^{0,1}(\R\times \T^2)}
\end{align*}
where here $\simeq$ means bounded above and below by $C$ and $C^{-1}$ respectively for some $C\geq 1$.
\end{enumerate}
\end{lemma}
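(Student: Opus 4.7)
The overall strategy exploits two basic facts: the operators $\Pi_{t,p} = \pi_p e^{-it\Delta_{\mathbb{R}\times\mathbb{T}^2}}$ are unitary on $H^s(\mathbb{R}\times\mathbb{T}^2)$, and the rescaling maps $T^e_N$, $T^{ls}_M$ approximately intertwine the ambient $H^1$ structure with the critical structure on the appropriate model space ($\dot H^1(\mathbb{R}^3)$ for Euclidean frames, $H^{0,1}(\mathbb{R}\times\mathbb{T}^2)$ for large-scale frames). Every assertion of the lemma then reduces, after peeling off the $\Pi$'s, to a concrete change-of-variables computation on test generators in the corresponding model space.

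The single-frame assertions (3), (4), (5) follow from direct scaling. For (3), unitarity of $\Pi_{t_k,p_k}$ reduces the inner product to $\langle T^e_{N_k}\phi, T^e_{N_k}\psi\rangle_{H^1}$; changing variables $y = N_k\Psi^{-1}(x)$ (legitimate for $k$ large, since the cutoff $\eta(\cdot/N_k^{1/2})$ eventually sits inside the chart neighborhood) yields $\|T^e_{N_k}\phi\|_{L^2}^2 = O(N_k^{-2})\to 0$ (using $\dot H^1(\mathbb{R}^3)\hookrightarrow L^6$ to control the $L^2$ mass on the growing support) and $\|\nabla T^e_{N_k}\phi\|_{L^2}^2\to\|\nabla\phi\|_{L^2(\mathbb{R}^3)}^2$, whence polarization delivers the limit. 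Assertion (4) is immediate from unitarity of $\Pi$. For (5), the analogous scaling shows $\|\partial_x T^{ls}_{M_k}\psi\|_{L^2}\to 0$ (by Bernstein applied to the low-pass filter $P^x_{\le M_k^{-1/100}}$) and $\|T^{ls}_{M_k}\psi\|_{L^6}\to 0$, while $\|T^{ls}_{M_k}\psi\|_{L^2}$ and $\|\nabla_y T^{ls}_{M_k}\psi\|_{L^2}$ converge to the corresponding norms of $\psi$; expanding $\partial_x(e^{i\xi_k x}f) = i\xi_k e^{i\xi_k x}f + e^{i\xi_k x}\partial_x f$ and sending $\xi_k\to\xi_\infty$ produces the extra $|\xi_\infty|^2\langle\psi,\phi\rangle_{L^2}$ contribution.

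For (1), I would pass to a subsequence along which each of the four non-orthogonality parameters converges: $N_k/M_k\to\lambda\in(0,\infty)$, $N_k^2(t_k-s_k)\to\tau_0$, $N_k^{-1}(\xi_k-\eta_k)\to v_0$, and $N_k[(p_k-q_k)-2(t_k-s_k)\xi_k]\to x_0$. The isometry $L$ is then built from the ambient symmetries of the model equation (Euclidean rescaling by $\lambda$, translation by $x_0$, time evolution by $\tau_0$, and, for large-scale frames, Galilean boost by $v_0$), each of which is manifestly isometric on the relevant Hilbert space; substituting $L\psi$ into the profile formula and performing the dictated change of variables yields \eqref{feq 1} up to errors from the $\eta$-cutoff and the identification $\Psi$, both of which vanish as $k\to\infty$. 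The orthogonality statement (2) is the main obstacle, and requires a branching case analysis. Mixed-type frames are automatically orthogonal because $|\ln(N_k/M_k)|\to\infty$: for the $H^1$ pairing, Plancherel applied to the vastly different Fourier supports forces vanishing; for the $\langle|\cdot|^3,|\cdot|^3\rangle$ pairing, H\"older reduces to an $L^6\times L^6$ product that vanishes either via (5) (for pairs involving a large-scale profile) or via the shrinking support of the Euclidean profile and absolute continuity of the $L^6$ norm of a scale-1 factor. For same-type pairs, at least one orthogonality parameter must diverge; each such divergence produces decoupling by a standard defect-of-compactness argument (Schur's test for scale separation, dispersive decay against a smooth dense subclass for time separation, Riemann--Lebesgue for space separation, and asymptotically disjoint Fourier supports for frequency separation in the large-scale case). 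The subtlest subcase is the compound large-scale parameter $N_k|(p_k-q_k)-2(t_k-s_k)\xi_k|\to\infty$, which tracks the Galilean wave-packet trajectory and requires a combined space-frequency change of variables in the spirit of \cite{IoPa, IoPaSt}.
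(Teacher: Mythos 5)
Your overall strategy is the standard one: the paper omits this proof entirely and defers to \cite{IoPaSt,IoPa}, and your reduction of every claim to unitarity of $\Pi_{t,p}$ plus explicit change-of-variables computations on the generators is exactly the route those references take. Parts (1), (3), (4), (5) are fine as sketched, modulo one small slip: after the change of variables, $\Vert T^e_{N_k}\phi\Vert_{L^2}^2=N_k^{-2}\Vert\eta(\cdot/N_k^{1/2})\phi\Vert_{L^2(\R^3)}^2$, and H\"older against $\Vert\phi\Vert_{L^6}$ shows the truncated $L^2$ mass grows like $N_k^{1/2}$, so the correct rate is $O(N_k^{-1})$, not $O(N_k^{-2})$; the conclusion is unaffected.

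Two steps in part (2) would fail as literally written. First, for the sextic pairing of a Euclidean profile against a scale-1 profile you invoke ``the shrinking support of the Euclidean profile,'' but $\widetilde{\psi}_{\OO,k}=\pi_{p_k}e^{-it_k\Delta}T^e_{N_k}\psi$ has shrinking support only when $t_k=0$: the free flow instantly destroys compact support. You must first use part (1) to pass to a renormalized frame and then branch: if $t_k\equiv 0$ your support argument works, while if $N_k^2\vert t_k\vert\to\infty$ the support argument is unavailable and you need instead the $L^6$-extinction $\Vert e^{-it_k\Delta}T^e_{N_k}\psi\Vert_{L^6(\R\times\T^2)}\to 0$ (approximate $\psi$ by Schwartz data and use the dispersive estimate for the rescaled evolution), after which H\"older finishes. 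Second, your claim that mixed-type $H^1$ pairings vanish by ``Plancherel applied to the vastly different Fourier supports'' is correct for any pair involving a Euclidean frame, but not for the large-scale vs.\ scale-1 pair: there $\vert\ln(M_k/1)\vert\to\infty$ yet both profiles carry Fourier mass at $O(1)$ frequencies, so there is no support disjointness. The correct mechanism is that the $x$-Fourier transform of $e^{i\xi_k x}T^{ls}_{M_k}\psi$ concentrates on the shrinking interval $\vert\zeta-\xi_k\vert\le M_k^{99/100}$ (equivalently, $T^{ls}_{M_k}\psi\rightharpoonup 0$ weakly in $H^1$), so its pairing against the fixed $L^2_\zeta$ Fourier density of a scale-1 generator vanishes by absolute continuity of the integral; this must then be combined with your time/space separation arguments when those parameters diverge. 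With these two repairs the argument closes.
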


\proof The proof is tedious but straightforward, so we omit the details (cf. \cite{IoPaSt}, \cite{IoPa}).
\endproof

\subsection{Profile Decomposition} We are now in a position to perform a profile decomposition for bounded sequences of functions in $H^1(\R\times \T^2)$.

\begin{proposition}\label{PDProp}

Assume that $\phi_k$ is a sequence satisfying
\begin{equation}\label{BoundsOnPhik}
\begin{split}
\sup_{k\ge 0}\Vert \phi_k\Vert_{L^2(\mathbb{R}\times\mathbb{T}^2)}+\Vert\nabla_{x,y}\phi_k\Vert_{L^2(\mathbb{R}\times\mathbb{T}^2)}\le E<+\infty,
\end{split}
\end{equation}
then there exists a subsequence (which we also denote by $\phi_k$), a family of Euclidean profiles $\widetilde{\varphi}^\alpha_{\mathcal{O}^\alpha,k}$, a family of large scale profiles $\widetilde{\psi}^\beta_{\mathcal{S}^\beta,k}$, a family of scale $1$ profiles $\widetilde{W}^\gamma_{\mathcal{O^\gamma},k}$ such that, for any $A\ge 1$ and any $k\ge 0$ in the subsequence
\begin{equation}\label{PD}
\phi_k(x,y)=\sum_{1\le\alpha\le A}\widetilde{\varphi}^\alpha_{\mathcal{O}^\alpha,k}(x,y)+\sum_{1\le\beta\le A}\widetilde{\psi}^\beta_{\mathcal{S}^\beta,k}(x,y)+\sum_{1\le \gamma\le A}\widetilde{W}^\gamma_{\mathcal{O}^\gamma,k}(x,y)+R^A_k(x,y)
\end{equation}
with
\begin{equation}\label{SmalnessRA}
\lim_{A\to+\infty}\limsup_{k\to+\infty}\Vert e^{it\Delta_{\mathbb{R}\times\mathbb{T}^2}}R^A_k\Vert_{Z(\mathbb{R})}=0.
\end{equation}
In addition, all the frames are pairwise orthogonal and we have the following orthogonality property:
\begin{equation}\label{AdditionOfEnergies}
\begin{split}
M(\phi_k)&=\sum_{1\le\beta\le A}M(\psi^\beta)+\sum_{1\le \gamma\le A}M(W^\gamma)+M(R^A_k)+o_{A,k\to+\infty}(1),\\
\Vert\nabla_{x,y}\phi_k\Vert_{L^2(\mathbb{R}\times\mathbb{T}^2)}^2&=\sum_{1\le \alpha\le A}\Vert \varphi^\alpha\Vert_{\dot{H}^1(\mathbb{R}^3)}^2+\sum_{1\le\beta\le A}\left[\vert\xi^\beta_\infty\vert^2M(\psi^\beta)+\Vert \nabla_y\psi^\beta\Vert_{L^2(\mathbb{R}\times\mathbb{T}^2)}^2\right]\\
&+\sum_{1\le\gamma\le A}\Vert\nabla_{x,y}W^\gamma\Vert_{L^2(\mathbb{R}\times\mathbb{T}^2)}^2
+\Vert \nabla_{x,y}R^A_k\Vert_{L^2(\mathbb{R}\times\mathbb{T}^2)}^2+o_{A,k\to+\infty}(1),\\
\Vert\phi_k\Vert_{L^6(\mathbb{R}\times\mathbb{T}^2)}^6&=\sum_{1\le \alpha\le A}\Vert \varphi^\alpha\Vert_{L^6(\mathbb{R}^3)}^6+\sum_{1\le\gamma\le A}\Vert W^\gamma\Vert_{L^6(\mathbb{R}\times\mathbb{T}^2)}^6+o_{A\to+\infty,k\to+\infty}(1),\\
\end{split}
\end{equation}
where $\xi_\infty^\beta=\lim_{k \to +\infty} \xi_k^\beta$, $o_{A,k\to+\infty}(1)\to0$ as $k\to+\infty$ for each fixed $A$, and $o_{A\to +\infty, k \to+\infty}(1)\to 0$ in the ordered limit  $\lim _{A\to +\infty} \lim_{k \to +\infty}$.
\end{proposition}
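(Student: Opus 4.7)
The plan is to adapt the standard concentration-compactness profile extraction scheme, iterating an \emph{extraction lemma} of the following form: if $\{\phi_k\}$ is bounded in $H^1(\mathbb{R}\times\mathbb{T}^2)$ by $E$ and $\limsup_k\|e^{it\Delta}\phi_k\|_{Z(\mathbb{R})}\geq\delta>0$, then one can find a frame (of Euclidean, large-scale, or scale-$1$ type) together with a nontrivial profile generator such that removing the corresponding profile strictly decreases $\|\phi_k\|_{H^1}^2$ by at least $\eta(\delta,E)>0$. Granting the lemma, the full decomposition is built greedily: fix $\delta>0$, extract profiles as long as the $Z$-norm of the linear evolution of the residual exceeds $\delta$; each extraction consumes a definite share of the $H^1$-budget, so the process terminates after finitely many steps with $\limsup_k\|e^{it\Delta}R^A_k\|_Z<\delta$; a diagonal argument in $\delta\to 0$ then delivers \eqref{SmalnessRA}.

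For the extraction itself, I would first pigeonhole in $p_0\in\{9/2,18\}$, the dyadic frequency $N$, and the time interval $I_\gamma$ to localize a positive share of the $Z$-norm onto $\mathbbm{1}_{I_\gamma}(t)e^{it\Delta}P_N\phi_k$. Interpolating the Strichartz bound \eqref{Strich M refined U^p lemma} with a cruder bound on a sub-cube $Q\subset\{|\xi|\sim N\}$ of side $M\leq N$ yields a refinement of the schematic form
\begin{equation*}
\|\mathbbm{1}_{I_\gamma}(t)e^{it\Delta}P_N\phi_k\|_{L^{p_0}_{x,y,t}}\lesssim N^{\frac{3}{2}-\frac{5}{p_0}}\|P_N\phi_k\|_{L^2}^{\theta}\sup_{Q}\|P_QP_N\phi_k\|_{L^2}^{1-\theta}
\end{equation*}
for some $\theta=\theta(p_0)\in(0,1)$. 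This forces a frequency cube $Q_k$ of side $M_k$ centered at some $\xi_k$ to carry a nontrivial share of $\|P_N\phi_k\|_{L^2}$. A further pigeonhole in physical space (using a Bernstein-type bound on $e^{it\Delta}P_{Q_k}\phi_k$) selects a spatial center $p_k$; after passing to a subsequence so that $N_k$, $M_k$, $\xi_k$, and $N_k^2t_k$ have limits in $[0,\infty]$, applying $\Pi_{-t_k,-p_k}$, an appropriate Galilean boost, and a rescaling by $N_k$ or $M_k^{-1}$ produces a weak $H^1$-limit which is the profile generator. The three regimes of Definition \ref{DefPro} correspond exactly to $N_k\to\infty$ (Euclidean), $N_k\sim 1$ with $M_k\to 0$ and $\xi_k\to\xi_\infty$ (large-scale), and the complementary bounded case (scale-$1$).

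Orthogonality of each newly extracted frame with all earlier ones is automatic: equivalence to an earlier frame would force the new profile to be captured already by the earlier extraction (by item (1) of Lemma \ref{Eq and orth profiles}), contradicting the positivity of the newly extracted weak limit. The additivity relations in \eqref{AdditionOfEnergies} then follow termwise from items (2)--(5) of Lemma \ref{Eq and orth profiles}: cross terms between pairwise orthogonal profiles vanish in the limit, while the diagonal contribution of each profile carries exactly the indicated weight, Euclidean profiles contributing nothing to the mass (item (3)) and large-scale profiles contributing nothing to the $L^6$ sum (item (5)) and providing the $|\xi^\beta_\infty|^2 M(\psi^\beta)+\|\nabla_y\psi^\beta\|_{L^2}^2$ term to the kinetic energy.

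The main obstacle is the refined Strichartz estimate underpinning the extraction. Unlike in the classical $\mathbb{T}^d$ or $\mathbb{R}^d$ settings, the $Z$-norm mixes a local-in-time $L^{p_0}_{x,y,t}$ norm with a global-in-time $l^q_\gamma$ summability and combines two values $p_0\in\{9/2,18\}$ carrying opposite-signed weights in $N$, so the refinement must track both the derivative loss coming from the $3d$ diagonal part and the dispersive gain coming from the $1d$ non-diagonal part in the decomposition used to prove Theorem \ref{Striclem}. A second novelty, absent from \cite{IoPaSt,IoPa}, is the Galilean parameter $\xi_k$: concentration of the Fourier mass on small cubes far from the origin in the $\mathbb{R}$-direction must be systematically distinguished from concentration near the origin, so as to produce large-scale profiles rather than misidentify them as scale-$1$ or Euclidean profiles.
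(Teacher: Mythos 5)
Your overall skeleton (iterative extraction with a definite $H^1$-decrement per step, hence finitely many profiles per threshold $\delta$, followed by a diagonal argument, with orthogonality and the additivity identities \eqref{AdditionOfEnergies} coming from Lemma \ref{Eq and orth profiles}) matches the paper's finitary Lemma \ref{PDLem}. However, the core of your extraction mechanism rests on a refined Strichartz inequality of the form
\begin{equation*}
\|\mathbbm{1}_{I_\gamma}(t)e^{it\Delta}P_N\phi_k\|_{L^{p_0}_{x,y,t}}\lesssim N^{\frac{3}{2}-\frac{5}{p_0}}\|P_N\phi_k\|_{L^2}^{\theta}\sup_{Q}\|P_QP_N\phi_k\|_{L^2}^{1-\theta},
\end{equation*}
which you acknowledge as the ``main obstacle'' but do not prove, and which is not established anywhere in the paper. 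This is not a technical footnote: such frequency-cube refinements are delicate even on $\mathbb{R}^d$, are substantially harder in the presence of periodic directions, and nothing in Section \ref{Strichartz section} (which only gives \eqref{Strich M refined U^p lemma}, a gain for data supported at small $x$-frequencies relative to $N$, not a sup over arbitrary sub-cubes) supplies it. Moreover, even granting such an estimate, your scheme would produce concentration on a cube of side $M_k$ inside $\{|\xi|\sim N_k\}$ with $N_k\to\infty$ and $M_k\ll N_k$ possible simultaneously; that regime does not correspond to any of the three profile types in Definition \ref{DefPro}, and you give no argument ruling it out.

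The paper circumvents exactly this difficulty by a two-step extraction. First (Lemma \ref{PDLem1}) it removes all profiles responsible for pointwise concentration, measured by the soft Besov-type functional $\Lambda_\infty$ in \eqref{Besov}; this needs only interpolation of the $l^{p_0}_N$ sum between $l^2_N$ and $l^\infty_N$ together with the linear Strichartz bound, and it yields precisely the Euclidean ($N_k\to\infty$) and scale-$1$ ($N_k$ bounded) profiles by testing against rescaled deltas. Second, once $\Lambda_\infty$ is small, a careful duality argument with the two-term estimate \eqref{Adj} (splitting the dual function into $h_k^{>B}$ and $h_k^{<B}$) shows that a large $Z$-norm forces the relevant frequency $N_k$ to be \emph{bounded}, cf. \eqref{BdNk}. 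Only then does the problem become effectively one-dimensional (finitely many periodic modes $z$), and the remaining concentration is captured by quoting the known inverse Strichartz theorem on $\mathbb{R}$ (Proposition \ref{TaoIS} of B\'egout--Vargas/Carles--Keraani/Tao) as a black box; its scale parameter $\lambda_k$ bounded below gives further scale-$1$ profiles, while $\lambda_k\to0$ gives the large-scale profiles with their Galilean parameter $\xi_k$. To repair your argument you would either need to prove your refined estimate on $\mathbb{R}\times\mathbb{T}^2$ (a significant open task) or restructure the extraction along the paper's two-step route.
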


As in \cite{IoPa}, this follows from iteration of the following finitary statement:

\begin{lemma}\label{PDLem}
Let $\delta>0$.
Assume that $\phi_k$ is a sequence satisfying
\begin{equation}\label{BoundsOnPhik}
\begin{split}
\sup_{k\ge 0}\Vert \phi_k\Vert_{L^2(\mathbb{R}\times\mathbb{T}^2)}+\Vert\nabla_{x,y}\phi_k\Vert_{L^2(\mathbb{R}\times\mathbb{T}^2)}\le E<+\infty,
\end{split}
\end{equation}
then there exists a subsequence (which we also denote by $\phi_k$), $A=A(E,\delta)$ Euclidean profiles $\widetilde{\varphi}^\alpha_{\mathcal{O}^\alpha,k}$, $A$ large scale profiles $\widetilde{\psi}^\beta_{\mathcal{S}^\beta,k}$, and $A$ scale $1$ profiles $\widetilde{W}^\gamma_{\mathcal{O^\gamma},k}$ such that, for any $k\ge 0$ in the subsequence
\begin{equation}\label{PD}
\phi_k(x,y)=\sum_{1\le\alpha\le A}\widetilde{\varphi}^\alpha_{\mathcal{O}^\alpha,k}(x,y)+\sum_{1\le\beta\le A}\widetilde{\psi}^\beta_{\mathcal{S}^\beta,k}(x,y)+\sum_{1\le \gamma\le A}\widetilde{W}^\gamma_{\mathcal{O}^\gamma,k}(x,y)+R^A_k(x,y)
\end{equation}
with
\begin{equation*}
\limsup_{k\to+\infty}\left[\Vert e^{it\Delta_{\mathbb{R}\times\mathbb{T}^2}}R^A_k\Vert_{Z(\mathbb{R})}+\sup_t\Vert e^{it\Delta_{\mathbb{R}\times\mathbb{T}^2}}R^A_k\Vert_{L^6_{x,y}(\mathbb{R}\times\mathbb{T}^2)}\right]\le\delta.
\end{equation*}
In addition, all the frames are pairwise orthogonal and we have the following orthogonality property:
\begin{equation*}
\begin{split}
M(\phi_k)&=\sum_{1\le\beta\le A}M(\psi^\beta)+\sum_{1\le \gamma\le A}M(W^\gamma)+M(R^A_k)+o_{k\to+\infty}(1),\\
\Vert\nabla_{x,y}\phi_k\Vert_{L^2(\mathbb{R}\times\mathbb{T}^2)}^2&=\sum_{1\le \alpha\le A}\Vert \varphi^\alpha\Vert_{\dot{H}^1(\mathbb{R}^3)}^2+\sum_{1\le\beta\le A}\left[\vert\xi^\beta_\infty\vert^2 M(\psi^\beta)+\Vert\nabla_y\psi^\beta\Vert_{L^2(\mathbb{R}\times\mathbb{T}^2)}^2\right]\\
&+\sum_{1\le\gamma\le A}\Vert\nabla_{x,y}W^\gamma\Vert_{L^2(\mathbb{R}\times\mathbb{T}^2)}^2
+\Vert \nabla_{x,y}R^A_k\Vert_{L^2(\mathbb{R}\times\mathbb{T}^2)}^2+o_{k\to+\infty}(1),\\
\Vert\phi_k\Vert_{L^6(\mathbb{R}\times\mathbb{T}^2)}^6&=\sum_{1\le \alpha\le A}\Vert \varphi^\alpha\Vert_{L^6(\mathbb{R}^3)}^6+\sum_{1\le\gamma\le A}\Vert W^\gamma\Vert_{L^6(\mathbb{R}\times\mathbb{T}^2)}^6\\
&+\Vert R^A_k\Vert_{L^6(\mathbb{R}\times\mathbb{T}^2)}^6+o_{k\to+\infty}(1),\\
\end{split}
\end{equation*}
where $o_{k\to+\infty}(1)\to0$ as $k\to+\infty$.
\end{lemma}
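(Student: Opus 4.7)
The plan is to prove Lemma \ref{PDLem} by iterating a single inverse Strichartz extraction step, following the scheme of \cite{Ker, KeMe, IoPaSt, IoPa}. The extraction lemma to be proved asserts: for any sequence $\{f_k\} \subset H^1(\R\times\T^2)$ with $\|f_k\|_{H^1} \le E$ and $\limsup_k \bigl[\|e^{it\Delta}f_k\|_{Z(\R)} + \sup_t \|e^{it\Delta}f_k\|_{L^6_{x,y}}\bigr] > \delta$, one can, after passing to a subsequence, find a frame $\mathcal{O}$ of one of the three types in Definition \ref{DefPro} and a profile generator $\Psi$ in the associated space with $\|\Psi\|_\ast \gtrsim_E \delta^{C}$ such that $f_k = \widetilde{\Psi}_{\mathcal{O}, k} + f_k^{(1)}$ with $f_k^{(1)}$ absent from $\mathcal{O}$. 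Granting this, iteration proceeds as usual: parts (3)--(5) of Lemma \ref{Eq and orth profiles} supply the Pythagorean splittings of $\|\cdot\|_{L^2}^2$ and $\|\nabla\cdot\|_{L^2}^2$; the $L^6$-additivity (which only involves Euclidean and scale-$1$ profiles, since large-scale profiles have vanishing $L^6$ norm) follows from a Brezis--Lieb-type argument carried out in each of the three frame classes after rescaling, using local Rellich compactness; each extraction depletes the full energy $L(\cdot)$ by at least $c(E)\delta^{C}$, so the procedure terminates after $A = A(E, \delta)$ iterations. Pairwise frame orthogonality is automatic since, by Lemma \ref{Eq and orth profiles}(1), any two non-orthogonal frames are equivalent and yield only one effective profile.

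The extraction step splits into two regimes. In the \emph{amplitude regime} $\sup_t \|e^{it\Delta}f_k\|_{L^6_{x,y}} > \delta/2$, we pick $t_k$ almost saturating this supremum, decompose $e^{it_k\Delta}f_k$ into Littlewood--Paley pieces $P_N$, and use Bernstein combined with the $H^1$-bound to pin the dominant dyadic scale $N_k$ inside a range $[N_-(E,\delta), N_+(E,\delta)]$. If (along a subsequence) $N_k \to +\infty$, a refined $L^6$-Strichartz inequality on $\R\times\T^2$---which at high frequency is insensitive to the periodic factor and reduces to the standard $\R^3$ refinement used in \cite{IoPaSt}---localizes $P_{N_k}e^{it_k\Delta}f_k$ on a ball $B(p_k, N_k^{-1})$, giving a Euclidean frame $(N_k, t_k, p_k, 0)$ and a nontrivial weak limit in $\dot H^1(\R^3)$ after the rescaling $T^e_{N_k}$ is undone; if $N_k$ stays bounded, the shifted sequence $\pi_{-p_k}e^{-it_k\Delta}f_k$ has a nonzero weak limit in $H^1(\R\times\T^2)$, producing a scale-$1$ profile.

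The \emph{dispersion regime}, where $\sup_t\|e^{it\Delta}f_k\|_{L^6_{x,y}} \le \delta/2$ yet $\|e^{it\Delta}f_k\|_{Z(\R)} > \delta/2$, is the novel part and the main obstacle. Here the $l^q_\gamma$ long-time structure of the $Z$-norm in \eqref{def of Z} must be responsible for the lower bound; combining the fixed-time $L^6_{x,y}$-smallness with the saturation of some $l^q_\gamma L^{p_0}_{x,y,t}$ piece for a fixed Littlewood--Paley block $P_{N_k}$ forces $N_k$ to stay bounded and the $x$-Fourier support of that block to concentrate near a single value $\xi_k \to \xi_\infty \in \R$ at scales $|\xi-\xi_k| \lesssim M_k \to 0$. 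Undoing the Galilean boost $e^{ix\xi_k}$ and rescaling $(x,t) \mapsto (M_k^{-1}x, M_k^{-2}t)$ then reduces the extraction to a one-dimensional mass-critical inverse Strichartz argument on $\R$, applied mode-by-mode in the periodic Fourier variable $y$; selecting $t_k$ and $x_k$ saturating this one-dimensional concentration and passing to the weak limit in $H^{0,1}(\R\times\T^2)$ yields the large-scale frame $(M_k, t_k, (x_k, 0), \xi_k) \in \widetilde{\mathcal{F}}_{ls}$ and the profile generator $\psi$. It is essential at this step that Theorem \ref{Striclem} decouples $3$D local-in-time derivative loss from $1$D global-in-time integrability; this decoupling, reflected structurally in the $l^q_\gamma$ separation inside the $Z$-norm, is precisely what allows a mass-critical inverse Strichartz on the $\R$ direction to be run in the presence of the periodic $\T^2$ factor.
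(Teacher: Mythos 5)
Your overall architecture is the same as the paper's: a concentration/amplitude functional extracts the Euclidean and scale-$1$ profiles, and the $Z$-norm plus a one-dimensional mass-critical inverse Strichartz theorem (Proposition \ref{TaoIS}) extracts the rest. (The paper uses the critical Besov functional $\Lambda_\infty$ of \eqref{Besov} rather than $\sup_t\|e^{it\Delta}f_k\|_{L^6}$; since $N^{-1/2}\|P_N u\|_{L^\infty}\lesssim \|u\|_{L^6}$ by Bernstein, your functional dominates theirs, and their choice makes the extraction trivial --- one pairs against $N_k^{-1/2}e^{-it_k\Delta}P_{N_k}\delta_{(x_k,y_k)}$ instead of appealing to a refined Sobolev inequality on $\R\times\T^2$ that you assert but would still have to prove. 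Also, your claim that the dominant scale is pinned in a $\delta$-dependent bounded range $[N_-,N_+]$ contradicts your own next sentence allowing $N_k\to\infty$; a high-frequency bubble with $\|\phi\|_{\dot H^1}\sim 1$ has $L^6$ norm $\sim 1$ at every scale.)

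The genuine gap is in the dispersion regime, at the single most delicate step of the paper's proof: the claim that $Z$-saturation together with smallness of the amplitude functional \emph{forces the saturating frequency $N_k$ to stay bounded}. You assert this in one clause, but nothing in your sketch produces it: the $Z$-weight $N^{5/p_0-1/2}$ is exactly critical against the Strichartz estimate \eqref{Stric1}, so saturation at scale $N_k$ only tells you $\|P_{N_k}\phi_k'\|_{H^1}\gtrsim\delta^C$, which is consistent with $N_k\to\infty$. The paper's mechanism is to dualize against a near-extremizer $h_k$, split it by the size of its $L^{p_0'}$ norm on the unit time blocks into $h_k^{>B}+h_k^{<B}$, estimate the $h_k^{>B}$ contribution by interpolating a subcritical Strichartz exponent $p_1<p_0$ against the small Besov norm (this is where $\nu=\nu(B,\delta,E)$ is fixed), and estimate the $h_k^{<B}$ contribution with the two-term dual estimate \eqref{Adj}, whose non-diagonal term carries the strictly smaller power $N^{1-3/p_0}$; comparing powers of $N_k$ then yields the bound \eqref{BdNk}. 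Without this step the passage to a single periodic mode $z$ and to the $1$D inverse Strichartz theorem cannot begin, so this is not a detail one can wave at --- it is where the specific structure of Theorem \ref{Striclem} is actually consumed. A second, smaller omission: Proposition \ref{TaoIS} returns a concentration scale $\lambda_k$ that need not tend to $0$; when $\lambda_k$ stays bounded below, the dispersion regime produces another \emph{scale-$1$} profile (with the Galilean phase $e^{ix\xi_\infty}$ absorbed into the generator), not a large-scale one, and your extraction lemma must account for this case or the iteration does not close.
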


The proof of Lemma \ref{PDLem} will be completed in two steps: first we extract the Euclidean and Scale-1 profiles by studying the defects of compactness of a Besov-type Strichartz/Sobolev estimate (cf \eqref{Besov}). This extraction leaves only sequences whose linear flow has small critical Besov norm (cf. $\eqref{Besov}, \eqref{BesSmall}$) but large $Z(\R)$ norm, from which we extract the large-scale profiles and finish the proof. 

We start with the first step: For a sequence of functions $\{f_k\}$ in $H^1(\mathbb{R}\times\mathbb{T}^2)$, we consider the following functional
\begin{equation}\label{Besov}
\Lambda_\infty(\{f_k\})=\limsup_{k\to+\infty} \Vert e^{it\Delta} f_k \Vert _{L_t^\infty B^{-\frac{1}{2}}_{\infty,\infty}}:=\limsup_{k\to +\infty}\sup_{(N,t,x,y)}N^{-\frac{1}{2}}\left\vert \left(e^{it\Delta}P_Nf_k\right)(x,y)\right\vert,
\end{equation}
where the supremum is taken over all scales $N\ge 1$, times $t\in\mathbb{R}$ and locations $(x,y)\in\mathbb{R}\times\mathbb{T}^2$.

\begin{lemma}[Extraction of the Euclidean and Scale-1 profiles]\label{PDLem1}
Let $\nu>0$.
Assume that $\phi_k$ is a sequence satisfying \eqref{BoundsOnPhik}, then there exists a subsequence of $\phi_k$, $A=O(\nu^{-2})$ Euclidean profiles $\widetilde{\varphi}^\alpha_{\mathcal{O}^\alpha,k}$, and $A$ scale $1$ profiles $\widetilde{W}_{\mathcal{O}^\gamma,k}$ such that, for any $k\ge 0$ in the subsequence
\begin{equation}\label{PD}
\phi_k^\prime(x,y)=\phi_k(x,y)-\sum_{1\le\alpha\le A}\widetilde{\varphi}^\alpha_{\mathcal{O}^\alpha,k}(x,y)-\sum_{1\le \gamma\le A}\widetilde{W}^\gamma_{\mathcal{O}^\gamma,k}(x,y)
\end{equation}
satisfies
\begin{equation}\label{BesSmall}
\Lambda_{\infty}(\{\phi_k^\prime\})<\nu.
\end{equation}
Besides, all the frames involved are pairwise orthogonal and $\phi^\prime_k$ is absent from all these frames.
\end{lemma}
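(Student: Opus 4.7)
The plan is to iterate a single-profile extraction procedure driven by an inverse-Strichartz-type inequality for the functional $\Lambda_\infty$. The result is obtained in the standard way (cf. \cite{IoPa,IoPaSt,BeGe,Ker}), but adapted to the product geometry $\mathbb{R}\times\mathbb{T}^2$.

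First I would prove the following one-profile extraction: suppose $\{g_k\}\subset H^1(\mathbb{R}\times\mathbb{T}^2)$ is bounded by $E$ and $\Lambda_\infty(\{g_k\})\ge\mu$. Then, along a subsequence, there exist parameters $(N_k,t_k,p_k)$ with $N_k\ge 1$ realizing $N_k^{-1/2}|(e^{it_k\Delta}P_{N_k}g_k)(p_k)|\ge \mu/2$, and, depending on whether $N_k\to\infty$ or $N_k$ stays bounded, a nontrivial weak limit $\Phi$ of a suitably rescaled/translated version of $g_k$:
\begin{itemize}
\item (Euclidean case) If $N_k\to\infty$, then for $k$ large the scale $N_k^{-1}$ is smaller than the injectivity radius of $\mathbb{T}^2$ so one can transfer $\Pi_{t_k,p_k}^{-1}g_k$ to $\mathbb{R}^3$ via $\Psi$, undo the $T^e_{N_k}$ rescaling, and extract a weak limit $\Phi\in\dot H^1(\mathbb{R}^3)$ with $\|\Phi\|_{\dot H^1}\gtrsim\mu$; set $\mathcal{O}=(N_k,t_k,p_k,0)$.
\item (Scale-$1$ case) If $N_k$ is bounded then, passing to a subsequence and redefining $N_k\equiv 1$, extract a weak limit $\Phi\in H^1(\mathbb{R}\times\mathbb{T}^2)$ of $\Pi_{t_k,p_k}^{-1}g_k$, with $\|\Phi\|_{H^1}\gtrsim\mu$; set $\mathcal{O}=(1,t_k,p_k,0)$.
\end{itemize}
Weak convergence then yields the Pythagorean expansion $\|g_k\|_{H^1}^2=\|\widetilde{\Phi}_{\mathcal{O},k}\|_{H^1}^2+\|g_k-\widetilde{\Phi}_{\mathcal{O},k}\|_{H^1}^2+o_k(1)$, together with analogous identities for $M(\cdot)$, $\|\nabla_{x,y}\cdot\|_{L^2}^2$, and (via the Brezis--Lieb lemma) $\|\cdot\|_{L^6}^6$. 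The key ingredient is the realization that a pointwise lower bound on the band-limited linear flow yields a nontrivial weak limit in the natural rescaled space; this is essentially a Bernstein/refined-Sobolev duality argument as in \cite{IoPa}.

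Next I would iterate: set $\phi_k^{(0)}:=\phi_k$ and repeat. If $\Lambda_\infty(\{\phi_k^{(j)}\})\ge\nu$, apply the one-profile extraction with $\mu=\nu$ to produce a profile $\widetilde{\Phi}^{(j+1)}_{\mathcal{O}^{(j+1)},k}$ (Euclidean or Scale-$1$, according to the case that arises) and set $\phi_k^{(j+1)}:=\phi_k^{(j)}-\widetilde{\Phi}^{(j+1)}_{\mathcal{O}^{(j+1)},k}$. By the Pythagorean identity, at each step
\[
\|\phi_k^{(j+1)}\|_{H^1}^2\le \|\phi_k^{(j)}\|_{H^1}^2-c\nu^2+o_k(1),
\]
so the procedure terminates after $A\le c^{-1}E^2\nu^{-2}=O(\nu^{-2})$ steps, at which point $\Lambda_\infty(\{\phi_k^\prime\})<\nu$. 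A routine diagonal argument gives the required uniform subsequence.

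Finally I would verify pairwise orthogonality of the extracted frames and absence of $\phi_k^\prime$ from all of them. If two frames $\mathcal{O}^{\alpha},\mathcal{O}^{\beta}$ with $\alpha<\beta$ were equivalent, Lemma \ref{Eq and orth profiles}(1) would let us identify $\widetilde{\Phi}^{(\beta)}_{\mathcal{O}^{(\beta)},k}$ with a profile along $\mathcal{O}^{(\alpha)}$; but by construction (weak convergence after rescaling around $\mathcal{O}^{(\alpha)}$), the residual $\phi_k^{(\alpha)}$ is absent from $\mathcal{O}^{(\alpha)}$, which forces $\Phi^{(\beta)}=0$, contradicting $\|\Phi^{(\beta)}\|\gtrsim\nu$. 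The same weak-convergence argument applied to $\phi_k^\prime$ shows it is absent from every extracted frame.

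The main obstacle is the one-profile extraction: one must convert a pointwise lower bound on $e^{it\Delta}P_Ng_k$ into a nontrivial weak limit in the correct space, and this requires distinguishing the small-scale regime (where the local geometry is Euclidean and the natural target is $\dot H^1(\mathbb{R}^3)$) from the bounded-scale regime (where the natural target is $H^1(\mathbb{R}\times\mathbb{T}^2)$). Once this dichotomy is set up, the remaining iteration, orthogonality, and mass/energy accounting are mechanical adaptations of the arguments in \cite{IoPa}.
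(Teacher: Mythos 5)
Your proposal is correct and follows essentially the same route as the paper: detect a point $(N_k,t_k,p_k)$ where $N_k^{-1/2}|e^{it_k\Delta}P_{N_k}\phi_k|\ge\nu/2$, dualize against a fixed band-limited test profile to get a nonzero $H^1$ pairing, split into the $N_k\to\infty$ (Euclidean, weak limit in $\dot H^1(\mathbb{R}^3)$ after rescaling through $\Psi$) and $N_k$ bounded (Scale-$1$, weak limit in $H^1(\mathbb{R}\times\mathbb{T}^2)$) regimes, and iterate using the Pythagorean energy decrement of size $\gtrsim\nu^2$ to terminate in $O(\nu^{-2})$ steps, with orthogonality and absence following from the nonzero inner product and Lemma \ref{Eq and orth profiles}. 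No substantive differences from the paper's argument.
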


\begin{proof}[Proof of Lemma \ref{PDLem1}]
We first claim that if $\Lambda_\infty(\{f_k\})\geq \nu$, then there exists a frame $\mathcal{O}$ and an associated (Euclidean or Scale-1) profile $\widetilde{\psi}_{\mathcal{O}_k}$ satisfying
\begin{equation}\label{claim11}
\limsup_{k\to+\infty}\Vert\widetilde{\psi}_{\mathcal{O},k}\Vert_{H^1}\lesssim 1
\end{equation}
and
\begin{equation}\label{claim12}
\limsup_{k\to+\infty}\left\vert \langle f_k,\widetilde{\psi}_{\mathcal{O},k}\rangle_{H^1\times H^{1}}\right\vert\gtrsim \nu.
\end{equation}
In addition, if $f_k$ was absent from a family of frames $\mathcal{O}^\alpha$, then $\mathcal{O}$ is orthogonal to all the previous frames $\mathcal{O}^\alpha$.

Let us prove the claim above. By assumption, up to extracting a subsequence, there exists a sequence $(N_k,t_k,(x_k,y_k))_k$ such that, for all $k$
\begin{equation*}
\begin{split}
\frac{\nu}{2}&\le N_k^{-\frac{1}{2}}\left\vert \left(e^{it_k\Delta}P_{N_k}f_k\right)(x_k,y_k)\right\vert=\left\vert \langle N_k^{-\frac{1}{2}}e^{it_k\Delta}P_{N_k}f_k,\delta_{(x_k,y_k)}\rangle_{\mathcal{D}\times\mathcal{D}^\prime}\right\vert\\
&\le\left\vert \langle f_k,N_k^{-\frac{1}{2}}e^{-it_k\Delta}P_{N_k}\delta_{(x_k,y_k)}\rangle_{H^1\times H^{-1}}\right\vert.
\end{split}
\end{equation*}
Now, first assume that $N_k$ remains bounded, then, up to a subsequence, one may assume that $N_k\to N_\infty$ and since $N_k$ is dyadic, we may even assume that $N_k=N_\infty$ for all $k$. In this case, we define the scale-1 profile $\mathcal{O}=(1,t_k,(x_k,y_k), 0)_k$ and
\begin{equation*}
\psi=(1-\Delta)^{-1}N_\infty^{-\frac{1}{2}}P_{N_\infty}\delta_{(0,0)}.
\end{equation*}
Inequalities \eqref{claim12} is direct. The fact that $\psi$ is  bounded in $H^1$ uniformly in $N_\infty$ follows from the limit below and implies \eqref{claim11}.

Now, we assume that $N_k\to+\infty$ and we define the Euclidean frame $\mathcal{O}=(N_k,t_k,(x_k,y_k),0)_k$ and the function
\begin{equation*}
\psi=\mathcal{F}^{-1}_{\R^3}\left(|\zeta|^{-2}[\eta^3(\zeta)-\eta^3(2\zeta)]\right)\in H^1(\mathbb{R}^3),\quad\zeta=(\zeta_1,\zeta_2,\zeta_3)\in\mathbb{R}^3.
\end{equation*}
It is not hard to prove that
\begin{equation*}
\begin{split}
&\lim_{k\to+\infty}\|(1-\Delta_{\mathbb{R}\times\mathbb{T}^2})T_{N_k}\psi-N_k^{-\frac{1}{2}}P_{N_k}\delta_{(0,0)}\|_{L^{6/5}(\R\times\T^2)}=0\\
&\limsup_{k\to+\infty}\Vert T_{N_k}\psi\Vert_{H^1(\mathbb{R}\times\mathbb{T}^2)}\lesssim 1.
\end{split}
\end{equation*}
Thus $\|(1-\Delta_{\mathbb{R}\times\mathbb{T}^2})T_{N_k}\psi-N_k^{-\frac{1}{2}}P_{N_k}\delta_{(0,0)}\|_{H^{-1}(\R\times\T^2)}\to 0$ and we conclude that
\begin{equation*}
\frac{\nu}{2}\lesssim\left\vert\langle f_k,N_k^{-\frac{1}{2}}e^{-it_k\Delta_{\mathbb{R}\times\mathbb{T}^2}}P_{N_k}\delta_{(x_k,y_k)}\rangle_{H^1\times H^{-1}}\right\vert\lesssim \left\vert\langle f_k,(1-\Delta_{\mathbb{R}\times\mathbb{T}^2})\widetilde{\psi}_{\mathcal{O},k}\rangle_{H^1\times H^{-1}}\right\vert.
\end{equation*}
This gives \eqref{claim12}.

The last claim about orthogonality of $\mathcal{O}$ with $\mathcal{O}^\alpha$ follows from Lemma \ref{Eq and orth profiles} and the existence of a nonzero scalar product in \eqref{claim12}.


\medskip

Now, continuing with the sequence $\{f_k\}_k$ as above, if the frame selected in the previous step was a Scale-$1$ frame, $\mathcal{O}=(1,t_k,p_k,0)_k$,  we consider the sequence
\begin{equation*}
g_k(x,y):=e^{it_k \Delta}f_k((x,y)+p_k)=\Pi_{-(t_k, p_k)}f_k.
\end{equation*}
This is a bounded sequence in $H^1(\mathbb{R}\times\mathbb{T}^2)$, thus, up to considering a subsequence, we can assume that it converges weakly to $\varphi\in H^1(\mathbb{R}\times\mathbb{T}^2)$. We then define the profile corresponding to $\mathcal{O}$ as $\widetilde{\varphi}_{\mathcal{O},k}$. By its definition and \eqref{BoundsOnPhik}, $\varphi$ has norm smaller than $E$. Besides, we also have that
\begin{equation*}
\begin{split}
\frac{\nu}{2}&\lesssim\lim_{k\to+\infty}\langle f_k,\widetilde{\psi}_{\OO, k} \rangle_{H^1\times H^1}\lesssim\lim_{k\to+\infty}\langle g_k,\psi\rangle_{H^1\times H^1}=\langle \varphi,\psi\rangle_{H^1\times H^1}.
\end{split}
\end{equation*}
Consequently, we get that
\begin{equation}\label{NonzeroProfile}
\Vert\varphi\Vert_{H^1}\gtrsim\nu.
\end{equation}
We also observe that since $g_k-\varphi$ weakly converges to $0$ in $H^1$, there holds that
\begin{equation}\label{AdditionOfEnergies}
\begin{split}
\Vert A f_k\Vert_{L^2}^2&=\Vert A g_k\Vert_{L^2}^2=\Vert A(g_k-\varphi)\Vert_{L^2}^2+\Vert A\varphi\Vert_{L^2}^2+o_k(1)\\
&=\Vert A(f_k-\varphi_{\mathcal{O},k})\Vert_{L^2}^2+\Vert A\varphi\Vert_{L^2}^2+o_k(1)
\end{split}
\end{equation}
for $A=1$ or $A=\nabla_{x,y}$.

The situation if $\mathcal{O}$ is a Euclidean frame is similar. In this case, for $k$ large enough, we consider
\begin{equation*}
\varphi_k(y)=N_k^{-\frac{1}{2}}\eta^3(y/(10N_k^\frac{1}{2}))\left(\Pi_{(-t_k,-x_k)}f_k\right)(\Psi(y/N_k)),\quad y\in\mathbb{R}^3.
\end{equation*}
This is a sequence of functions bounded in $\dot{H}^1(\mathbb{R}^3)$. We can thus extract a subsequence that converges weakly to a function $\varphi\in\dot{H}^1(\mathbb{R}^3)$ satisfying
\begin{equation*}
\Vert \varphi\Vert_{\dot{H}^1(\mathbb{R}^3)}\lesssim 1.
\end{equation*}
Now, let $\gamma\in C^\infty_0(\mathbb{R}^3)$; for $k$ large enough,
\begin{equation*}
\begin{split}
\langle f_k,\widetilde{\gamma}_{\mathcal{O},k}\rangle_{H^1\times H^1(\mathbb{R}\times\mathbb{T}^2)}&=\langle \Pi_{-(t_k,x_k)}f_k,T^e_{N_k}\gamma\rangle_{H^1\times H^1(\mathbb{R}\times\mathbb{T}^2)}\\
&=\langle \varphi,\gamma\rangle_{\dot{H}^1\times\dot{H}^1(\mathbb{R}^3)}+o_k(1).
\end{split}
\end{equation*}
Form this and \eqref{claim11}, \eqref{claim12}, we conclude that
\begin{equation}\label{NonzeroProfile2}
\Vert\varphi\Vert_{\dot{H}^1(\mathbb{R}^3)}\gtrsim\nu
\end{equation}
and that
\begin{equation*}
h_k=f_k-\widetilde{\varphi}_{\mathcal{O},k}
\end{equation*}
is absent from the frame $\mathcal{O}$. Now, similarly to \eqref{AdditionOfEnergies} and using Lemma \ref{Eq and orth profiles}, we see that
\begin{equation*}
\begin{split}
\Vert h_k\Vert_{L^2}^2&=\Vert f_k\Vert^2_{L^2}+o_k(1)\\
\Vert \nabla_{x,y} h_k\Vert_{L^2}^2&=\Vert \nabla_{x,y} f_k\Vert_{L^2}^2+\Vert\nabla\varphi\Vert_{L^2(\mathbb{R}^3)}^2-2\langle \nabla f_k,\nabla \widetilde{\varphi}_{\mathcal{O},k}\rangle_{L^2\times L^2}\\
&=\Vert \nabla_{x,y} f_k\Vert_{L^2}^2-\Vert\nabla\varphi\Vert_{L^2(\mathbb{R}^3)}^2+o_k(1).
\end{split}
\end{equation*}


\medskip

Defining $f^0_k=\phi_k$ and for each $\alpha$, $f^{\alpha+1}_k=f^\alpha_k-\widetilde{\psi}_{\mathcal{O}^\alpha,k}$ where $\widetilde{\psi}_{\mathcal{O}^\alpha,k}$ is the profile given by the considerations above;
iterating this claim at most $O(\nu^{-2})$ times and replacing $\phi_k$ by
\begin{equation*}
\phi^\prime_k:=\phi_k-\sum_\alpha\widetilde{\psi}_{\mathcal{O}^\alpha,k},
\end{equation*}
we obtain that $\{\phi^\prime_k\}$ satisfies
\begin{equation}\label{EBound}
\limsup_{k\to+\infty}\Vert \phi_k^\prime\Vert_{H^1}\le E<+\infty
\end{equation}
and
\begin{equation}\label{SmallBesov}
\limsup_{k\to+\infty}\sup_{N\ge 1,t,x,y}N^{-\frac{1}{2}}\left\vert \left(e^{it\Delta}P_N\phi^\prime_k\right)(x,y)\right\vert<\nu.
\end{equation}
This proves our lemma.
\end{proof}

We can now finish the proof of Lemma \ref{PDLem}.

\begin{proof}[Proof of Lemma \ref{PDLem}]

First, for $\nu=\nu(\delta,E)$ to be decided later, we run Lemma \ref{PDLem1} and extract some profiles. Then, we replace $\phi_k$ by $\phi_k^\prime$, thus ensuring that \eqref{BesSmall} holds for the sequence $\{\phi^\prime_k\}_k$.
We now consider
\begin{equation*}
\Lambda_0(\{\phi^\prime_k\})=\limsup_{k\to+\infty}\Vert e^{it\Delta}\phi^\prime_k\Vert_{Z(\mathbb{R})}.
\end{equation*}
If $\Lambda_0(\{\phi_k^\prime\})<\delta$, we may set $R^A_k=\phi^\prime_k$ for all $k$ and we get Lemma \ref{PDLem}. Assume otherwise. 

\medskip

We claim that if $\{\phi^\prime_k\}_k$ satisfies $\Lambda_0(\{\phi^\prime_k\})\ge\delta$ and $\{\phi^\prime_k\}$ is orthogonal to a family of frames $\mathcal{O}^\alpha$, $1\le \alpha\le A$, then there exists a frame $\mathcal{O}$ orthogonal to $\mathcal{O}^\alpha$, $1\le \alpha\le A$ and an associated profile $\widetilde{\varphi}_{\mathcal{O},k}$ such that, after passing to a subsequence, we have that
\begin{equation}\label{CCLClaim}
\begin{split}
\limsup_{k\to+\infty}\Vert\widetilde{\varphi}_{\mathcal{O},k}\Vert_{H^1(\mathbb{R}\times\mathbb{T}^2)}\gtrsim_\delta 1,\quad \phi^\prime_k-\widetilde{\varphi}_{\mathcal{O},k}\hbox{ is absent from }\mathcal{O}.
\end{split}
\end{equation}
Once this claim is established, the end of the proof follows by iterating the extraction process as in Lemma \ref{PDLem1}.

\medskip

	Since $\Lambda_0(\{\phi^\prime_k\})\ge\delta$, by H\"older's inequality and Strichartz estimates \eqref{Stric1}, we have that, for $p_0\in\{9/2,18\}$ and $q_0=\frac{4p_0}{p_0-2}$,
\begin{equation*}
\begin{split}
c_{N}^k&=N^{\frac{5}{p_0}-\frac{1}{2}}\left\Vert e^{it\Delta}P_{N}\phi^\prime_k\right\Vert_{l^{q_0}_\gamma L^{p_0}_{x,y,t}(\mathbb{R}\times\mathbb{T}^2\times I_\gamma)}\\
\Vert c_N^k\Vert_{l^{p_0}_N}&\le \Vert c_N^k\Vert_{l^2_N}^\frac{2}{p_0}\Vert c_N^k\Vert_{l^\infty_N}^\frac{p_0-2}{p_0}
\le\left(\sum_{N\ge 1}N^2\Vert P_N\phi^\prime_k\Vert_{L^2}^2\right)^\frac{1}{p_0}\left(\sup_{N}c_N^k\right)^\frac{p_0-2}{p_0}.\\
\end{split}
\end{equation*}
Using \eqref{EBound}, we obtain that there exists a sequence of scales $N_k\ge 1$ such that, for some $p_0$ in $\{9/2,18\}$,
\begin{equation*}
(\delta/2)^\frac{p_0}{p_0-2}<\Lambda_0(\{\phi^\prime_k\})^\frac{p_0}{p_0-2}\le E^\frac{2}{p_0-2}N_k^{\frac{5}{p_0}-\frac{1}{2}}\left\Vert e^{it\Delta}P_{N_k}\phi^\prime_k\right\Vert_{l^{q_0}_\gamma L^{p_0}_{x,y,t}(\mathbb{R}\times\mathbb{T}^2\times I_\gamma)}.
\end{equation*}

\medskip

We conclude that there exists a sequence $h_k\in C^\infty_c(\mathbb{R}_x\times\mathbb{T}^2_y\times\mathbb{R}_t)$ such that
\begin{equation*}
\begin{split}
&1\le \Vert h_k\Vert_{l^{q_0\prime}_\gamma L^{p_0^\prime}_{x,y,t}(\mathbb{R}\times\mathbb{T}^2\times I_\gamma)}\le 2\\
&(\delta/2)^\frac{p_0}{p_0-2}E^{-\frac{2}{p_0-2}}N_k^{\frac{1}{2}-\frac{5}{p_0}}\le \langle h_k, e^{it\Delta}P_{N_k}\phi^\prime_k\rangle_{L^2\times L^2_{x,y,t,\gamma}}.
\end{split}
\end{equation*}

\medskip

Now, for a given threshold $B$, we introduce the partition function
\begin{equation*}
\chi_B(\gamma)=\begin{cases}
1&\hbox{ if }\Vert h_k\Vert_{L^{p_0^\prime}_{x,y,t}(\mathbb{R}\times\mathbb{T}^2\times I_\gamma)}\ge B\\
0&\hbox{ otherwise}
\end{cases}
\end{equation*}
and we  decompose
\begin{equation*}
h_k(x,y,t)=h_k^{>B}+h_k^{<B}=h_k(x,y,t)\chi_B([\frac{t}{2\pi}])+h_k(x,y,t)(1-\chi_B([\frac{t}{2\pi}]))
\end{equation*}
so that
\begin{equation*}
\begin{split}
&\Vert h_k\Vert_{l^{q_0^\prime}_\gamma L^{p_0^\prime}_{x,y,t}(\mathbb{R}\times\mathbb{T}^2\times I_\gamma)}\le \Vert h_k^{>B}\Vert_{l^{q_0^\prime}_\gamma L^{p_0^\prime}_{x,y,t}(\mathbb{R}\times\mathbb{T}^2\times I_\gamma)}+\Vert h_k^{<B}\Vert_{l^{q_0^\prime}_\gamma L^{p_0^\prime}_{x,y,t}(\mathbb{R}\times\mathbb{T}^2\times I_\gamma)}\\
&\sup_\gamma\Vert h_k^{<B}\Vert_{L^{p_0^\prime}_{x,y,t}(\mathbb{R}\times\mathbb{T}^2\times I_\gamma)}\le B.
\end{split}
\end{equation*}

\medskip

Using Strichartz estimates, we have that, for $p_1=17/4<p_0$,
\begin{equation*}
\limsup_{k\to+\infty}N_k^{\frac{5}{p_1}-\frac{1}{2}}\left\Vert e^{it\Delta}P_{N_k}\phi^\prime_k\right\Vert_{l^\frac{4p_1}{p_1-2}_\gamma L^{p_1}_{x,y,t}(\mathbb{R}\times\mathbb{T}^2\times I_\gamma)}\lesssim\limsup_{k\to+\infty}\Vert \phi^\prime_k\Vert_{H^1}\lesssim E.
\end{equation*}
Interpolating with \eqref{SmallBesov}, we obtain that
\begin{equation*}
\limsup_{k\to+\infty}N_k^{\frac{5}{p_0}-\frac{1}{2}}\left\Vert e^{it\Delta}P_{N_k}\phi^\prime_k\right\Vert_{l^{\frac{4p_0}{p_1-2}}_\gamma L^{p_0}_{x,y,t}(\mathbb{R}\times\mathbb{T}^2\times I_\gamma)}\lesssim E^\frac{p_1}{p_0}\nu^\frac{p_0-p_1}{p_0}.
\end{equation*}

\medskip

We first observe that
\begin{equation*}
\left\vert{\rm \hbox{supp}}_\gamma(h_k^{>B})\right\vert\le(2/B)^{q_0^\prime},
\end{equation*}
and consequently, by H\"older's inequality in $\gamma$,
\begin{equation*}
\begin{split}
\langle e^{it\Delta}\phi^\prime_k,h_k^{>B}\rangle&\le \Vert e^{it\Delta}\phi_k^\prime\Vert_{l^{\frac{4p_0}{p_1-2}}_\gamma L^{p_0}_{x,y,t}(I_\gamma)}\Vert h_k^{>B}\Vert_{l^{q_0^\prime}_\gamma L^{p_0^\prime}_{x,y,t}(I_\gamma)} \left[(2/B)^{q_0^\prime}\right]^{\frac{p_0-p_1}{4p_0}}\\
&\lesssim 2\left[2/B\right]^{\frac{p_0-p_1}{3p_0+2}}E^\frac{p_1}{p_0}\nu^\frac{p_0-p_1}{p_0}N_k^{\frac{1}{2}-\frac{5}{p_0}}.
\end{split}
\end{equation*}
Consequently, for any fixed $B>0$, we can choose $\nu=\nu(B,\delta)$ with
\begin{equation*}
\nu^\frac{1}{18}=c E^{-\frac{2}{p_0-2}-\frac{p_1}{p_0}}\delta^\frac{p_0}{p_0-2}B^{\frac{p_0-p_1}{3p_0+2}},
\end{equation*}
for $c>0$ a universal constant sufficiently small so that,
\begin{equation}\label{NonTrivialIP}
\begin{split}
(\delta/2)^\frac{p_0}{p_0-2}E^{-\frac{2}{p_0-2}}N_k^{\frac{1}{2}-\frac{5}{p_0}}&\le 2\langle h_k^{<B}, e^{it\Delta}P_{N_k}\phi^\prime_k\rangle_{L^2\times L^2_{x,y,t,\gamma}}\\
&\le 2EN_k^{-1}\Vert \int_{\mathbb{R}}e^{-is\Delta}P_{N_k}h_k^{<B}(x,y,s)ds\Vert_{L^2_{x,y}(\mathbb{R}\times\mathbb{T}^2)}.
\end{split}
\end{equation}

\medskip

Using Strichartz estimates \eqref{Adj}, we obtain that
\begin{equation*}
\begin{split}
\Vert \int_{\mathbb{R}}e^{-is\Delta}P_{N_k}h_k^{<B}(x,y,s)ds\Vert_{L^2_{x,y}}
&\lesssim N_k^{\frac{3}{2}-\frac{5}{p_0}}\Vert h_k^{<B}\Vert_{l^2_\gamma L^{p_0^\prime}_{x,y,t}(I_\gamma)}+N_k^{1-\frac{3}{p_0}}\Vert h_k^{<B}\Vert_{l^{q_0^\prime}_\gamma L^{p_0^\prime}_{x,y,t}(I_\gamma)}\\
&\lesssim N_k^{\frac{3}{2}-\frac{5}{p_0}}\Vert h_k^{<B}\Vert_{l^{q_0^\prime}_\gamma L^{p_0^\prime}_{x,y,t}(I_\gamma)}^\frac{2p_0}{3p_0+2}B^\frac{p_0+2}{3p_0+2}+N_k^{\frac{p_0-3}{p_0}}\Vert h_k^{<B}\Vert_{l^{q_0^\prime}_\gamma L^{p_0^\prime}_{x,y,t}(I_\gamma)}\\
&\lesssim N_k^{\frac{3}{2}-\frac{5}{p_0}}B^\frac{p_0+2}{3p_0+2}+N_k^{\frac{p_0-3}{p_0}}.
\end{split}
\end{equation*}
Choosing $B$ such that
\begin{equation*}
B^\frac{p_0+2}{3p_0+2}=\varepsilon E^{-\frac{2}{p_0-2}}\delta^\frac{p_0}{p_0-2}
\end{equation*}
for some absolute constant $\varepsilon>0$ small enough and plugging into \eqref{NonTrivialIP}, we obtain that
\begin{equation*}
\delta^\frac{p_0}{p_0-2}\lesssim \varepsilon\delta^\frac{p_0}{p_0-2}+E^\frac{2}{p_0-2}N_k^{\frac{2}{p_0}-\frac{1}{2}}.
\end{equation*}
If $\varepsilon>0$ is small enough, we obtain a uniform bound
\begin{equation}\label{BdNk}
N_k\lesssim \left[E^\frac{2}{p_0-2}\delta^{-\frac{p_0}{p_0-2}}\right]^\frac{2p_0}{p_0-4}.
\end{equation}

\medskip

To sum up, we have proved that if $\Lambda_0(\{\phi_k^\prime\})>\delta$, then there exists a sequence of scales $\{N_k\}$ satisfying \eqref{BdNk} and such that
\begin{equation*}
N_k^{\frac{5}{p_0}-\frac{1}{2}}\Vert P_{N_k}e^{it\Delta}\phi_k^\prime\Vert_{l^{q_0}_\gamma L^{p_0}_{x,y,t}(I_\gamma)}>\delta^\frac{p_0}{p_0-2}E^{-\frac{2}{p_0-2}}.
\end{equation*}
Using again Strichartz estimates, we see that
\begin{equation*}
N_k^{\frac{5}{p_2}-\frac{1}{2}}\Vert P_{N_k}e^{it\Delta}\phi_k^\prime\Vert_{l^\frac{4p_2}{p_2-2}_\gamma L^{p_2}_{x,y,t}(I_\gamma)}\lesssim E
\end{equation*}
for $p_2\in\{4.001,1000\}$ and using H\"older's estimate, we deduce that
\begin{equation*}
\Vert P_{N_k}e^{it\Delta}\phi_k^\prime\Vert_{L^{6}_{x,y,t}(\mathbb{R}\times\mathbb{T}^2\times\mathbb{R})}>c(\delta,E)
\end{equation*}
for some $c(\delta,E)>0$. Here and below, $c(\delta,E)$ denotes a small positive constant depending only on $\delta$ and $E$ which is allowed to change from line to line.

\medskip

At this point, we write
\begin{equation*}
P_{N_k}e^{it\Delta_{\mathbb{R}\times\mathbb{T}^2}}\phi_k^\prime(x,y)=\sum_{z=(z_1,z_2)\in\mathbb{Z}^2;\, \vert z_i\vert\le N_k} e^{-it\vert z\vert^2}e^{i\langle z, y\rangle}\eta^2_{N_k}(z)\left[e^{it\Delta_{\mathbb{R}}}\phi^{\prime\prime}_{z,k}(x)\right]
\end{equation*}
where
\begin{equation*}
\phi^{\prime\prime}_{z,k}(x)=\frac{1}{(2\pi)^2}\int_{\mathbb{T}^2}P_{N_k}\phi^\prime_k(x,y)e^{-i \langle z, y\rangle}dy
\end{equation*}
and, extracting a subsequence, we conclude that there exists $z$ such that, for all $k$,
\begin{equation}\label{BdsIS}
\begin{split}
\hbox{\rm supp}(\mathcal{F}_x\phi^{\prime\prime}_{z,k})&\subset [-3N_k,3N_k]\\
\Vert e^{it\Delta_{\mathbb{R}}}\phi^{\prime\prime}_{z,k}\Vert_{L^{6}_{x,t}(\mathbb{R}\times\mathbb{R})}&>c(\delta,E)\\
\Vert \phi^{\prime\prime}_{z,k}\Vert_{H^1(\mathbb{R})}&\le M.\\
\end{split}
\end{equation}

\medskip

To proceed, we borrow the following proposition from \cite{BegVar, CarKer, TaoIS} (we use the version in \cite{TaoIS}):

\begin{proposition}[\cite{BegVar,CarKer,TaoIS}]\label{TaoIS}
For any $M,c(\delta,E)>0$, there exists a finite set $\mathcal{C}\subset L^2(\mathbb{R})$ of functions satisfying,
\begin{equation}\label{CondForvvv}
\Vert v\Vert_{L^2(\mathbb{R})}=1,\quad\Vert v\Vert_{L^1(\mathbb{R})}\le S(E,M,\delta),\,\,\forall v\in\mathcal{C}
\end{equation}
and $\kappa(M,c(\delta,E))>0$ such that whenever $u\in L^2(\mathbb{R})$ obeys the bounds
\begin{equation*}
\Vert u\Vert_{L^2}\le M,\quad \Vert e^{it\Delta_{\mathbb{R}}}u\Vert_{L^6_{x,t}(\mathbb{R}\times\mathbb{R})}\ge c(\delta,E),
\end{equation*}
then there exists $v\in \mathcal{C}$ and $(\lambda,\xi_0,t_0,x_0)\in\mathbb{R}_+\times\mathbb{R}\times\mathbb{R}\times\mathbb{R}$ such that
\begin{equation*}
\langle u,v^\prime\rangle_{L^2\times L^2(\mathbb{R})}\ge\kappa,\quad v^\prime(x)=\lambda^{\frac{1}{2}}e^{ix\xi_0}\left[e^{it_0\Delta_{\mathbb{R}}}v\right]\left(\lambda(x-x_0)\right).
\end{equation*}
\end{proposition}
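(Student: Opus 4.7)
\smallskip

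\noindent\textbf{Proof proposal.} The plan is to follow the inverse Strichartz strategy developed in \cite{BegVar,CarKer,TaoIS}. The starting point will be a refined mass-critical Strichartz inequality on $\R$ of the form
\[
\|e^{it\partial_{xx}} u\|_{L^6_{x,t}(\R\times\R)} \lesssim \|u\|_{L^2(\R)}^{2/3} \sup_{Q} \left(|Q|^{-1/2}\|\widehat u\|_{L^2(Q)}\right)^{1/3},
\]
with the supremum taken over dyadic frequency intervals $Q\subset\R$. Applied to $u$ satisfying $\|u\|_{L^2}\le M$ and $\|e^{it\partial_{xx}}u\|_{L^6_{x,t}}\ge c(\delta,E)$, this produces a dyadic interval $Q$ centered at some $\xi_0$ of length $\lambda^{-1}$ and satisfying $\lambda^{1/2}\|\widehat u\|_{L^2(Q)}\ge c_1(M,c(\delta,E))$.

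Next, after modulating by $e^{-ix\xi_0}$ and rescaling by $\lambda$, I define $\widetilde u(x) := \lambda^{-1/2}e^{-i\xi_0\lambda^{-1}x}u(\lambda^{-1}x)$, which is $L^2$-bounded with Fourier support essentially contained in $[-3,3]$, and whose free evolution still has $L^6_{x,t}$ norm bounded below by $c_1$ by scale invariance. A Bernstein-type estimate combined with a pigeonhole in dyadic time intervals then yields a time $t_0$ and a location $x_0$ at which $|(e^{it_0\partial_{xx}}\widetilde u)(x_0)|\gtrsim_{M,c} 1$. Setting $v_0 := e^{it_0\partial_{xx}}\widetilde u(\cdot-x_0)$, the resulting profile has Fourier support in $[-3,3]$, has a pointwise lower bound at the origin, and hence belongs to a precompact subset of $L^2(\R)$ by a Rellich-type argument (bandlimited plus mass-concentration at a definite scale).

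To obtain the finite set, I will cover this precompact family by a finite $\kappa/2$-net $\mathcal C \subset L^2(\R)$ consisting of smooth, compactly supported functions normalized to $\|v\|_{L^2}=1$. Approximating $v_0$ by a nearest element $v\in\mathcal C$ and undoing all symmetries produces the sought $v'(x)=\lambda^{1/2}e^{ix\xi_0}[e^{it_0\partial_{xx}}v](\lambda(x-x_0))$ with $\langle u,v'\rangle_{L^2}\ge\kappa(M,c(\delta,E))$. The uniform $L^1$ bound on elements of $\mathcal C$ then follows from their compact support in a fixed window together with Cauchy--Schwarz against the $L^2$ normalization.

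The main obstacle is the refined Strichartz inequality, which is the only nontrivial analytical input; I will invoke it directly from \cite{BegVar,CarKer,TaoIS} rather than reprove it. The remaining concentration-compactness and finite-net arguments are standard once the refined estimate is in hand; in particular, compared to the higher-dimensional analogues in \cite{TaoIS}, the $1d$ setting here is if anything technically simpler.
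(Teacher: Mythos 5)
First, a point of reference: the paper itself offers \emph{no} proof of this proposition --- it is imported verbatim as a black box from \cite{BegVar,CarKer,TaoIS} (``we use the version in \cite{TaoIS}''), so the only meaningful comparison is with the arguments in those references. Your outline (refined Strichartz $\to$ frequency cube $\to$ renormalization $\to$ spacetime concentration $\to$ finite net $\to$ undo symmetries) has the right general shape, and invoking the refined estimate from the references is legitimate.

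The genuine gap is in your concentration step, and it is not a technicality. After renormalizing so that $\Vert \widetilde u\Vert_{L^2}\le M$, $\widehat{\widetilde u}$ is (say) supported in $[-3,3]$ and $\Vert e^{it\partial_{xx}}\widetilde u\Vert_{L^6_{x,t}(\mathbb{R}\times\mathbb{R})}\ge c_1$, you claim that Bernstein plus a pigeonhole over dyadic time intervals produces a point $(t_0,x_0)$ with $\vert e^{it_0\partial_{xx}}\widetilde u(x_0)\vert\gtrsim 1$. This does not follow. To run either argument you would need a globally-in-time subcritical bound such as $\Vert e^{it\partial_{xx}}\widetilde u\Vert_{L^4_{x,t}(\mathbb{R}\times\mathbb{R})}\lesssim_M 1$, or $\sum_\gamma\Vert e^{it\partial_{xx}}\widetilde u\Vert^{p}_{L^6_{x,t}(\mathbb{R}\times I_\gamma)}\lesssim_M 1$ for some $p<6$; both are \emph{false} for band-limited data on $\mathbb{R}$ over infinite time (for a wave packet of spatial scale $R\gg 1$ and frequency scale $R^{-1}\subset[-3,3]$ one has $\Vert e^{it\partial_{xx}}\widetilde u\Vert_{L^6_{x,t}}\sim 1$ while $\sup_{x,t}\vert e^{it\partial_{xx}}\widetilde u\vert\sim R^{-1/2}\to 0$, and $\Vert e^{it\partial_{xx}}\widetilde u(t)\Vert_{L^4_x}^4\sim\langle t\rangle^{-1}$ is not integrable). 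Moreover, rescaling so that the \emph{selected cube} has unit side does not confine the rest of the Fourier support of $\widetilde u$ to $[-3,3]$, and the $L^6$ lower bound is a statement about all of $\widetilde u$, not about $P_Q\widetilde u$. This is exactly the point where the mass-critical inverse theorem is harder than its energy-critical counterpart: in \cite{BegVar,CarKer,TaoIS} the spatial and temporal localization --- i.e.\ the parameters $x_0,t_0$, which encode the linear and quadratic parts of the phase of $\widehat u$ on the cube --- is extracted from the bilinear-restriction ($X_{p,q}$-type) refinements of Moyua--Vargas--Vega, not from Bernstein. A secondary issue: ``band-limited $+$ bounded in $L^2$ $+$ large at a point'' is not precompact in the $L^2$ norm topology (mass can escape to spatial infinity), only weakly sequentially compact; but this step is in any case redundant, since once a pointwise lower bound is correctly established the conclusion follows by pairing with a single fixed band-limited bump translated to $(t_0,x_0)$, with $\mathcal{C}$ essentially a singleton.
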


This proposition gives, after extraction, a function $v\in L^2(\mathbb{R})$ satisfying \eqref{CondForvvv} and a sequence $(\lambda_k,\xi_k,t_k,x_k)\in\mathbb{R}_+\times\mathbb{R}\times\mathbb{R}\times\mathbb{R}$ such that
\begin{equation}\label{NontrivialIP}
\langle \phi^{\prime\prime}_{z,k},v_k\rangle\ge\kappa,\quad v_k(x)=\lambda_k^{\frac{1}{2}}e^{ix\xi_k}\left[e^{it_k\Delta_{\mathbb{R}}}v \right]\left(\lambda_k(x-x_k)\right).
\end{equation}
We may assume that $v$ has a compactly supported Fourier transform.

\medskip

We now claim that $\lambda_k$ and $\vert\xi_k\vert$ remain bounded. Indeed, we compute that
\begin{equation*}
\begin{split}
\mathcal{F}_{\mathbb{R}}v_k(\xi)&=e^{i\xi_kx_k}e^{-ix_k\xi}\lambda_k^{-\frac{1}{2}}\left[\mathcal{F}_{\mathbb{R}}e^{it_k\Delta_{\mathbb{R}}}v\right](\frac{\xi-\xi_k}{\lambda_k})\\
\Vert \langle\xi\rangle^{-1}\mathcal{F}_{\mathbb{R}}v_k(\xi)\Vert_{L^2([-4N_k,4N_k])}&\le \sqrt{\pi}\lambda_k^{-\frac{1}{2}}S(M,E,\delta)\\
\Vert v_k\Vert_{{H}^{-1}(\mathbb{R})}^2&\sim \int_{\R}\left[\frac{1}{1+\vert \xi_k+\lambda_k\eta\vert}\right]^2\left\vert\mathcal{F}_{\mathbb{R}}v\right\vert^2(\eta)d\eta.
\end{split}
\end{equation*}
The second line, together with \eqref{BdsIS} and \eqref{NontrivialIP} implies that
\begin{equation*}
\kappa\le \sqrt{\pi}MS(M,E,\delta)\lambda_k^{-\frac{1}{2}}
\end{equation*}
which forces $\lambda_k\le C_\kappa(E,M,\delta)=\pi M^2\kappa^{-2}S(M,E,\delta)$. Similarly, the third line, together with \eqref{BdsIS} and \eqref{NontrivialIP}, along with the uniform bound on $\lambda_k$ and the compactness of the support of $\FF_\R v$, forces $\vert\xi_k\vert\le C_\xi(E,M,\delta)$.

\medskip

Assume first that $\lambda_k$ remains bounded from below. Then, up to extracting a subsequence, we may assume that $\lambda_k\to\lambda_\infty\in(0,\infty)$. Similarly, we may assume that $\xi_k\to\xi_\infty$. Then, setting
\begin{equation*}
\tilde{\psi}(x,y)=e^{i\langle z,y\rangle}e^{ix\xi_\infty}\lambda_\infty^\frac{1}{2}v(\lambda_\infty x),\quad  t^\prime_k=-\lambda_k^{-2}t_k,\quad x^\prime_k=x_k+2t^\prime_k\xi_k
\end{equation*}
and defining the frame $\mathcal{O}=\{(1,t^\prime_k,(x^\prime_k,0),0)_k\}$, we see from \eqref{NontrivialIP} that
\begin{equation*}
\begin{split}
\kappa&\le\langle P_{N_k}\phi^\prime_k, e^{i\langle z,y\rangle}e^{ix\xi_k}\lambda_k^\frac{1}{2}\left[e^{-it_k\Delta_{\mathbb{R}}}v\right](\lambda_k (x-x_k))\rangle\\
&\le\langle \phi^\prime_k, e^{i(x_k \xi_\infty-t^\prime_k(\vert z\vert^2-\vert \xi_\infty\vert^2))}P_{N_k}e^{-it^\prime_k\Delta_{\mathbb{R}\times\mathbb{T}^2}}\tilde{\psi}(x-x^\prime_k)\rangle+o_k(1)
\end{split}
\end{equation*}
Since $1 \leq N_k \lesssim 1$ is dyadic, we may assume (up to a subsequence) that $N_k=N_\infty$. As a result, setting $\psi=P_{N_\infty}\tilde{\psi}$, we see that the Scale-1 profile $\widetilde{\psi}_{\mathcal{O},k}$ satisfies both \eqref{claim11} and \eqref{claim12}. We also conclude that $\mathcal{O}$ is orthogonal to $\mathcal{O}^\alpha$, $1\le \alpha\le A$. In addition, proceeding as in the proof of Lemma \ref{PDLem}, we find $\varphi$ satisfying \eqref{CCLClaim}.
\medskip

Assume now that $\lambda_k\to 0$. Let $M_k$ be a dyadic number such that $1\le\lambda_k^{-1}M_k\le 2$ and consider the sequence
\begin{equation*}
\Phi_k(x,y)=M_k^{-\frac{1}{2}}e^{-it_k\partial_{xx}}\left[e^{i\xi_k x/M_k}\phi^\prime_k(x_k+\frac{x}{M_k},y)\right].
\end{equation*}
We have that
\begin{equation*}
\Vert \Phi_k\Vert_{L^2(\mathbb{R}\times\mathbb{T}^2)}^2+\Vert\nabla_y\Phi_k\Vert_{L^2(\mathbb{R}\times\mathbb{T}^2)}^2\le M^2+E^2,\\
\end{equation*}
hence, up to extracting a subsequence, we may assume that
$
\Phi_k\rightharpoonup \Phi$ in $H^{0,1}(\mathbb{R}\times\mathbb{T}^2)$.
We define
\begin{equation*}
t^\prime_k=-M_k^{-2}t_k,\quad \xi_k^\prime=-\xi_k, \quad x^\prime_k=x_k+2t_k^\prime\xi_k^\prime,
\end{equation*}
and $\mathcal{O}=(M_k,t_k^\prime,(x^\prime_k,0),\xi^\prime_k)$. Then we obtain from the existence of a nonzero inner product \eqref{NontrivialIP} that $\mathcal{O}$ is orthogonal to $\mathcal{O}^\alpha$, $1\le \alpha\le A$ and we see from the definition of $\Phi_k$ that \eqref{CCLClaim} holds with $\varphi=e^{i\theta_\infty}\Phi$ for some $\theta_\infty \in \R/\Z$. This finishes the proof.

\end{proof}

\section{Induction on energy}\label{Sec-IOE}

\subsection{Proof of Theorem \ref{MainThm}}

We are now ready to prove Theorem \ref{MainThm}. We follow an induction on energy method formalized in \cite{KeMe,KeMe2}. Define
\begin{equation*}
\Lambda(L)=\sup\{\Vert u\Vert_{Z(I)}^2:u\in X^1_{loc}(I),E(u)+\frac{1}{2}M(u)\le L\}
\end{equation*}
where the supremum is taken over all strong solutions of full energy less than $L$. By the local well-posedness theory in Theorem \ref{Thm1}, this is sublinear in $L$ and finite for $L$ sufficiently small. In addition, define
\begin{equation*}
L_{max}=\sup\{L:\Lambda(L)<+\infty\}.
\end{equation*}
Our goal is to prove that $L_{max}=+\infty$. As in \cite{TaViZh}, the key proposition is:
\begin{proposition}\label{KeyProp}
Assume that $L_{max}<+\infty$ and that Conjecture \ref{SSConj} holds for $L_{max}$. Let $\{t_k\}_k$, $\{a_k\}_k$, $\{b_k\}_k$ be arbitrary sequences of real numbers and $\{u_k\}$ be a sequence of solutions to \eqref{NLS} such that $u_k\in X^1_{c,loc}(t_k-a_k,t_k+b_k)$ and satisfying
\begin{equation}\label{CondForComp}
L(u_k)\to L_{max},\quad \Vert u_k\Vert_{Z(t_k-a_k,t_k)}\to+\infty,\quad\Vert u_k\Vert_{Z(t_k,t_k+b_k)}\to+\infty.
\end{equation}
Then, passing to a subsequence, there exists a sequence $x_k\in\mathbb{R}$ and $w\in H^1(\mathbb{R}\times\mathbb{T}^2)$ such that
\begin{equation}\label{Compactness}
w_k(x,y)=u_k(x-x_k,y,t_k)\to w
\end{equation}
strongly in $H^1(\mathbb{R}\times\mathbb{T}^2)$.
\end{proposition}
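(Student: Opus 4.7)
The argument follows the Kenig-Merle induction-on-energy scheme. Apply the linear profile decomposition of Proposition \ref{PDProp} to the bounded sequence $\phi_k := u_k(t_k) \in H^1(\mathbb{R}\times\mathbb{T}^2)$. This produces Euclidean profiles $\widetilde{\varphi}^\alpha_{\mathcal{O}^\alpha,k}$, large-scale profiles $\widetilde{\psi}^\beta_{\mathcal{S}^\beta,k}$, scale-$1$ profiles $\widetilde{W}^\gamma_{\mathcal{O}^\gamma,k}$, and a remainder $R^A_k$ satisfying \eqref{SmalnessRA}. Combining the identities in \eqref{AdditionOfEnergies} (using that Euclidean profiles carry no mass in the limit while large-scale profiles carry no $L^6$ norm), the full energy decouples as
\begin{equation*}
L(\phi_k) = \sum_\alpha E_{\mathbb{R}^3}(\varphi^\alpha) + \sum_\beta \tilde{L}_{ls}(\psi^\beta,\xi^\beta_\infty) + \sum_\gamma L(W^\gamma) + L(R^A_k) + o_{A,k}(1),
\end{equation*}
where $\tilde{L}_{ls}(\psi,\xi_\infty) = \frac{1}{2}(1+|\xi_\infty|^2)\|\psi\|_{L^2}^2 + \frac{1}{2}\|\nabla_y\psi\|_{L^2}^2$ is strictly positive on nontrivial profiles. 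Each term is therefore bounded by $L_{max} + o(1)$.

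Next, associate a nonlinear profile to each linear one. Proposition \ref{GEForEP} provides a global nonlinear solution $U^\alpha_k \in X^1_c(\mathbb{R})$ for each Euclidean profile with $\|U^\alpha_k\|_{X^1} \lesssim_{E_{\mathbb{R}^3}(\varphi^\alpha)} 1$; Proposition \ref{GEForLSP} does the same for each large-scale profile, using the hypothesis that Conjecture \ref{SSConj} holds up to energy $L_{max}$; and for each scale-$1$ profile with $L(W^\gamma) < L_{max}$, the induction hypothesis $\Lambda(L(W^\gamma)) < \infty$ together with Lemma \ref{Z controls} produces a global $U^\gamma_k \in X^1_c(\mathbb{R})$ of finite $X^1$-norm. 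Suppose, toward a contradiction, that either (a) there are at least two nontrivial profiles, or (b) the unique profile has $L < L_{max}$. In either case every nonlinear profile is bounded in $X^1$, and one forms the approximate solution
\begin{equation*}
\tilde u_k(t) = \sum_\alpha U^\alpha_k(t) + \sum_\beta U^\beta_k(t) + \sum_\gamma U^\gamma_k(t) + e^{i(t-t_k)\Delta}R^A_k,
\end{equation*}
truncated to finitely many profiles (tails are absorbed into the remainder via the square-summability of \eqref{AdditionOfEnergies}). Pairwise orthogonality of the frames, combined with the trilinear estimate of Lemma \ref{half nl lemma}, ensures that the cross terms in $|\tilde u_k|^4\tilde u_k - \sum_{\ast}|U^\ast_k|^4 U^\ast_k$ vanish in $N$-norm as $k\to\infty$, while \eqref{SmalnessRA} controls the contribution of $R^A_k$. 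The stability Proposition \ref{stability} then gives $\|u_k\|_{X^1(t_k-a_k,t_k+b_k)} \lesssim 1$ for $k$ large, and a fortiori $\|u_k\|_{Z(t_k-a_k,t_k+b_k)} \lesssim 1$, contradicting \eqref{CondForComp}.

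Consequently the decomposition reduces to a single scale-$1$ profile $\widetilde{W}^1_{\mathcal{O}^1,k}$ with $\mathcal{O}^1 = (1,t^1_k,p^1_k,0)$ and $L(W^1) = L_{max}$, all other profiles being trivial and $L(R^A_k) \to 0$; since the full energy is a sum of nonnegative coercive quantities, this yields $R^A_k \to 0$ strongly in $H^1(\mathbb{R}\times\mathbb{T}^2)$. To conclude, the time shift $t^1_k$ must remain bounded: if $t^1_k \to +\infty$ along a subsequence, one constructs a backward-scattering nonlinear profile $U^1_\infty \in X^1_c$ with $U^1_\infty(s) - e^{is\Delta}W^1 \to 0$ in $H^1$ as $s\to -\infty$, whose $Z$-norm on $(-\infty,-T]$ is small for $T$ large; rescaling and applying stability as above (now using only this single nonlinear profile) yields a uniform $Z$-bound on $u_k$ on a forward-in-time interval $[t_k, t_k+b_k]$, contradicting \eqref{CondForComp}. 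The case $t^1_k \to -\infty$ is excluded symmetrically using the backward blow-up. After passing to a subsequence with $t^1_k \to t^1_\infty \in \mathbb{R}$ and absorbing $t^1_\infty$ into $W^1$ (replacing $W^1$ by $e^{-it^1_\infty\Delta}W^1$), we may assume $t^1_k \equiv 0$. Writing $p^1_k = (x^1_k,y^1_k)$ and extracting $y^1_k \to y^1_\infty \in \mathbb{T}^2$ by compactness, the result follows with $x_k := x^1_k$ and $w(x,y) := W^1(x, y-y^1_\infty)$.

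The main obstacle is the decoupling step: one must show that nonlinear interactions between profiles attached to orthogonal frames are negligible in the spaces $N$ and $Z'$, and that the stability proposition applies uniformly as the number of profiles varies. The large-scale profiles are a novel feature here, and their decoupling from Euclidean and scale-$1$ profiles relies crucially on the orthogonality relations spelled out in Lemma \ref{Eq and orth profiles}, together with the Galilean-type invariance reflected in the frequency parameter $\xi^\beta_k$.
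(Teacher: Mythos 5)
Your overall architecture matches the paper's: profile decomposition of $u_k(t_k)$, nonlinear profiles via Propositions \ref{GEForEP}, \ref{GEForLSP} and the induction hypothesis, a case analysis isolating a single full-energy scale-$1$ profile, and exclusion of divergent time shifts. However, there is one genuine gap in the recomposition step. You take as approximate solution the sum of the nonlinear profiles plus the \emph{linear} evolution $e^{i(t-t_k)\Delta}R^A_k$ of the remainder. The resulting error then contains terms that are \emph{linear} in the remainder, of the schematic form $\mathfrak{O}_{4,1}(U^{\mu,\alpha}_k,\,e^{it\Delta}R^A_k)$. The only multilinear tool available, \eqref{nl estimate}, bounds a quintic product by $\sum_j\|u_j\|_{X^1}\prod_{i\ne j}\|u_i\|_{Z'}$; the term in that sum which places $e^{it\Delta}R^A_k$ in $X^1$ is merely $O(1)$ (the remainder is bounded, not small, in $H^1$), and the four profile factors are only bounded in $Z'$. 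So this error term cannot be made small from \eqref{SmalnessRA} alone, and the stability argument does not close. This is precisely the obstruction the paper flags in the introduction (``terms that are linear in the perturbation''), and it is resolved by replacing $e^{it\Delta}R^J_k$ with the solution $g^{A,J}_k$ of the \emph{linearized} equation $(i\partial_t+\Delta)g-F'(U^A_{prof,k})g=0$, $g(0)=R^J_k$, so that the dangerous linear-in-remainder terms are cancelled exactly and only quadratic-in-$g$ errors survive. The price is Lemma \ref{gZeroLimit}, i.e.\ that $\|g^{A,J}_k\|_{Z(\R)}\to 0$: this is a substantial piece of analysis in its own right (angular frequency truncation $\tilde P^1_\kappa$, the smoothing estimate \eqref{1D smoothing}, and the anisotropic norms $\widetilde X^1_\rho$ built on the refined Strichartz estimate \eqref{Strich M refined U^p lemma}), and nothing in your proposal substitutes for it.

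Two smaller points. First, your contradiction dichotomy ``(a) at least two nontrivial profiles, or (b) the unique profile has $L<L_{max}$'' omits the cases of a single Euclidean or single large-scale profile carrying the full energy $L_{max}$; these are not covered by the induction hypothesis but are still contradictory, because Proposition \ref{GEForEP} (via the Euclidean theory) and Proposition \ref{GEForLSP} (via Conjecture \ref{SSConj}) give global $Z$-bounds for such profiles at any energy up to $L_{max}$ --- this is the paper's Cases IIa and IIb and should be stated. Second, the decoupling of cross terms between orthogonal nonlinear profiles (Lemma \ref{orthnlprof}) does not follow directly from Lemma \ref{half nl lemma}; it requires decomposing each nonlinear profile into a compactly supported bulk, scattering tails small in $Z'$, and an $X^1$-small error, and then exploiting disjointness of supports or frequency separation. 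You correctly identify this as the main obstacle but the proposal as written treats it as an application of the trilinear estimate, which it is not.
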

The proof of this key proposition is postponed to the end of this section.

\begin{corollary}\label{CorComp}
Assume that $L_{max}<+\infty$ and that Conjecture \ref{SSConj} holds for some $E^{ls}_{max}\ge L_{max}$. Then, there exists $u\in X^1_{loc}(\mathbb{R})$ solving \eqref{NLS} and a Lipschitz function $\underline{x}:\mathbb{R}\to\mathbb{R}$ such that $L(u)=L_{max}$ and
\begin{equation}\label{FlowComp}
\begin{split}
\sup_{t\in\mathbb{R}}\vert\underline{x}^\prime(t)\vert&\lesssim 1\\
\{u(x-\underline{x}(t),y,t):t\in\mathbb{R}\}&\hbox{ is precompact in }H^1(\mathbb{R}\times\mathbb{T}^2).
\end{split}
\end{equation}
\end{corollary}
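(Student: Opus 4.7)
The plan is the standard Kenig--Merle induction-on-energy scheme; Proposition \ref{KeyProp} is essentially the only input required and will be invoked three times.

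\textbf{Extraction of a critical element.} By definition of $L_{max}$ there exist solutions $v_n\in X^1_{c,\operatorname{loc}}(I_n)$ of \eqref{NLS} with $L(v_n)\to L_{max}$ and $\|v_n\|_{Z(I_n)}\to+\infty$. Because the $Z$-norm is $\ell^q$-subadditive over disjoint time windows $I_\gamma$, I can pick splitting times $t_n\in I_n$ for which $\|v_n\|_{Z(I_n\cap(-\infty,t_n])}\to+\infty$ and $\|v_n\|_{Z(I_n\cap[t_n,+\infty))}\to+\infty$. Proposition \ref{KeyProp} then produces $x_n\in\mathbb{R}$ and $w\in H^1(\mathbb{R}\times\mathbb{T}^2)$ with $v_n(\cdot-x_n,\cdot,t_n)\to w$ strongly in $H^1$. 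Let $u$ be the maximal solution of \eqref{NLS} with $u(0)=w$. Continuity of the mass and energy on $H^1$ together with the conservation laws give $L(u)=\lim_n L(v_n)=L_{max}$; the inequality $L(u)\ge L_{max}$ is forced because $L(u)<L_{max}$ would give $\|u\|_{Z(\mathbb{R})}<+\infty$ by the definition of $L_{max}$, and Proposition \ref{stability} would then yield a uniform bound on $\|v_n\|_{Z(t_n,t_n+T)}$ for every fixed $T$, contradicting the splitting we just constructed.

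\textbf{Globality and two-sided $Z$-divergence.} The same mechanism forbids $u$ from blowing up in finite time. If $T^*<\infty$ were a forward blow-up time, Lemma \ref{Z controls} would give $\|u\|_{Z(0,T^*)}=+\infty$; by $\ell^q$-monotone convergence $\|u\|_{Z(0,s_k)}\to+\infty$ along any sequence $s_k\uparrow T^*$, while $\|u\|_{Z(s_k,T^*)}=+\infty$ by subadditivity. Proposition \ref{KeyProp} applied to the constant sequence $u_k\equiv u$ with times $s_k$, $a_k=s_k$, $b_k=T^*-s_k$ produces a strong $H^1$ limit after translation, and the local theory together with Proposition \ref{stability} then extend $u$ past $T^*$ on an interval of length depending only on the $H^1$-norm of that limit, contradicting the maximality of $T^*$. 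Therefore $u$ is global, and the same stability reasoning (transferred back to the $v_n$) shows $\|u\|_{Z(-\infty,t)}=\|u\|_{Z(t,+\infty)}=+\infty$ for every $t\in\mathbb{R}$.

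\textbf{Compactness modulo translations and Lipschitz modulation.} For any sequence $\tau_n\in\mathbb{R}$, choose $a_n,b_n\to+\infty$ so that $\|u\|_{Z(\tau_n-a_n,\tau_n)}\to+\infty$ and $\|u\|_{Z(\tau_n,\tau_n+b_n)}\to+\infty$ (possible by the previous paragraph); Proposition \ref{KeyProp} applied to $u_k\equiv u$ then yields translates $y_n\in\mathbb{R}$ and a subsequential $H^1$-limit of $u(\cdot-y_n,\cdot,\tau_n)$. This gives $H^1$-precompactness of the orbit $\{u(\cdot,t):t\in\mathbb{R}\}$ modulo $x$-translations; equivalently, for each $\varepsilon>0$ there is $R(\varepsilon)$ such that for every $t$ some $z(t)\in\mathbb{R}$ satisfies $\|u(\cdot,t)\|_{H^1(|x-z(t)|>R(\varepsilon))}<\varepsilon$, and I would define $\underline{x}(t)$ to be any such choice (for instance, the center of mass of $|u(\cdot,t)|^2$ inside a large ball). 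The main remaining obstacle, and the only place where compactness alone does not suffice, is upgrading continuity of $\underline{x}$ to the Lipschitz bound $\sup_t|\underline{x}'(t)|\lesssim 1$. I would derive this from the equation: the uniform $H^1$-bound on $\{u(t)\}$ coming from compactness forces $\partial_t u=i\Delta u-i|u|^4u\in L^\infty_tH^{-1}$ with norm $\lesssim 1$, so $\|u(t+h)-u(t)\|_{H^{-1}}\lesssim|h|$; pairing this with a normalized bump $\chi((\cdot-\underline{x}(t))/R)$ and the uniform tightness from precompactness yields $|\underline{x}(t+h)-\underline{x}(t)|\lesssim|h|$ for $|h|$ below a universal threshold, hence for all $h$ by iteration, giving \eqref{FlowComp} after a mollification at scale $1$.
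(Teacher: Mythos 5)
Your proposal is correct and follows essentially the same route as the paper: three invocations of Proposition \ref{KeyProp} (to extract the critical element, to rule out finite-time breakdown, and to get precompactness of the orbit modulo translations), followed by the construction of the modulation path. The only implementational difference is in the last step, where the paper fixes discrete concentration centers $\underline{x}_k$ at times $k\kappa$ via the $99/100$-mass condition and interpolates, while you obtain the $O(1)$ drift per unit time from $\Vert\partial_t u\Vert_{L^\infty_t H^{-1}}\lesssim 1$ and uniform tightness before mollifying; both yield $\sup_t\vert\underline{x}'(t)\vert\lesssim 1$.
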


\begin{proof}[Proof of Corollary \ref{CorComp}]

Assuming that $L_{max}<+\infty$, we can find a sequence of solutions of \eqref{NLS} $u_k$ satisfying \eqref{CondForComp}. Applying Proposition \ref{KeyProp}, we can extract a subsequence and obtain a sequence $x_k$ such that \eqref{Compactness} holds for some $w\in H^1(\mathbb{R}\times\mathbb{T}^2)$. Clearly $L(w)=L_{max}$. Let $U\in C(I:H^1)$ be the (maximal) strong solution of \eqref{NLS} with initial data $w$, defined on $I=(-a_\infty,b_\infty)$. A consequence of the local wellposedness theory and \eqref{CondForComp} is that
\begin{equation}\label{Contra1}
\Vert W\Vert_{Z(-a_\infty,0)}=\Vert W\Vert_{Z(0,b_\infty)}=+\infty.
\end{equation}

\medskip

We first claim that there exists $\kappa>0$ such that for all $t$ in $I$,
\begin{equation}\label{Contra2}
\Vert U\Vert_{Z(t-2\kappa,t+2\kappa)}\le 2.
\end{equation}
In particular, $U$ is global, $a_\infty=b_\infty=\infty$.

Assume \eqref{Contra2} is not true. Then there exists a sequence $t_k\in I$ such that
\begin{equation*}
\Vert U\Vert_{Z(t_k-\frac{1}{k},t_k+\frac{1}{k})}\ge 2.
\end{equation*}
We can apply Proposition \ref{KeyProp} to the sequence $U(t_k)$ and obtain that, up to a subsequence, there exists $x_k$ such that $U_k(x,y):=U(t_k,x-x_k,y)\to\omega$ in $H^1$. Let $W$ be the nonlinear solution of \eqref{NLS} with initial data $\omega$. By the local wellposedness theory in Proposition \ref{LWP}, there exists $\kappa^\ast>0$ such that
\begin{equation*}
\Vert W\Vert_{Z(-\kappa^\ast,\kappa^\ast)}\le 1
\end{equation*}
and still by the stability theory Proposition \ref{stability}, we obtain that, for $k$ large enough,
\begin{equation*}
\Vert U\Vert_{Z(t_k-\kappa^\ast,t_k+\kappa^\ast)}\le 2
\end{equation*}
which gives a contradiction for $k$ large enough.
 
\medskip

We now prove \eqref{FlowComp}. We define the sequence of times $\{t_k\}=\{k\kappa\}$ and for each $t_k$, we define $\underline{x}_k$ and $R_k$ such that
\begin{equation}\label{RK}
\frac{1}{2}\int_{\{\vert x-\underline{x}_k\vert\le R_k\}}\int_{\mathbb{T}^2}\left[\vert u(t_k,x,y)\vert^2+\vert\nabla u(t_k,x,y)\vert^2+\frac{1}{3}\vert u(t_k,x,y)\vert^6\right]dxdy=99/100L_{max}
\end{equation}
and $R_k$ is minimal with this property.
While ${\underline x}_k$ is not necessarily unique, we claim that there exists $D$ such that for all $k$,
\begin{equation}\label{Contra3}
R_k\le D,\quad \vert \underline{x}_k-\underline{x}_{k+1}\vert\le D
\end{equation}
and that
\begin{equation}\label{Precomp}
\{u(t_k+s,x-\underline{x}_k),k\in\mathbb{Z},s\in (-\kappa,\kappa)\}\quad \text{is precompact in $H^1(\mathbb{R}\times\mathbb{T}^2)$.}
\end{equation}

The fact that the $R_k$ are uniformly bounded is a direct consequence of compactness up to translation of $\{u(t_k)\}_k$. Assume now that $\{v_k(x,y)=u(x-\underline{x}_k,y,t_k)\}_k$ was not precompact in $H^1(\mathbb{R}\times\mathbb{T}^2)$. Then, we could find $\varepsilon>0$ and a subsequence $k^\prime$ such that for all $k_1^\prime,k_2^\prime$,
\begin{equation}\label{Imp1}
\Vert v_{{k_1}^\prime}-v_{{k_2}^\prime}\Vert_{H^1(\mathbb{R}\times\mathbb{T}^2)}>\varepsilon.
\end{equation}
On the other hand, applying once again Proposition \ref{KeyProp}, we see that there exists a sequence $\overline{x}^k$ and a subsequence of $k^\prime$ such that
\begin{equation*}
v_{k^{\prime\prime}}(x-\overline{x}^{k^{\prime\prime}},y)\to w(x,y)\quad \hbox{strongly in }H^1(\mathbb{R}\times\mathbb{T}^2).
\end{equation*}
Necessarily, from \eqref{RK}, $\{\overline{x}^k\}_k$ remains bounded, so that the convergence of $\{v_{k^{\prime\prime}}\}_k$ contradicts \eqref{Imp1}. Using \eqref{Contra2} and the precompactness of $\{v_k\}_k$, we obtain \eqref{Precomp}. For the same reason as above, this implies the second statement in \eqref{Contra3}. Choosing $\underline{x}(t)$ to be a Lipschitz function satisfying $\underline{x}(t_k)=\underline{x}_k$, we obtain \eqref{FlowComp}. This ends the proof.
\end{proof}

Now, we can finish the proof of our main theorem.

\begin{proposition}
Assume that $u$ satisfies the conclusions of Corollary \ref{CorComp}, then $u=0$. In particular, $L_{max}\ge E^{ls}_{max}$. 
\end{proposition}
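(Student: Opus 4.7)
The plan is to derive a contradiction from a truncated one-dimensional Morawetz inequality in the $\R$-direction, exploiting the spatial localization \eqref{FlowComp}. Assume $L(u)=L_{max}>0$ for contradiction. The first step is to reduce to the case $P_x(u):=\operatorname{Im}\int\bar u\,\partial_x u\,dxdy=0$ via a Galilean transformation in the $x$-variable: setting $G_\xi u(x,y,t):=e^{ix\xi-it\xi^2}u(x-2t\xi,y,t)$ with $\xi:=-P_x(u)/M(u)$ produces another solution of \eqref{NLS} with the same mass, vanishing $x$-momentum, and strictly smaller full energy $L(G_\xi u)=L(u)-P_x(u)^2/(2M(u))<L_{max}$ whenever $P_x(u)\neq 0$. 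Since $G_\xi$ preserves all $L^p_{x,y,t}$ norms and only translates the $x$-Fourier support by a bounded amount $|\xi|\lesssim L_{max}^{1/2}$, it induces a bounded map on $Z(\R)$, so \eqref{Contra1} would force $\|G_\xi u\|_{Z(\R)}=+\infty$, contradicting the definition of $L_{max}$ at the strictly smaller level $L(G_\xi u)$. Hence $P_x(u)=0$.

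For the second step I fix $a\in C^\infty(\R)$ with $a'(x)=R\tanh(x/R)$, so $a''(x)=\operatorname{sech}^2(x/R)\in(0,1]$ and $|a^{(k)}|\lesssim R^{2-k}$ for $k\geq 2$, and set
\begin{equation*}
I_R(t):=\int_{\R\times\T^2} a'(x-\underline x(t))\,\operatorname{Im}(\bar u\,\partial_x u)\,dxdy,
\end{equation*}
which satisfies $|I_R(t)|\le 2R\,L_{max}$ by Cauchy-Schwarz. A direct computation using \eqref{NLS}, in which $y$-derivative contributions cancel after integration by parts in $y$ because $a$ depends only on $x$, yields
\begin{equation*}
\frac{dI_R}{dt}=\int\!\Bigl[2a''(x-\underline x)|\partial_x u|^2+\tfrac{2}{3}a''(x-\underline x)|u|^6-\tfrac{1}{2}a^{(4)}(x-\underline x)|u|^2\Bigr]dxdy-\underline x'(t)\!\int\! a''(x-\underline x)\operatorname{Im}(\bar u\,\partial_x u)\,dxdy.
\end{equation*}

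The heart of the argument is a uniform lower bound $dI_R/dt\ge c_0/4$ for some $c_0>0$. Compactness in $H^1$ together with $L(u)=L_{max}>0$ keeps the orbit away from $0$ in $H^1$, so the Sobolev embedding gives $\|u(t)\|_{L^6}^6\ge c_0>0$ uniformly in $t$. Given $\varepsilon>0$, \eqref{FlowComp} furnishes $R_\varepsilon$ with the tail bound $\int_{|x-\underline x(t)|>R_\varepsilon}(|u|^2+|\nabla u|^2+|u|^6)<\varepsilon$; choosing $R\gg R_\varepsilon/\sqrt\varepsilon$ makes $a''\ge 1-\varepsilon$ on the inner region, so the bracketed integrand is at least $\tfrac{2}{3}(1-\varepsilon)(c_0-\varepsilon)\ge c_0/2$, and the $a^{(4)}|u|^2$-term is $O(R^{-2})M(u)=O(\varepsilon)$. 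For the moving-center term I use $P_x(u)=0$ to replace $a''$ by $a''-1$ and split into $\{|x-\underline x|\le R_\varepsilon\}$ and its complement, controlling the inner piece by $(R_\varepsilon/R)^2\le\varepsilon$ (with Cauchy-Schwarz) and the outer by the compactness tails; combined with the Lipschitz bound $|\underline x'|\lesssim 1$ from \eqref{FlowComp} this correction is $O(\varepsilon)$. For $\varepsilon$ small, $dI_R/dt\ge c_0/4$, and integrating from $0$ to $T$ gives $c_0T/4\le 4R\,L_{max}$, which is impossible for $T$ large. Thus $L_{max}=0$, so $u\equiv 0$; the ``in particular'' assertion $L_{max}\ge E_{max}^{ls}$ follows since otherwise Corollary \ref{CorComp} (applicable because Conjecture \ref{SSConj} holds below $E_{max}^{ls}$) would produce a nontrivial compact-orbit solution. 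The main technical hurdle will be the Galilean-invariance argument, as the $Z$-norm of \eqref{def of Z} uses dyadic Littlewood-Paley projectors that do not commute with $G_\xi$; one must verify that the bounded Fourier shift by $\xi$ can be absorbed by comparison between adjacent Littlewood-Paley pieces.
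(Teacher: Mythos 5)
Your argument is essentially the paper's: a Galilean boost in $x$ to kill the momentum, followed by a truncated virial/Morawetz identity in the $x$-direction whose error terms are controlled by the almost-periodicity \eqref{FlowComp} and whose main term is bounded below via the uniform lower bound on $\|u(t)\|_{L^6}$ coming from precompactness and conservation of $L$, contradicting the $O(R)$ bound on the virial action for large times. Two small remarks: the paper avoids your step-1 detour entirely (it simply replaces $u$ by its boost, which still satisfies \eqref{FlowComp} and has the same mass and $L^6$ lower bound, rather than arguing that the original $u$ must have $P_x(u)=0$ — note that $\|u\|_{Z(\R)}=+\infty$ is established inside the proof of Corollary \ref{CorComp} but is not among its stated conclusions, so your contradiction with $\Lambda(L(G_\xi u))<\infty$ needs that extra input); and the lower bound $\|u(t)\|_{L^6}^6\ge c_0$ comes from precompactness plus the fact that an $H^1$-limit with vanishing $L^6$ norm must vanish (contradicting $L=L_{max}>0$), not from Sobolev embedding, which goes the wrong way.
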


\begin{proof}
Assume that $u\ne 0$. Then, from the compactness property \eqref{FlowComp}, we see that there exists $\rho>0$ such that
\begin{equation}\label{RHO}
\inf_{t\in\mathbb{R}}\min(\Vert u(t)\Vert_{L^6_{x,y}(\mathbb{R}\times\mathbb{T}^2)},\Vert u(t)\Vert_{L^2_{x,y}(\mathbb{R}\times\mathbb{T}^2)})\ge\rho.
\end{equation}
The proof we use is adapted from an argument of Duyckaerts-Holmer-Roudenko \cite{DuHoRo} (see also \cite{Pa2}). The fundamental tool is a monotonicity formula derived from the conserved momentum
\begin{equation*}
\hbox{Mom}(u)=\hbox{Im}\int_{\mathbb{R}\times\mathbb{T}^2}\overline{u}(x,y,t)\nabla u(x,y,t)dxdy.
\end{equation*}
First, replacing $u$ by\footnote{$\xi_0$ is well defined since $M(u)\ne 0$ by \eqref{RHO}.}
\begin{equation*}
\begin{split}
&v(z,t)=e^{-i\vert\xi_0\vert^2t+i\langle z,\xi_0\rangle}u(z-2\xi_0t,t),\quad M(u)\xi_0=-\hbox{Mom}(u)\\
&M(v)=M(u),\quad E(u)=E(v)+\vert \xi_0\vert^2M(u)
\end{split}
\end{equation*}
we may assume that $u$ satisfies
\begin{equation}\label{ZM}
\hbox{Mom}(u)=0
\end{equation}
in addition to \eqref{FlowComp}.

Define the Virial action by
\begin{equation*}
\begin{split}
A_R(t)=\int_{\mathbb{R}\times\mathbb{T}^2}\chi_R(x-\underline{x}(t))\cdot (x-\underline{x}(t))\cdot\hbox{Im}\left[\overline{u}(x,y,t)\partial_xu(x,y,t)\right]dxdy
\end{split}
\end{equation*}
for $\chi_R(x)=\chi(x/R)$ and $\chi$ satisfies $\chi(x)=1$ when $\vert x\vert\le 1$ and $\chi(x)=0$ when $\vert x\vert\ge 2$. Clearly
\begin{equation}\label{AR}
\sup_t\vert A_R(t)\vert\lesssim R.
\end{equation}
Independently, we compute that
\begin{equation*}
\begin{split}
\frac{d}{dt}A_R=&-\underline{x}^\prime(t)\hbox{Im}\int_{\mathbb{R}\times\mathbb{T}^2}\overline{u}(x,y,t)\partial_xu(x,y,t)dxdy\\
&-\underline{x}^\prime(t)\int_{\mathbb{R}\times\mathbb{T}^2}\left\{(\chi^\prime)_R(x-\underline{x}(t))\frac{x-\underline{x}(t)}{R}-(1-\chi_R(x-\underline{x}))\right\}\hbox{Im}\left[\overline{u}(x,y,t)\partial_xu(x,y,t)\right]dxdy\\
&+\int_{\mathbb{R}\times\mathbb{T}^2}\chi_R(x-\underline{x}(t))(x-\underline{x}(t))\cdot\partial_t\hbox{Im}\left[\overline{u}(x,y,t)\partial_xu(x,y,t)\right]dxdy.
\end{split}
\end{equation*}
The first line vanishes thanks to assumption \eqref{ZM}. The second line can be bounded by
\begin{equation*}
\int_{\{\vert x-\underline{x}(t)\vert\ge R\}}\int_{\mathbb{T}^2}\left[\vert u(x,y,t)\vert^2+\vert\nabla u(x,y,t)\vert^2\right]dxdy=O_R(t)
\end{equation*}
where, thanks to \eqref{FlowComp},
\begin{equation}\label{OR}
\sup_t O_R(t)\to 0\,\hbox{ as }R\to+\infty.
\end{equation}
Since
\begin{equation*}
\partial_t\hbox{Im}\left[\overline{u}(x,y,t)\partial_xu(x,y,t)\right]=\partial_x\Delta\frac{\vert u\vert^2}{2}-2\hbox{div}\left\{\hbox{Re}\{\partial_x\overline{u}\nabla u\} \right\}-\frac{4}{6}\partial_x\vert u\vert^6,
\end{equation*}
we get that
\begin{equation*}
\begin{split}
\frac{d}{dt}A_R&=4\int_{\mathbb{R}\times\mathbb{T}^2}\chi_R(x-\underline{x}(t))\left\{\frac{1}{2}\vert \partial_xu(x,y,t)\vert^2+\frac{1}{6}\vert u(x,y,t)\vert^6\right\}dxdy\\
&+4\int_{\mathbb{R}\times\mathbb{T}^2}\left(\chi^\prime\right)_R(x-\underline{x}(t))\frac{x-\underline{x}(t)}{R}\left\{\frac{1}{2}\vert \partial_xu(x,y,t)\vert^2+\frac{1}{6}\vert u(x,y,t)\vert^6\right\}dxdy\\
&-
\int_{\mathbb{R}\times\mathbb{T}^2}\frac{\vert u(x,y,t)\vert^2}{2}\partial_x^3\left[(x-\underline{x}(t))\chi_R(x-\underline{x}(t))\right]dxdy+O_R(t)\\
&=4\int_{\mathbb{R}\times\mathbb{T}^2}\left\{\frac{1}{2}\vert \partial_xu(x,y,t)\vert^2+\frac{1}{6}\vert u(x,y,t)\vert^6\right\}dxdy+\tilde{O}_R(t)
\end{split}
\end{equation*}
where $\tilde{O}_R(t)$ satisfies the same bounds as $O_R(t)$ in \eqref{OR}. Integrating this equality, we obtain
\begin{equation*}
\vert A_R(t)-A_R(0)\vert\ge 4t\rho-t\sup_t\tilde{O}_R(t).
\end{equation*}
Taking $R$ sufficiently large and using \eqref{OR}, we obtain, when $t$ is sufficiently large a contradiction with \eqref{AR}. This finishes the proof.
\end{proof}

\subsection{Proof of Proposition \ref{KeyProp}}\label{ProofOfKP}

\begin{proof}

Without loss of generality, we may assume that $t_k=0$. We apply Proposition \ref{PDProp} to the sequence $\{u_k(0)\}_k$ which is indeed bounded in $H^1(\mathbb{R}\times\mathbb{T}^2)$. This way we obtain, for all $J$,
\begin{equation*}
u_k(0)=\sum_{1\le\alpha\le J}\widetilde{\varphi}_{\mathcal{O}^\alpha,k}^\alpha+\sum_{1\le\beta\le J}\widetilde{\psi}^\beta_{\mathcal{S}^\beta,k}+\sum_{1\le\gamma\le J}\widetilde{\omega}^\gamma_{\mathcal{O}^\gamma,k}+R^J_k.
\end{equation*}

\medskip

{\bf Case I}: There are no profiles. Then, taking $J$ sufficiently large, we see from \eqref{SmalnessRA} that
\begin{equation*}
\begin{split}
\Vert e^{it\Delta}u_k(0)\Vert_{Z(\mathbb{R})}&=\Vert e^{it\Delta}R^J_k\Vert_{Z(\mathbb{R})}\le \delta_0/2
\end{split}
\end{equation*}
for $k$ sufficiently large,
where $\delta_0=\delta_0(L_{max})$ is given in Proposition \ref{LWP}. Then we see from Proposition \ref{LWP} that $u_k$ can be extended on $\mathbb{R}$ and that
\begin{equation*}
\lim_{k\to+\infty}\Vert u_k\Vert_{Z(\mathbb{R})}\le \delta_0
\end{equation*}
which contradicts \eqref{CondForComp}.

\medskip

Hence, we see that there exists at least one profile. Using Lemma \ref{Eq and orth profiles} and passing to a subsequence, we may renormalize every Euclidean profile, that is, up to passing to an equivalent profile, we may assume that every Euclidean frame $\mathcal{E}^\alpha$ belongs to $\widetilde{\mathcal{F}}_e$, every large-scale frame $\mathcal{S}^\beta$ belongs to $\widetilde{\mathcal{F}}_{ls}$, and that every Scale-1 frame $\mathcal{O}^\gamma$ belong to $\widetilde{\mathcal{F}}_1$, where $\widetilde{\mathcal{F}}_e$ was defined in \eqref{renframe}, $\widetilde{\mathcal{F}}_{ls}$ was defined in \eqref{renframeLS}, and 
\begin{equation}\label{renframeS1}
\widetilde{\mathcal{F}}_1:=\{(1,t_k,p_k,0)_k : \hbox{either $\vert t_k\vert \to +\infty$ or $t_k\equiv 0$}\}.
\end{equation}

Besides, passing to a subsequence once again and applying Lemma \ref{Eq and orth profiles}, we may assume that for every $\alpha\ne\beta$,
either $N^\alpha_k/N^\beta_k+N^\beta_k/N^\alpha_k\to+\infty$ as $k\to+\infty$ or $N^\alpha_k=N^\beta_k$ for all $k$ and in this case, either $t^\alpha_k=t^\beta_k$ as $k\to+\infty$ or $(N^\alpha_k)^2\vert t^\alpha_k-t^\beta_k\vert \to+\infty$ as $k\to+\infty$.

Now for every linear profile, we define the associated nonlinear profile as the maximal solution of \eqref{NLS} with the same initial data. More precisely,
\begin{enumerate}
\item If $\mathcal{E}^\alpha$ is a Euclidean frame, this is given by Lemma \ref{GEForEP}. This nonlinear profile is noted $U^{E,\alpha}_k$.
\item If $\mathcal{S}^\beta$ is a large-scale profile, this is given by Lemma \ref{GEForLSP}. This nonlinear profile is noted $U^{ls,\beta}_k$.
\item If $\mathcal{O}^\gamma=(1,t^\gamma,x_k^\gamma)_k$ is a Scale-1 frame, then letting $W^\gamma$ be the maximal strong solution with initial data $\omega^\gamma\in H^1(\mathbb{R}\times\mathbb{T}^2)$ if $t^\gamma_k\equiv 0$, and  $W^\gamma$ be the unique nonlinear solution which scatters to $e^{it\Delta_{\mathbb{R}\times\mathbb{T}^2}}\omega^\gamma$ as $t\to\pm\infty$ if $t_k\to\mp\infty$, we have that
\begin{equation*}
U^{1,\gamma}_k(x,t)=W^\gamma(x-x^\gamma_k,t-t^\gamma_k).
\end{equation*}

\end{enumerate}

From \eqref{AdditionOfEnergies} and defining
\begin{equation*}
\begin{split}
L_E(\alpha)&:=\lim_{k\to+\infty}\left(E(\widetilde{\varphi}^\alpha_{\mathcal{E}^\alpha,k})+\frac{1}{2}M(\widetilde{\varphi}^\alpha_{\mathcal{E}^\alpha,k})\right)=E_{\mathbb{R}^3}(\varphi^\alpha)\in(0,L_{max}],\\
L_{ls}(\beta)&:=\lim_{k\to+\infty}\left(E(\widetilde{\psi}^\beta_{\mathcal{S}^\beta,k})+\frac{1}{2}M(\widetilde{\psi}^\beta_{\mathcal{S}^\beta,k})\right)=\Vert \psi^\beta\Vert_{H^{0,1}(\mathbb{R}\times\mathbb{T}^2)}^2\in(0,L_{max}],\\
L_1(\gamma)&:=\lim_{k\to+\infty}\left(E(\widetilde{\omega}^\gamma_{\mathcal{O}^\gamma,k})+\frac{1}{2}M(\widetilde{\omega}^\gamma_{\mathcal{O}^\gamma,k})\right)=E(W^\gamma)+\frac{1}{2}M(W^\gamma)\in(0,L_{max}],\\
\end{split}
\end{equation*}
we see that
\begin{equation}\label{SumOfL}
\begin{split}
\lim_{J\to+\infty}\big[\sum_{1\le\alpha,\beta,\gamma\le J}\left[L_E(\alpha)+L_{ls}(\beta)+L_{1}(\gamma)\right]+\lim_{k\to+\infty}L(R_k^J)\big]\le L_{max}.
\end{split}
\end{equation}
The numbers $L_E(\alpha)$, $L_{ls}(\beta)$, $L_1(\gamma)$ and $\lim_{k\to+\infty}L(R_k^J)$ are all well defined up to taking a subsequence. Up to relabelling the profiles, we can assume that for all $\alpha$, $L_\mu(1)\ge L_\mu(\alpha)$, $\mu\in\{E,ls,1\}$.

\medskip

{\bf Case IIa:} $L_E(1)=L_{max}$, there is only one Euclidean profile, that is
\begin{equation*}
u_k(0)=\widetilde{\varphi}_{\mathcal{E},k}+o_k(1)
\end{equation*}
in $H^1(\mathbb{R}\times\mathbb{T}^2)$ (see \eqref{SumOfL}), where $\mathcal{E}$ is a Euclidean frame. In this case, since from \eqref{ControlOnZNormForEP} the corresponding nonlinear profile $U_k$ satisfies
\begin{equation*}
\Vert U_k\Vert_{Z(\mathbb{R})}\lesssim_{E_{\mathbb{R}^3}(\varphi)} 1\quad\text{and}\quad\lim_{k\to +\infty}\Vert U_k(0)-u_k(0)\Vert_{H^1(\mathbb{R}\times\mathbb{T}^2)}\to 0
\end{equation*}
we may use Proposition \ref{stability} to deduce that
\begin{equation*}
\Vert u_k\Vert_{Z(\mathbb{R})}\lesssim \Vert u_k\Vert_{X^1(\mathbb{R})}\lesssim_{L_{max}} 1
\end{equation*}
which contradicts \eqref{CondForComp}.

\medskip

{\bf Case IIb:} $L_{ls}(1)=L_{max}$, there is only one large scale profile, that is
\begin{equation*}
u_k(0)=\widetilde{\psi}_{\mathcal{S},k}+o_k(1)
\end{equation*}
in $H^1$ (see \eqref{SumOfL}), where $\mathcal{S}$ is a large-scale frame. In this case, since from \eqref{ControlOnZNormForLSP} the corresponding nonlinear profile $U_k$ satisfies
\begin{equation*}
\Vert U_k\Vert_{Z(\mathbb{R})}\lesssim_{\Vert \psi\Vert_{H^{0,1}}} 1\quad\text{and}\quad\lim_{k\to +\infty}\Vert U_k(0)-u_k(0)\Vert_{H^1(\mathbb{R}\times\mathbb{T}^2)}\to 0
\end{equation*}
we may use Proposition \ref{stability} to deduce that
\begin{equation*}
\Vert u_k\Vert_{Z(\mathbb{R})}\lesssim \Vert u_k\Vert_{X^1(\mathbb{R})}\lesssim_{L_{max}} 1
\end{equation*}
which contradicts \eqref{CondForComp}.

\medskip

{\bf Case IIc:} $L_1(1)=L_{max}$ and, using again \eqref{SumOfL}, we have that
\begin{equation*}
u_k(0)=\widetilde{\omega}_{\mathcal{O},k}+o_k(1)\quad\text{ in }H^1(\mathbb{R}\times\mathbb{T}^2),
\end{equation*}
where $\mathcal{O}=\{(1,t_k,x_k,0)\}$ is a Scale-1 frame. If $t_k\equiv 0$, this is precisely conclusion \eqref{Compactness}. If $t_k\to+\infty$, then, we observe that
\begin{equation*}
\Vert e^{it\Delta_{\mathbb{R}\times\mathbb{T}^2}}\widetilde{\omega}_{\mathcal{O}_k}\Vert_{Z(a_k,0)}\le \Vert e^{it\Delta_{\mathbb{R}\times\mathbb{T}^2}}\widetilde{\omega}_{\mathcal{O}_k}\Vert_{Z(-\infty,0)}=\Vert e^{it\Delta_{\mathbb{R}\times\mathbb{T}^2}}\omega\Vert_{Z(-\infty,-t_k)}
\end{equation*}
which goes to $0$ as $k\to+\infty$. Using Proposition \ref{LWP}, we see that, for $k$ large enough,
\begin{equation*}
\Vert u_k\Vert_{Z(-\infty,0)}\le\delta_0
\end{equation*}
which again contradicts \eqref{CondForComp}. The case $t_k\to+\infty$ is similar.

\medskip

{\bf Case III:}
$L_\mu(1)<L_{max}$ for all $\mu\in\{E,ls,1\}$. In this case, we construct an approximate finite-$Z-$norm solution of $\eqref{NLS}$ with initial data $u_k(0)$ and derive a contradiction by invoking Proposition \ref{stability}. Since there exists $\eta>0$ such that for all $\alpha\ge 1$, $\mu\in\{E,ls,1\}$, $L_\mu(\alpha)<L_{max}-\eta$, we have that all nonlinear profiles are global and satisfy, for any $k$, $\alpha\ge 1$ and $\mu\in\{E,ls,1\}$ (after extracting a subsequence)
\begin{equation*}
\Vert U^{\mu,\alpha}_k\Vert_{Z(\mathbb{R})}\le 2\Lambda(L_{max}-\eta/2)\lesssim 1,
\end{equation*}
where from now on all the implicit constants are allowed to depend on $\Lambda(L_{max}-\eta/2)$.
 Using Proposition \ref{stability} it follows that
\begin{equation}\label{BddX1}
\|U^{\mu, \alpha}_k\|_{X^1(\mathbb{R})}\lesssim 1.
\end{equation}

For $J,k\geq 1$ we define
\begin{equation}\label{DefUProfJ}
U^J_{prof,k}:=\sum_{\alpha=1}^J\sum_{\mu\in\{E,ls,1\}} U^{\mu,\alpha}_k.
\end{equation}
We show first that there is a constant $Q\lesssim 1$ such that
\begin{equation}\label{bi1}
\Vert U^J_{prof,k}\Vert_{X^1}^2+\sum_{1\le\alpha\le J}\sum_{\mu\in\{E,ls,1\}}\|U^{\mu,\alpha}_k\|_{X^1}^2\leq Q^2,
\end{equation}
uniformly in $J$, for all $k$ sufficiently large. Let $\delta_0=\delta_0(2L_{max})$ defined in Proposition \ref{LWP}. From \eqref{SumOfL}, we know that there are only finitely many profiles such that $L(\alpha)\geq\delta_0/2$. Without loss of generality, we may assume that for all $\alpha\ge A$, $L(\alpha)\leq\delta_0$. Using \eqref{AdditionOfEnergies}, \eqref{BddX1}, and \eqref{SmalldataCCL} we then see that
\begin{equation*}
\begin{split}
\Vert U^J_{prof,k}\Vert_{X^1(\mathbb{R})}&=\Vert \sum_{1\le\alpha\le J}\sum_{\mu\in\{E,ls,1\}}U^{\mu,\alpha}_k\Vert_{X^1(\mathbb{R})}\\
&\le \Vert \sum_{1 \leq \alpha \leq A} \sum_{\mu\in\{E, ls, 1\}} U^{\mu, \alpha}_k\Vert_{X^1(\R)}+\Vert \sum_{A\le\alpha\le J}\sum_{\mu\in\{E,ls,1\}}(U^{\mu,\alpha}_k-e^{it\Delta}U^{\mu,\alpha}_k(0))\Vert_{X^1(\R)}\\
&+\Vert e^{it\Delta}\sum_{A\le\alpha\le J}\sum_{\mu\in\{E,ls,1\}}U^{\mu,\alpha}_k(0)\Vert_{X^1(\R)}\\
&\le \sum_{1\le\alpha\le A}\sum_{\mu\in\{E,ls,1\}}\Vert U^{\mu,\alpha}_k\Vert_{X^1(\R)}+\sum_{A\le\alpha\le J}\sum_{\mu\in\{E,ls,1\}}L_\mu(\alpha)^\frac{3}{2}\\
&+\Vert\sum_{A\le\alpha\le J}\sum_{\mu\in\{E,ls,1\}}U^{\mu,\alpha}_k(0)\Vert_{H^1(\mathbb{R}\times\mathbb{T}^2)}\\
&\lesssim A+\sum_{A\le\alpha\le J}\sum_{\mu\in\{E,ls,1\}}L_\mu(\alpha)^\frac{3}{2}+\Vert\sum_{A\le\alpha\le J}\sum_{\mu\in\{E,ls,1\}}U^{\mu,\alpha}_k(0)\Vert_{H^1(\mathbb{R}\times\mathbb{T}^2)}
\lesssim 1.
\end{split}
\end{equation*}
The bound on $\sum_{\al=1}^J\sum_{\mu\in\{E,ls,1\}}\|U^{\mu,\al}_k\|_{X^1(\R)}^2$ is similar (in fact easier), which gives \eqref{bi1}.

We are now ready to construct our approximate solution. Let $F(z)=|z|^4z$ and introduce the notation:
\begin{equation}\label{def of F'}
F'(G)u:= 3|G|^4u +2G^2|G|^2 \overline{u}
\end{equation}

For each $B$ and $J$, we define $g^{B,J}_k$ to be the solution of the initial value problem:

\begin{equation}\label{g eqn}
i\partial_t g +\Delta g - F'(U^B_{prof, k}) g =0, \;\;\; g(0)=R^J_k.
\end{equation}

The solution $g^{B,J}_k$ is well defined on $\R$ for $k \geq k_0(J)$ and satisfies:
\begin{equation}\label{g bound}
\|g^{B,J}_k\|_{X^1(\R)}\leq Q'.
\end{equation}
for some constant $Q'$ independent of $J$ and $B$. This follows by splitting $\R$ into $O(Q)$ intervals $I_j$ over which $\|U_{prof, k}^B\|_{Z(I_j)}$ is small and applying the local theory in Section \ref{Section-LWP} (in particular Lemma \ref{nl estimate lemma}) on each subinterval. 

We now define the approximate solution: for $A$ to be chosen shortly, we let 

\begin{equation}\label{U approx}
U_k^{app, J}=U^A_{prof,k} +g_k^{A, J}+U^{>A}_{prof, k} \;\;\; \hbox{ where }\;\;\; U^{>A}_{prof, k} =\sum_{A<\alpha\leq J} \sum_{\mu\in\{E, ls, 1\}} U^{\mu, \alpha}_k
\end{equation}
which has $u_k(0)$ as its initial data and satisfies, for any $1\leq A \leq J$, the bound:
$$
\|U_k^{app, J}\|_{X^1(\R)}\leq 3(Q+Q')
$$
for all $k \geq k_0(J)$. The stability result in Proposition \ref{stability} with $M = 6(1+Q+Q')$ gives us an $\epsilon_1=\epsilon_1(M) \leq \frac{1}{K(1+Q+Q')}$ for some $K$ sufficiently large, such that if the error term defined below in $\eqref{def of ekJ}$ satisfies $\|e_k^J \|_{N(\R)}\leq \epsilon_1$, then we can upgrade the uniform $X^1(\R)$ bounds on $U_k^{app, J}$ into a uniform bound on $\|u_k\|_{Z(\R)}$ thus deriving a contradiction. To do this, we first choose $A$ so that:
\begin{equation}\label{sAe}
\Vert U^{>A}_{prof, k}\|_{X^1(\R)}^2+\sum_{A+1\leq \alpha\leq J} \sum_{\mu\in\{E, ls, 1\}} \|U^{\mu, \alpha}_k\|^2_{X^1(\R)} \leq \epsilon_1^{10} \;\;\;\hbox{ for any $J\geq A$ and $k$ sufficiently large},
\end{equation}
which is possible by arguing as in \eqref{bi1}. With $A$ fixed as above, the rest of the proof depends on Lemma \ref{orthnlprof} and Lemma \ref{gZeroLimit} proved in the next subsection to bound the error term $e_k^J$ resulting from the approximate solution $U^{app, J}_k$ as follows:

\begin{align}
e_k^J =&(i\partial_t +\Delta_{\mathbb{R}\times\mathbb{T}^2})U_k^{app, J} -F(U^{app,J}_k)\label{def of ekJ}\\
=& -F(U^A_{prof, k} +g^{A, J}_k+ U^{>A}_{prof, k}) +\sum_{\substack{1\leq \alpha \leq J\\ \mu\in\{E, ls, 1\}}} F(U_k^{\mu, \alpha}) +F'(U^A_{prof,k})g^{A,J}_k\\
=& -F(U^A_{prof, k} +g^{A, J}_k+ U^{>A}_{prof, k}) +F(U^A_{prof,k} +g^{A, J}_k) +F(U^{>A}_{prof, k})\label{ekI}\\ 
&-F(U^A_{prof,k} +g^{A, J}_k) +F(U^A_{prof,k})+F'(U^A_{prof,k})g^{A,J}_k\label{ekII}\\
&-F(U^A_{prof,k})+\sum_{\substack{1\leq \alpha \leq A\\ \mu\in\{E, ls, 1\}}} F(U_k^{\mu, \alpha}) \label{ekIII}\\
&-F(U^{>A}_{prof, k})+\sum_{\substack{A+1\leq \alpha \leq J\\ \mu\in \{E, ls, 1\}}} F(U_k^{\mu, \alpha})\label{ekIV} 
\end{align}

By Lemma \ref{nl estimate lemma}, \eqref{sAe}, and \eqref{bi1}, we estimate:
\begin{equation*}
\Vert\eqref{ekI}\Vert_{N(\R)} \lesssim \left(\Vert U^A_{prof,k}+g^{A,J}_k\Vert_{X^1(\R)} +\Vert U^{>A}_{prof,k}\Vert_{X^1(\R)}\right)^4 \Vert U^{>A}_{prof,k}\Vert_{X^1(\R)}< \epsilon_1/4
\end{equation*}
for $k$ large enough. By Lemma \ref{nl estimate lemma}, Lemma \ref{gZeroLimit} we bound:\footnote{The last term in $\eqref{ekII}$ was introduced in $\eqref{g eqn}$ precisely to guarantee that only quadratic in $g^{A,J}_k$ factors appear in $\eqref{ekII}$; otherwise one is left with terms like $|U^{\mu,\alpha}|^4 g^{A,J}_k$ that cannot be bounded using Lemmas \ref{orthnlprof} and \ref{gZeroLimit}.}
\begin{equation*}
\Vert \eqref{ekII}\Vert_{N(\R)}\lesssim \left(\Vert U^A_{prof,k}\Vert_{X^1(\R)}+\Vert g^{A,J}_k\Vert_{X^1(\R)}\right)^4\|g^{A,J}_k\|_{Z'(\R)} \lesssim (Q+Q')^4\|g^{A,J}_k\|_{Z'(\R)}<\epsilon_1/4
\end{equation*}
if $J$ is sufficiently large and $k \geq k_0(J)$ for some $k_0(J)$. Turning to $\eqref{ekIII}$, by Lemma \ref{orthnlprof}:
\begin{equation*}
\Vert \eqref{ekIII}\Vert_{N(\R)}\lesssim_A \sum_{\substack{(\alpha_j, \mu_j)\in \{1, \ldots, A\}\times \{E, ls, 1\},\,\, j=1,2 \\(\alpha_1, \mu_1)\neq (\alpha_2, \mu_2)}} \Vert \widetilde U_k^{\mu_1, \alpha_1}\widetilde U_{k}^{\mu_2,\alpha_2}\prod_{i=3}^5 \widetilde U_k^{\mu_i, \alpha_i} \Vert_{N(\R)} \leq \epsilon_1/4
\end{equation*}
if $k$ is large enough. Finally, by $\eqref{sAe}$, 
\begin{align*}
\Vert \eqref{ekIV}\Vert_{N(\R)}\lesssim& \Vert F(U^{>A}_{prof,k})\Vert_{N(\R)}+\sum_{\substack{A<\alpha\leq J\\ \mu \in \{E, ls, 1\}}} \Vert F(U^{\mu, \alpha}_{prof,k})\Vert_{N(\R)}\\
\lesssim& \Vert U^{>A}_{prof,k}\Vert_{X^1(\R)}^5+\sum_{\substack{A<\alpha\leq J\\ \mu \in \{E, ls, 1\}}} \Vert U^{\mu, \alpha}_{prof,k}\Vert_{X^1(\R)}^5\lesssim \epsilon_1/4.
\end{align*}

The upshot is that $\Vert e_k^J \|_{N(\R)}< \epsilon_1$ if $J$ is sufficiently large and $k \geq k_0(J)$. Invoking the conclusion of Proposition \ref{stability} we get that $u_k$ extends as a solution in $X^1_c(\R)$ satisfying:
\begin{equation*}
\Vert u_k \Vert_{Z(\R)} < +\infty, 
\end{equation*}
which is in contradiction with $\eqref{CondForComp}$.
\end{proof}

We are thus left with proving the following two lemmas:

\begin{lemma}\label{orthnlprof}

Assume that $U_k^{\alpha}, U_k^{\beta}, U_k^{\gamma_1}, U_k^{\gamma_2}, U_k^{\gamma_3} $ are five nonlinear profiles from the set $\{U_k^{\mu, \alpha}:1\leq \alpha \leq A,  \mu\in \{E, ls, 1\}\}$ such that $U_k^\alpha$ and  $U_k^{\beta}$ correspond to orthogonal frames. Then:
\begin{equation}\label{onlp0}
\limsup_{k \to +\infty} \|\widetilde U_k^{\alpha} \widetilde U^{\beta}_k \prod_{i=1}^3 \widetilde U_k^{\gamma_i}\|_{N(\R)}=0 \;\;\;\hbox{ where, for $\delta \in \{\alpha, \beta, \gamma_1, \gamma_2, \gamma_3\}$, } \widetilde U_k^{\delta}\in \{U^{ \delta}_k, \overline{U^{\delta}_k}\} 
\end{equation}
\end{lemma}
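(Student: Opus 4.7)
My strategy is to replace each nonlinear profile by a ``proxy'' with explicit space-time structure, obtained from the approximation results of Propositions \ref{GEForEP} and \ref{GEForLSP}, and then to exploit the orthogonality between the $\alpha$ and $\beta$ frames to show that the corresponding product vanishes in the limit $k\to\infty$.

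First, given $\varepsilon>0$, I would use Propositions \ref{GEForEP} and \ref{GEForLSP} together with the extinction Lemmas \ref{Extinction} and \ref{LSExtinction} to decompose each nonlinear profile as
\begin{equation*}
U_k^\delta = \tilde{U}_k^\delta + r_k^\delta,
\end{equation*}
where $\tilde{U}_k^\delta$ has its space-time support essentially concentrated in a window of size $T(N_k^\delta)^{-2}$ in time and $R(N_k^\delta)^{-1}$ in space (respectively with $M_k^\delta$ for large-scale profiles, or in a fixed compact region for scale-$1$ profiles), with the explicit rescaled form provided by those propositions, while $r_k^\delta$ satisfies $\|r_k^\delta\|_{X^1(\mathbb{R})}\lesssim\varepsilon$ and has small $Z$-norm outside its proxy window. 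Using the multilinear estimate \eqref{nl estimate} together with the uniform bounds \eqref{bi1}, any term in the resulting multilinear expansion involving at least one $r_k^\delta$ contributes at most $O(\varepsilon)$ to the $N$-norm, so it suffices to analyze the main term involving only the proxies.

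For this main term I would distinguish cases according to which of the four summands in the orthogonality condition between the $\alpha$ and $\beta$ frames blows up. If the scale ratio $|\ln(N_k^\alpha/N_k^\beta)|$ diverges, then after Littlewood-Paley localization to the natural dyadic scales, Lemma \ref{half nl lemma} (or its refinement \eqref{nl refined}) delivers a positive power of the ratio of the smaller scale to the larger scale, forcing the $N$-norm to zero. If instead the scales remain comparable and $(N_k^\alpha)^2|t_k^\alpha-t_k^\beta|\to\infty$, then the time supports of $\tilde U_k^\alpha$ and $\tilde U_k^\beta$ are eventually disjoint, so their product identically vanishes. If the scales and times are comparable but $N_k^\alpha|p_k^\alpha-p_k^\beta-2(t_k^\alpha-t_k^\beta)\xi_k^\beta|\to\infty$, then the spatial supports of the two proxies are disjoint on the relevant time overlap, again giving a vanishing product. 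Finally, in the large-scale case, if $(M_k^\alpha)^{-1}|\xi_k^\alpha-\xi_k^\beta|\to\infty$, a Galilean change of variables converts frequency separation into spatial separation and reduces to the previous case.

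Letting $\varepsilon\to 0$ at the end concludes the argument. I expect the main obstacle to be the cross-type interactions (Euclidean versus large-scale, or scale-$1$ versus large-scale), where the natural coordinates of the two proxies differ drastically and where the Littlewood-Paley bookkeeping needed to extract a uniform gain from Lemma \ref{half nl lemma} is delicate; one must carefully exploit the fact that the two proxies live at very different frequency scales in at least one direction, even when they may appear to live at comparable scales in another, and handle separately the non-dispersive ($y$) direction where a large-scale profile concentrates in frequency but a Euclidean profile spreads across the full range $|k|\lesssim N_k^\alpha$.
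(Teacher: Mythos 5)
Your decomposition into a compactly supported proxy plus a remainder that is ``small in $X^1$'' does not match what the approximation results actually give, and this creates a genuine gap. Propositions \ref{GEForEP} and \ref{GEForLSP} decompose each profile into a bulk $\omega_k^\delta$ (compact space--time support, frequency localized), a piece $\rho_k^\delta$ that is genuinely $O(\theta)$ in $X^1$, \emph{and} the scattered tails $\omega_k^{\delta,\pm\infty}$ supported in $\pm(t-t_k)\ge T N_k^{-2}$. These tails carry the scattering data: they are $O(1)$ in $X^1$ and only small in the weaker $Z'$ norm. If you absorb them into $r_k^\delta$, the bound $\|r_k^\delta\|_{X^1}\lesssim\varepsilon$ is false; if you absorb them into $\tilde U_k^\delta$, you lose the compact space--time support on which your disjointness arguments rest. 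The concrete consequence is that the multilinear estimate \eqref{nl estimate} controls a quintic monomial only when at least one factor is small in $X^1$, or at least \emph{two} factors are small in $Z'$; a monomial consisting of four copies of the bulk $\omega_k^\alpha$ times a single scattered tail $\omega_k^{\beta,\pm\infty}$ of the orthogonal profile satisfies neither condition, so your ``at least one $r_k^\delta$ gives $O(\varepsilon)$'' step fails for exactly this term. This is why the paper isolates it as a separate statement (Lemma \ref{onlp lemma2}), whose proof is not a support/frequency disjointness argument at all: it combines $L^\infty$ bounds on the linear tail with $L^2$ bounds on the bulk over its core $\mathcal{S}_k^\alpha$, the $t^{-1/2}$ dispersive decay when the time centers separate, and, in the case $N_k^\beta/N_k^\alpha\to\infty$, the one-dimensional local smoothing estimate \eqref{1D smoothing} in the $x$-direction. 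None of this appears in your proposal.

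The remainder of your plan --- reducing to the product of the two bulks $\omega_k^\alpha\omega_k^\beta$ and running a case analysis on which term of the orthogonality condition diverges (scale separation via \eqref{nl refined} and the time-divisibility of the $Z$-norm, disjoint time supports, disjoint spatial supports, and the Galilean reduction of frequency separation to spatial separation) --- is essentially the paper's Lemma \ref{onlp lemma1} and is sound in outline, including your correct identification of the cross-type interactions as the delicate bookkeeping point. But without an argument for the bulk-times-scattered-tail term, the proof is incomplete.
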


\begin{lemma}\label{gZeroLimit}
For any fixed $A$, it holds that:
\begin{equation*}
\limsup_{J \to +\infty}\limsup_{k \to +\infty} \Vert g^{A,J}_k\Vert_{Z(\R)}=0.
\end{equation*}
\end{lemma}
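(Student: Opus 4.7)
The plan is as follows. Write $g := g^{A,J}_k$ and set $h := g - e^{it\Delta}R^J_k$, so that $h(0)=0$ and
\begin{equation*}
(i\partial_t+\Delta)h \;=\; F'(U^A_{prof,k})\bigl(e^{it\Delta}R^J_k + h\bigr).
\end{equation*}
Since $\|g\|_{Z(\R)}\leq\|e^{it\Delta}R^J_k\|_{Z(\R)}+\|h\|_{Z(\R)} \leq \|e^{it\Delta}R^J_k\|_{Z(\R)} + \|h\|_{X^1(\R)}$ by Lemma \ref{U^p lemma}, and since $\limsup_{J}\limsup_{k}\|e^{it\Delta}R^J_k\|_{Z(\R)}=0$ by \eqref{SmalnessRA}, it suffices to show that for each fixed $J$, $\lim_{k\to\infty}\|h\|_{X^1(\R)}=0$.

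To that end, first decompose $\R=\bigcup_{j=1}^{N_0}I_j$ into consecutive subintervals on which $\|U^A_{prof,k}\|_{Z(I_j)}\leq\delta_0$, with $\delta_0=\delta_0(Q,L_{max})$ small enough that the $F'(U^A_{prof,k})h$ contribution from Lemma \ref{nl estimate lemma} can be absorbed into $\|h\|_{X^1(I_j)}$; because $Z$ is divisible in time and $\|U^A_{prof,k}\|_{X^1(\R)}\leq Q$, the number $N_0$ can be chosen independently of $J,k$ for $k$ large. On each $I_j$, Duhamel together with this absorption gives
\begin{equation*}
\|h\|_{X^1(I_j)}\;\leq\; 2\|h(t_j)\|_{H^1}+C\,\|F'(U^A_{prof,k})\,e^{it\Delta}R^J_k\|_{N(I_j)},
\end{equation*}
and iterating over $j$, using $\|\cdot\|_{N(I_j)}\lesssim\|\cdot\|_{N(\R)}$ via the dual formulation in Proposition \ref{N prop}, yields
\begin{equation*}
\|h\|_{X^1(\R)}\;\lesssim_{N_0}\;\|F'(U^A_{prof,k})\,e^{it\Delta}R^J_k\|_{N(\R)}.
\end{equation*}
Since $N_0$ depends only on $Q$ and $L_{max}$, it is enough to show that, for each fixed $J$, the right-hand side tends to zero as $k\to\infty$.

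Expanding $U^A_{prof,k}=\sum_{\alpha\leq A,\,\mu}U^{\mu,\alpha}_k$ inside $F'(\cdot)$ produces a sum of five-fold products where four factors are profiles $U^{\mu_i,\alpha_i}_k$ (possibly conjugated) and the fifth is $e^{it\Delta}R^J_k$. The \emph{cross terms}, in which two of the four profile factors come from distinct (hence orthogonal) frames, vanish in $N(\R)$ as $k\to\infty$ by the same frame-orthogonality mechanism as Lemma \ref{orthnlprof}: asymptotically the relevant space-time supports become disjoint. The \emph{diagonal terms}, of the form $\bigl\|\,|U^{\mu,\alpha}_k|^4\,e^{it\Delta}R^J_k\,\bigr\|_{N(\R)}$, are handled using that $R^J_k$ is absent from the frame $\mathcal{O}^\alpha$ in the sense of Definition \ref{DefPro}(4); rescaling to coordinates adapted to $\mathcal{O}^\alpha$ as in Lemmas \ref{step1} and \ref{LSstep1}, the profile $U^{\mu,\alpha}_k$ converges to a fixed compactly-supported Euclidean or large-scale limit, while the rescaled linear evolution of $R^J_k$ converges weakly to zero in the appropriate scale-invariant space. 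A duality/bilinear Strichartz argument then promotes this weak vanishing to the required $N$-norm decay as $k\to\infty$. The main obstacle is precisely this last step: the absence of $R^J_k$ from a frame only yields \emph{weak} vanishing after rescaling, so one must combine it with the concentration of $U^{\mu,\alpha}_k$ and the dual representation of $N$ from Proposition \ref{N prop} to extract quantitative control. Once this is in hand, cross and diagonal contributions together give $\lim_{k\to\infty}\|F'(U^A_{prof,k})e^{it\Delta}R^J_k\|_{N(\R)}=0$, hence $\lim_{k\to\infty}\|h\|_{X^1(\R)}=0$, and the conclusion of Lemma \ref{gZeroLimit} follows on sending $J\to\infty$.
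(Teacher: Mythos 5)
Your reduction to $\lim_{k\to\infty}\Vert F'(U^A_{prof,k})\,e^{it\Delta}R^J_k\Vert_{N(\R)}=0$ \emph{for fixed $J$} is where the argument breaks, and the step you yourself flag as ``the main obstacle'' is in fact the entire content of the lemma; it is not closable by the mechanism you propose. Absence of $R^J_k$ from the frame $\mathcal{O}^\alpha$ only gives weak vanishing of inner products against \emph{fixed} profile generators. To control the diagonal term $\Vert\, \vert U^{\mu,\alpha}_k\vert^4\, e^{it\Delta}R^J_k\Vert_{N(\R)}$ you must, via Proposition \ref{N prop} or the $L^1_tH^1$ bound, place a full derivative on $e^{it\Delta}R^J_k$; but $\nabla e^{it\Delta}R^J_k$ only converges weakly in $L^2$, and weak convergence does not pass to the product against the concentrated core. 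Concretely, $R^J_k$ may contain a high-frequency piece with Fourier support near the non-dispersive ($\mathcal{F}_x=0$) directions: its linear evolution lingers on the support of a scale-1 or large-scale core for times of order one with gradient of size $N\gg1$, and no compactness or bilinear Strichartz estimate available in this setting forces that interaction to vanish as $k\to\infty$ for fixed $J$. (This is exactly the difficulty the introduction describes as perturbations concentrating near trapped geodesics.) A secondary symptom of the same problem already appears in your interval iteration: applying \eqref{nl estimate} to $F'(U)e^{it\Delta}R^J_k$ on $I_j$ produces the term $\Vert e^{it\Delta}R^J_k\Vert_{X^1(I_j)}\Vert U\Vert_{Z'(I_j)}^4\lesssim\delta_0^4$, which is bounded but not small in either limit, and cannot be beaten by shrinking $\delta_0$ since the number of intervals $N_0$ and the iteration constant $C^{N_0}$ grow accordingly.

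The paper's proof never asserts vanishing for fixed $J$. Instead it (i) reduces to the compactly supported profile cores $\omega^\alpha_k$ (Step 1); (ii) splits the \emph{data} $R^J_k$ by frequency and angular aperture, handling the low-frequency part by the double-limit $Z$-smallness \eqref{SmalnessRA} and the high-frequency ``smoothing-direction'' part $\tilde P^1_\kappa P_{>\kappa^{-1}}R^J_k$ by the local smoothing estimate of Lemma \ref{smoothing lemma}, which converts $Z$-smallness into $L^2_{x,y,t}$-smallness of $\nabla e^{it\Delta}R^J_k$ on the cores' supports (Step 2); and (iii) for the remaining high-frequency, small-angle part, proves that narrow angular Fourier localization is \emph{propagated} by the linearized flow (Lemma \ref{tilde WP}, in the weighted norm $\widetilde X^1_\rho$), so that the refined Strichartz estimate \eqref{Strich M refined U^p lemma} yields $Z$-smallness of the corresponding solution (Step 3). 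If you want to salvage your outline, you would need to import precisely these two ingredients — the smoothing estimate and the angular-localization propagation — in place of the weak-convergence/duality step.
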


\subsection{Proof of Lemma \ref{orthnlprof}}

We start with some notational simplifications: We will often use the notation $U^\alpha_k, U^\beta_k$ instead of $\widetilde U_{k}^{\alpha}, \widetilde U^{\beta}_k$ due to the fact that all our estimates are invariant under complex conjugation of any of the arguments. 

Based on the analysis in Lemma \ref{Eq and orth profiles}, we only need to consider non-linear profiles generated by renormalized frames. Recall that these were defined in \eqref{renframe}, \eqref{renframeLS}, \eqref{renframeS1} for Euclidean, large-scale, and scale-1 frames respectively. Note that any frame is equivalent to a re-normalized frame by Lemma \ref{Eq and orth profiles}.


Let $\theta>0$ be an arbitrary small parameter. If $\OO^\gamma$ is a scale-1 profile, then the nonlinear profile $U_k^\gamma=U_k^{s1, \alpha}$ associated to the linear profile $\widetilde \omega^\gamma_{\OO^\gamma, k}$ can be decomposed as follows: there exists parameters $T^\gamma$ and $R^\gamma$ depending on $\theta$ such that:

\begin{equation}\label{dec s1}
\begin{split}
U^{s1,\gamma}_k(t, x) &=\Pi_{(t_k^\gamma,p_k^\gamma)}\left[\mathbbm1_{t\in[T^\gamma, +\infty)} e^{it\Delta}\phi^\gamma_{+}+ \mathbbm 1_{t\in (-\infty, -T^\gamma)}e^{it\Delta}\phi^\gamma_-
 +V^{\gamma}+\rho^\gamma\right]\\
 &=\omega^{\gamma,+\infty}_k+\omega_k^{\gamma, -\infty}+ \omega^\gamma_k +\rho^\gamma_k\\
&\|\rho^\gamma\|_{X^1(\mathbb{R})} +\|e^{it\Delta}\phi^\gamma_{\pm}\|_{Z'(\pm [T^\gamma, +\infty))} + \Vert (1-P^x_{\geq (R^\gamma)^{-1}}P_{\leq R^\gamma})V^\gamma\Vert_{X^1(\R)} \leq \theta\\
&\Vert V^\gamma \Vert_{X^1(\R)}+\Vert\phi_{\pm}^\gamma\Vert_{H^1} \lesssim 1,\\
&|D_{x,y}^m V^\gamma| \lesssim R^\gamma \mathbbm 1_{S^\gamma_k} \quad \hbox{ for }0\leq m\leq 3\\
&\operatorname{Supp} V^\gamma \subset S_k^\gamma:= \{(x,y,t) \in \R\times \T^2\times\mathbb{R}: |t|\leq T^\gamma,\; |x|\leq R^\gamma\}, \; \quad \phi^{\gamma}_ {\pm} =P_{\leq R^\gamma}\phi^\gamma_{\pm}.
\end{split}
\end{equation}  

If $\mathcal{E}^\gamma$ is a normalized Euclidean profile, then the analysis in Section \ref{Euclidean profile analysis} allows us to find $R^\gamma$ and $T^\gamma$ depending on $\theta$ such that
\begin{equation}\label{dec E}
\begin{split}
U^{E,\gamma}_k(t, x) &=\Pi_{(t_k^\gamma, p_k^\gamma)}\left[\mathbbm1_{t\in[({N_k^\gamma})^{-2}T^\gamma, +\infty)}e^{it\Delta}T_{{N_k^\gamma}}^e\phi^\gamma_{+}+ \mathbbm 1_{t\in(-\infty, -{(N_k^\gamma)}^{-2}T^\gamma)}e^{it\Delta}T^e_{{N_k^\gamma}}\phi^\gamma_-+V_k^{\gamma}+\rho_k^\gamma\right]\\
&=\omega^{\gamma,+\infty}_k+\omega_k^{\gamma, -\infty}+ \omega_k^\gamma +\rho^\gamma_k\\
&\|\rho_k^\gamma\|_{X^1(\R)} +\|\omega_k^{\gamma, \pm \infty}\|_{Z'(\R)} + \Vert (1-P_{ N^\gamma_k(R^\gamma)^{-1}\leq |\nabla|\leq N_k^\gamma R^\gamma})V_k^\gamma\Vert_{X^1(\R)} \leq \theta\\
&\Vert V_k^\gamma \Vert_{X^1(\R)}+\Vert\phi_{\pm}^\gamma\Vert_{\dot H^1(\R^3)} \lesssim 1\\
&|D_{x,y}^m V^\gamma_k | \lesssim R^\gamma (N_k^\gamma)^{1/2+m}\mathbbm 1_{S^\gamma_k} \quad \hbox{ for }0\leq k\leq 3\\
&\operatorname{Supp} V_k^\gamma \subset S_k^\gamma:= \{(x,y,t) \in \R\times \T^2\times\R: |t|\leq {(N_k^\gamma)}^{-2}T^\gamma,\; |(x,y)|\leq {(N_k^\gamma)}^{-1}R^\gamma\} \\
& \phi^\gamma_{\pm}=P_{(R^\gamma)^{-1}\leq \cdot\leq R^\gamma} \tilde{\phi}^\gamma_{\pm},\quad\Vert\tilde{\phi}^\gamma_{\pm}\Vert_{L^1(\mathbb{R}^3)}\le R^\gamma.\\
\end{split}
\end{equation}  
Finally if $\mathcal{L}^\gamma$ is a large-scale profile, then the analysis in Section \ref{LS profile analysis} allows us to decompose:
\begin{equation}\label{dec ls}
\begin{split}
U^{ls,\gamma}_k(t, x) &=\Pi_{(t_k^\gamma, (x_k^\gamma,0))}\bigg[\mathbbm1_{t\in[({N_k^\gamma})^{-2}T^\gamma, +\infty)}e^{it\Delta}e^{i\xi_k .x}T_{{N_k^\gamma}}^{ls}\phi^\gamma_{+}+ \mathbbm 1_{t\in(-\infty, -{(N_k^\gamma)}^{-2}T^\gamma)}e^{it\Delta}e^{i\xi_k.x}T^{ls}_{{N_k^\gamma}}\phi^\gamma_-\\
&{}\quad+e^{i(\xi_k.x-|\xi_k|^2t)}V_k^{\gamma}+\rho_k^\gamma\bigg]=\omega^{\gamma,+\infty}_k+\omega_k^{\gamma, -\infty}+ \omega_k^\gamma +\rho^\gamma_k\\
&\|\rho_k^\gamma\|_{X^1(\R)} +\|\omega_k^{\gamma, \pm \infty}\|_{Z'(\R)} + \Vert (1-P_{ N^\gamma_k(R^\gamma)^{-1}\leq  |\partial_x|\leq N_k^\gamma R^\gamma}P_{|\nabla_y| \leq R^\gamma})V_k^\gamma\Vert_{X^1(\R)} \leq \theta\\
&\Vert V_k^\gamma \Vert_{X^1(\R)}+\Vert\phi_{\pm}\Vert_{H^{0,1}(\R\times \T^2)} \lesssim 1\\
&|D_x^mD_y^k V^\gamma_k| \lesssim R^\gamma (N_k^\gamma)^{1/2+m}\mathbbm 1_{S^\gamma_k} \quad \hbox{ for }0\leq m,k \leq 3\\
&\operatorname{Supp} V_k^\gamma \subset S_k^\gamma:=\{(x,y,t) \in \R\times \T^2\times\R: |t|\leq {(N_k^\gamma)}^{-2}T^\gamma,\; |x-2\xi_k^\gamma t|\leq {(N_k^\gamma)}^{-1}R^\gamma\} \\
& \phi^\gamma_{\pm}=P_{\leq R^\gamma} \phi^\gamma_{\pm},\\
\end{split}
\end{equation}
for some $R^\gamma$ and $T^\gamma$ depending on $\theta$.

The decomposition for Euclidean and large-scale profiles follow from Propositions \ref{GEForEP} and \ref{GEForLSP}. The further frequency and space support properties are either consequences of the latter propositions or are obtained a posteriori by truncating and incurring an error smaller than $\theta$ in $X^1(\R)$ which is absorbed in $\rho_k^\gamma$. In all three cases above, we denote by $\mathcal{S}_k^\gamma$ the translate of $S_k^\gamma$ by $(t_k^\gamma, p_k^\gamma)$, so that $\operatorname{supp} \omega_k^\gamma \subset \mathcal{S}_k^\gamma$.

\begin{proof}[Proof of Lemma \ref{orthnlprof}]
For each $\delta \in \{\alpha, \beta, \gamma_1, \gamma_2, \gamma_3\}$, we decompose $U_k^\delta$ as above with a smallness parameter $\theta$. Expanding the product $U_k^\alpha U_k^\beta \prod_{i=1}^3U_k^{\gamma_i}$ into its constituents, we directly notice that by Lemma \ref{nl estimate lemma} any monomial containing at least one factor of $\rho_k^\delta$ contributes $\lesssim \theta$ to $\eqref{onlp0}$, and similarly for any monomial with more than one factor of type $\omega_k^{\delta, \pm \infty}$.

There remains only two possibilities: either there are two different bulk terms $\omega^\alpha_k$ and $\omega^\beta_k$ and we can use Lemma \ref{onlp lemma1} below, or there are four copies of the same, up to conjugation, bulk (say $\omega^\alpha_k$) and a linear term $\omega^{\beta,\pm\infty}$ and we can use Lemma \ref{onlp lemma2} below. Since $\theta$ is arbitrary, we are thus only left with proving Lemmas \ref{onlp lemma1} and \ref{onlp lemma2}.

\end{proof}

\begin{lemma}\label{onlp lemma1}
With the notation as above, suppose that $f_k^{1}, f_k^2,$ and $f_k^3 \in X^1_c(\R)$ with unit $X^1$ norm. For any $\theta>0$ and $1\le \alpha,\beta\le A$ that correspond to orthogonal frames, there holds:
\begin{equation}\label{onlp 3}
\limsup_{k \to +\infty} \Vert \omega_k^\alpha \omega_k^\beta \prod_{j=1}^3f_k^j\Vert_{N(\R)}=0.
\end{equation} 
\end{lemma}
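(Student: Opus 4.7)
The plan is to reduce the five-linear estimate to a bilinear statement about $\omega_k^\alpha\,\omega_k^\beta$ that exploits the frame orthogonality, then apply the multilinear Strichartz bounds of Section \ref{Section-LWP} to the three auxiliary factors $f_k^j$. By \eqref{dec s1}--\eqref{dec ls}, each bulk piece $\omega_k^\delta$ is (up to a Galilean modulation, translation, and rescaling dictated by the frame parameters) a smooth function compactly supported in the space-time box $\mathcal{S}_k^\delta$, with Fourier support essentially in an annulus of size $N_k^\delta R^\delta$ around the characteristic frequency $\xi_k^\delta$. In particular $\|\omega_k^\delta\|_{X^1(\R)}\lesssim 1$ uniformly in $k$.

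First I would dualize the $N$-norm via Proposition \ref{N prop}, reducing the problem to showing
\[
\sup_{\|v\|_{Y^{-1}(\R)}\le 1}\left|\int_{\R\times(\R\times\T^2)} \widetilde\omega_k^\alpha\,\widetilde\omega_k^\beta\,\widetilde f_k^1\,\widetilde f_k^2\,\widetilde f_k^3\,\bar v\,dx\,dt\right|\longrightarrow 0.
\]
A Littlewood-Paley decomposition of all six factors, followed by a grouping into two triples as in the proof of Lemma \ref{nl estimate lemma}, bounds the integral by a Cauchy-Schwarz-type product $\|\omega_k^\alpha\,g_k^{(1)}\,g_k^{(2)}\|_{L^2_{x,y,t}}\|\omega_k^\beta\,g_k^{(3)}\,\tilde v\|_{L^2_{x,y,t}}$ for suitable auxiliary functions $g_k^{(i)}$ built from the $f_k^j$'s and a derivative-adjusted $\tilde v$. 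The refined trilinear estimate \eqref{half nonlinear} applied to each factor is uniformly bounded in $k$ and, crucially, carries a gain $(\min(N_k^\alpha,N_k^\beta)/\max(N_k^\alpha,N_k^\beta))^\delta$ whenever the two scales diverge. This settles the scenario $N_k^\alpha/N_k^\beta+N_k^\beta/N_k^\alpha\to\infty$.

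In the remaining scenario, both scales coincide, $N_k^\alpha=N_k^\beta=:N_k$ (so $\mathcal{O}^\alpha$ and $\mathcal{O}^\beta$ are of the same type), and the orthogonality condition forces at least one of the quantities $N_k^2|t_k^\alpha-t_k^\beta|$, $N_k^{-1}|\xi_k^\alpha-\xi_k^\beta|$, or $N_k|(p_k^\alpha-p_k^\beta)-2(t_k^\alpha-t_k^\beta)\xi_k^\alpha|$ to diverge. A preliminary Galilean change of variables normalizes $\xi_k^\beta$ to zero, a further space-time translation centers $\omega_k^\alpha$ at the origin, and the divergence of whichever orthogonality quantity survives then guarantees that the support of $\omega_k^\beta$ leaves every fixed region as $k\to\infty$, either in space, in time, or in Fourier. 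Since by \eqref{dec s1}--\eqref{dec ls} the supports of both bulks are comparable in size to $T^\delta N_k^{-2}\times R^\delta N_k^{-1}$, the product $\omega_k^\alpha\,\omega_k^\beta$ vanishes identically for $k$ large enough and \eqref{onlp 3} follows trivially.

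The main obstacle is the mixed scenario in which one bulk is of Euclidean type ($N_k^\alpha\to\infty$, concentrating at a point) while the other is of large-scale type ($M_k^\beta\to 0$, spreading along a tilted slab whose velocity $2\xi_k^\beta$ may itself be unbounded). Here one has to verify that the Euclidean bulk is not eventually caught inside the moving slab during the window $|t-t_k^\beta|\lesssim T^\beta (M_k^\beta)^{-2}$. I would handle this by first passing to the Galilean frame of the large-scale bulk (a change of variables compatible with the $X^1$, $Z$, and $N$ norms up to bounded constants) and then using the quantitative divergence $|\ln(N_k^\alpha/M_k^\beta)|\to\infty$, together with a careful comparison of support sizes, to deduce eventual disjointness. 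Following the bookkeeping of Lemma 7.4 of \cite{IoPa} closely---in particular distributing derivatives across the Littlewood-Paley pieces so that the dyadic sum converges absolutely---then completes the argument.
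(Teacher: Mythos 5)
Your reduction of the diverging--scale case to the frequency gain in \eqref{half nonlinear} does not close. That estimate gains $\left(\frac{N_3}{N_1}+\frac{1}{N_2}\right)^{\delta}$, where $N_1\ge N_2\ge N_3$ are the frequencies \emph{within a single trilinear factor}; the gain is only effective when the \emph{second-highest} frequency in that factor is large. In the typical configuration of this lemma --- say $\omega_k^\alpha$ Euclidean with $N_k^\alpha\to\infty$, $\omega_k^\beta$ a scale-1 or large-scale bulk at frequency $O(1)$, and the $f_k^j$ arbitrary unit $X^1$ functions also at frequency $O(1)$ --- one has $N_2=O(1)$ and the factor $\left(\frac{N_3}{N_1}+\frac{1}{N_2}\right)^{\delta}\simeq 1$: no smallness in $k$ whatsoever. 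Frequency separation between the two bulks by itself cannot produce decay here. The mechanism the paper actually uses is time divisibility: the finer-scale bulk is supported in a time interval $I_k^\alpha$ of length $\sim T^\alpha (N_k^\alpha)^{-2}\to 0$, the coarser bulk satisfies $\Vert \mathbbm{1}_{I_k^\alpha}\omega_k^\beta\Vert_{Z(\R)}=o_k(1)$ (after a harmless frequency truncation $P_{\le D}$ controlled by an arbitrary $\kappa$), and the refined estimate \eqref{nl refined} --- whose right-hand side involves only $Z'$ norms of the non-leading factors --- converts this into smallness of the $N$-norm. This ingredient is absent from your argument and cannot be recovered from \eqref{half nonlinear} alone.

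Two further points. In the equal-scale case with two large-scale profiles sharing $N_k, t_k, p_k$ but with $(N_k)^{-1}|\xi_k^\alpha-\xi_k^\beta|\to\infty$, the two tilted slabs both pass through the common center near $t=t_k$, so $\omega_k^\alpha\omega_k^\beta$ does \emph{not} vanish identically for large $k$; the supports merely separate after a time interval $I_k$ with $(N_k)^2|I_k|\to 0$, and one again needs the vanishing of the restricted $Z$-norms plus \eqref{nl refined} rather than outright disjointness. Similarly, in your ``mixed scenario,'' a Euclidean bulk may well sit inside the large-scale slab for its entire lifespan --- orthogonality of frames at very different scales forces no spatial disjointness --- so the ``eventual disjointness'' you invoke there is not available; that case is again handled by the shrinking-time-window argument. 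Your dualization via Proposition \ref{N prop} and the Cauchy--Schwarz grouping are consistent with how \eqref{nl refined} is proved, but the decay in $k$ must come from the restricted $Z'$ norms, not from the dyadic frequency gains.
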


\begin{lemma}\label{onlp lemma2}
With the notation above, for any $\theta>0$, and any $1\le\alpha,\beta\le A$ that correspond to orthogonal frames,
\begin{equation}\label{onlp 4}
\limsup_{k \to +\infty} \Vert \mathfrak{O}_{4,1}(\omega_k^\alpha, \omega_k^{\beta, \pm \infty})\Vert_{N(\R)}=0.
\end{equation}
\end{lemma}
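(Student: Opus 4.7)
My plan is to reduce the estimate to the bilinear orthogonality bound
\begin{equation*}
\lim_{k\to+\infty}\|\omega_k^\alpha\cdot \omega_k^{\beta,\pm\infty}\|_{L^2_{x,y,t}(\R\times\T^2\times\R)}=0,
\end{equation*}
using the fact that the product $\mathfrak{O}_{4,1}(\omega_k^\alpha,\omega_k^{\beta,\pm\infty})$ is supported in the space--time box $\mathcal{S}_k^\alpha$ given by \eqref{dec s1}, \eqref{dec E}, or \eqref{dec ls} according to the type of $\mathcal{O}^\alpha$. Granting this bilinear bound, \eqref{onlp 4} follows from H\"older and the Strichartz bounds of Theorem~\ref{Striclem}: the three remaining copies of $\omega_k^\alpha$ contribute a bounded $L^{6}_{x,y,t}$ (or $Z$) factor, and the gradient appearing through \eqref{X1SimpleEst} in the reformulation of the $N$-norm as an $L^1_tH^1_{x,y}$-type quantity can always be placed on a factor of bounded $X^1$ norm, exactly as in the proof of \eqref{nl estimate}. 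Note that the naive use of Lemma~\ref{nl estimate lemma} alone is insufficient, since only $\|\omega_k^{\beta,\pm\infty}\|_{Z'}$ is small while $\|\omega_k^\alpha\|_{Z'}$ is only $O(1)$, so bilinear orthogonality is essential.

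To prove the bilinear orthogonality I would first apply the change of variables that normalizes $\mathcal{O}^\alpha$: the rescaling \eqref{rescaled} for a Euclidean $\alpha$, the large-scale rescaling \eqref{LSrescaled} composed with a Galilean boost by $-\xi_k^\alpha$ for a large-scale $\alpha$, and a pure space--time translation for scale-1 $\alpha$. This turns $\omega_k^\alpha$ into a bulk function on a fixed bounded box $B\subset\R\times\T^2\times\R$ (or $B\subset\R^3\times\R$ via $\Psi$ in the small-scale cases), while $\omega_k^{\beta,\pm\infty}$ becomes a linear Schr\"odinger evolution whose transformed parameters have at least one divergent component, according to the orthogonality condition in Definition~\ref{DefPro}(2): the rescaled scale ratio, the rescaled time offset $(N_k^\alpha)^2(t_k^\alpha-t_k^\beta)$, the rescaled spatial offset $N_k^\alpha|(p_k^\alpha-p_k^\beta)-2(t_k^\alpha-t_k^\beta)\xi_k^\alpha|$, or the rescaled frequency offset $(N_k^\alpha)^{-1}|\xi_k^\beta-\xi_k^\alpha|$.

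Given this normalization, I would split the time domain of $\omega_k^{\beta,\pm\infty}$ into a bounded slab $\{T^\beta\le\pm(t-t_k^\beta)\le T'\}$ and a tail $\{\pm(t-t_k^\beta)\ge T'\}$. On the tail, the dispersive estimate for $e^{it\partial_{xx}}$ on $\R$, combined with the frequency cut-off $\phi_\pm^\beta=P_{\le R^\beta}\phi_\pm^\beta$ and Bernstein's inequality in $y$, yields a pointwise $|t-t_k^\beta|^{-1/2}$ bound, which integrates to a contribution that tends to zero as $T'\to+\infty$ uniformly in $k$. On the bounded slab, the rescaled $\omega_k^{\beta,\pm\infty}$ has essentially compact physical and Fourier support centered on the Galilean trajectory issued from $p_k^\beta$ with momentum $\xi_k^\beta$; divergence of any of the orthogonality parameters then forces either its physical support to leave $B$, its Fourier support to be disjoint from that of (the rescaled) $\omega_k^\alpha$, or its concentration scale to be incompatible with $B$, yielding vanishing $L^2$-overlap with $\omega_k^\alpha$. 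Sending first $k\to+\infty$ and then $T'\to+\infty$ gives the bilinear bound.

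The main obstacle, and the only case that goes genuinely beyond the framework of \cite{IoPaSt,IoPa}, is when $\mathcal{O}^\beta$ is a large-scale frame with $M_k^{-1}\xi_k^\beta\to\pm\infty$. The Galilean phase $e^{i(\xi_k^\beta\cdot x-|\xi_k^\beta|^2 t)}$ in the decomposition \eqref{dec ls} produces fast $x$-oscillations that, after normalization of $\mathcal{O}^\alpha$, lie far outside the rescaled Fourier support of $\omega_k^\alpha$ in the $x$-variable; the bilinear $L^2$-pairing then vanishes by Fourier orthogonality in $x$ alone, with no need to invoke dispersion. The residual bookkeeping across the nine frame-type combinations is tedious but mechanical once this mechanism and the two generic ones (dispersion and spatial/frequency separation) are in place.
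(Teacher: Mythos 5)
Your overall strategy (reduce the $N$-norm to a bilinear interaction between $\omega_k^\alpha$ and $\omega_k^{\beta,\pm\infty}$, then run a case analysis over the divergent orthogonality parameter) matches the paper's, but two of your steps fail as stated. First, the reduction: passing through \eqref{X1SimpleEst} one must distribute a gradient over $\mathfrak{O}_{4,1}(\omega_k^\alpha,\omega_k^{\beta,\pm\infty})$, and the term where it lands on $\omega_k^{\beta,\pm\infty}$ cannot be moved onto a "bounded $X^1$ factor" when $N_k^\beta\gg N_k^\alpha$, since morally $\nabla\omega_k^{\beta,\pm\infty}\sim N_k^\beta\,\omega_k^{\beta,\pm\infty}$. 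Moreover, extracting the three spectator copies of $\omega_k^\alpha$ in $L^\infty_{x,y}$ and applying Cauchy--Schwarz in $t$ over $I_k^\alpha$ costs positive powers of $N_k^\alpha$ (the paper's \eqref{Estim0.1CoreLin} records the weights $(N_k^\alpha)^{1/2}(1+N_k^\alpha)$ and $(N_k^\alpha)^{1/2}$). So the qualitative statement $\Vert\omega_k^\alpha\,\omega_k^{\beta,\pm\infty}\Vert_{L^2_{x,y,t}}\to0$ does not suffice: you need quantitative rates for both $\Vert\omega_k^\alpha\,\omega_k^{\beta,\pm\infty}\Vert_{L^2}$ and $\Vert\omega_k^\alpha\,\nabla\omega_k^{\beta,\pm\infty}\Vert_{L^2}$ strong enough to beat these weights.

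Second, and more seriously, your mechanism for the frequency-separated cases is invalid. The quantity at stake is the $L^2$ norm of a pointwise product, not an inner product, and disjointness of the Fourier supports of the two factors gives no smallness there (take $f$ a unit bump and $g=e^{iNx}f$: $\Vert fg\Vert_{L^2}$ is independent of $N$). This breaks your treatment of the case $N_k^\beta/N_k^\alpha\to+\infty$ and of the large-scale $\beta$ with $M_k^{-1}\xi_k^\beta\to\infty$, the very case you single out as the "main obstacle" and propose to dispatch "by Fourier orthogonality in $x$ alone". The paper handles the high-frequency-$\beta$ case by a genuine bilinear gain: after discarding the low $x$-frequencies $P^x_{\le\delta N_k^\beta}\omega_k^{\beta,\pm\infty}$ (small in $X^1$), it puts $\omega_k^\alpha$ in $L^\infty$ and invokes the one-dimensional local smoothing estimate \eqref{1D smoothing}, $\Vert P^x_{\ge K}e^{it\Delta}\phi\Vert_{L^\infty_xL^2_{y,t}}\lesssim K^{-1/2}\Vert\phi\Vert_{L^2}$, which recovers a full power of $(N_k^\beta)^{-1}$ against the gradient; the equal-scale $\xi$-separation is handled by physical-space transport (the differing group velocities force $\omega_k^{\beta,\pm\infty}$ out of $\mathcal{S}_k^\alpha$ for most of $I_k^\alpha$), not by Fourier orthogonality. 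These are the ingredients missing from your argument.
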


Recall that $\mathfrak{O}_{4,1}(a,b)$ represents an expression which is $4$-linear in $\{a,\overline{a}\}$ and linear in $\{b,\overline{b}\}$.

\begin{proof}[Proof of Lemma \ref{onlp lemma1}]
Recall that by Lemma \ref{Eq and orth profiles} we may assume that either $N_k^\alpha / N_k^\beta \to +\infty$ or $N_k^\alpha=N_k^\beta$. In the latter case, either $(N_k^\alpha)^{2}|t_k^\alpha-t_k^\beta|+(N_k^\alpha)^{-1}|\xi_k^\alpha -\xi_k^\beta| \to +\infty$ or $t_k^\alpha=t_k^\beta$ and $\xi_k^\alpha=\xi_k^\beta$.

We fix $\kappa>0$.
We start by assuming that $N_k^\alpha/N_k^\beta\to+\infty$ and let $I^\alpha_k=(t^\alpha_k-T^\alpha(N^\alpha_k)^{-2},t^\alpha_k+T^\alpha(N^\alpha_k)^{-2})$. Assume also that $\omega^\beta_k$ corresponds to a large scale profile or a scale-1 profile. In this case, there exists $D=D(\kappa)$ such that
\begin{equation*}
\Vert P_{\ge D}\omega^\beta_k\Vert_{X^1(\R)}\le \kappa
\end{equation*}
and therefore,
\begin{equation*}
\Vert \omega^\alpha_k(P_{\ge D}\omega^\beta_k)f^1_kf^2_kf^3_k\Vert_{N(I_\alpha)}\le\kappa.
\end{equation*}
Independently, using \eqref{nl refined} we also see that
\begin{equation*}
\begin{split}
\Vert \omega^\alpha_k(P_{\le D}\omega^\beta_k)f^1_kf^2_kf^3_k\Vert_{N(I_\alpha)}
&=\Vert \omega^\alpha_k(\mathbbm{1}_{I^\alpha_k}P_{\le D}\omega^\beta_k)f^1_kf^2_kf^3_k\Vert_{N(I^\alpha_k)}\lesssim_{\theta,\kappa} \Vert \mathbbm{1}_{I^\alpha_k}\omega^\beta_k\Vert_{Z(\mathbb{R})}\\
&\lesssim_{\theta,\kappa} o_k(1)
\end{split}
\end{equation*}
where the last line follows from the estimate in the fifth line in \eqref{dec s1} and \eqref{dec ls}.

It remains to consider the case when both profiles are Euclidean profiles. In this case, there exists $D=D(\kappa)$ such that
\begin{equation}\label{onlp 5}
\Vert P_{\leq D^{-1}N_k^\alpha}\omega_k^\alpha\Vert_{X^1(\R)} \lesssim \kappa \hbox{ whereas } \Vert P_{\geq D N_k^\beta}\omega_k^\beta\Vert_{X^1(\R)}\lesssim \kappa
\end{equation}
and using $\eqref{onlp 5}$ and $\eqref{nl refined}$:
\begin{align*}
\Vert \omega_k^\alpha \omega_k^\beta \prod_{j=1}^3f_k^j\Vert_{N(\R)} &\le&& \Vert (P_{\leq D^{-1}N_k^\alpha} \omega_k^\alpha) \omega_k^\beta \prod_{j=1}^3f_k^j\Vert_{N(\R)}+\Vert (P_{\geq D^{-1}N_k^\alpha} \omega_k^\alpha) \cdot (P_{\geq D N_k^\beta}\omega_k^\beta) \prod_{j=1}^3f_k^j\Vert_{N(\R)}\\
&&&+\Vert (P_{\geq D^{-1}N_k^\alpha} \omega_k^\alpha)\cdot (P_{\leq D N_k^\beta} \omega_k^\beta) \prod_{j=1}^3f_k^j\Vert_{N(\R)}\\
&\le&& 2\kappa+\Vert (P_{\geq D^{-1}N_k^\alpha} \omega_k^\alpha)\cdot (\mathbbm{1}_{I^\alpha_k}P_{\leq D N_k^\beta} \omega_k^\beta) \prod_{j=1}^3f_k^j\Vert_{N(\R)},
\end{align*}
and once again, since for $k$ large enough, $D^{-1}N^\alpha_k>> DN^\beta_k$, using \eqref{nl refined} and the fifth line in \eqref{dec E},
\begin{equation*}
\Vert (P_{\geq D^{-1}N_k^\alpha} \omega_k^\alpha)\cdot (\mathbbm{1}_{I^\alpha_k}P_{\leq D N_k^\beta} \omega_k^\beta) \prod_{j=1}^3f_k^j\Vert_{N(\R)}\lesssim_\theta \Vert \mathbbm{1}_{I^\alpha_k} \omega_k^\beta\Vert_{Z(\mathbb{R})}=o_k(1).
\end{equation*}
This finishes the proof of the case $N_k^\alpha/N_k^\beta \to +\infty$. 

Now assume that $N_k^\alpha=N_k^\beta$. If $N_k^2|t_k^\alpha-t_k^\beta| \to +\infty$, then by inspecting the time supports, we get that $\omega_k^\alpha \omega_k^\beta=0$ for large enough $k$. As a result, to continue we may assume $t_k^\alpha\equiv t_k^\beta$.

Assume now that we have two large scale profiles and that $(N_k^\alpha)^{-1} |\xi_k^\alpha -\xi_k^\beta| \to +\infty$. In this case, if $(x,y,s)\in \mathcal{S}^\alpha_k\cap \mathcal{S}^\beta_k$, then
\begin{equation*}
(N^\alpha_k)^{-1}R^\alpha\ge\vert x-2\xi^\alpha_ks\vert
=\vert x-2\xi^\beta_ks-2(\xi^\alpha_k-\xi^\beta_k)s\vert\ge N^\alpha_k\vert s\vert\cdot\left[(N^\alpha_k)^{-1}\vert\xi^\alpha_k-\xi^\beta_k\vert\right]-(N^\alpha_k)^{-1}R^\beta_k.
\end{equation*}
as a result, we see that there exists an interval $I_k$ of length satisfying $(N^\alpha_k)^2\vert I_k\vert\to0$ as $k\to+\infty$ such that
\begin{equation*}
\omega^\alpha_k\omega^\beta_k=(\mathbbm{1}_{I_k}\omega^\alpha_k)\cdot(\mathbbm{1}_{I_k}\omega^\beta_k),
\end{equation*}
and using the fifth line in \eqref{dec ls}, we have that
\begin{equation*}
\Vert \mathbbm{1}_{I_k}\omega^\alpha_k\Vert_{Z(\mathbb{R})}+\Vert\mathbbm{1}_{I_k}\omega^\beta_k\Vert_{Z(\mathbb{R})}\to0\hbox{ as }k\to+\infty.
\end{equation*}
Using Lemma \ref{nl refined}, we deduce that
\begin{equation*}
\Vert \omega^\alpha_k\omega^\beta_k f^1_kf^2_kf^3_k\Vert_{N(\mathbb{R})}\to0.
\end{equation*}
Finally, we are left with the case when $N_k^\alpha\equiv N_k^\beta, t_k^\alpha\equiv t_k^\beta, $ and $\xi_k^\alpha\equiv \xi_k^\beta$, but $(N_k^\alpha)|x_k^\alpha-x_k^\beta|\to +\infty$. Here, $\eqref{onlp 3}$ follows directly since $\omega_k^\alpha \omega_k^\beta =0$ for large enough $k$. Since $\kappa>0$ was arbitrary, the proof is complete.
\end{proof}

\begin{proof}[Proof of lemma \ref{onlp lemma2}]

Using that
\begin{equation*}
\nabla \mathfrak{O}_{4,1}(\omega^\alpha_k,\omega^{\beta,\pm\infty}_k)=\mathfrak{O}_{3,1,1}(\omega^\alpha_k,\nabla\omega^\alpha_k,\omega^{\beta,\pm\infty}_k)+\mathfrak{O}_{4,1}(\omega^\alpha_k,\nabla\omega^{\beta,\pm\infty}_k),
\end{equation*}
we see that
\begin{equation}\label{Estim0.1CoreLin}
\begin{split}
&\Vert \mathfrak{O}_{4,1}(\omega^\alpha_k,\omega^{\beta,\pm\infty}_k)\Vert_{L^1_t(I^\alpha_k,H^1)}\\
&\lesssim_\theta (N^\alpha_k)^\frac{1}{2}\left[(1+N^\alpha_k)\Vert\mathfrak{O}_{1,1}(\omega^\alpha_k,\omega^{\beta,\pm\infty}_k)\Vert_{L^2_{x,y,t}(\mathcal{S}^\alpha_k)}+\Vert \mathfrak{O}_{1,1}(\omega^\alpha_k,\nabla\omega^{\beta,\pm\infty}_k)\Vert_{L^2_{x,y,t}(\mathcal{S}^\alpha_k)}\right].
\end{split}
\end{equation}
where  $I^\alpha_k=(t^\alpha_k-T^\alpha(N^\alpha_k)^{-2},t^\alpha_k+T^\alpha(N^\alpha_k)^{-2})$. Since we also have that
\begin{equation}\label{Estim1CoreLin}
\begin{split}
\Vert \partial^p\omega^{\beta,\pm\infty}_k\Vert_{L^\infty_{x,y,t}}\lesssim_\theta (1+(N^\beta_k)^{\vert p\vert})(N^\beta_k)^\frac{1}{2},
\quad \Vert \omega^\alpha_k\Vert_{L^2_{x,y,t}}&\lesssim_\theta
\begin{cases}
(N^\alpha_k)^{-2}&\hbox{ if }N^\alpha_k\ge1\\
(N^\alpha_k)^{-1}&\hbox{ if }N^\alpha_k\to0.
\end{cases}\\
\end{split}
\end{equation}
we get that when $N^\alpha_k/N^\beta_k\to+\infty$,
\begin{equation*}
\limsup_{k\to+\infty}\Vert \mathfrak{O}_{4,1}(\omega^\alpha_k,\omega^{\beta,\pm\infty}_k)\Vert_{L^1_tH^1_{x,y}}=0.
\end{equation*}
Assume now that $N^\alpha_k\equiv N^\beta_k$. If $(N^\alpha_k)^2\vert t^\alpha_k-t^\beta_k\vert\to+\infty$, we may simply use the $t^{-1/2}$ decay of the propagator $e^{it\Delta}$ and replace the first bound in \eqref{Estim1CoreLin} by
\begin{equation*}
\Vert \partial^p\omega^{\beta,\pm\infty}_k\Vert_{L^\infty_{t}(I^\alpha_k:L^\infty_{x,y})}\lesssim_\theta (1+(N^\alpha_k)^{\vert p\vert})(N^\alpha_k)^\frac{1}{2}\left[ (N^\alpha_k)^2\vert t^\alpha_k-t^\beta_k\vert\right]^{-1/2}
\end{equation*}
and conclude similarly. If $t^\alpha_k\equiv t^\beta_k$ but $(N^\alpha_k)^{-1}\vert\xi^\alpha_k-\xi^\beta_k\vert+N^\alpha_k\vert (x^\alpha_k-x^\beta_k)-2(\xi_k^\alpha-\xi_k^\beta)t_k^\alpha|\to+\infty$, then, it is easy to see that
\begin{equation*}
\mathbbm{1}_{\mathcal{S}^\alpha_k}\mathfrak{O}_{4,1}(\omega^\alpha_k,\omega^{\beta,\pm\infty}_k)\to0\hbox{ in }L^1_t(\mathbb{R},H^1_{x,y}(\mathbb{R}\times\mathbb{T}^2)).
\end{equation*}

It remains to consider the case $N^\beta_k/N^\alpha_k\to+\infty$.  Fix $\kappa>0$. There exists $\delta=\delta(\kappa,\theta)>0$ such that
\begin{equation*}
\Vert P_{\le\delta N^\beta_k}^x\omega^{\beta,+\infty}_k\Vert_{X^1(\R)}\le\kappa
\end{equation*}
so this gives an acceptable contribution. Similar considerations hold for $(-\infty,0)$. Independently, estimating $\omega^\alpha_k$ in $L^\infty_{x,y,t}$ in \eqref{Estim0.1CoreLin} and using that
\begin{equation*}
\Vert P^x_{\ge\delta N^\beta_k}\nabla\omega^{\beta,\pm\infty}_k\Vert_{L^\infty_{x}L^2_{y,t}}\lesssim \delta^{-1/2}(N^\beta_k)^{-1/2},
\end{equation*}
which follows from:
\begin{equation}\label{1D smoothing}
\|P_{\geq K} e^{it\Delta}\phi\|_{L_x^\infty L_{t,y}^2}\lesssim K^{-1/2} \|\phi\|_{L^2(\R\times \T^2)}
\end{equation}
(Plancherel in $(y,t)$ and change variables, cf. (2.13) in \cite{IoPa}), we also find
\begin{equation*}
\Vert \mathfrak{O}_{4,1}(\omega^\alpha_k,P^x_{\ge \delta N^\beta_k}\omega^{\beta,\pm\infty}_k)\Vert_{L^1_t(I^\alpha_k,H^1)}\lesssim_{\kappa,\theta}\delta^{-3/2}(N^\alpha_k/N^\beta_k)^\frac{1}{2}
\end{equation*}
Since $\kappa>0$ is arbitrary, this finishes the proof.
\end{proof}

\subsection{Proof of Lemma \ref{gZeroLimit}}
\begin{proof}
We would like to show that for any fixed $A$ the solution $g_k^{A, J}$ of:
\begin{equation}\label{g eqn}
(i\partial_t +\Delta_{\mathbb{R}\times\mathbb{T}^2})g_k^{J,A}-F'(U_{prof,k}^A)g_k^{A,J}=0, \quad g_k^{A,J}(0)=R_k^{J}
\end{equation}
with $F(G)u$ being defined in $\eqref{def of F'}$ and $U^A_{prof, k}$ in \eqref{DefUProfJ}, satisfies:
\begin{equation}\label{goal 0.5}
\limsup_{J \to +\infty}\limsup_{k \to +\infty} \Vert g_k^{A,J}\Vert_{Z(\R)}=0.
\end{equation}

The key point here is that $R_k^{J}$ satisfies the following:
\begin{equation}\label{R Z small}
\sup_{J,k} \Vert R_k^J\Vert_{H^1(\R\times\T^2)}\lesssim 1 \quad \hbox{ and } \limsup_{J \to +\infty}\limsup_{k \to +\infty}\Vert e^{it\Delta_{\mathbb{R}\times\mathbb{T}^2}}R_k^{J}\Vert_{Z(\R)}=0.
\end{equation}

Throughout this proof we will keep using the following claim that follows easily from a simple analysis similar to that in Proposition \ref{LWP}:

{\bf Claim:} Suppose that for each $1\leq i\leq 4$, $u_i, u^\prime_i$ satisfy:
\begin{equation*}
\sum_{i=1}^4 \Vert u_i \Vert_{X^1(\R)} +\Vert u^\prime_i\Vert_{X^1(\R)} \leq C, \quad \|h\|_{N(\R)}<+\infty
\end{equation*}
then for any $g_0\in H^1(\R\times \T^2)$, the solution $g(t)$ of the initial value problem:
$$
(i\partial_t +\Delta)g +\left(\prod_{i=1}^4 u_i \right)g+\left(\prod_{i=1}^4 u^\prime_i\right) \overline g=h, \quad g(0)=g_0
$$
exists for all time in $X^1_c(\R)$ and satisfies:
\begin{equation}\label{energy est 0.5}
\|g\|_{X^1(\R)}\lesssim_C \|g_0\|_{H^1(\R\times \T^2)}+\|h\|_{N(\R)}.
\end{equation}

As a result of this and the fact that $U_k^\alpha$ are uniformly bounded in $X^1_c(\R)$, we obtain that $g_k^{A,J}$ is well-defined on $\mathbb{R}$ and satisfies:
\begin{equation}\label{X^1 g bound}
\sup_{J,k \in \N} \|g_k^{A,J}\|_{X^1(\R)} \lesssim 1.
\end{equation}

Let $\theta>0$ be given. We will prove the lemma by showing that the left-hand side of $\eqref{goal 0.5}$ is $\lesssim \sqrt{\theta}$. This will be accomplished in three steps:

\medskip
{\bf Step 1: (Reduction to profile cores)} With $\theta$ fixed as above,  we decompose each profile $U_k^\alpha$ for $1\leq \alpha \leq A$ as $\eqref{dec s1}$, \eqref{dec E}, or \eqref{dec ls} with smallness parameter $\theta$. Let $h_k^J$ be the solution of the initial value problem:
\begin{equation}\label{h eqn}
(i\partial_t +\Delta_{\mathbb{R}\times\mathbb{T}^2})h_k^{J}-\sum_{1\leq \alpha \leq A} F^\prime(\omega_k^{\alpha}) h_k^{J}=0, \quad h(0)=R_k^J.
\end{equation}
By the above claim we have that $h_k^J\in X_c^{1}(\R)$ and satisfies:
$$
\sup_{J,k}\|h_k^{J}\|_{X^1(\R)} \lesssim 1.
$$
We will reduce our problem to proving:
\begin{equation}\label{goal modified}
\limsup_{J \to +\infty}\limsup_{k \to \infty}\|h_k^J\|_{Z(\R)}\lesssim \theta.
\end{equation}
In fact, assuming $\eqref{goal modified}$, we show that:
\begin{equation}\label{h and g}
\limsup_{J \to +\infty} \limsup_{k \to +\infty} \|g_k^{A, J}-h_k^J\|_{X^1(\R)}\lesssim \theta^{1/2}.
\end{equation}
To see this, define $\zeta_k=g_k^{A,J}-h_k^J$ which satisfies the following equation:
\begin{align*}
(i\partial_t +\Delta_{\mathbb{R}\times\mathbb{T}^2})\zeta_k=& F'(U^A_{prof, k}) \zeta_k +\left (F'(U^A_{prof,k})h_k^{J}-\sum_{1\leq \alpha \leq A} F'(U_k^\alpha)h_k^{J}\right)\\
&+\sum_{1\leq \alpha \leq A}\left( F'(U_k^\alpha)h_k^J -F'(\omega_k^\alpha)h_k^J\right)
\end{align*}

Estimate $\eqref{h and g}$ would follow directly (for $k$ large enough) using $\eqref{energy est 0.5}$ once we show that:
\begin{align}
\label{1rst term l0.5}\limsup_{k \to  +\infty} \left \|F'(U^A_{prof,k})h_k^{J}-\sum_{1\leq \alpha \leq A} F'(U_k^\alpha)h_k^{J}\right \|_{N(\R)}\lesssim \theta\\
\label{2nd term l0.5}\limsup_{J \to +\infty}\limsup_{k \to  +\infty}\left\|\sum_{1\leq \alpha \leq A}\left( F'(U_k^\alpha)h_k^J -F'(\omega_k^\alpha)h_k^J\right)\right\|_{N(\R)}\lesssim \theta.
\end{align}
To prove \eqref{1rst term l0.5}, expand
\begin{equation*}
F'(U^A_{prof,k})h_k^{J}-\sum_{1\leq \alpha \leq A} F'(U_k^\alpha)h_k^{J}=\sum_{1\le \alpha,\beta,\gamma,\delta\le A}\tilde{U}^\alpha_k\tilde{U}^\beta_k\tilde{U}^\gamma_k\tilde{U}^\delta_k h^J_k,
\end{equation*}
where the first two profiles differ by more than complex conjugation, then expand each profile as in \eqref{dec s1}, \eqref{dec E}, \eqref{dec ls} with smallness parameter $\theta$. The term involving $\omega^\alpha_k\omega^\beta_k$ is then controlled using Lemma \ref{onlp lemma1}, while the bound for the other terms follows from \eqref{nl estimate} since there are at least two terms which are small in $Z^\prime$.
Similarly, $\eqref{2nd term l0.5}$ follows from \eqref{nl estimate} and the $Z'$-norm smallness of the asymptotic components $\omega_k^{\alpha, \pm \infty}$ which give:
$$
\sum_{1\leq \alpha \leq A}\left\| F'(U_k^\alpha)h_k^J -F'(\omega_k^\alpha)h_k^J\right\|_{N(\R)}\lesssim \theta + \|h_k^J\|_{Z'(\R)}
$$
which is $\lesssim \theta^{1/2}$ once we prove \eqref{goal modified}.

\medskip

{\bf Step 2 (Contribution of low frequencies and high-frequency smoothing directions)}

Let $\kappa>0$ be a small dyadic number to be chosen later in terms of $\theta$. We will decompose $R_k^J$ into three components: a low frequency part, a high frequency smoothing part, and a high frequency non-smoothing part:
\begin{equation*}
R_k^J= P_{\leq \kappa^{-1}} R_k^J + \widetilde{P}^1_\kappa P_{> \kappa^{-1}} R_k^J + (1-\widetilde P^1_\kappa)P_{>\kappa^{-1}} R_k^J
\end{equation*}
where $\widetilde P^1_{\kappa}$ was defined in \eqref{def of tilde P}. In this step we will estimate the contribution of first two terms above: Let $\sigma_k^{J, \kappa}$ be the solution to the initial value problem:

\begin{equation}\label{sigma equation}
(i\partial_t +\Delta_{\mathbb{R}\times\mathbb{T}^2})\sigma_k^{J, \kappa} +\sum_{1\leq \alpha \leq A}F'(\omega_k^\alpha) \sigma_k^{J,\kappa}=0, \quad \sigma_k^{J, \kappa}(0)=R_k^{J,0} :=P_{\leq \kappa^{-1}} R_k^J + \widetilde{P}^1_\kappa P_{> \kappa^{-1}} R_k^J.
\end{equation}

As in \eqref{energy est 0.5}, we have that $\sigma_k^{J, \kappa}$ is well-defined and satisfies $\|\sigma_k^{J,\kappa}\|_{X^1(\R)}\lesssim 1$. Applying $\eqref{energy est 0.5}$ to the difference equation between $\eqref{sigma equation}$ and the linear homogeneous Schr\"odinger equation, we get:
\begin{equation}\label{pcl 000}
\|\sigma_k^{J,\kappa}-e^{it\Delta}R_k^{J,0}\|_{X^1(\R)} \lesssim \sum_{1\leq \alpha \leq A} \| F^\prime (\omega_k^\alpha)e^{it\Delta}\left(P_{\leq \kappa^{-1}\langle N^\alpha_k\rangle} R_k^{J,0} + P_{> \kappa^{-1} \langle N^\alpha_k\rangle} R_k^{J,0} \right)\|_{N(\R)}.
\end{equation}
Using \eqref{nl refined}, we directly see that for any $1\le\alpha\le A$,
\begin{equation}\label{pal 00100}
\begin{split}
\limsup_{J\to+\infty}\limsup_{k\to+\infty}\| F^\prime (\omega_k^\alpha)e^{it\Delta}P_{\leq \kappa^{-1}\langle N^\alpha_k\rangle} R_k^{J,0}\|_{N(\R)}&\lesssim_\kappa \limsup_{J\to+\infty}\limsup_{k\to+\infty}\Vert e^{it\Delta}P_{\leq \kappa^{-1}\langle N^\alpha_k\rangle} R_k^{J,0}\Vert_{Z^\prime(\R)}\\
&=0.
\end{split}
\end{equation}

We now bound the contribution of $P_{> \kappa^{-1}\langle N^\alpha_k\rangle} R_k^{J,0}=\widetilde P^1_\kappa P_{> \kappa^{-1}\langle N^\alpha_k\rangle} R_k^J$: Using the convention that $\nabla^1 h := |h|+|\nabla h|$, we estimate:
\begin{equation}\label{smoothing contr}
\begin{split}
\|F'(\omega_k^\alpha) e^{it\Delta} P_{>\kappa^{-1}\langle N^\alpha_k\rangle}R_k^{J,0} \|_{N(\R)}& \lesssim \|(\omega_k^\alpha)^4 \nabla^1\left( e^{it\Delta} P_{>\kappa^{-1}\langle N^\alpha_k\rangle}R_k^{J,0}\right) \|_{L_t^1 L_{x,y}^2}\\
&+ \|(\omega_k^\alpha)^3 \nabla \omega_k^\alpha e^{it\Delta}P_{>\kappa^{-1}\langle N^\alpha_k\rangle}R_k^{J,0} \|_{L_t^1 L_{x,y}^2}\\
&\lesssim \|(\omega_k^\alpha)^4\|_{L_t^2 L_{x,y}^\infty}\|\mathbbm 1_{\mathcal{S}^\alpha_k} \nabla^1\left( e^{it\Delta} P_\kappa^1 P_{>\kappa^{-1}\langle N^\alpha_k\rangle}R_k^J\right)\|_{ L^2_{x,y,t}(\mathcal{S}^\alpha_k)}\\
&+ \kappa\langle N^\alpha_k\rangle^{-1} \|(\omega_k^\alpha)^3 \nabla \omega_k^\alpha\|_{L_t^2 L^\infty_{x,y}}\|\nabla^1\left( e^{it\Delta} P_\kappa^1 P_{>\kappa^{-1}\langle N^\alpha_k\rangle}R_k^J\right)\|_{L_{x,y,t}^2(\mathcal{S}^\alpha_k)}\\
&\lesssim_\theta N^\alpha_k\|\mathbbm 1_{\mathcal{S}^\alpha_k} \nabla^1\left( e^{it\Delta} P_\kappa^1 P_{>\kappa^{-1}\langle N^\alpha_k\rangle}R_k^J\right)\|_{ L^2_{x,y,t}} \lesssim_\theta \kappa^{\frac{1}{100}},
\end{split}
\end{equation}
which follows from Lemma \ref{smoothing lemma} at the end of this section. Using again \eqref{R Z small} we conclude after combining \eqref{pcl 000}, \eqref{pal 00100} and \eqref{smoothing contr} that, whenever $\kappa \leq \kappa_1(\theta)$,
\begin{equation}\label{step 2 conclusion}
\|\sigma_k^{J, \kappa}\|_{Z(\R)}\leq C(\theta,A) \kappa^{\frac{1}{100}} \leq \theta.
\end{equation}

\medskip

{\bf Step 3: (Contribution of high-frequency non-smoothing directions)} Let $f_k^{J, \kappa}$ denote the solution of the initial value problem:
\begin{equation}\label{f_k equation}
(i\partial_t +\Delta_{\mathbb{R}\times\mathbb{T}^2})f_k^{J,\kappa} -\sum_{\alpha=1}^AF'(\omega_k^\alpha) f_k^{J, \kappa}=0, \quad f_k^{J, \kappa}(0)=R_k^{J,1}:= (1-\widetilde P_\kappa^1)P_{\geq \kappa^{-1}}R_k^J
\end{equation}

We would like to show that if $k$ is large enough and $J \geq A$:
\begin{equation}\label{step 3 goal}
\|f_k^{J,\kappa}\|_{Z(\R)} \lesssim \theta
\end{equation}
whenever $\kappa \leq \kappa_2(\theta)$. This combined with $\eqref{step 2 conclusion}$ and the fact that $h_k^J=\sigma_k^{J, \kappa}+f_k^{J,\kappa}$ gives $\eqref{goal modified}$ thus finishing the proof. 

The idea behind the proof of \eqref{step 3 goal} is to exploit the narrowness  of the angular aperture of the Fourier support of the initial data $R_k^{J,1}$ represented by the smallness of $\kappa$. The key observation is that the smallness  of $\kappa$ implies that of $\|e^{it\Delta}R_k^{J,1}\|_{Z(\R)}$, which essentially follows from \eqref{Strich M refined U^p lemma}. Translating this information from the homogeneous linear Schr\"odinger equation to the inhomogeneous one \eqref{f_k equation} is done via a slightly delicate perturbative argument using an appropriately modified $X^1$-type norm that-unlike $X^1$- captures the gain reflected in $\eqref{Strich M refined U^p lemma}$. We now move to the details:

For dyadic numbers $1\leq M \leq N$, we define:
\begin{align*} 
p_{N,M}(\xi)=&\left[\eta^3(\xi/2N)-\eta^3(\xi/N)\right] \left[ \eta^1(\xi_1/2M)-\eta^1(\xi_1/M)\right] \quad \hbox{if }M\geq 2\\
p_{N,1}(\xi)=&\left[\eta^3(\xi/N)-\eta^3(\xi/2N)\right] \eta^1(\xi_1/2)
\end{align*}
and the corresponding operators:
$$
\FF P_{N,M} f= p_{N, M} \FF f, \quad P_{N, \leq K}=\sum_{1\leq M \leq \min(K, N)} P_{N,M}.
$$

Given $\rho\in (0,1]$, we define the weights:
\begin{equation}\label{DefRho}
c_{\rho, N, M}:=\left[M/N +\rho+(1+\rho N)^{-1}\right]^{\delta_0/100},\quad c_{\rho,N,M}\simeq
\begin{cases}
1&\hbox{ if }N\le\rho^{-1}\\
\max(M/N,1/(N\rho))^\frac{\delta_0}{100}&\hbox{ if }\rho^{-1}\le N\le\rho^{-2}\\
\max(M/N,\rho)^\frac{\delta_0}{100}&\hbox{ if }N\ge\rho^{-2}.
\end{cases}
\end{equation}
where $\delta_0$ is the same constant as in $\eqref{extra refinement}$. For functions $f\in X^1(I)$ on intervals $I \subset \R$ and $g\in H^1(\R\times \T^2)$, we defined the norms:
\begin{equation}\label{X^1_rho}
\begin{split}
\|f\|_{\widetilde X^1_\rho(I)}^2:=& \sum_{N\geq 1}\sum_{1\leq M \leq N}c_{\rho, N, M}^2\left \|  P_{N,M} f \right\|_{X^1(I)}^2\\
\|g\|_{\widetilde H^1_\rho(\R\times \T^2)}^2:= &\sum_{N\geq 1}\sum_{1\leq M \leq N}c_{\rho, N, M}^2\left \| P_{N,M} g \right\|_{H^1(\R\times \T^2)}^2
\end{split}
\end{equation} 
From \eqref{DefRho}, we see that
\begin{equation}\label{X^1_rho and X^1}
\begin{split}
\rho^{\delta_0/100}\|f\|_{X^1(I)} \lesssim& \|f\|_{\widetilde X^1_\rho(I)} \lesssim \|f\|_{X^1(I)}\\
\rho^{\delta_0/100}\|g\|_{H^1(\R\times \T^2)} \lesssim& \|g\|_{\widetilde H_\rho^1(\R\times \T^2)} \lesssim \|g\|_{H^1(\R\times \T^2)}
\end{split}
\end{equation}
It is obvious that the $\widetilde X^1_\rho$ satisfies:
$$
\|e^{it\Delta_{\mathbb{R}\times\mathbb{T}^2}} \phi\|_{\widetilde X^1_\rho(I)}\lesssim \|\phi\|_{\widetilde H^1_\rho(\R\times \T^2)}
$$
for any interval $I$, from which follows the smallness of $\|e^{it\Delta} (1-\tilde{P}^1_\kappa) P_{\geq \kappa^{-1}} \phi\|_{\widetilde X^1_\rho}$ for any $\kappa \leq \rho$. This will be particularly useful since $\widetilde X^1_\rho$ controls the $Z$ norm: for any $\rho \in (0,1]$:
\begin{equation}\label{tilde X and Z}
\|f\|_{Z(I)}\lesssim \|f\|_{\widetilde X^1_\rho(I)}.
\end{equation}
In fact, \eqref{Strich M refined U^p lemma} and Lemma \ref{U^p lemma} imply that for $p_0 \in \{\frac{9}{2}, 18\}$:
\begin{equation}\label{Step 301}
N^{\frac{5}{p_0}-\frac{1}{2}}\|P_{N,M}f\|_{l^{\frac{4p_0}{p_0-2}}_\gamma L^{p_0}_{x,y,t}(\R\times \T^2\times I)}\lesssim \left(\frac{M}{M+N}\right)^{1/20}N\|P_{N,M} f\|_{U^{9/2}_\Delta(I)},
\end{equation}
which gives:
\begin{align*}
\|f\|_{Z(I)} \leq& \sum_{p_0 \in \{9/2,18\}} \left( \sum_{N\geq 1} N^{\frac{10}{p_0}-1}\left[\sum_{1\leq M \leq N} \|P_{N,M}f\|_{l^{\frac{4p_0}{p_0-2}}_\gamma L^{p_0}_{x,y,t}(\R\times \T^2\times I)}\right]^2 \right)^{1/2}\\
\lesssim&  \left( \sum_{N\geq 1} \left[\sum_{1\leq M \leq N} \left(\frac{M}{N}\right)^{1/20}  \|P_{N,M}f\|_{X^1(I)} \right]^2\right)^{1/2} \lesssim \left(\sum_{N\geq 1}\sum_{1\leq M \leq N} \left(\frac{M}{N}\right)^{1/20}\|P_{N,M}f\|_{X^1(I)}^2\right)^{1/2} \\
\lesssim& \|f\|_{\widetilde X_\rho^1(I)}
\end{align*}
which proves \eqref{tilde X and Z}. We will prove the following lemma:
\begin{lemma}\label{tilde WP}
Let $D\geq 1$ be dyadic, $\theta \in (0,1]$, and $f_k \in X_c^1(\R)$ be a solution to the linear equation:
$$
(i\partial_t+\Delta_{\mathbb{R}\times\mathbb{T}^2})f_k -\sum_{1\leq \alpha \leq A}F'(\omega_{k,D}^{\alpha})f_k=0
$$
where $\omega_{k,D}^\alpha$ is $P_{\leq D} \omega_k^\alpha$ if $N_k^\alpha \leq 1$ and $(P_{\leq DN_k^\alpha}-P_{\geq D^{-1}N_k^\alpha})\omega^\alpha_k$ if $N_k^\alpha \to +\infty$. Then there exists $\rho_0=\rho_0(\theta, D)$ such that for all $\rho \leq \rho_0$ and $k \geq k(\rho)$ sufficiently large, it holds that:
\begin{equation}\label{step 302}
\|f_k\|_{\widetilde X^1_\rho(\R)}\lesssim_{D, \theta} \|f_k(0)\|_{\widetilde H^1_\rho(\R\times \T^2)}.
\end{equation}
\end{lemma}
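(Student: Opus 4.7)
The plan is to prove \eqref{step 302} by running a linear energy-type argument in the modified space $\widetilde{X}^1_\rho$, viewing the equation as a Schr\"odinger equation with a time-dependent (real) potential built from $\sum_{\alpha}F'(\omega_{k,D}^\alpha)$. By Proposition \ref{N prop} and the usual $X^1$-type estimate, this will reduce to a weighted multilinear bound of the form
\begin{equation*}
\sum_{N,M} c_{\rho,N,M}^2 \left\|P_{N,M}\left[F'(\omega_{k,D}^\alpha)\,f_k\right]\right\|_{N(I)}^2 \lesssim_{D,\theta}\,\left(\rho^{\delta_0/200}+o_k(1)\right)\|f_k\|_{\widetilde X^1_\rho(I)}^2,
\end{equation*}
on suitable time intervals $I$, so that the standard continuity bootstrap can close after iterating over $O_D(1)$ such intervals covering $\mathbb{R}$.

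The main technical input is a compatibility statement for the weights $c_{\rho,N,M}$ under multiplication by a frequency-localized factor. Expanding $f_k=\sum_{N',M'}P_{N',M'}f_k$ and using that $F'(\omega_{k,D}^\alpha)$ is a polynomial in $\omega_{k,D}^\alpha,\overline{\omega_{k,D}^\alpha}$ whose spatial Fourier support is contained in a box of size $\lesssim D\langle N_k^\alpha\rangle$, a non-trivial interaction producing output in the box $(N,M)$ forces $|N-N'|+|M-M'| \lesssim D\langle N_k^\alpha\rangle$. Inspection of \eqref{DefRho} then shows that for any such pair with $N \geq D\langle N_k^\alpha\rangle/\rho$,
\begin{equation*}
\frac{c_{\rho,N,M}}{c_{\rho,N',M'}} \leq 1+C\rho^{\delta_0/200},
\end{equation*}
while the contribution of the remaining, bounded range of $(N,N')$ (of cardinality $O_{D,\rho}(1)$) is controlled by the unweighted Lemma \ref{nl estimate lemma} at the cost of a $\rho$-dependent constant, which is absorbed in the implied constant in \eqref{step 302} via \eqref{X^1_rho and X^1}.

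The main obstacle is the Euclidean contribution ($N_k^\alpha\to+\infty$): there the multiplier $\omega_{k,D}^\alpha$ has very high frequency $\sim N_k^\alpha$, so the weight-shift analysis above only applies in the regime $N\gtrsim N_k^\alpha\rho^{-1}$, which is never satisfied uniformly in $k$. The saving feature is that $\omega_{k,D}^\alpha$ lives on the time window $|t-t_k^\alpha|\leq T^\alpha (N_k^\alpha)^{-2}$, and by the pointwise bounds in \eqref{dec E} one has $\|\omega_{k,D}^\alpha\|_{L^\infty_{x,y,t}}\lesssim_{\theta,D}(N_k^\alpha)^{1/2}$. Using Cauchy-Schwarz in time on this small window and the $U^2$-structure of the $X^1$ spaces,
\begin{equation*}
\left\|P_{N,M}\left[(\omega_{k,D}^\alpha)^{4} f_k\right]\right\|_{L^1_t H^1_{x,y}} \lesssim_{\theta,D}\,(N_k^\alpha)^{2} \cdot (N_k^\alpha)^{-1}\|f_k\|_{L^\infty_t H^1_{x,y}} \cdot (N_k^\alpha)^{-1} \to 0,
\end{equation*}
so that the Euclidean profiles contribute an $o_k(1)$ perturbation. (The factor $(N_k^\alpha)^{-1}$ comes from the derivative distribution analysis analogous to \eqref{Estim0.1CoreLin}.)

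With these two ingredients, the argument is completed as follows. First split $\mathbb{R}$ into $J=J(\theta,D,A)$ intervals $I_j$ on which each $\|\omega_{k,D}^\alpha\mathbf{1}_{I_j}\|_{Z\cap L^\infty_tH^1}$ is either small or $\omega_{k,D}^\alpha$ is Euclidean and $I_j$ is contained in its time window (which happens for $O(1)$ intervals per profile). On each $I_j$, choose $\rho\leq\rho_0(\theta,D)$ small enough so that the weight-shift and Euclidean smallness above give
\begin{equation*}
\|f_k\|_{\widetilde X^1_\rho(I_j)} \leq C(D,\theta)\,\|f_k(\inf I_j)\|_{\widetilde H^1_\rho} + \frac{1}{2}\|f_k\|_{\widetilde X^1_\rho(I_j)},
\end{equation*}
which absorbs the bootstrap term. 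Iterating over $j=1,\ldots,J$ yields the claimed bound $\|f_k\|_{\widetilde X^1_\rho(\mathbb{R})}\lesssim_{D,\theta}\|f_k(0)\|_{\widetilde H^1_\rho}$ for $k$ sufficiently large.
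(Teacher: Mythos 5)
Your overall architecture (interval splitting via time-divisibility of the $Z$-norm, a weight-compatibility argument for potentials of bounded frequency, and a bootstrap on each interval) matches the paper's treatment of the case $N_k^\alpha\lesssim 1$. The genuine gap is in your treatment of the Euclidean profiles ($N_k^\alpha\to+\infty$), which is the heart of the lemma. Your claimed bound
\begin{equation*}
\bigl\|P_{N,M}\bigl[(\omega_{k,D}^\alpha)^{4} f_k\bigr]\bigr\|_{L^1_t H^1} \lesssim (N_k^\alpha)^{2}\cdot (N_k^\alpha)^{-1}\cdot (N_k^\alpha)^{-1}\,\|f_k\|_{L^\infty_t H^1}
\end{equation*}
equals $O(1)\,\|f_k\|_{L^\infty_tH^1}$, not $o_k(1)$ as you assert; and in fact the term $(\omega_{k,D}^\alpha)^3\nabla\omega_{k,D}^\alpha\cdot f_k$ costs an extra factor of $N_k^\alpha$ from $\|\nabla\omega_k^\alpha\|_{L^\infty}\lesssim (N_k^\alpha)^{3/2}$, so a brute-force $L^\infty\times L^\infty_tL^2$ estimate on the window $|t-t_k^\alpha|\lesssim (N_k^\alpha)^{-2}$ diverges unless one exploits the smallness of $f_k$ itself on the concentration set $\mathcal{S}_k^\alpha$. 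A concentrating bubble of unit energy is \emph{not} a vanishing perturbation of the linearized flow, so no argument that ignores the structure of $f_k$ at frequencies $\gtrsim N_k^\alpha$ can close here.

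What is missing is precisely the mechanism the $\widetilde X^1_\rho$ norm was designed for: the refined trilinear estimate \eqref{step 305} (built from \cite[Proposition 3.5]{HeTaTz} together with the anisotropic Strichartz refinement \eqref{Strich M refined U^p lemma}), which yields the gain $\bigl[(\tfrac{N_k^\alpha}{N'}+\tfrac{1}{N_k^\alpha})\tfrac{M'}{M'+N_k^\alpha}\bigr]^{\delta_0/2}$ when a piece $P_{N',M'}f$ with $N'\gg N_k^\alpha$ and narrow angular aperture $M'\lesssim N_k^\alpha$ interacts with the Euclidean bulk. The paper's proof shows this gain dominates the input weight $c_{\rho,N',M'}$ (for $N_k^\alpha\geq\rho^{-2}$), which is exactly what permits summation over $M'$ at the cost of $\|P_{N'}f\|_{\widetilde X^1_\rho}$, with the actual smallness supplied by $\mu^4$ from the $Z'$-subdivision — the same subdivision you reserve only for non-Euclidean profiles, but which applies to the Euclidean bulks as well. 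Your weight-shift lemma is essentially correct (though the ratio is only bounded by an absolute constant, not by $1+C\rho^{\delta_0/200}$, and the claimed global smallness factor $\rho^{\delta_0/200}$ in your reduction is unsubstantiated); it handles the regime $M\gtrsim N_k^\alpha$ but cannot replace \eqref{step 305} in the regime $M'\lesssim N_k^\alpha\ll N'$.
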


With this lemma in hand, we can finish the proof as follows: by the decompositions in $\eqref{dec s1}$, \eqref{dec E}, and \eqref{dec ls}, there exists a dyadic $D\geq 1$ such that $\|\omega_k^\alpha -\omega_{k, D}^\alpha\|_{X^1(\R)}\leq \theta$. Denoting by $\widetilde f^{J, \kappa}_k$ the solution to the initial value problem:
$$
(i\partial_t +\Delta_{\mathbb{R}\times\mathbb{T}^2})\widetilde f_k^{J,\kappa} -F'(\omega_{k,D}^\alpha) \widetilde f_k^{J, \kappa}=0, \quad \widetilde f_k^{J, \kappa}(0)=R_k^{J,1}:= (1-\widetilde P_\kappa^1)P_{\geq \kappa^{-1}}R_k^J
$$
It follows from $\eqref{energy est 0.5}$ that:
$$
\|f_k^{J, \kappa} -\widetilde f_k^{J,\kappa}\|_{X^1(\R)}\lesssim \theta
$$
for $k$ large enough, whereas it follows from Lemma \ref{tilde WP} and \eqref{tilde X and Z} above that for $\rho \leq \rho(\theta, D)$:
$$
\|\widetilde f^{J, \kappa}_k\|_{Z(\R)}\lesssim \|\widetilde f_k^{J, \kappa}\|_{\widetilde X^1_\rho(\R)}\lesssim_{D, \theta} \|R_k^{J,1}\|_{\widetilde H^1_\rho}
$$
for $k$ large enough. Choosing $\kappa=\rho^{100}$, we have that $\|R_k^{J,1}\|_{\widetilde H^1_\rho} \lesssim \kappa^{\delta_0}$ and $\eqref{step 3 goal}$ now follows by combining the above two inequalities. We are left with proving Lemma \ref{tilde WP}.

\subsection{Proof of Lemma \ref{tilde WP}} As before, we denote for any interval $I \subset \mathbb{R}$:
$$
\|f\|_{\widetilde N_\rho(I)}=\left\| \int_0^t e^{i(t-s)\Delta}f(s) ds\right\|_{\widetilde X^1_\rho(I)}.
$$
Using the time-divisibility of the $Z-$norm and the nature of the decomposition in \eqref{dec s1}, \eqref{dec E}, and \eqref{dec ls}, we can split $\R$ for each $\mu>0$ into $O_{\theta, \mu}(1)$ intervals $I$ for which:
\begin{equation}\label{Z small 303}
\sum_{1\leq \alpha \leq A} \|\omega_{k,D}^\alpha\|_{Z'(I)}\leq \mu.
\end{equation}
We will eventually choose $\mu=\mu(\theta,D,A,Q)$ independent of $\rho$, $\kappa$, which means that it would be enough to prove that the analogue of $\eqref{step 302}$ for each interval $I$ (with $f_k(0)$ replaced by the initial value of $f_k$ on that interval). By \eqref{energy est 0.5}, this would follow if we prove that for each interval $I$ and for every $1\leq \alpha \leq A$,
\begin{equation}\label{step 304}
\|F'(\omega_{k,D}^\alpha) f\|_{\widetilde N_\rho(I)}\leq \frac{1}{2A}\|f\|_{\widetilde X^1_\rho}
\end{equation}
if $k\geq k(\rho)$ is large enough.
To prove this we will need the following refinement of \eqref{extra refinement}: suppose that $1\leq M_1 \leq N_1$, $1\leq M_2 \leq N_2\leq N_1$, and $N_3\leq N_2$ are all dyadic. Then:
\begin{equation}\label{step 305}
\begin{split}
&\|P_{N_1, M_1} u_1 P_{N_2, \leq M_2} u_2P_{N_3} u_3\|_{L^2_{x,y,t}(\R\times \T^2\times I)}\\
&\lesssim \left[\left(\frac{N_2}{N_1}+\frac{1}{N_2}\right)\frac{M_2}{N_2} \frac{M_1}{M_1+N_2}\right]^{\delta_0/2}\|P_{N_1, M_1}u_1\|_{Y^0(I)}\left(N_2\|P_{N_2, \leq M_2} u_2\|_{Y^0(I)}\right)\left(N_3\|P_{N_3} u_3\|_{Y^0(I)}\right).
\end{split}
\end{equation}
This follows by interpolating \eqref{extra refinement} and the following refinement of $\eqref{trilinear Z}$ obtained by using \eqref{Step 301}: if $C$ is any cube in $\R^3$ with side-length $\sim N_2$:
\begin{align*}
&\|P_C P_{N_1,M_1}u_1P_{N_2,\le M_2}u_2 P_{N_3}u_3\|_{L^2_{x,y,t}(\R\times \T^2\times I)}\\
&\lesssim  \|P_C P_{N_1, M_1}u_1\|_{l^{\frac{36}{5}}_\gamma L_{x,y,t}^{\frac{9}{2}}} \|P_{N_2, \leq M_2}u_2\|_{l^{\frac{36}{5}}_\gamma L_{x,y,t}^{9/2}}\|P_{N_3}u_3\|_{l^{\frac{9}{2}}_\gamma L_{x,y,t}^{18}}\\
&\lesssim N_2^{\frac{7}{18}}\left(\frac{M_1}{M_1+N_2}\right)^{1/20}\|P_C P_{N_1, M_1} u_1\|_{Y^0} \|P_{N_2, \le M_2}u_2\|_{l^{\frac{36}{5}}_\gamma L_{x,y,t}^{9/2}}\|P_{N_3}u_3\|_{l^{\frac{9}{2}}_\gamma L_{x,y,t}^{18}}\\
&\lesssim   N_2 N_3\left(\frac{M_2}{N_2}\frac{M_1}{M_1+N_2}\right)^{1/20} \|P_C u_1\|_{Y^0}\|u_2\|_{Y^0}\|u_3\|_{Y^0}.
\end{align*}

To continue we split into two cases:
\medskip

{\bf Case 1: $N_k \leq 1$ for all $k$.} In this case, we have that $\omega_{k,D}^\alpha=P_{\leq D}\omega_k^\alpha$. Writing $f=P_{\leq \rho^{-1}}f +P_{\geq \rho^{-1}}f$, we estimate using \eqref{nl refined}:
$$
\|F'(\omega_{k,D}^\alpha) P_{\leq \rho^{-1}} f\|_{N(I)}\lesssim \mu^3\|P_{\leq \rho^{-1}}f\|_{X^1(I)} \lesssim \mu^3 \|f\|_{\widetilde X^1_\rho(I)}
$$
since $c_{\rho, N, M}\sim 1$ if $N \lesssim \rho^{-1}$. On the other hand, for $N \geq \rho^{-1}$,
$$
P_{N,M}[F'(\omega_{k,D}^\alpha) P_{\geq \rho^{-1}} f]=\sum_{N'\sim_D N, M'\sim_D M}P_{N,M}[F'(\omega_{k,D}^\alpha)  P_{N',M'}P_{\geq \rho^{-1}} f]
$$
and as a result, we can estimate:
\begin{align*}
\|F'(\omega_{k, D}^\alpha)P_{\geq \rho^{-1}} f \|_{\widetilde N_\rho(I)}^2\lesssim& \sum_{N,M}c_{\rho, N,M}^2 \|P_{N,M}[F'(\omega_{k,D}^\alpha) P_{\geq \rho^{-1}} f]\|^2_{N(I)}\\
\lesssim& \sum_{N,M}\sum_{N'\sim_D N,\, M'\sim_D M}c_{\rho, N, M}^2 \|F'(\omega_{k,D}^\alpha) P_{N',M'}P_{\geq \rho^{-1}}f \|^2_{N(I)}\\
\lesssim&\sum_{N,M}\sum_{N'\sim_D N,\, M'\sim_D M}c_{\rho, N', M'}^2 \mu^3 \|P_{N',M'}P_{\geq \rho^{-1}}f \|^2_{X^1(I)}\lesssim_D \mu^3 \|f\|_{\widetilde X^1_\rho(I)}^2,
\end{align*}
which gives \eqref{step 304} in this case provided $\mu$ is chosen small enough depending on $A, D, \theta$.

\medskip
{\bf Case 2: $N_k \to +\infty$:} The analysis in this case is more delicate. We will assume throughout that $N_k^\alpha \geq \rho^{-2}$. The contribution of low frequencies $P_{\leq 2^{10}D N_k^\alpha}$ is easily estimated using $\eqref{nl refined}$ as follows:
\begin{equation}\label{step 305B}
\|F'(\omega_{k,D}) P_{\leq 2^{10}D N_k^\alpha}f\|_{N(I)} \lesssim_D \mu^3 \|P_{\leq 2^{10}D N_k^\alpha}f\|_{Z(I)} \lesssim \mu^3 \|f\|_{\widetilde X^1_\rho(I)}.
\end{equation}

We now turn to the contribution of $P_{\geq 2^{10}D N_k^\alpha}f$: by inspecting the Fourier supports, we notice that:
\begin{equation}
P_{N,M}[F'(\omega_{k,D}^\alpha)P_{\geq 2^{10}DN_k^\alpha} f]=\sum_{N'\sim N, \, M'\sim M}P_{N,M}[F'(\omega_{k,D}^\alpha)P_{N',M'}P_{\geq 2^{10}DN_k^\alpha} f]
\end{equation}
if $M\geq 2^5DN_k^\alpha$, and 
\begin{equation}\label{non diag 307}
P_{N,\leq 2^5 DN_k^\alpha}[F'(\omega_{k,D}^\alpha)P_{\geq 2^{10}DN_k^\alpha} f]=\sum_{N'\sim N, \, M'\leq 2^{5}DN_k^\alpha}P_{N,\leq 2^5 DN_k^\alpha}[F'(\omega_{k,D}^\alpha)P_{N',M'}P_{\geq 2^{10}DN_k^\alpha} f].
\end{equation}

In the case when $M\geq 2^5DN_k^\alpha$, we can proceed as in {\bf Case 1} to get:
\begin{equation}\label{step 306}
\|\sum_{N\geq 1}\sum_{M >2^5DN_k^\alpha}P_{N,M}\left[F'(\omega_{k,D}^\alpha) P_{\geq 2^{10}DN_k^\alpha} f\right]\|_{\widetilde N_\rho(I)}\lesssim \mu^3 \|f\|_{\widetilde X_\rho^1(I)}.
\end{equation}

Moving on to the case when $M \leq 2^5D N_k^\alpha$, we first denote $\tilde f \in \{ P_{\geq 2^{10}D N_k^\alpha}f, \overline{P_{\geq 2^{10}D N_k^\alpha}f}\}$, $\widetilde \omega_{k,D}^\alpha \in \{\omega_{k,D}^\alpha, \overline{\omega_{k,D}^\alpha}\}$ and estimate, for $N' \geq 2^5D N_k^\alpha$, $M^\prime\le 2^5DN^\alpha_k$,  using Proposition \ref{N prop}, and \eqref{step 305}:
\begin{align*}
\|\mathfrak{O}_{4,1}(\omega_{k,D}^\alpha,P_{N', M'}f)\|_{N(I)}=& \sup_{\|h\|_{Y^{-1}(I)}=1} \left|\int_{\R\times \T^2\times I} h\cdot \mathfrak{O}_{4,1}(\omega_{k,D}^\alpha,P_{N', M'}f) \;dxdydt\right|\\
\lesssim& \|\mathfrak{O}_{2,1}(\omega^\alpha_{k,D},P_{N', M'} f)\|_{L_{x,y,t}^2} \sup_{\|h\|_{Y^{-1}(I)}=1} \|\mathfrak{O}_{2,1}(\omega^\alpha_k,(P_{\leq 2N'}- P_{\leq N'/2})h)\|_{L^2_{x,y,t}}\\
\lesssim&_{D,\theta}  \;\mu^4 \left[\left(\frac{N_k^\alpha}{N'}+\frac{1}{N_k^\alpha}\right)\frac{M'}{M'+N_k^\alpha}\right]^{\delta_0/2} \|P_{N^\prime, M^\prime}\tilde f\|_{Y^0(I)}.
\end{align*}
Note that\footnote{Notice that here, we use crucially the gain from \cite[Proposition $3.5$]{HeTaTz}.}:
\begin{align*}
\left[\left(\frac{N_k^\alpha}{N'}+\frac{1}{N_k^\alpha}\right)\frac{M'}{M'+N_k^\alpha}\right]^{\delta_0/4}\lesssim& \left(\frac{M'}{N'}\right)^{\delta_0/4}\lesssim c_{\rho, N',M'} \quad &\hbox{ if } N' \leq (N_k^\alpha)^2, \\
\lesssim& \left(\frac{M'}{(N_k^\alpha)^2}\right)^{\delta_0/4} \lesssim_D \rho^{\delta_0/2} \lesssim c_{\rho, N',M'} \quad &\hbox{ if } N' \geq (N_k^\alpha)^2\geq \rho^{-4}.
\end{align*}
As a result we get that:
\begin{align*}
\sum_{1\leq M' \leq 2^5DN_k^\alpha}\|\mathfrak{O}_{4,1}(\omega_{k,D}^\alpha,P_{N', M'}f)\Vert_{N(I)}\lesssim  \mu^4 \left(\frac{N_k^\alpha}{N'}+\frac{1}{N_k^\alpha}\right)^{\delta_0/4}\|P_{N^\prime}\tilde f\|_{\widetilde X^1_\rho(I)}
\end{align*}
which gives by \eqref{non diag 307} combined with \eqref{step 306} that the contribution of $P_{\geq 2^{10}DN_k^\alpha} f$ is 
$$
\|F'(\omega_{k,D}^\alpha)  P_{\geq DN_k^\alpha} f \|_{N(I)}\lesssim_{D, \theta} \mu^3 \|f\|_{\widetilde X^1_\rho(I)}.
$$
This and \eqref{step 305B} give the desired bound if $\mu\leq \mu_0(D, \theta)$, which finishes the proof of Lemma \ref{tilde WP}.
\endproof

We end this section by proving the following smoothing lemma:

\begin{lemma}\label{smoothing lemma}
Suppose that $\psi \in H^1(\R\times \T^2)$ satisfies the following:
\begin{equation}\label{psi sm lemma}
\|\psi\|_{H^1(\R\times \T^2)}\lesssim 1 \quad \quad \|e^{it\Delta} \psi\|_{Z(\R)}\leq \delta^2
\end{equation}
for some $0<\delta<1$. Then for any $R>0$, there exists $C(R) \geq 1$ such that:
\begin{equation}\label{L^2 smoothing}
N \|\nabla^1\left( e^{it\Delta} \tilde{P}_\delta^1 \psi\right)\|_{ L^2_{x,y,t}(\{|p-p_0|\leq R N^{-1}\}\times \{|t-t_0|\leq RN^{-2}\})}\leq C(R) \delta^{\frac{1}{100}}.
\end{equation}
for any dyadic integer $N\geq 1$ and $t_0\in \R$ and any $p_0 \in \R\times \T^2$.
\end{lemma}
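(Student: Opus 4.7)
The strategy is to decompose $\tilde P^1_\delta\psi=\sum_{N'\geq 1}P_{N'}P^x_{\geq\delta N'}\psi$ and bound each Littlewood-Paley piece on the box $B=\{|p-p_0|\le RN^{-1}\}\times\{|t-t_0|\le RN^{-2}\}$ separately, then sum. Since $\nabla^1$ applied to $P_{N'}$ gains a factor $\langle N'\rangle$, the task reduces to showing
\[
\sum_{N'\geq 1} N\cdot\langle N'\rangle\,\bigl\|P_{N'}P^x_{\geq\delta N'}e^{it\Delta}\psi\bigr\|_{L^2_{x,y,t}(B)}\lesssim C(R)\,\delta^{1/100}.
\]
The case where $N$ is bounded (i.e.\ $N\leq R^{10}$) is handled crudely using $\|\psi\|_{H^1}\lesssim 1$ and $|B|^{1/2}\lesssim_R 1$; we therefore focus on $N$ large, so that $B$ fits inside a single unit time slab.

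Two basic estimates drive the argument. \emph{First}, from the $Z$-norm smallness \eqref{psi sm lemma} we extract, via the $p_0=9/2$ component of \eqref{def of Z}, that $\|P_{N'}e^{it\Delta}\psi\|_{L^{9/2}_{x,y,t}(\mathbb{R}\times\mathbb{T}^2\times[\gamma,\gamma+1])}\lesssim\delta^{2}(N')^{-11/18}$; combined with the volume bound $|B|\lesssim R^{5}N^{-5}$ and H\"older's inequality $\|f\|_{L^2(B)}\lesssim|B|^{5/18}\|f\|_{L^{9/2}(B)}$, this yields
\[
\bigl\|P_{N'}e^{it\Delta}\psi\bigr\|_{L^2(B)}\lesssim R^{20/18}\,N^{-25/18}\,(N')^{-11/18}\,\delta^{2}.
\]
\emph{Second}, the one-dimensional smoothing \eqref{1D smoothing} with $K=\delta N'$, combined with $\|f\|_{L^2(B)}\le(RN^{-1})^{1/2}\|f\|_{L^\infty_x L^2_{y,t}}$ and Bernstein $\|P_{N'}\psi\|_{L^2}\lesssim\langle N'\rangle^{-1}$, gives
\[
\bigl\|P_{N'}P^x_{\geq\delta N'}e^{it\Delta}\psi\bigr\|_{L^2(B)}\lesssim R^{1/2}N^{-1/2}(\delta N')^{-1/2}\langle N'\rangle^{-1}.
\]

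For the summation, I split into two regimes. In the \emph{low-frequency regime} $N'\leq N$, the first estimate alone suffices: multiplying by $NN'$ gives $R^{20/18}\delta^{2}N^{-7/18}(N')^{7/18}$, whose dyadic sum over $N'\leq N$ is $\lesssim R^{20/18}\delta^{2}$. In the \emph{high-frequency regime} $N'\geq N$, the $L^{9/2}$ estimate alone produces $(N'/N)^{7/18}$ which diverges upon summing, so I geometrically interpolate the two bounds with parameter $\theta\in(0,1)$, getting
\[
NN'\bigl\|P_{N'}P^x_{\geq\delta N'}e^{it\Delta}\psi\bigr\|_{L^2(B)}\lesssim C(R)\,\delta^{\,2-5\theta/2}\,N^{\eta(\theta)}(N')^{-\eta'(\theta)},
\]
where $\eta'(\theta)=\theta\cdot 16/18-7/18$ and $\eta(\theta)=-\eta'(\theta)$ by design. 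Choosing $\theta=7/16+\varepsilon$ for small $\varepsilon>0$ makes $\eta'>0$, so the tail sum $\sum_{N'\geq N}(N')^{-\eta'}\lesssim N^{-\eta'}$ cancels the $N^{\eta}$ factor exactly. The resulting exponent of $\delta$ is $2-5(7/16+\varepsilon)/2=29/32-5\varepsilon/2$, which is comfortably larger than $1/100$.

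The main obstacle is the tension between the two regimes: the smoothing bound carries a \emph{bad} factor $\delta^{-1/2}$ but decays in $N'$, whereas the $Z$-bound supplies $\delta^{2}$ but decays too slowly in $N'$ to admit unrestricted summation at high frequencies. Interpolation only closes thanks to the near-cancellation $\eta(\theta)+\eta'(\theta)=0$ at the critical exponent $\theta=7/16$, which requires picking $\theta$ in a narrow window just above this value; bookkeeping the resulting $N,N'$ and $R$ powers is the only delicate part of the proof.
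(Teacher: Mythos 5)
Your argument is in substance the paper's own proof: the paper likewise plays off (i) the $Z$-norm smallness, converted into an $L^2(B)$ bound by H\"older on the small box, against (ii) the one-dimensional smoothing estimate \eqref{1D smoothing} together with Bernstein, and then optimizes over the Littlewood--Paley scale $K=N'$ relative to $N$. The only cosmetic differences are that the paper uses the $p_0=18$ component of \eqref{def of Z} (getting $\|e^{it\Delta}P_K\psi\|_{L^{18}}\lesssim\delta^2K^{2/9}$, hence $\delta^2N^{-11/9}K^{11/9}$ after H\"older) and simply takes the better of the two bounds at each scale, whereas you use $p_0=9/2$ and an explicit geometric interpolation; both routes close with room to spare over $\delta^{1/100}$. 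Your numerology checks out, apart from the sign slip in ``$\eta(\theta)=-\eta'(\theta)$'', which must read $\eta=+\eta'$ for the term to be $(N/N')^{\eta'}$ and for the claimed cancellation against the tail sum to occur.

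The one step that does not work as written is your treatment of bounded $N$. Estimating $N\|\nabla^1(e^{it\Delta}\tilde P^1_\delta\psi)\|_{L^2(B)}$ by $\|\psi\|_{H^1}\cdot|I_t|^{1/2}\sim R^{1/2}$ gives only $\lesssim_R 1$ with no power of $\delta$, while \eqref{L^2 smoothing} demands $C(R)\delta^{1/100}$ for \emph{every} dyadic $N\ge1$, including $N=1$; the $\delta$-gain is the entire content of the lemma, so this case cannot be conceded. Fortunately no separate treatment is needed: for $N\lesssim R$ the time interval is covered by $O(R)$ unit slabs (costing a factor $R^{2/9}$ in the $L^{9/2}$ bound extracted from \eqref{def of Z}), and the spatial volume of the ball of radius $RN^{-1}$ in $\R\times\T^2$ is $\lesssim RN^{-1}$ rather than $(RN^{-1})^3$, which changes the $N$-exponent in your low-frequency sum from $-7/18$ to $+3/18$; since $N\lesssim R$ this is still $\lesssim_R\delta^2$, and the high-frequency interpolation goes through unchanged. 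Alternatively, reduce to $R=1$ at the outset (as the paper does) by covering the $R$-box with $O_R(1)$ unit-scale boxes. Either repair closes the gap; as stated, your bounded-$N$ case discards exactly the smallness the lemma asserts.
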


\proof Without any loss of generality, we may assume that $R=1, p_0=0$, and $t_0=0$ thanks to space and time translation invariance. From the definition of the $Z-$norm and $\eqref{psi sm lemma}$, we have:
$$
\|e^{it\Delta} P_K \psi\|_{L^{18}_{x,y,t}(\{|p|\leq N^{-1}\}\times \{|t|\leq N^{-2}\})} \lesssim \delta^2 K^{\frac{2}{9}},
$$
from which it follows by H\"older's inequality that:
\begin{equation}\label{smooth lemma 001}
N\|\nabla^1 (e^{it\Delta}P_K \widetilde P_\delta^1\psi)\|_{L^{2}_{x,y,t}(\{|p|\leq N^{-1}\}\times \{|t|\leq N^{-2}\})} \lesssim \delta^2 N^{-\frac{11}{9}}K^{\frac{11}{9}}.
\end{equation}
On the other hand, estimating differently, we have from the smoothing estimate \eqref{1D smoothing} that:
\begin{equation}\label{smooth lemma 002}
\begin{split}
N\|\nabla^1(e^{it\Delta} P_K P_\delta^1 \psi) \|_{L^2_{x,y,t}(\{|p|\leq N^{-1}\}\times \{|t|\leq N^{-2}\})}&\lesssim N^{1/2}\|\nabla^1(e^{it\Delta} P_K P_\delta^1 \psi) \|_{L_x^\infty L^2_{y,t}(\{|p|\leq N^{-1}\}\times \{|t|\leq N^{-2}\})}\\
&\lesssim N^{1/2}(\delta K)^{-1/2}.
\end{split}
\end{equation}
Combining $\eqref{smooth lemma 001}$ and $\eqref{smooth lemma 002}$ and summing in $K$ yields $\eqref{L^2 smoothing}$.
\end{proof}

\section{Appendix: Local Theory of the Resonant System}\label{Sec-AppSS}

In this appendix, we collect some facts about resonant quintic system \eqref{SS}. This system is the Hamiltonian flow of the positive definite Hamiltonian:
\begin{equation}\label{Hamiltonian}
H[\vec u] =\frac{1}{2}\sum_{q\in\mathbb{Z}^2} \int_\R |\partial_x u_q|^2 dx + \frac{1}{6}\sum_{q\in \Z^2, n \in \mathbb{N}} \int_{\R} | \sum_{\substack{p_1-p_2+p_3=q\\|p_1|^2-|p_2|^2+|p_3|^2=n}} u_{p_1} \overline{u_{p_2}} u_{p_3} |^2dx.
\end{equation}
In addition, \eqref{SS} enjoys the following conservation laws:  for weights $g(p)=g_{a,b,c}( p)= a+bp+c\vert p\vert^2$, the solutions of \eqref{SS} have formally conserved $g$-energy where, 
\begin{equation}\label{g energy}
E_g({\vec u})=\sum_{p\in\mathbb{Z}^2}g( p)\Vert u_p\Vert_{L^2(\R)}^2.
\end{equation}
Indeed,
\begin{equation*}
\begin{split}
\frac{d}{dt}E_g({\vec u})&=\sum_{\substack{ p_0+p_2+p_4=p_1+p_3+p_5\\ \vert p_0\vert^2+\vert p_2\vert^2+\vert p_4\vert^2=\vert p_1\vert^2+\vert p_3\vert^2+\vert p_5\vert^2}}g(p_0)\hbox{Im}\left\{\int_{\mathbb{R}}\overline{u_{p_0}u_{p_2}u_{p_4}}u_{p_1}u_{p_3}u_{p_5}dx\right\}\\
&=\frac{1}{3}\sum_{\substack{ p_0+p_2+p_4=p_1+p_3+p_5\\ \vert p_0\vert^2+\vert p_2\vert^2+\vert p_4\vert^2=\vert p_1\vert^2+\vert p _3\vert^2+\vert p_5\vert^2}}\left[g(p_0)+g(p_2)+g(p_4)\right]\hbox{Im}\left\{\int_{\mathbb{R}}\overline{u_{p_0}u_{p_2}u_{p_4}}u_{p_1}u_{p_3}u_{p_5}dx\right\}\\
&=0.
\end{split}
\end{equation*}

The following proposition proves the local well-posedness and small-data scattering for the quintic system \eqref{SS}. We recall that the norm $\vec W(I)$ is defined as:
$$
\Vert {\vec u}\Vert_{{\vec W}(I)}^2:=\sum_{p\in\mathbb{Z}^2}\left[1+\vert p\vert^2\right]\Vert u_p\Vert_{L^6_{x,t}(\mathbb{R}_x\times I)}^2.
$$

\begin{proposition}[Local well-posedness and small-data scattering for \eqref{SS}]\label{LWPSS}
Let ${\vec u}(0)=\{u_p(0)\}_p\in h^1L^2$ satisfy $\|\vec u_0\|_{h^1 L^2} \leq E$, then: 
\begin{enumerate}
\item there exists an open interval $0\in I$ and a unique solution $\vec u(t)$ of \eqref{SS} in $C^0_t(I:h^1L^2)\cap \vec W(I)$.
\item there exists $E_0$ such that if $E \leq E_0$, $\vec u(t)$ is global and scatters in positive and negative infinite time.
\item propagation of regularity: if ${\vec u}(0)\in h^\sigma H^k$ for some $\sigma\ge 1$ and $k\ge 0$, then ${\vec u}\in C^0_t(I:h^\sigma H^k)$.
\end{enumerate}
\end{proposition}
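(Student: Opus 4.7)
The plan is to run a Picard iteration in $\vec X(I):=C(I;h^1L^2_x)\cap \vec W(I)$, equipped with the sum norm. Applying the 1D Schr\"odinger Strichartz for the admissible pair $(q,r)=(6,6)$ (since $2/6+1/6=1/2$) componentwise and weighting by $\langle p\rangle^2$ yields
\begin{equation*}
\|e^{it\partial_{xx}}\vec u_0\|_{\vec X(\R)}\lesssim \|\vec u_0\|_{h^1L^2},\qquad \Big\|\int_0^t e^{i(t-s)\partial_{xx}}\vec F(s)\,ds\Big\|_{\vec X(I)}\lesssim \|\vec F\|_{h^1(L^{6/5}_{x,t}+L^1_tL^2_x)},
\end{equation*}
and dominated convergence gives $\|e^{it\partial_{xx}}\vec u_0\|_{\vec W(I)}\to 0$ as $|I|\to 0$.

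\textbf{Reduction to a discrete quintilinear bound.} The crux is the multilinear nonlinear estimate
\begin{equation*}
\|N(\vec u^{(1)},\ldots,\vec u^{(5)})\|_{h^1L^{6/5}_{x,t}(\R\times I)}\lesssim \prod_{k=1}^5\|\vec u^{(k)}\|_{\vec W(I)}.
\end{equation*}
Componentwise H\"older ($5\cdot 1/6=5/6$) gives $\|u_{p_1}\overline{u_{p_2}}u_{p_3}\overline{u_{p_4}}u_{p_5}\|_{L^{6/5}_{x,t}}\le \prod_k\|u_{p_k}\|_{L^6_{x,t}}$, so setting $a^{(k)}_p:=\|u^{(k)}_p\|_{L^6_{x,t}(\R\times I)}\ge 0$ (for which $\|a^{(k)}\|_{h^1(\Z^2)}=\|\vec u^{(k)}\|_{\vec W(I)}$), the task reduces to the purely discrete inequality on $\Z^2$
\begin{equation}\label{keyComb}
\sum_{j\in\Z^2}\langle j\rangle^2\Big(\sum_{\mathcal{R}(j)}\prod_{k=1}^5 a^{(k)}_{p_k}\Big)^2\lesssim \prod_{k=1}^5\|a^{(k)}\|_{h^1(\Z^2)}^2.
\end{equation}

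\textbf{Proof of \eqref{keyComb}.} I will identify \eqref{keyComb} with a multilinear Strichartz bound for free Schr\"odinger evolutions on $\T^2$. Introduce $A_k(y,t):=\sum_{p\in\Z^2}a^{(k)}_p e^{ip\cdot y-it|p|^2}$, a free $\T^2$-Schr\"odinger wave with $\|A_k(\cdot,0)\|_{H^1(\T^2)}=\|a^{(k)}\|_{h^1}$. A direct Fourier computation yields
\begin{equation*}
\mathcal F_y[A_1\overline{A_2}A_3\overline{A_4}A_5](p,t)=\sum_{p_1-p_2+p_3-p_4+p_5=p}\Big(\prod a^{(k)}_{p_k}\Big)\,e^{-it(|p_1|^2-|p_2|^2+|p_3|^2-|p_4|^2+|p_5|^2)},
\end{equation*}
so $\sum_{\mathcal R(p)}\prod a^{(k)}_{p_k}$ is exactly the time mean $\lim_{T\to\infty}T^{-1}\int_0^T e^{it|p|^2}\mathcal F_y[A_1\overline{A_2}A_3\overline{A_4}A_5](p,t)\,dt$. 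Cauchy-Schwarz in $t$, Plancherel in $y$, together with the uniform-in-$T$ bound from $H^1$-conservation of each $A_k$, reduce \eqref{keyComb} to the multilinear estimate $\|\langle\nabla_y\rangle(A_1\overline{A_2}A_3\overline{A_4}A_5)\|_{L^2_{y,t}([0,1]\times\T^2)}\lesssim \prod_k\|A_k(0)\|_{H^1}$, the $\langle\nabla_y\rangle$ being placed (via Leibniz and the inequality $\langle p\rangle\lesssim \max_k\langle p_k\rangle$, valid on the support of $p_1-p_2+p_3-p_4+p_5=p$) on the highest-frequency factor. Since quintic NLS on $\T^2$ is \emph{energy-subcritical} at $H^1$ (critical Sobolev exponent $s_c=1/2$), this estimate is a standard consequence of Bourgain's $\T^2$-Strichartz bounds (e.g.\ $\|e^{it\Delta_{\T^2}}f\|_{L^{10}_{y,t}([0,1]\times\T^2)}\lesssim \|f\|_{H^1(\T^2)}$) combined with an appropriate H\"older choice in $(y,t)$.

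\textbf{Iteration, scattering, regularity; main obstacle.} With \eqref{keyComb} in hand, Picard iteration for $\Phi(\vec v)(t):=e^{it\partial_{xx}}\vec u_0-i\int_0^te^{i(t-s)\partial_{xx}}N(\vec v)(s)\,ds$ contracts on a ball of $\vec X(I)$ as soon as $\|e^{it\partial_{xx}}\vec u_0\|_{\vec W(I)}\ll 1$: this holds for any datum when $|I|$ is small, proving (1), and on $I=\R$ when $\|\vec u_0\|_{h^1L^2}$ is sufficiently small, yielding (2) along with scattering data $\vec v^{\pm\infty}:=\vec u_0-i\int_0^{\pm\infty}e^{-is\partial_{xx}}N(\vec u)(s)\,ds\in h^1L^2$. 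Part (3) follows by re-running the same estimates after inserting $\langle p\rangle^{\sigma-1}|\partial_x|^k$ into Duhamel and placing the extra regularity on the highest-frequency factor. The main obstacle is \eqref{keyComb}: it sits at the borderline $h^1(\Z^2)$ regularity, where $h^1(\Z^2)$ is \emph{not} an algebra, and both the momentum and the energy components of the resonance constraint $\mathcal R(j)$ are essential---the unconstrained quintic convolution bound would fail at this regularity, as concentrated-data test cases show---and the geometric translation to free $\T^2$-Schr\"odinger evolutions is precisely what lets the resonance constraint be leveraged via Bourgain's Strichartz estimates on $\T^2$.
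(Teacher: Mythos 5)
Your reduction to the discrete quintilinear bound \eqref{keyComb} is exactly the paper's (componentwise H\"older in $L^{6/5}_{x,t}$ plus the $(6,6)$ Strichartz pair on $\R$), but your proof of \eqref{keyComb} takes a genuinely different route. The paper applies Cauchy--Schwarz in the sum over $\mathcal{R}(j)$ and then invokes a purely arithmetic bound, Lemma \ref{SumLem}, namely $\sup_j \langle j\rangle^2\sum_{\mathcal{R}(j)}\prod_k\langle p_k\rangle^{-2}\lesssim 1$, proved by parametrizing the resonance relations as lattice points on circles (Lemma \ref{LemAp1}). You instead realize the resonant sum as the time average of a Fourier coefficient of $A_1\overline{A_2}A_3\overline{A_4}A_5$ for free $\T^2$ evolutions, apply Cauchy--Schwarz in $t$ (legitimate, and clean here since the phases are integers so the time average is just an average over one period), and reduce to a multilinear Strichartz estimate on $\T^2$. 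This is an elegant repackaging, and it makes transparent why the resonance constraint is essential --- your closing remark that the unconstrained convolution bound fails at $h^1(\Z^2)$ is correct and matches the role of the second equation in $\mathcal{R}(j)$ in the paper's counting argument.

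The soft spot is the final estimate $\|\langle\nabla_y\rangle(A_1\overline{A_2}A_3\overline{A_4}A_5)\|_{L^2_{y,t}}\lesssim\prod_k\|A_k(0)\|_{H^1(\T^2)}$, which you declare standard. The specific justification you offer does not close as written: placing all five factors in $L^{10}_{y,t}$ costs $H^{3/5}$ per factor, so the factor carrying $\langle\nabla_y\rangle$ would need $H^{8/5}$ data, not $H^1$. A correct argument requires a Littlewood--Paley decomposition with the derivative on the highest frequency $N_1$, that factor placed in (essentially) $L^4_{y,t}$ where Bourgain's $\T^2$ estimate carries an unavoidable $N_1^{\epsilon}$ loss, and the remaining factors in $L^{16}$; one must then check that the $N_1^{\epsilon}$ is absorbed, which works when the second frequency is comparable to $N_1$ (subcritical room in the lower factors) and must be replaced by the lossless $L^2\times (L^\infty)^4$ bound when it is not, with an interpolation or a bilinear refinement of Bourgain type to make the dyadic sum converge. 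That bilinear refinement is proved by exactly the lattice-points-on-circles count that the paper isolates in Lemma \ref{LemAp1}, so your route ultimately defers to, rather than avoids, the arithmetic input. The remaining parts of your argument (contraction in $C_t h^1L^2\cap\vec W$, small-data globality and scattering, persistence of regularity by placing the extra weights on the highest frequency) match the paper's ``classical arguments'' and are fine.
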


\begin{proof}

The proof follows from a simple fixed point theorem (and classical arguments) once we have established the fundamental nonlinear estimate. By the scalar Strichartz estimates, we see that
\begin{equation*}
\Vert u_j\Vert_{L^6_{x,t}(\mathbb{R}\times I)}\lesssim \Vert u_j(0)\Vert_{L^2}+\sum_{\mathcal{R}(j)}\Vert u_{p_1}\overline{u_{p_2}}u_{p_3}\overline{u_{p_4}}u_{p_5}\Vert_{L^\frac{6}{5}_{x,t}(\mathbb{R}\times I)},
\end{equation*}
where $\mathcal{R}(j)$ was defined in \eqref{SS}.
Multiplying by $\langle j\rangle$ and square-summing, the first term on the right-hand side is bounded by the square of the $h^1L^2_x$-norm. For the second term, we compute using Lemma \ref{SumLem} below that
\begin{equation*}
\begin{split}
&\sum_{j\in\mathbb{Z}^2}\langle j\rangle^2\left[\sum_{\mathcal{R}(j)}\Vert u_{p_1}\overline{u_{p_2}}u_{p_3}\overline{u_{p_4}}u_{p_5}\Vert_{L^\frac{6}{5}_{x,t}(\mathbb{R}\times I)}\right]^2\\
&\lesssim \sum_{j\in\mathbb{Z}^2}\langle j\rangle^2\left[\sum_{\mathcal{R}(j)}\Pi_{k=1}^5\Vert u_{p_k}\Vert_{L^6_{x,t}(\mathbb{R}\times I)}\right]^2\\
&\lesssim\sum_{j\in\mathbb{Z}^2}\big\{\sum_{\mathcal{R}(j)}\Pi_{k=1}^5\langle p_k\rangle^{2}\Vert u_{p_k}\Vert_{L^6_{x,t}(\mathbb{R}\times I)}^2\times\langle j\rangle^2\sum_{\mathcal{R}(j)}\langle p_1\rangle^{-2}\langle p_2\rangle^{-2}\langle p_3\rangle^{-2}\langle p_4\rangle^{-2}\langle p_5\rangle^{-2}\big\}\\
&\lesssim \sum_{j\in\mathbb{Z}^2}\sum_{\mathcal{R}(j)}\Pi_{k=1}^5\langle p_k\rangle^{2}\Vert u_{p_k}\Vert_{L^6_{x,t}(\mathbb{R}\times I)}^2\lesssim \Vert {\vec u}\Vert_{\vec W (I)}^{10}.
\end{split}
\end{equation*}
Consequently, we obtain
\begin{equation*}
\Vert {\vec u}\Vert_{\vec W(I)}\lesssim \Vert e^{it \partial_{xx}} \vec u_0\Vert_{\vec W(I)}+\Vert {\vec u}\Vert_{\vec W(I)}^5.
\end{equation*}
This and the Strichatz estimate:
$$
\|e^{it \partial_{xx}} \vec u_0\|_{\vec W (\R)}\lesssim \| \vec u_0\|_{h^1 L^2}\lesssim E
$$
allows one to run a classical fixed-point argument in $\vec W(I)\cap C_t (I: h^1L^2)$ provided $I$ or $E$ is small enough. The rest of the proposition follows from standard arguments.
\end{proof}

\begin{lemma}\label{SumLem}
There holds that
\begin{equation*}
\sup_{j\in\mathbb{Z}^2}\left\{\langle j\rangle^2\sum_{(p_1,p_2,p_3,p_4,p_5)\in\mathcal{R}(j)}\langle p_1\rangle^{-2}\langle p_2\rangle^{-2}\langle p_3\rangle^{-2}\langle p_4\rangle^{-2}\langle p_5\rangle^{-2}\right\}\lesssim 1.
\end{equation*}
\end{lemma}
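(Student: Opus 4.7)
My plan is to prove the bound in three steps: first reduce via symmetry and pigeonhole to absorb the $\langle j\rangle^2$ prefactor, then exploit an algebraic cancellation that makes the residual energy constraint linear in one of the remaining variables, and finally estimate the resulting sum by a combination of a one-dimensional lattice sum and iterated $\mathbb{Z}^2$-convolutions.

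For the first two steps, note that the vector constraint $p_1-p_2+p_3-p_4+p_5=j$ forces $\max_i|p_i|\geq|j|/5$. Since the energy constraint defining $\mathcal{R}(j)$ and the summand $\prod_i\langle p_i\rangle^{-2}$ are invariant under arbitrary permutations within $\{p_1,p_3,p_5\}$ and within $\{p_2,p_4\}$, I can restrict (up to a constant) to the subset where $|p_1|\geq|j|/5$, and there $\langle p_1\rangle^{-2}\lesssim\langle j\rangle^{-2}$ absorbs the $\langle j\rangle^2$ prefactor. Substituting $p_1=j+p_2-p_3+p_4-p_5$ into the energy identity $\sum_i(-1)^{i+1}|p_i|^2=|j|^2$, the quadratic $|p_2|^2$ terms cancel exactly, reducing the energy constraint to the \emph{linear} equation
\[
p_2\cdot K=M,\qquad K:=j-p_3+p_4-p_5\in\mathbb{Z}^2,\quad M=M(j,p_3,p_4,p_5)\in\mathbb{Z}.
\]

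For the third step, I fix $(p_3,p_4,p_5)$ with $K\neq 0$; then $p_2$ is confined to an arithmetic progression along a line at distance $|M|/|K|$ from the origin with spacing $|K|/\gcd(K_1,K_2)$, and $p_1=p_2+K$ traces the parallel progression. Parametrizing $p_2=p_2^{(0)}+nv$ with $v\perp K$, $|v|=|K|/\gcd(K)$, and using $D_2^2-D_1^2=2M+|K|^2$ where $D_i$ is the distance from the origin to the line carrying $p_i$, a partial-fraction calculation yields
\[
\sum_{n\in\mathbb{Z}}\langle p_2\rangle^{-2}\langle p_1\rangle^{-2}\;\lesssim\;\frac{1}{\langle K\rangle^2}
\]
(the degenerate case $K=0$ forces $M=0$ and contributes only the convergent diagonal sum $\sum_{p_2}\langle p_2\rangle^{-4}\lesssim 1$). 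The problem therefore reduces to showing
\[
\sum_{(p_3,p_4,p_5)\in(\mathbb{Z}^2)^3}\langle p_3\rangle^{-2}\langle p_4\rangle^{-2}\langle p_5\rangle^{-2}\,\langle K\rangle^{-2}\,\lesssim\,1,\qquad K=j-p_3+p_4-p_5,
\]
which after the substitution $p_5\mapsto K$ (Jacobian one) is an iterated convolution of the kernel $\langle\cdot\rangle^{-2}$ on $\mathbb{Z}^2$.

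The hardest part will be this final step: naively iterating the convolution estimate $\sum_p\langle p\rangle^{-2}\langle q-p\rangle^{-2}\lesssim\langle q\rangle^{-2}\log\langle q\rangle$ produces logarithmic losses at each stage and yields only a bound of order $\langle j\rangle^{-2}\log^3\langle j\rangle$. To obtain the genuine $O(1)$ bound I would refine the one-dimensional estimate to retain the sharper form $\gcd(K)/\langle K\rangle^2$ (which follows from the same partial-fraction computation by keeping track of $|v|=|K|/\gcd(K)$) and then exploit the Gauss-sum-type average $\sum_{|K|\sim N}\gcd(K_1,K_2)\lesssim N^2\log N$, combined with a dyadic decomposition and a separate treatment of the regime $|M|\gtrsim|K|^2$ (where the 1D sum decays faster than $\langle K\rangle^{-2}$ by $\langle M\rangle^{-2}|K|^2$), in order to absorb the log factors against the decay of $\langle j-p_3+p_4-K\rangle^{-2}$.
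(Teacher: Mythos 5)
Your reformulation of the resonance relation as the linear condition $2p_2\cdot K=2M$ with $K=j-p_3+p_4-p_5$ is correct, and so is the line-sum estimate $\sum_{p_2\cdot K=M}\langle p_2\rangle^{-2}\langle p_2+K\rangle^{-2}\lesssim\langle K\rangle^{-2}$ (since $|M|+\bigl|M+|K|^2\bigr|\ge |K|^2$, one of the two parallel lines lies at distance $\ge |K|/2$ from the origin). The gap is in the bookkeeping around these two facts, and it is fatal to the argument as structured: you spend the weight $\langle p_1\rangle^{-2}$ twice, once to absorb the prefactor $\langle j\rangle^2$ and once inside the line sum. If you really absorb it, the line sum carries only $\langle p_2\rangle^{-2}$ and is merely $O(1)$, so the remaining sum $\sum_{p_3,p_4,p_5}\langle p_3\rangle^{-2}\langle p_4\rangle^{-2}\langle p_5\rangle^{-2}$ diverges. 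If you do not absorb it, your final reduction must read
\begin{equation*}
\langle j\rangle^2\sum_{p_3,p_4,p_5}\langle p_3\rangle^{-2}\langle p_4\rangle^{-2}\langle p_5\rangle^{-2}\langle K\rangle^{-2}\lesssim 1,
\end{equation*}
i.e.\ the fourfold convolution of $\langle\cdot\rangle^{-2}$ on $\mathbb{Z}^2$ must be $\lesssim\langle j\rangle^{-2}$ with no logarithm, and that is false: restricting to $|p_3|,|p_4|,|p_5|\le|j|/100$ already gives a lower bound of order $\langle j\rangle^{-2}\log^3\langle j\rangle$. So the step you flag as ``the hardest part'' is where the proof actually lives, and the sketch you give for it does not work: $\gcd(K)/\langle K\rangle^2$ is weaker, not sharper, than $\langle K\rangle^{-2}$ (as $\gcd\ge1$); the genuine refinement is that the line sum is much smaller than $\langle K\rangle^{-2}$ unless $M$ is near $0$ or $-|K|^2$, and exploiting that forces you to reinject the dependence of $M$ on $(j,p_3,p_4,p_5)$ --- exactly the information the convolution reduction discards. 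Two smaller issues: the restriction to $|p_1|\ge|j|/5$ does not follow from the permutation symmetry you cite (the maximum could sit at $p_2$ or $p_4$), though it does follow from the energy identity, which gives $|p_1|^2+|p_3|^2+|p_5|^2\ge|j|^2$; and the case $K=0$ is not just the diagonal sum --- you still must control $\sum\langle p_3\rangle^{-2}\langle p_4\rangle^{-2}\langle p_5\rangle^{-2}$ over the set $M=0$.

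For comparison, the paper avoids all of this by spending only one weight on the constrained variable while keeping a power gain: after ordering the variables, it fixes $(p_1,p_2,p_4)$, observes that the quadratic constraint places $p_3$ on a circle, and applies the lattice-point count of Lemma \ref{LemAp1} to get $\sum_{p_3\in\mathcal{C},\,|p_3|\ge A}\langle p_3\rangle^{-2}\lesssim A^{-1}$ with $A=\max(|p_1|,|p_2|,|p_4|)$. That extra factor is precisely what beats the logarithms in the remaining free triple sum. To salvage your line-based variant you would need an analogous quantitative gain from the position of the line relative to the origin (through $D_1D_2=|M|\,\bigl|M+|K|^2\bigr|/|K|^2$), which is substantially more work than the circle count and is not carried out in your sketch.
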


\begin{proof}

Without loss of generality, we may assume that
\begin{equation}\label{WLOG}
\vert p_1\vert\le\vert p_3\vert\le\vert p_5\vert,\quad \vert p_2\vert\le \vert p_4\vert,\quad\max(\vert j\vert,\vert p_4\vert)\sim\vert p_5\vert.
\end{equation}
The first equation in the resonant condition specifies
\begin{equation*}
p_5=p_2+p_4+j-p_1-p_3,
\end{equation*}
while the second condition specifies that $p_3$ lies on a specific circle $\mathcal{C}$, namely
\begin{equation*}
\left\vert p_3-\frac{p_2+p_4+j-p_1}{2}\right\vert^2=\frac{2\left[\vert p_2\vert^2+\vert p_4\vert^2+\vert j\vert^2-\vert p_1\vert^2\right]-3\vert p_2+p_4+j-p_1\vert^2}{4}.
\end{equation*}
We can use Lemma \ref{LemAp1} below to bound the number of possible $p_3$:
\begin{equation*}
\sum_{p_3;\vert p_3\vert\ge\max(\vert p_1\vert,\vert p_2\vert,\vert p_4\vert);\,p_3\in\mathcal{C}}\langle p_3\rangle^{-2}\lesssim\langle \max(\vert p_1\vert,\vert p_2\vert,\vert p_4\vert)\rangle^{-1}.
\end{equation*}
Using this, we obtain that
\begin{equation*}
\begin{split}
S_1&=\sum_{\substack{(p_1,p_2,p_3,p_4,p_5)\in\mathcal{R}(j);\\\vert p_2\vert\le\vert p_4\vert\le\vert p_3\vert;\vert p_1\vert\le \vert p_3\vert\le\vert p_5\vert}}
\langle p_1\rangle^{-2}\langle p_2\rangle^{-2}\langle p_3\rangle^{-2}\langle p_4\rangle^{-2}\frac{\langle j\rangle^2}{\langle p_5\rangle^2}\\
&\lesssim\sum_{p_1,p_2,p_4}\langle p_1\rangle^{-2}\langle p_2\rangle^{-2}\langle p_4\rangle^{-2}\sum_{\substack{p_3:(p_1,p_2,p_3,p_4,p_2+p_4+j-p_1-p_3)\in\mathcal{R}(j),\\ \vert p_3\vert\ge\max(\vert p_1\vert,\vert p_2\vert,\vert p_4\vert)}}\langle p_3\rangle^{-2}\\
&\lesssim \sum_{p_1,p_2,p_4}\langle p_1\rangle^{-2}\langle p_2\rangle^{-2}\langle p_4\rangle^{-2}\langle\vert p_1\vert+\vert p_2\vert+\vert p_4\vert\rangle^{-1}\lesssim 1.
\end{split}
\end{equation*}
The sum when $\vert p_3\vert\le\vert p_4\vert$ is bounded similarly, using Lemma \ref{LemAp1} to bound the sum over $p_4$ instead of the bound over $p_3$.
\end{proof}

We are left with 

\begin{lemma}\label{LemAp1}

For any $P\in \mathbb{R}^2$, $R>0$ and $A>1$ there holds that:
\begin{equation}
\sum_{\substack{\vert p\vert\geq A\\ p \in \mathbb{Z}^2\cap C(P,R)}} \frac{1}{\langle p\rangle^2} \lesssim A^{-1},
\end{equation}
where $C(P,R)$ denotes the circle of radius $R$ centered at $P$. 
\end{lemma}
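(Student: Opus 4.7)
The proof proceeds by a dyadic decomposition in $|p|$, combined with an elementary arclength bound and the fact that two distinct lattice points are at Euclidean distance at least $1$. Write
$$\sum_{\substack{|p|\geq A\\ p\in \mathbb{Z}^2\cap C(P,R)}}\frac{1}{\langle p\rangle^2} \;\lesssim\; \sum_{k:\,2^k\geq A/2} 2^{-2k}\, N_k,\qquad N_k:=\#\bigl(\mathbb{Z}^2\cap C(P,R)\cap A_k\bigr),$$
where $A_k:=\{q\in\mathbb{R}^2:2^k\leq|q|<2^{k+1}\}$. Since $\sum_{2^k\geq A/2}2^{-k}\lesssim A^{-1}$, it suffices to prove the uniform bound $N_k\lesssim 2^k$ for every $k$ with $2^k\geq A>1$.

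To estimate $N_k$, I first control the arclength of $\Gamma_k:=C(P,R)\cap A_k$, which consists of at most two arcs (the two bounding circles of the annulus each meet $C(P,R)$ in at most two points). If $R\leq 2^{k+1}$, then the entire circle already has length at most $2\pi R\lesssim 2^k$, so trivially $|\Gamma_k|\lesssim 2^k$. If on the other hand $R>2^{k+1}$, then $C(P,R)\cap \{|q|\leq 2^{k+1}\}$ is a single arc (a convex body meets a circle in at most one arc), whose endpoints lie on $\{|q|=2^{k+1}\}$; hence the chord has length $c\leq 2^{k+2}\leq 2R$, and the arclength equals $2R\arcsin(c/2R)\leq \pi c/2\lesssim 2^k$. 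In either case $|\Gamma_k|\lesssim 2^k$.

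Finally I convert the length estimate into a lattice-point count by the elementary fact that any two distinct points of $\mathbb{Z}^2$ have Euclidean distance at least $1$, hence are at arclength at least $1$ along any arc joining them; therefore an arc of length $L$ carries at most $L+1$ lattice points. Applied to each of the at most two components of $\Gamma_k$, this gives $N_k\leq |\Gamma_k|+2\lesssim 2^k$ (the additive $O(1)$ being absorbed because $2^k\geq A>1$). Inserting this into the dyadic sum above yields the claimed $O(A^{-1})$ bound. The only mildly delicate point is the geometric case split on whether $R$ is larger or smaller than $2^{k+1}$ in the arclength estimate; everything else is routine.
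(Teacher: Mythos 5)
Your proof is correct and follows essentially the same route as the paper: a dyadic decomposition in $|p|$, reduction to the lattice-point count $N_k\lesssim 2^k$ on each dyadic piece, and a case split on whether $R$ is comparable to or much larger than the dyadic scale, using the $1$-separation of lattice points together with an arclength-versus-chord bound (the paper phrases the latter via $R\tan\alpha\le 2^{k+2}A$ rather than $\arcsin$). The only blemish is the parenthetical claim that a convex body meets a circle in at most one arc, which is false in general (a circle can meet a convex strip in two arcs); the correct justification here is simply that two distinct circles intersect in at most two points, so $C(P,R)$ meets the disc $\{|q|\le 2^{k+1}\}$ in at most one arc.
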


\begin{proof}
Indeed, it suffices to prove that for any $k\ge 0$,
\begin{equation*}
\left\vert\{p:\,\vert p\vert\in [2^kA,2^{k+1}A];\,p \in C(P,R)\}\right\vert\lesssim  2^kA.
\end{equation*}
When $R\le 2^{k+10}A$, this follows directly from the fact that the integer points are $1$-separated and hence there can be at most $2\pi R$ points on a circle of radius $R$. When $R\ge 2^{k+10}A$, we have that
\begin{equation*}
\left\vert D(0,2^{k+1}A) \cap C(P,R)\right\vert \lesssim 2^kA,
\end{equation*}
where $D(0,K)$ denotes the disc centered at the origin of radius $K$. To see this, let $\alpha$ be the angle of the above arc in $C(P,R)$. This arc has length $R\alpha$ while from the fact that the chord has to fit into the disc, we also obtain that $R\tan\alpha\le 2^{k+2}A$ which gives the result. 
\end{proof}

One can also prove by standard arguments a stability result for \eqref{SS} similar to Lemma \ref{stability}, which will allow to prove:

\begin{lemma}\label{reverse implication}
Assume the conclusion of Theorem \ref{MainThm} holds for all initial data $u_0\in H^1(\R\times \T^2)$ with full energy $L(u_0) < E_{max}$. Then Conjecture \ref{SSConj} holds true for all initial data $\vec v_0 \in h^1 L^2$ satisfying $E_{ls}(\vec v_0) < E_{max}$. In particular, if all finite full-energy (i.e. $H^1$) solutions scatter for $\eqref{NLS}$, then the same holds for finite $E_{ls}$-energy solutions of \eqref{SS}.
\end{lemma}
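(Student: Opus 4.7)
The plan is to reverse the analysis of Lemma \ref{LSstep1}, extracting bounds on solutions of \eqref{SS} from the assumed uniform $X^1(\mathbb{R})$ bounds on the associated large-scale solutions of \eqref{NLS}. By the small-data/local theory of Proposition \ref{LWPSS} together with the stability result for \eqref{SS} mentioned before the statement of the lemma, it suffices to prove the conclusion for smooth data $\vec{v}_0\in h^5 H^4$ with $E_{ls}(\vec{v}_0)<E_{max}$ and then pass to the limit in $h^1 L^2$.

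For such smooth $\vec{v}_0$, let $\vec{v}(t)$ be the maximal $h^5 H^4$ solution on $(T_-,T_+)\ni 0$ given by Proposition \ref{LWPSS}. Set $\psi:=\underline{\vec{v}_0}\in H^\infty(\mathbb{R}\times\mathbb{T}^2)$ and $f_M:=T^{ls}_M\psi$ for $M\ll 1$. A direct computation using \eqref{LSrescaled} and \eqref{DiscFourier} shows that, up to universal normalization constants, $L(f_M)\to c\,E_{ls}(\vec{v}_0)$ as $M\to 0$, so that $L(f_M)<E_{max}$ for all $M$ small enough. The assumed conclusion of Theorem \ref{MainThm} then gives a global solution $u_M\in X^1_c(\mathbb{R})$ with
\begin{equation*}
\|u_M\|_{X^1(\mathbb{R})}\le \Lambda:=\Lambda(E_{ls}(\vec{v}_0))
\end{equation*}
uniformly in $M$ (small).

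We now run a bootstrap on intervals $[-T,T]\subset(T_-,T_+)$ with hypothesis $\|\vec{v}\|_{\vec{W}([-T,T])}\le 2\Lambda$; the hypothesis holds for $T$ small by Proposition \ref{LWPSS} and continuity. Under this hypothesis, propagation of regularity controls $\vec{v}$ uniformly in $L^\infty_t([-T,T]:h^5 H^4)$, and defining $V_M$ as in \eqref{RemormalizedLS}, the computations carried out in the proof of Lemma \ref{LSstep1} go through verbatim and yield
\begin{equation*}
\|V_M\|_{X^1(-TM^{-2},TM^{-2})}\lesssim_{\Lambda,T}1,\qquad \|(i\partial_t+\Delta)V_M-|V_M|^4V_M\|_{N^1(-TM^{-2},TM^{-2})}\lesssim_{\Lambda,T}M.
\end{equation*}
The key point is that these estimates only require bounds on $\vec{v}$ \emph{on} $[-T,T]$ and make no reference to Conjecture \ref{LSConj}. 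Proposition \ref{stability} then gives $\|u_M-V_M\|_{X^1(-TM^{-2},TM^{-2})}=o_M(1)$, while the definition \eqref{RemormalizedLS} together with the scale invariance of the $L^6_{x,y,t}$ norm (used in $Z$) gives the reverse control
\begin{equation*}
\|\vec{v}\|_{\vec{W}([-T,T])}^2\lesssim \|V_M\|_{Z(-TM^{-2},TM^{-2})}^2\lesssim \|V_M\|_{X^1(-TM^{-2},TM^{-2})}^2.
\end{equation*}
Combining these, $\|\vec{v}\|_{\vec{W}([-T,T])}\le \|u_M\|_{X^1(\mathbb{R})}+o_M(1)\le\Lambda+o_M(1)$, which sends $M\to 0$ and closes the bootstrap strictly. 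Hence $\|\vec{v}\|_{\vec{W}([-T,T])}\le 2\Lambda$ persists for all $T\in(T_-,T_+)$, which by Proposition \ref{LWPSS} forces $(T_-,T_+)=\mathbb{R}$ and yields \eqref{SSBd}; scattering then follows from the argument in Proposition \ref{LocTheoryForSS} adapted to this setting. The passage from smooth $\vec{v}_0\in h^5H^4$ to general $\vec{v}_0\in h^1 L^2$ is handled by the stability result for \eqref{SS}.

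\emph{Main obstacle.} The main subtle point is avoiding circularity in the use of Lemma \ref{LSstep1}, which is stated conditionally on Conjecture \ref{LSConj}. The bootstrap structure resolves this: on an interval where $\vec{v}$ is \emph{a priori} bounded in $\vec{W}$, the $X^1$-bound on $V_M$ and the smallness of the non-resonant normal form remainder in $N^1$ are obtained purely from the smooth regularity and $\vec{W}$-bound of $\vec{v}$ on that interval, with no recourse to the conjectured global bound. A secondary technical point is carefully tracking the normalization constants relating $L(f_M)$ to $E_{ls}(\vec{v}_0)$ in the limit $M\to 0$, so that the quantitative assumption on $E_{max}$ produces the claimed bound for all $\vec{v}_0$ below the same threshold.
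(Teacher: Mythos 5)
Your overall strategy is the one the paper intends (the paper gives no written proof of this lemma; it only points to the reversal of the large-scale profile analysis plus a stability theory for \eqref{SS}), and the two structural points you handle carefully --- reducing to smooth data via \eqref{SS}-stability, and running the approximation of Lemma \ref{LSstep1} non-circularly on compact time intervals where $\vec v$ is controlled by local theory rather than by Conjecture \ref{LSConj} --- are exactly right. The computation $L(f_M)\to c\,E_{ls}(\vec v_0)$ is also correct (the $x$-derivative and $L^6$ terms vanish in the limit), so the hypothesis of Theorem \ref{MainThm} is indeed available for $M$ small.

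However, the key quantitative transfer step is wrong as written: the inequality $\Vert\vec v\Vert_{\vec W([-T,T])}^2\lesssim\Vert V_M\Vert_{Z(-TM^{-2},TM^{-2})}^2$ is false. The $\vec W$ norm weights the mode $v_q$ by $\langle q\rangle^2$ in $L^6_{x,t}$, whereas the $Z$ norm, at the interpolated $L^6_{x,y,t}$ level, weights the Littlewood--Paley piece $P_N$ only by $N^{2/3}$ (the exponent $p_0(\tfrac{5}{p_0}-\tfrac12)$ gives $N^{1/3}$ per copy of the solution); moreover $Z$ does not resolve individual periodic Fourier modes inside a dyadic block. Testing on a single mode $\vec v=(\dots,0,v_q,0,\dots)$ with $|q|=N$ shows the two sides differ by a factor $N^{4/3}$, unbounded in $N$, so no universal constant can close your bootstrap through $Z$. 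The repair is to bypass $Z$ entirely and compare $\vec W$ directly to $X^1$, which you already control: writing $P_{C_{(\xi_0,q)}}V_M=e^{-it|q|^2}e^{i\langle y,q\rangle}P^x_{I_{\xi_0}}v_{q,M}$, the one-dimensional embedding $U^2_{\partial_{xx}}\hookrightarrow L^6_{x,t}(\R\times\R)$ together with the square-function estimate over the intervals $I_{\xi_0}$ and the trivial bound $\langle q\rangle\le\langle(\xi_0,q)\rangle$ gives $\Vert\vec v\Vert_{\vec W([-T,T])}\lesssim\Vert V_M\Vert_{X^1(-TM^{-2},TM^{-2})}\le\Vert u_M\Vert_{X^1(\R)}+o_M(1)$, and the argument then closes as you describe (with the bootstrap constant $2C\Lambda$ rather than $2\Lambda$, to absorb the universal constant). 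Relatedly, note that the conclusion of Theorem \ref{MainThm} as literally stated is qualitative; to get $\Vert u_M\Vert_{X^1(\R)}\le\Lambda$ uniformly in $M$ you should invoke the finiteness of $\Lambda(L)$ for $L<L_{max}$ from the induction-on-energy scheme (Section \ref{Sec-IOE}) together with Lemma \ref{Z controls}, rather than treat the uniform bound as part of the stated conclusion.
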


%
%
%

\end{document}